\documentclass[12pt]{article}
\usepackage[margin=1in]{geometry}  
\usepackage{graphicx}              
\usepackage{amsmath}               
\usepackage{amsthm}                
\usepackage[ansinew]{inputenc}
\usepackage{array}
\usepackage{xcolor}
\usepackage{amsxtra}
\usepackage{amstext}
\usepackage{latexsym}
\usepackage{dsfont}
\usepackage[all]{xy}
\usepackage{amsmath,amsfonts,amssymb}
\usepackage{soul}

\usepackage{marginnote}

\usepackage{enumitem}  
\usepackage{calc}
\setlist{labelindent=1pt,itemsep=.5em}
\setlist[itemize]{leftmargin=1.2cm}
\setlist[enumerate]{itemindent=0em,leftmargin=1.2cm}
\setlist[enumerate,1]{label={\upshape(\roman*)}}

\usepackage{cite}

\usepackage{hyperref}
\hypersetup{
colorlinks   = true,
citecolor    = black,
linkcolor =black,
urlcolor=black,
}

\hypersetup{hidelinks}

\usepackage{authblk}

\setlength{\affilsep}{1em}

\newcommand{\email}[1]{%
    \normalsize\href{mailto:#1}{\color{black}{#1} }}



\allowdisplaybreaks

\makeatletter
\newcommand{\subjclass}[2][2020]{%
  \let\@oldtitle\@title%
  \gdef\@title{\@oldtitle\footnotetext{#1 \emph{Mathematics subject classification}: #2}}%
}
\newcommand{\keywords}[1]{%
  \let\@@oldtitle\@title%
  \gdef\@title{\@@oldtitle\footnotetext{\emph{Keywords}: #1}}%
}
\makeatother

\newtheorem{thm}{Theorem}[section]
\newtheorem{cor}[thm]{Corollary}
\newtheorem{lem}[thm]{Lemma}
\newtheorem{prop}[thm]{Proposition}
\theoremstyle{definition}
\newtheorem{defn}[thm]{Definition}
\theoremstyle{remark}
\newtheorem{rmk}[thm]{Remark}
\theoremstyle{remark}

\newtheorem{ex}[thm]{Example}
\newtheorem{exes}[thm]{Examples}
\numberwithin{equation}{section}

\title{Hom-Leibniz bialgebras and BiHom-Leibniz dendriform algebras}

\author[1,2]{Ismail Laraiedh}
\author[3]{Sergei Silvestrov}
\affil[1]{\Affilfont Departement of Mathematics, Faculty of Sciences,
\authorcr \Affilfont Sfax University, Box 1171, 3000 Sfax, Tunisia
\authorcr \Affilfont
\email{ismail.laraiedh@gmail.com}}
\affil[2]{Departement of Mathematics,
\authorcr \Affilfont College of Sciences and Humanities Al Quwaiiyah,
\authorcr \Affilfont Shaqra University, Kingdom of Saudi Arabia
\authorcr \Affilfont
\email{ismail.laraiedh@su.edu.sa}}
\affil[3]{\Affilfont Division of Mathematics and Physics,
\authorcr \Affilfont School of Education, Culture and Communication,
\authorcr \Affilfont M\"{a}lardalen University, Box 883, 72123 V{\"a}ster{\aa}s, Sweden
\authorcr \Affilfont
\email{sergei.silvestrov@mdh.se}}

%

\subjclass[2020]{17B61, 17D30}
\keywords{Hom-Leibniz bialgebra, BiHom-Leibniz dendriform algebra, bimodule, $\mathcal{O}$-operator}

\date{\today}

\begin{document}
\maketitle

\abstract{
The notion of a Hom-Leibniz bialgebra is introduced and it is shown that matched pairs of Hom-Leibniz algebras, Manin triples of Hom-Leibniz algebras and Hom-Leibniz bialgebras are equivalent in a certain sense. The notion of Hom-Leibniz dendriform algebra is established, their bimodules and matched pairs are defined and their properties and theorems about their interplay and construction are obtained. Furthermore, the concept of BiHom-Leibniz dendriform algebras is introduced and discussed, their bimodules and matched pairs are constructed and properties are described. Finally, the connections between all these algebraic structures using $\mathcal{O}$-operators are shown.
}

\section{Introduction}

Hom-Lie algebras and quasi-Hom-Lie algebras were introduced first by Hartwig, Larsson, and Silvestrov in 2003 in \cite{HLSPrepr2003JA2006:deformLiealgsigmaderiv} devoted to a general method for construction of deformations and discretizations of Lie algebras of vector fields and deformations of Witt and Virasoro type algebras based on general twisted derivations ($\sigma$-derivations) obeying twisted Leibniz rule, and motivated also by the examples of $q$-deformed Jacobi identities in $q$-deformations of Witt and Visaroro algebras and in related $q$-deformed algebras discovered in 1990'th in string theory, vertex models of conformal field theory, quantum field theory and quantum mechanics, and $q$-deformed differential calculi and $q$-deformed homological algebra
\cite{aizawasato199091:qdefViralgcenext,
ChaiElinPop1990:qconfalgcentext,
ChaiIsLukPopPresn1991:Viralgconfdim,
ChaiKuLuk,
ChaiPopPres,
daskaloyannis1992generalized,
Hu1992:qWittalgqLie,
Kassel92,
LiuKQuantumCentExt,
LiuKQ1992:CharQuantWittAlg,
LiuKQPhDthesis}.
In 2005, Larsson and Silvestrov introduced quasi-Lie and quasi-Leibniz algebras  \cite{LarssonSilvestrov2005:QuasiLiealgebras} and color quasi-Lie and color quasi-Leibniz algebras  \cite{LarssonSilvestrov2005:GradedquasiLiealgebras}
incorporating within the same framework Hom-Lie algebras and quasi-Hom-Lie algebras, the color Hom-Lie algebras and Hom-Lie superalgebras, the color quasi-Hom-Lie algebras and quasi-Hom-Lie superalgebras, as well as the quasi-Leibniz algebras, the color quasi-Leibniz algebras and super quasi-Leibniz algebras.
The central extensions and cocycle conditions have been first considered for quasi-Hom-Lie algebras and Hom-Lie algebras in
\cite{HLSPrepr2003JA2006:deformLiealgsigmaderiv,LarssonSilvJA2005:QuasiHomLieCentExt2cocyid} and for graded color quasi-Hom-Lie algebras in \cite{SigurdssonSilvestrov2009:colorHomLiealgebrascentralext}.
In quasi-Lie algebras, the skew-symmetry and the Jacobi identity are twisted in a special way by linear maps, and in general the quasi-Lie Jacobi identity containes six twisted triple bracket terms. In Hom-Lie algebras, the bilinear product satisfies the non-twisted skew-symmetry property as in Lie algebras, and the Hom-Lie algebras Jacobi identity has three terms twisted by a single linear map. Lie algebras are a special case of Hom-Lie algebras when the twisting linear map is the identity map. For other twisting linear maps however the Hom-Lie algebras are different and in many ways richer algebraic structures with classifications, deformations, representations, morphisms, derivations and homological structures in the fundamental ways dependent on joint properties of the twisting map and bilinear product which are in the intricate way interlinked by Hom-Jacobi identity. Hom-Lie admissible algebras have been considered first in \cite{MakhoufSilvestrov:Prep2006JGLTA2008:homstructure}, where
the Hom-associative algebras and more general $G$-Hom-associative algebras including the Hom-Vinberg algebras (Hom-left symmetric algebras), Hom-pre-Lie algebras (Hom-right symmetric algebras), and some other new Hom-algebra structures have been introduced and shown to be Hom-Lie admissible, in the sense that the operation of commutator as new product in these Hom-algebras structures yields Hom-Lie algebras. Furthermore, in \cite{MakhoufSilvestrov:Prep2006JGLTA2008:homstructure}, flexible Hom-algebras have been introduced and connections to Hom-algebra generalizations of derivations and of adjoint derivations maps have been considered, investigations of the classification problems for Hom-Lie algebras have been initiated with constriction of families of the low-dimensional Hom-Lie algebras. Following the footsteps of \cite{HLSPrepr2003JA2006:deformLiealgsigmaderiv,
LarssonSilvJA2005:QuasiHomLieCentExt2cocyid,
LarssonSilvestrov2005:QuasiLiealgebras,
LarssonSilvestrov2005:GradedquasiLiealgebras,
LarssonSilv:quasidefsl2,
MakhoufSilvestrov:Prep2006JGLTA2008:homstructure},
many Hom-algebra structures were developed and shown to link on the fundamental axiomatic level within common framework various deformed and non-deformed associative and non-associative algebraic structures, super-algebras and color algebras, $n$-ary algebraic structures, their formal deformations and quantum deformations and related non-commutative differential and homological algebra structures.
In \cite{AbdaouiMabroukMakhlouf2017:rotabaxteropspreliesuper,
AmmarEjbehiMakhlouf2011:cohomhomdeformhomalg,
ArmakanFarhangdoost:2017IJGMMP:GeomaspectsextHomLiesuperalgs,
AttanLaraiedh:2020ConstrBihomalternBihomJordan,
Bakayoko:LaplacehomLiequasibialg,
Bakayoko:LmodcomodhomLiequasibialg,
BakBan:bimodrotbaxt,
BakyokoSilvestrov:HomleftsymHomdendicolorYauTwi,
BakyokoSilvestrov:MultiplicnHomLiecoloralg,
BenMakh:Hombiliform,
BenAbdeljElhamdKaygorMakhl201920GenDernBiHomLiealg,
ChtiouiMabroukMakhlouf1,
ChtiouiMabroukMakhlouf2,
DassoundoSilvestrov2021:NearlyHomass,
GrMakMenPan:Bihom1,
HassanzadehShapiroSutlu:CyclichomolHomasal,
HounkonnouDassoundo:centersymalgbialg,
HounkonnouHoundedjiSilvestrov:DoubleconstrbiHomFrobalg,
HounkonnouDassoundo:homcensymalgbialg,
kms:narygenBiHomLieBiHomassalgebras2020,
Laraiedh1:2021:BimodmtchdprsBihomprepois,
LarssonSigSilvJGLTA2008,
LarssonSilvJA2005:QuasiHomLieCentExt2cocyid,
LarssonSilv:quasidefsl2,
LarssonSigSilvJGLTA2008:QuasiLiedefFttN,
LarssonSilvestrovGLTMPBSpr2009:GenNComplTwistDer,
MaMakhSil:CurvedOoperatorSyst,
MaMakhSil:RotaBaxbisyscovbialg,
MaMakhSil:RotaBaxCosyCoquasitriMixBial,
MaZheng:RotaBaxtMonoidalHomAlg,
MabroukNcibSilvestrov2020:GenDerRotaBaxterOpsnaryHomNambuSuperalgs,
Makhl:HomaltHomJord,
Makhlouf2010:ParadigmnonassHomalgHomsuper,
MakhloufHomdemdoformRotaBaxterHomalg2011,
MakhSil:HomHopf,
MakhSilv:HomDeform,
MakhSilv:HomAlgHomCoalg,
MakYau:RotaBaxterHomLieadmis,
MandalMishra:HomGerstenhaberHomLiealgebroids, MishraSilvestrov:SpringerAAS2020HomGerstenhalgsHomLiealgds,
RichardSilvestrovJA2008,
RichardSilvestrovGLTbnd2009,
SaadaouSilvestrov:lmgderivationsBiHomLiealgebras,
ShengBai:homLiebialg,
Sheng:homrep,
ShengTang2017:LeibnizbialgsRotaBaxLeibnizYangBax,
ShengXiong:LMLA2015:OnHomLiealg,
SigurdssonSilvestrov2009:colorHomLiealgebrascentralext,
SilvestrovParadigmQLieQhomLie2007,
SilvestrovZardeh2021:HNNextinvolmultHomLiealg,
QSunHomPrealtBialg,
WangZhangWei2015:HomLeibnizsuperalg,
Yau:ModuleHomalg,
Yau:HomEnv,
Yau:homalghomol,
Yau:HombialgcomoduleHomalg,
Yau:HomYangBaHomLiequasitribial,
YauHomMalcevHomalternHomJord
}
Hom-modules and Hom-bimodules, Hom-Lie admissible Hom-coalgebras, Hom-coalgebras, Hom-bialgebras, Hom-Hopf algebras, $L$-modules, $L$-comodules and Hom-Lie quasi-bialgebras, $n$-ary generalizations of BiHom-Lie algebras and BiHom-associative algebras and generalized derivations, Rota-Baxter operators, Hom-dendriform color algebras, Rota-Baxter bisystems and covariant bialgebras, Rota-Baxter cosystems, coquasitriangular mixed bialgebras, coassociative Yang-Baxter pairs, coassociative Yang-Baxter equation and generalizations of Rota-Baxter systems and algebras, curved $\mathcal{O}$-operator systems and their connections with tridendriform systems and pre-Lie algebras, BiHom-algebras, BiHom-Frobenius algebras and double constructions, infinitesimal BiHom-bialgebras and Hom-dendriform $D$-bialgebras and Hom-omni-Lie algebras and Hom-Gerstenhaber algebras and Hom-Lie algebroids.

Dendriform algebras, introduced by Loday as algebras with two operations which dichotomize the notion of associative algebras \cite{JLodayDialgebras}, are connected to K-theory, Hopf algebras, homotopy Gerstenhaber algebra, operads, homology, combinatorics, and quantum field theory where they occur in the theory of renormalization of Connes and Kreimer. Then, the tridendriform algebras were
introduced by Loday and Ronco in their study of polytopes and Koszul duality \cite{LodayRonco2004:Trialgpolytops}. Connections of dendriform algebras to Rota-Baxter algebras have been considered in \cite{EbrahimiFardGuo08}.
Hom-tridendriform algebras, generalizing tridendriform algebras by twisting their axioms by arbitrary linear maps in a special ways, together with connections to Rota-Baxter Hom-algebras generalising Rota-Baxter algebras, have been introduced in \cite{MakhloufHomdemdoformRotaBaxterHomalg2011}.
Connections between Hom-bialgebras and Hom-coalgebras, Hom-dendiform and tridendiform systems and Rota-Baxter and Hom-Rota-Baxter Hom-algebra structures have been investigated in
\cite{MaMakhSil:CurvedOoperatorSyst,
MaMakhSil:RotaBaxbisyscovbialg,
MaMakhSil:RotaBaxCosyCoquasitriMixBial,
MaZheng:RotaBaxtMonoidalHomAlg,
MakhloufHomdemdoformRotaBaxterHomalg2011,
MakYau:RotaBaxterHomLieadmis}.

In this article, we introduce the notion of a Hom-Leibniz bialgebras and show that matched pairs of Hom-Leibniz algebras, Manin triples of Hom-Leibniz algebras and Hom-Leibniz bialgebras are equivalent in a certain sense. Next, we establish the notion of Hom-Leibniz dendriform algebra. Their bimodules and matched pairs are defined and the relevant properties and theorems are obtained. Furthermore, the concept of BiHom-Leibniz dendriform algebras is introduced and discussed. Their bimodules and matched pairs are also constructed, and related relevant properties are given. Finally, the connections between all these algebraic structures using $\mathcal{O} $-operators are shown.

The paper is organized as follows. In Section \ref{sec:prelimbasics},
we summarize the definition of Hom-Leibniz algebras and the concepts
of bimodule and matched pairs and we give some relevant properties. In Section \ref{sec:manintripplesbialgshomleibniz}, we introduce the notions of a Manin triple of Hom-Leibniz
algebras and a Hom-Leibniz bialgebra. We prove the equivalence between matched pairs of Hom-Leibniz
algebras, Manin triples of Hom-Leibniz algebras and Hom-Leibniz bialgebras. In Section \ref{sec:homleibnizdendrifalgs},
we establish definition of Hom-Leibniz dendriform algebra. Similar to the connection from Hom-pre-Lie algebras to Hom-Lie algebras and from Hom-dendriform algebras to Hom-associative algebras, we show that a Hom-Leibniz dendriform algebra gives rise to a Hom-Leibniz algebra. Next,
we develop some construction theorems. Their bimodule and matched pair are defined
and their related relevant properties are also given. Section \ref{sec:bihomleibniz}, is devoted to reminders of fundamental concepts for BiHom-Leibniz algebras.
Some results of bimodules and matched pair are given. In
Section \ref{sec:bileibnizdendriformalgs}, we introduce definition of BiHom-Leibniz dendriform algebra and we give some key of constructions. Their bimodule and matched pair are defined and their related relevant properties are also given. Also, we study the notion of $\mathcal{O}$-operator and we illustrate the relations existing between BiHom-Leibniz and BiHom-Leibniz dendriform algebras.

\section{Preliminaries on Hom-Leibniz algebras}   \label{sec:prelimbasics}
In this section definitions and basic facts on Hom-Leibniz algebras are reviewed.

Throughout this article all linear spaces are assumed to be over a field $\mathbb{K}$, and
multilinear maps $f\colon V_1 \times \cdots \times V_n \to W $ on finite direct products and linear maps $ F\colon V_1 \otimes \cdots \otimes V_n \to W\text{,}$ on finite tensor products of linear spaces are identified in the usual way via $F(v_1\otimes \cdots \otimes v_n) = f(v_1,\ldots,v_n)$.

Hom-modules are pairs $(M,\alpha_M)$ consisting of a
$\mathbb{K}$-module $M$ over a field $\mathbb{K}$ and a linear map $\alpha_M:
M\longrightarrow M$. A morphism $f: (M,\alpha_M)\rightarrow
(N,\alpha_N)$ of Hom-modules is a linear map
$f: M\longrightarrow N$ such that $f\alpha_M=\alpha_N f$.
Hom-algebras are triples $(A,\mu,\alpha)$ where $(A,\alpha)$ is
a Hom-module and $\mu : A \times A \rightarrow A$ is a bilinear map.
If, moreover, the linear map $\alpha: A\rightarrow A$ satisfies the multiplicativity,
$\alpha(\mu(x, y))=\mu(\alpha(x),\alpha(y)),$ that is if $\alpha$ is an algebra endomorphism of the algebra $(A,\mu)$, then the Hom-algebra $(A, \mu(\cdot,\cdot), \alpha)$ is called multiplicative.

\begin{defn}[\cite{LarssonSilvestrov2005:QuasiLiealgebras, MakhoufSilvestrov:Prep2006JGLTA2008:homstructure}]
\label{def:homleibnizalg}
Hom-Leibniz algebras are triples $( A, [\cdot,\cdot], \alpha)$ consisting of a linear space $ A$ over a field $\mathbb{K}$, a bilinear map $[\cdot,\cdot]$: $ A\times A\rightarrow  A$ and a linear map $\alpha:  A\rightarrow A$ satisfying for all $x,y,z \in A$,
\begin{equation}\label{Leibnizidentity}
[\alpha(x),[y, z]]=[[x,y],\alpha(z)]+[\alpha(y),[x,z]]. \quad \quad \text{(Hom-Leibniz identity)}
\end{equation}
If, moreover, the linear map $\alpha: A\rightarrow A$ is an algebra endomorphism, meaning that it satisfies also the multiplicativity property
\begin{eqnarray} \label{multipl:homleibnizhomlie}
\alpha([x, y])=[\alpha(x),\alpha(y)],
\end{eqnarray}
then the Hom-Leibniz algebra is called multiplicative.
\end{defn}

\begin{defn}[\cite{HLSPrepr2003JA2006:deformLiealgsigmaderiv,MakhoufSilvestrov:Prep2006JGLTA2008:homstructure}]
\label{def:homliealg}
Hom-Lie algebras are triples $( A, [\cdot,\cdot], \alpha)$ consisting of a linear space $A$ over a field $\mathbb{K}$, a bilinear map $[\cdot,\cdot]$: $ A\times A\rightarrow  A$ and a linear map $\alpha:  A \rightarrow A$ satisfying for all $x,y,z \in A$,
\begin{align}
\label{skewsymmetry}
[x,y]&=-[y,x] &\quad \quad \text{(Skew-symmetry identity)}\\
\label{homliejacobiidentity}
[\alpha(x),[y, z]] &+[\alpha(y),[z,x]]+[\alpha(z),[x,y]] = 0. &\quad \quad \text{(Hom-Jacobi identity)}
\end{align}
When, moreover, the linear map $\alpha: A\rightarrow A$ satisfies multiplicativity \eqref{multipl:homleibnizhomlie}, that is when $\alpha$ is an algebra endomorphism,
the Hom-Lie algebra $(A, [\cdot,\cdot], \alpha)$ is called multiplicative.
\end{defn}

\begin{rmk}
If $\alpha=id_{ A}$, then
$( A, [\cdot,\cdot])=( A, [\cdot,\cdot], id_{A})$ in Definition \ref{def:homleibnizalg} is a Leibniz algebra. So, Leibniz algebras are a special case of Hom-Leibniz algebras when the twisting linear map is the identity map, that is when the fundamental defining Leibniz identity is not deformed at all.
\end{rmk}
\begin{exes} Some basic examples of Hom-Leibniz algebras are as follows.
\begin{enumerate}[label=\upshape{\arabic*.},leftmargin=30pt]
\item
Any Hom-Lie algebra is automatically a Hom-Leibniz algebra as
in the presence of skew-symmetry and the Hom-Leibniz identity implies Hom-Jacobi identity.
\item
Given a Leibniz algebra $(A, [\cdot,\cdot])$ and a Leibniz algebra morphism
$\alpha: A\rightarrow A$, one always get a Hom-Leibniz algebra $(A, [\cdot,\cdot]_\alpha, \alpha)$, where $[x, y]_\alpha =
[\alpha(x), \alpha(y)].$
\item
Any Hom-module $(M, \alpha_M )$, together with the trivial zero bracket $[\cdot,\cdot]$, that is when $[x, y]= 0$ for all $x, y \in M$, is a Hom-Leibniz algebra, called abelian or commutative Hom-Leibniz algebra.
\item
The two-dimensional $\mathbb{K}$-linear space $A$ with basis $\{e_1, e_2\}$, the
bilinear operation given by $[e_2, e_2] = e_1$ and zero elsewhere, and the endomorphism $\alpha$ given by $\alpha(e_1)=e_1,~~\alpha(e_2)=e_1+e_2$ define a Hom-Leibniz algebra. This Hom-Leibniz algebra is multiplicative, since
$$\alpha([e_j,e_k])=
\left\{
\begin{array}{l}
\alpha(0)=0=[\alpha(e_j),\alpha(e_k)], \quad (j,k) \neq (2,2) \\
\begin{aligned}
\alpha(e_1)=e_1 = [e_2, e_2] =[e_1+e_2, e_1+e_2]
\\
=[\alpha(e_2), \alpha(e_2)], \quad j=k=2
\end{aligned}
\end{array}
\right.
$$
but it is not a Hom-Lie algebra since the skew-symmetry identity is not satisfied
$[e_2, e_2]=e_1\neq -e_1 = -[e_2, e_2].$
\end{enumerate}
\end{exes}
\begin{defn}[\cite{ChengSuAMS2011:CohomunivextHomLeibniz}]
Let $( A, [\cdot,\cdot], \alpha)$ be a Hom-Leibniz algebra and let $(V, \beta)$ be a Hom-module. Let $ l, r:  A \rightarrow gl(V) $ be two linear maps. The quadruple  $(l, r, \beta, V)$ is called a bimodule of $ A$ if for all $ x, y \in   A, v \in V $:
\begin{align} \label{Cond:HomBimod:lpb}
l(\alpha(x))l(y)v&=l([x,y])\beta(v)+l(\alpha(y))l(x)v,\\
 \label{Cond:HomBimod:rpb}
l(\alpha(x))r(y)v&=r(\alpha(y))l(x)v+r([x,y])\beta(v), \\
\label{Cond:HomBimod:lar}
 r([x,y])\beta(v)&=r(\alpha(y))r(x)v+l(\alpha(x))r(y)v,
\\
\label{Hombimodule:bl:eq1}
\beta(l(x)v) &= l(\alpha(x))\beta(v), \\
\label{Hombimodule:br:eq2}
\beta(r(x)v) &= r(\alpha(x))\beta(v).
\end{align}
\end{defn}

\begin{rmk}
By using \eqref{Cond:HomBimod:rpb} and \eqref{Cond:HomBimod:lar}, observe that for all $x,y\in A$ and $v\in V$,
\begin{equation}\label{Cond:HomBimod:rpblar}
    r(\alpha(y))l(x)v=-r(\alpha(y))r(x)v.
\end{equation}
\end{rmk}
\begin{prop}
Let $(l, r, \beta, V)$ is a bimodule of a Hom-Leibniz algebra $( A, [\cdot,\cdot], \alpha)$. Then the direct sum  of linear spaces, $ A \oplus V$,  is turned into a Hom-Leibniz algebra  by defining multiplication in $  A \oplus V $ for all $ x_{1}, x_{2} \in   A, v_{1}, v_{2} \in V$ by
\begin{eqnarray}
[x_{1} + v_{1}, x_{2} + v_{2}]' &=& [x_{1} , x_{2}] + (l(x_{1})v_{2} + r(x_{2})v_{1}),\cr
(\alpha\oplus\beta)(x_{1} + v_{1})&=&\alpha(x_{1}) + \beta(v_{1}).
\end{eqnarray}
\end{prop}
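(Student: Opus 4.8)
The plan is to verify directly that the triple $(A\oplus V, [\cdot,\cdot]', \alpha\oplus\beta)$ satisfies the Hom-Leibniz identity \eqref{Leibnizidentity}. Write a general element of $A\oplus V$ as $X_i = x_i + v_i$ with $x_i\in A$ and $v_i\in V$ for $i=1,2,3$. Since both $[\cdot,\cdot]'$ and $\alpha\oplus\beta$ respect the decomposition $A\oplus V$ (the bracket of two elements has $A$-part in $A$ and $V$-part in $V$, and $\alpha\oplus\beta$ is block-diagonal), each of the three bracketed expressions appearing in the Hom-Leibniz identity for $[\cdot,\cdot]'$ splits into a component in $A$ and a component in $V$. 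Hence it suffices to check the identity separately in each component.

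For the $A$-component I would observe that $[\cdot,\cdot]'$ restricted to $A$ is just $[\cdot,\cdot]$ and $(\alpha\oplus\beta)|_A=\alpha$, so the $A$-part of the identity is literally \eqref{Leibnizidentity} for $(A,[\cdot,\cdot],\alpha)$, which holds by hypothesis. The substance lies in the $V$-component. Expanding $[(\alpha\oplus\beta)(X_1),[X_2,X_3]']'$, $[[X_1,X_2]',(\alpha\oplus\beta)(X_3)]'$ and $[(\alpha\oplus\beta)(X_2),[X_1,X_3]']'$ with the definition of $[\cdot,\cdot]'$ and retaining only the $V$-valued outputs, I would sort the resulting operator terms according to which of $v_1,v_2,v_3$ they act on.

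The key observation I expect is that the terms acting on $v_3$ assemble into exactly \eqref{Cond:HomBimod:lpb} (with $x=x_1$, $y=x_2$, $v=v_3$), those acting on $v_2$ into \eqref{Cond:HomBimod:rpb} (with $x=x_1$, $y=x_3$, $v=v_2$), and those acting on $v_1$ into \eqref{Cond:HomBimod:lar} (with $x=x_2$, $y=x_3$, $v=v_1$). Each of the three groups therefore reduces to the corresponding bimodule axiom, which holds by assumption; summing the three groups gives the $V$-component, and together with the $A$-component this establishes \eqref{Leibnizidentity} for $[\cdot,\cdot]'$.

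The main obstacle is organizational rather than conceptual: correctly expanding the nine bracketed subexpressions and matching the resulting dozen operator terms to the three bimodule conditions without index errors, taking care that the three groups are indeed independent (they act on distinct $v_i$ and so cannot interfere). I would also note that the two remaining axioms \eqref{Hombimodule:bl:eq1} and \eqref{Hombimodule:br:eq2} are precisely what is needed if one additionally wants $\alpha\oplus\beta$ to be multiplicative on $(A\oplus V,[\cdot,\cdot]')$, but they play no role in the Hom-Leibniz identity itself and so are not invoked here.
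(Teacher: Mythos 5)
Your proposal is correct and takes essentially the same route as the paper's proof: a direct expansion of the Hom-Leibniz identity for $[\cdot,\cdot]'$, with the $A$-component disposed of by \eqref{Leibnizidentity} and the $V$-component split according to which $v_i$ the operators act on, matching the $v_3$-, $v_2$-, and $v_1$-terms to \eqref{Cond:HomBimod:lpb}, \eqref{Cond:HomBimod:rpb}, and \eqref{Cond:HomBimod:lar} respectively (with the same substitutions you indicate). Your closing remark that \eqref{Hombimodule:bl:eq1}--\eqref{Hombimodule:br:eq2} are not needed here, only for multiplicativity of $\alpha\oplus\beta$, is also consistent with the paper, which invokes only the three conditions above.
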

\begin{proof}
For all $v_{1}, v_{2}, v_{3}\in V$ and $x_{1}, x_{2},x_{3}\in  A$,
\begin{align*}\label{condit. du Bimod.}
& [(\alpha+\beta)(x_1+v_1),[x_2+v_2,x_3+v_3]']'\\&\quad\quad-
[[x_1+v_1,x_2+v_2]',(\alpha+\beta)(x_3+v_3)]'\\&\quad\quad-[(\alpha+\beta)(x_2+v_2),[x_1+v_1,x_3+v_3]']'\\
& \quad =[\alpha(x_1)+\beta(v_1),[x_2,x_3]+l(x_2)v_3+r(x_3)v_2]'\\
&\quad\quad-[[x_1,x_2]+l(x_1)v_2+r(x_2)v_1,\alpha(x_3)+\beta(v_3)]'\\
&\quad\quad-[\alpha(x_2)+\beta(v_2),[x_1,x_3]+l(x_1)v_3+r(x_3)v_1]'
\\
&\quad=[\alpha(x_1),[x_2,x_3]]+l(\alpha(x_1))l(x_2)v_3+l(\alpha(x_1))r(x_3)v_2+r([x_2,x_3])\beta(v_1)\\&\quad\quad-[[x_1,x_2],\alpha(x_3)]-l([x_1,x_2])\beta(v_3)-r(\alpha(x_3))l(x_1)v_2-r(\alpha(x_3))r(x_2)v_1\\&\quad\quad-[\alpha(x_2),[x_1,x_3]]-l(\alpha(x_2))l(x_1)v_3-r(\alpha(x_3))l(x_1)v_2-r(\alpha(x_3))r(x_2)v_1.
\end{align*}
Then, by \eqref{Leibnizidentity} and \eqref{Cond:HomBimod:lpb}-\eqref{Cond:HomBimod:lar},
\begin{multline*}
[(\alpha+\beta)(x_1+v_1),[x_2+v_2,x_3+v_3]']'-
[[x_1+v_1,x_2+v_2]',(\alpha+\beta)(x_3+v_3)]'\\
-[(\alpha+\beta)(x_2+v_2),[x_1+v_1,x_3+v_3]']'=0,
\end{multline*}
which completes the proof.
\end{proof}
We denote such a Hom-Leibniz algebra $( A\oplus V, [\cdot,\cdot]', \alpha + \beta),$
 or $ A\times_{l, r, \alpha, \beta}V.$
\begin{ex}
If $(A,[\cdot,\cdot],\alpha)$ is a Hom-Leibniz algebra, then $(L,0,\alpha,A)$ and $(L,R,\alpha,A)$ are bimodules of $(A,[\cdot,\cdot],\alpha)$, where $L(a)b=[a,b]$ and $R(a)b=[b,a]$ for all $a,b\in A$.
\end{ex}

\begin{prop}
If $( A, [\cdot,\cdot], \alpha)$ is a multiplicative Hom-Leibniz algebra, then the quadruple $(\alpha\otimes L ,\alpha\otimes R, \alpha\otimes \alpha,  A\otimes A)$, where for any $x, a, b\in  A,$
\begin{align*}
(\alpha\otimes L)(x)(a\otimes b)  &= (\alpha\otimes L(x))(a\otimes b) = \alpha(a)\otimes L(x)b=\alpha(a)\otimes [x,b], \cr
(\alpha \otimes R )(x)(a\otimes b)  &= (\alpha\otimes R(x) )(a\otimes b) = \alpha(a)\otimes R(x)b=\alpha(a)\otimes [b,x],
\end{align*}
is a bimodule of $ A.$
\end{prop}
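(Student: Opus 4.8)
The plan is to verify directly the five defining conditions \eqref{Cond:HomBimod:lpb}--\eqref{Hombimodule:br:eq2} of a bimodule for the quadruple $(\alpha\otimes L, \alpha\otimes R, \alpha\otimes\alpha, A\otimes A)$, evaluating each on a generic simple tensor $a\otimes b$. The key structural observation is that $\alpha\otimes L$, $\alpha\otimes R$, and $\beta=\alpha\otimes\alpha$ all act on the first tensor factor only through $\alpha$; hence on the two sides of every condition the first factor is transformed in exactly the same way (into $\alpha^{2}(a)$), and each condition collapses to an identity in the second tensor factor alone.

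First I would dispatch the two compatibility conditions \eqref{Hombimodule:bl:eq1} and \eqref{Hombimodule:br:eq2}. Applying $\beta$ to $(\alpha\otimes L)(x)(a\otimes b)=\alpha(a)\otimes[x,b]$ gives $\alpha^{2}(a)\otimes\alpha([x,b])$, while $(\alpha\otimes L)(\alpha(x))\beta(a\otimes b)=\alpha^{2}(a)\otimes[\alpha(x),\alpha(b)]$; these coincide precisely because multiplicativity \eqref{multipl:homleibnizhomlie} yields $\alpha([x,b])=[\alpha(x),\alpha(b)]$. The identical computation with $R$ in place of $L$ settles \eqref{Hombimodule:br:eq2}. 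Thus the two $\beta$-compatibility conditions rest solely on multiplicativity.

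Next I would treat the three module axioms \eqref{Cond:HomBimod:lpb}--\eqref{Cond:HomBimod:lar}. Expanding each operator on $a\otimes b$ and cancelling the common first component $\alpha^{2}(a)$, condition \eqref{Cond:HomBimod:lpb} reduces to $[\alpha(x),[y,b]]=[[x,y],\alpha(b)]+[\alpha(y),[x,b]]$, which is the Hom-Leibniz identity \eqref{Leibnizidentity} for the triple $(x,y,b)$. In the same way, \eqref{Cond:HomBimod:rpb} reduces to $[\alpha(x),[b,y]]=[[x,b],\alpha(y)]+[\alpha(b),[x,y]]$, an instance of \eqref{Leibnizidentity} with arguments $(x,b,y)$, and \eqref{Cond:HomBimod:lar} reduces to $[\alpha(b),[x,y]]=[[b,x],\alpha(y)]+[\alpha(x),[b,y]]$, an instance of \eqref{Leibnizidentity} with arguments $(b,x,y)$. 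Each of the three axioms therefore follows from a single application of \eqref{Leibnizidentity} in the second slot.

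The computation is entirely routine and presents no genuine obstacle. The only point requiring a little care is the bookkeeping of the order of entries inside the brackets when expanding $L$ and $R$ (one has $L(x)b=[x,b]$ but $R(x)b=[b,x]$), so that each reduced equation is matched against the correct instance of the Hom-Leibniz identity; getting this ordering right is what guarantees that \eqref{Cond:HomBimod:rpb} and \eqref{Cond:HomBimod:lar} land on genuine instances of \eqref{Leibnizidentity} rather than on a spurious relation.
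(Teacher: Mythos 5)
Your proof is correct and follows essentially the same route as the paper: verify all five bimodule conditions on simple tensors, reduce each of \eqref{Cond:HomBimod:lpb}--\eqref{Cond:HomBimod:lar} to an instance of the Hom-Leibniz identity \eqref{Leibnizidentity} in the second tensor factor (with the argument orderings $(x,y,b)$, $(x,b,y)$, $(b,x,y)$ you identify), and use multiplicativity \eqref{multipl:homleibnizhomlie} only for the compatibility conditions \eqref{Hombimodule:bl:eq1}--\eqref{Hombimodule:br:eq2}. If anything, your bookkeeping of the first factor as $\alpha^{2}(a)$ on both sides is more careful than the paper's, which writes the first slot as $v_1$ after two applications of $\alpha$.
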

\begin{proof}
By formulas for the maps and Hom-Leibniz identity \eqref{Leibnizidentity},
for all $x, y, v_{1}, v_{2}\in  A$:
\begin{align*}
(\alpha\otimes L)(\alpha(x))(\alpha\otimes L)(y)(v_1\otimes v_2)
&=(\alpha\otimes L(\alpha(x))(\alpha(v_1))\otimes L(y)v_2)\\
&=v_1\otimes L(\alpha(x))L(y)v_2\\
&=v_1\otimes[\alpha(x),[y,v_2]],\\
(\alpha\otimes L)([x,y])(\alpha\otimes\alpha)(v_1\otimes v_2)
&=(\alpha\otimes L)([x,y](\alpha(v_1)\otimes\alpha(v_2))\\
&=v_1\otimes L([x,y])\alpha(v_2)\\
&=v_1\otimes[[x,y],\alpha(v_2)],\\
(\alpha\otimes L)(\alpha(y))(\alpha\otimes L)(x)(v_1\otimes v_2)
&=(\alpha\otimes L)(\alpha(y))(\alpha\otimes L(x))(v_1\otimes v_2)\\
&=(\alpha\otimes L)(\alpha(y))(\alpha(v_1)\otimes L(x)v_2)\\
&=v_1\otimes L(\alpha(y))L(x)v_2\\
&=v_1\otimes[\alpha(y),[x,v_2]], \\
(\alpha\otimes L)(\alpha(x))(\alpha\otimes L)(y)(v_1\otimes v_2)
&= v_1\otimes[\alpha(x),[y,v_2]] \\
&\hspace{-1,5mm} \stackrel{\eqref{Leibnizidentity}}{=} v_1\otimes([[x,y],\alpha(v_2)]+[\alpha(y),[x,v_2]]) \\
&= v_1\otimes[[x,y],\alpha(v_2)]+ v_1\otimes [\alpha(y),[x,v_2]] \\
=(\alpha\otimes L)([x,y])(\alpha\otimes\alpha)(v_1\otimes v_2)
&+(\alpha\otimes L)(\alpha(y))(\alpha\otimes L)(x)(v_1\otimes v_2);
\\*[2mm]
(\alpha\otimes L)(\alpha(x))(\alpha\otimes R)(y)(v_1\otimes v_2)
&=(\alpha\otimes L(\alpha(x)))(\alpha\otimes R(y))(v_1\otimes v_2)\\
&=(\alpha\otimes L(\alpha(x)))(\alpha(v_1)\otimes R(y)v_2)\\
&=v_1\otimes[\alpha(x),[v_2,y]],\\
(\alpha\otimes R)(\alpha(y))(\alpha\otimes L)(x)(v_1\otimes v_2)
&=(\alpha\otimes R(\alpha(y)))(\alpha\otimes L(x))(v_1\otimes v_2)\\
&=(\alpha\otimes R(\alpha(y)))(\alpha(v_1)\otimes L(x)v_2)\\
&=v_1\otimes[[x,v_2],\alpha(y)],\\
(\alpha\otimes R)([x,y])(\alpha\otimes\alpha)(v_1\otimes v_2)
&=(\alpha\otimes R([x,y]))(\alpha(v_1)\otimes\alpha(v_2))\\
&=v_1\otimes[\alpha(v_2),[x,y]],  \\
(\alpha\otimes L)(\alpha(x))(\alpha\otimes R)(y)(v_1\otimes v_2)
&=v_1\otimes [\alpha(x),[v_2,y]] \\
&\hspace{-1,5mm} \stackrel{\eqref{Leibnizidentity}}{=}
v_1\otimes ([[x,v_2],\alpha(y)]+[\alpha(v_2),[x,y]]) \\
&= v_1\otimes [[x,v_2],\alpha(y)]+ v_1\otimes [\alpha(v_2),[x,y]]\\
=(\alpha\otimes R)(\alpha(y))(\alpha\otimes L)(x)(v_1\otimes v_2)
&+ (\alpha\otimes R)([x,y])(\alpha\otimes\alpha)(v_1\otimes v_2);\\*[2mm]
(\alpha\otimes R)([x,y])(\alpha\otimes\alpha)(v_1\otimes v_2)&=
(\alpha\otimes R([x,y]))(\alpha(v_1)\otimes\alpha(v_2))\\
&=v_1\otimes[\alpha(v_2),[x,y]],\\
(\alpha\otimes R)(\alpha(y))(\alpha\otimes R)(x)(v_1\otimes v_2)
&=(\alpha\otimes R(\alpha(y)))(\alpha\otimes R(x))(v_1\otimes v_2)\\
&=\alpha\otimes R(\alpha(y))(\alpha\otimes R(x))(v_1\otimes v_2)\\
&=(\alpha\otimes R(\alpha(y))(\alpha(v_1)\otimes R(x)v_2))\\
&=v_1\otimes[[v_2,x],\alpha(y)],\\
(\alpha\otimes L)(\alpha(x))(\alpha\otimes R)(y)(v_1\otimes v_2)
&=(\alpha\otimes L(\alpha(x))(\alpha\otimes R(y))(v_1\otimes v_2)\\
&=(\alpha\otimes L(\alpha(x))(\alpha(v_1)\otimes R(y)v_2)\\
&=v_1\otimes[\alpha(x),[[v_2,y]],
\\
(\alpha\otimes R)([x,y])(\alpha\otimes\alpha)(v_1\otimes v_2) &=v_1\otimes[\alpha(v_2),[x,y]] \\
& \hspace{-1,5mm} \stackrel{\eqref{Leibnizidentity}}{=} v_1\otimes([[v_2,x],\alpha(y)]+[\alpha(x),[v_2,y]]) \\
&= v_1\otimes [[v_2,x],\alpha(y)] + v_1\otimes [\alpha(x),[v_2,y]] \\
= (\alpha\otimes R)(\alpha(y))(\alpha\otimes R)(x)(v_1\otimes v_2)
&+(\alpha\otimes L)(\alpha(x))(\alpha\otimes R)(y)(v_1\otimes v_2).
\end{align*}
By formulas for the maps,
\begin{eqnarray*}
(\alpha\otimes\alpha)(\alpha\otimes L)(x)(v_1\otimes v_2)&=&(\alpha\otimes\alpha)(\alpha\otimes L(x))(v_1\otimes v_2))\\
&=&(\alpha\otimes\alpha)(\alpha(v_1)\otimes L(x)v_2)\\
&=&v_1\otimes L(\alpha(x))\alpha(v_2),\\
(\alpha\otimes L)(\alpha(x))((\alpha\otimes\alpha)(v_1\otimes v_2))
&=&(\alpha\otimes L)(\alpha(x))(\alpha(v_1)\otimes\alpha(v_2))\\
&=&v_1\otimes L(\alpha(x))\alpha(v_2).
\end{eqnarray*}
Therefore,
$
(\alpha\otimes\alpha)(\alpha\otimes L)(x)(v_1\otimes v_2)= (\alpha\otimes L)(\alpha(x))((\alpha\otimes\alpha)(v_1\otimes v_2)).
$
Similarly,
$
(\alpha\otimes\alpha)(\alpha\otimes R)(x)(v_1\otimes v_2)= (\alpha\otimes R)(\alpha(x))((\alpha\otimes\alpha)(v_1\otimes v_2)).
$
\qedhere
\end{proof}
\begin{rmk}
The quadruple $(L\otimes\alpha,  R\otimes\alpha, \alpha\otimes \alpha,  A\otimes A)$ is also a bimodule of $ A.$
\end{rmk}
\begin{thm}[\cite{Attan2021:Repsrelrotabaxthomleibnizpoissonalgs}]
\label{thm:matchedpairs}
Let $( A, [\cdot,\cdot], \alpha)$ and $(B, \{\cdot,\cdot\}, \beta)$ be two
Hom-Leibniz algebras. Suppose there are linear maps $l_{  A}, r_{
 A}:  A\rightarrow gl(  B)$ and $l_{B}, r_{
B} :  B\rightarrow gl(  A)$ such that the quadruple
 $(l_{  A}, r_{
 A},  \beta, B)$ is
a bimodule of $ A,$ and $(l_{ B}, r_{B}, \alpha,
 A)$ is a bimodule of $B,$
satisfying, for any $ x, y \in   A, a,b \in  B$, the following {\it  conditions}:
\begin{eqnarray} \label{match. pair1}
l_{A}(\alpha(x))(\{a,b\})=l_A(r_B(a)x)\beta(b)&+&r_A(r_B(b)x)\beta(a)\nonumber\\&+&\{l_A(x)a,\beta(b)\}+\{\beta(a),l_A(x)b\},\\
\label{match. pair2}
\{\beta(a),l_A(x)b\}+r_A(r_B(b)x)\beta(a)&=&\{r_A(x)a,\beta(b)\}\nonumber\\&+&l_A(l_B(a)x)\beta(b)+l_A(\alpha(x))(\{a,b\}),
\\ \label{match. pair3}
\{\beta(a),r_A(x)b\}+r_A(l_B(b)x)\beta(a)&=&r_A(\alpha(x))(\{a,b\})\nonumber\\&+&\{\beta(b),r_A(x)a\}+r_A(l_B(a)x)\beta(b),
\\\label{match. pair4}
l_{B}(\beta(a))([x,y])=l_B(r_A(x)a)\alpha(y)&+&r_B(r_A(y)a)\alpha(x)\nonumber\\&+&[l_B(a)x,\alpha(y)]+[\alpha(x),l_B(a)y],
\\ \label{match. pair5}
[\alpha(x),l_B(a)y]+r_B(r_A(y)a)\alpha(x)&=&[r_B(a)x,\alpha(y)]\nonumber\\&+&l_B(l_A(x)a)\alpha(y)+l_B(\beta(a))([x,y]),
\\\label{match. pair6}
[\alpha(x),r_B(a)y]+r_B(l_A(y)a)\alpha(x)&=&r_B(\beta(a))([x,y])\nonumber\\&+&[\alpha(y),r_B(a)x]+r_B(l_A(x)a)\alpha(y).
\end{eqnarray}
Then, $(A,B,l_A,r_A,\beta,l_B,r_B,\alpha)$ is called a matched pair of
Hom-Leibniz algebras. In this case, there is a Hom-Leibniz algebra structure on the direct sum $ A\oplus B$ of
the underlying linear spaces of $ A$ and $B$ given for all $ x, y \in   A, a,b \in  B$ by
\begin{equation} \label{matchedpairproduct}
\begin{array}{rcl}
[x + a , y + b]' &=& ([x, y] + l_{ B}(a)y + r_{ B}(b)x) + \{a, b\} +  l_{  A}(x)b +  r_{  A}(y)a),\\
(\alpha\oplus\beta)(x + a)&=&\alpha(x) + \beta(a).
\end{array}
\end{equation}
\end{thm}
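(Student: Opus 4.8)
The plan is to verify that $([\cdot,\cdot]',\alpha\oplus\beta)$ satisfies the Hom-Leibniz identity \eqref{Leibnizidentity} on $A\oplus B$; since Definition \ref{def:homleibnizalg} imposes no further requirement (multiplicativity is not asserted here), this is all that must be checked. The identity in question,
\[
[(\alpha\oplus\beta)(X),[Y,Z]']'=[[X,Y]',(\alpha\oplus\beta)(Z)]'+[(\alpha\oplus\beta)(Y),[X,Z]']',
\]
is $\mathbb{K}$-trilinear in $X,Y,Z\in A\oplus B$, so by bilinearity of $[\cdot,\cdot]'$ and linearity of $\alpha\oplus\beta$ it suffices to test it on homogeneous triples, each argument lying wholly in $A$ or wholly in $B$. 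This gives eight cases. When all three arguments lie in $A$, the bracket $[\cdot,\cdot]'$ restricts to $[\cdot,\cdot]$ and the identity is exactly \eqref{Leibnizidentity} for $A$; when all three lie in $B$ it is \eqref{Leibnizidentity} for $B$. So only the six mixed triples require work.

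For a mixed triple I would substitute the components into \eqref{matchedpairproduct}, expand the three brackets $[(\alpha\oplus\beta)(X),[Y,Z]']'$, $[[X,Y]',(\alpha\oplus\beta)(Z)]'$ and $[(\alpha\oplus\beta)(Y),[X,Z]']'$, and then project the resulting identity onto its $A$-valued and $B$-valued parts using the direct-sum decomposition. Each mixed triple thereby splits \eqref{Leibnizidentity} into one equation in $A$ and one in $B$. The expected correspondence is as follows. The three triples with two arguments from $A$ and one from $B$ yield, in their $A$-component, precisely conditions \eqref{match. pair4}, \eqref{match. pair5}, \eqref{match. pair6}, while their $B$-component collapses to the bimodule axioms \eqref{Cond:HomBimod:lpb}--\eqref{Cond:HomBimod:lar} of the $A$-bimodule $(l_A,r_A,\beta,B)$. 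Dually, the three triples with two arguments from $B$ yield, in their $B$-component, conditions \eqref{match. pair1}, \eqref{match. pair2}, \eqref{match. pair3}, with $A$-component reducing to the bimodule axioms of the $B$-bimodule $(l_B,r_B,\alpha,A)$. As a representative case, for $X=x\in A$ and $Y=a$, $Z=b\in B$, the $B$-component of the expansion reorganizes exactly into \eqref{match. pair1}, and the $A$-component into the instance of \eqref{Cond:HomBimod:lar} for the $B$-bimodule $(l_B,r_B,\alpha,A)$.

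This layout halves the labour: under the involution exchanging $A\leftrightarrow B$, $\alpha\leftrightarrow\beta$, $[\cdot,\cdot]\leftrightarrow\{\cdot,\cdot\}$, $l_A\leftrightarrow l_B$, $r_A\leftrightarrow r_B$, the product \eqref{matchedpairproduct} is preserved and the two families \eqref{match. pair1}--\eqref{match. pair3} and \eqref{match. pair4}--\eqref{match. pair6} are interchanged, so it is enough to treat three of the six mixed triples and invoke symmetry for the remaining three. The main obstacle is not conceptual but organizational: each of the three expanded brackets contributes on the order of a dozen summands, and the work lies in carefully collecting the $A$-valued and $B$-valued terms and recognizing each collected block as exactly one labelled identity, while respecting the order of entries in the non-symmetric brackets $[\cdot,\cdot]$, $\{\cdot,\cdot\}$ and in the actions $l,r$. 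The auxiliary relation \eqref{Cond:HomBimod:rpblar} can be used to shorten some of the cancellations. Once all six mixed cases are matched to the hypotheses, the Hom-Leibniz identity holds throughout $A\oplus B$ and the proof is complete.
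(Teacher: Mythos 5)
The paper itself contains no proof of this theorem: it is imported with a citation to \cite{Attan2021:Repsrelrotabaxthomleibnizpoissonalgs}, and the later analogues in the paper are dispatched with ``obtained in a similar way as for Theorem \ref{thm:matchedpairs}''. So your proposal can only be compared against the evident intended argument, namely direct verification of the Hom-Leibniz identity, and your version of it is correct. I checked all six mixed cases, and the bookkeeping you claim is exactly right: with $x,y\in A$, $a,b\in B$, the triples $(x,a,b)$, $(a,x,b)$, $(a,b,x)$ produce \eqref{match. pair1}, \eqref{match. pair2}, \eqref{match. pair3} in the $B$-component, while their $A$-components are precisely the axioms \eqref{Cond:HomBimod:lar}, \eqref{Cond:HomBimod:rpb}, \eqref{Cond:HomBimod:lpb} of the bimodule $(l_B,r_B,\alpha,A)$; dually, $(a,x,y)$, $(x,a,y)$, $(x,y,a)$ produce \eqref{match. pair4}, \eqref{match. pair5}, \eqref{match. pair6} in the $A$-component, with $B$-components equal to \eqref{Cond:HomBimod:lar}, \eqref{Cond:HomBimod:rpb}, \eqref{Cond:HomBimod:lpb} of $(l_A,r_A,\beta,B)$. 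Your symmetry reduction is also legitimate: the exchange $A\leftrightarrow B$, $\alpha\leftrightarrow\beta$, $l_A\leftrightarrow l_B$, $r_A\leftrightarrow r_B$, $[\cdot,\cdot]\leftrightarrow\{\cdot,\cdot\}$ preserves \eqref{matchedpairproduct}, interchanges the two bimodule hypotheses, and carries \eqref{match. pair1}--\eqref{match. pair3} onto \eqref{match. pair4}--\eqref{match. pair6} in that order, so three mixed cases plus this involution do finish the argument. Two details you flagged are confirmed: the compatibility axioms \eqref{Hombimodule:bl:eq1}--\eqref{Hombimodule:br:eq2} are never needed for the Hom-Leibniz identity itself, consistent with your observation that the theorem asserts no multiplicativity of $\alpha\oplus\beta$; and \eqref{Cond:HomBimod:rpblar} is an optional shortcut, not a necessity.
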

We denote this Hom-Leibniz algebra by
$( A\bowtie B, [\cdot,\cdot]', \alpha + \beta)$ or $   A \bowtie^{l_{  A}, r_{  A}, \beta}_{l_{ B}, r_{ B}, \alpha} B.$
\begin{rmk}
Condition \ref{match. pair6}, can be reformulated equivalently as
\begin{equation}
[l_B(a)x,\alpha(y)]+l_B(r_A(x)a)\alpha(y)+[r_B(a)x,\alpha(y)]+l_B(l_A(x)a)\alpha(y)=0.\end{equation}
\end{rmk}
\section{Manin triples and bialgebras of Hom-Leibniz algebras}
\label{sec:manintripplesbialgshomleibniz}
In this section, we consider the multiplicative Hom-Leibniz algebra $( A, [\cdot,\cdot], \alpha)$ such that $\alpha$ is involutive, i.e, $\alpha^{2}= id_{ A}$.
\begin{defn}
Let $V_{1} $, $ V_{2} $ be two linear spaces. For a linear map $ \phi : V_{1} \rightarrow V_{2} $, we denote by $ \phi^{\ast} : V^{\ast}_{2} \rightarrow V^{\ast}_{1} $ the dual (linear) map given by
\begin{eqnarray*}
\langle v, \phi^{\ast}(u^{\ast})\rangle = -\langle \phi(v), u^{\ast} \rangle \mbox{ for all } v \in V_{1} ,  u^{\ast} \in V^{\ast}_{2}.
\end{eqnarray*}
\end{defn}
\begin{lem} \label{lem:bimodmultinvolhomleib}
Let $(l, r, \beta, V)$ be a bimodule of a multiplicative involutive Hom-Leibniz algebra $( A, [\cdot,\cdot], \alpha)$. Then, for linear maps
$ l^{\ast}, r^{\ast} $ : $   A \rightarrow gl(V^{\ast}) $ such that
\begin{align*}
\forall \ x \in   A,  u^{\ast} \in V^{\ast}, v \in V:  \langle l^{\ast}(x)u^{\ast}, v \rangle = -\langle l(x)v, u^{\ast} \rangle,
\langle r^{\ast}(x)u^{\ast}, v \rangle = -\langle r(x)v, u^{\ast} \rangle,
\end{align*}
\begin{enumerate}[label=\upshape{(\roman*)},leftmargin=30pt]
\item \label{ilem:bimodmultinvolhomleib} $(l^{\ast}, r^{\ast}, \beta^{\ast}, V^{\ast})$ is a bimodule of $( A, [\cdot,\cdot], \alpha)$;
\item \label{iilem:bimodmultinvolhomleib} $(l^{\ast}, -l^{\ast}-r^{\ast}, \beta^{\ast}, V^{\ast})$ is a bimodule of $( A, [\cdot,\cdot], \alpha)$;
\item \label{iiilem:bimodmultinvolhomleib} $(l^{\ast}, 0, \beta^{\ast}, V^{\ast})$ and $(0, r^{\ast}, \beta^{\ast}, V^{\ast})$ are bimodules of $( A,[\cdot,\cdot], \alpha)$.
\end{enumerate}
\end{lem}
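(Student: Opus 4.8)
The plan is to treat the three items uniformly. To prove that a quadruple $(l^{\ast},r^{\ast},\beta^{\ast},V^{\ast})$ (or one of its variants) is a bimodule of $(A,[\cdot,\cdot],\alpha)$, I would verify the five defining identities \eqref{Cond:HomBimod:lpb}--\eqref{Hombimodule:br:eq2} for it. Since each is an equality in $V^{\ast}$, I would test it by evaluating against an arbitrary $v\in V$ and transporting every starred operator back onto $V$ through the defining relations $\langle l^{\ast}(x)u^{\ast},v\rangle=-\langle l(x)v,u^{\ast}\rangle$ and $\langle r^{\ast}(x)u^{\ast},v\rangle=-\langle r(x)v,u^{\ast}\rangle$. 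The first book-keeping step is a transposition rule for composites: passing to the dual reverses the order of two factors and contributes two sign changes, so that, for instance, $\langle l^{\ast}(a)l^{\ast}(b)u^{\ast},v\rangle=\langle u^{\ast},l(b)l(a)v\rangle$, with the analogous formulas for the mixed composites $l^{\ast}r^{\ast}$, $r^{\ast}l^{\ast}$ and $r^{\ast}r^{\ast}$. Once these are in hand, each starred identity collapses, after stripping off the arbitrary $u^{\ast}$, to an identity among ordinary operators on $V$.

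I would dispose of the compatibility identities \eqref{Hombimodule:bl:eq1}--\eqref{Hombimodule:br:eq2} first, as they are cleanest: dualizing them produces the expressions $\beta(l(\alpha x)v)$ and $\beta(r(\alpha x)v)$, which by the original \eqref{Hombimodule:bl:eq1}--\eqref{Hombimodule:br:eq2} equal $l(\alpha^{2}x)\beta(v)$ and $r(\alpha^{2}x)\beta(v)$; this is exactly where the standing hypothesis $\alpha^{2}=\mathrm{id}_{A}$ is indispensable, since it restores the argument $x$ and matches the two sides. For the homogeneous conditions, namely the pure-left identity \eqref{Cond:HomBimod:lpb} and the pure-right identity \eqref{Cond:HomBimod:lar}, the transposition rule turns them into instances of the originals provided I simultaneously invoke multiplicativity \eqref{multipl:homleibnizhomlie} in the form $\alpha([x,y])=[\alpha x,\alpha y]$ and re-index the entries through $\alpha$ (again using involutivity), so that the pair $(x,y)$ is effectively replaced by $(\alpha x,\alpha y)$; conditions \eqref{Hombimodule:bl:eq1}--\eqref{Hombimodule:br:eq2} are used here once more to absorb the twist $\beta$ carried by the term $l([x,y])\beta(v)$.

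The genuine obstacle is the pair of mixed conditions \eqref{Cond:HomBimod:rpb} and \eqref{Cond:HomBimod:lar}, which couple the left and right actions. After the transposition rule, multiplicativity and the $\alpha$-reindexing, dualizing these leaves, beyond the terms matched by the originals, a residual cross-term that is not by itself one of the defining identities. The only available instrument for removing it is the consequence \eqref{Cond:HomBimod:rpblar} of the axioms, $r(\alpha y)l(x)v=-r(\alpha y)r(x)v$, which says that the right action annihilates the image of $l+r$. Arranging the residual terms into combinations of the form $r(\alpha y)(l+r)(x)$ so that \eqref{Cond:HomBimod:rpblar} applies — and thereby pinning down exactly which right action on $V^{\ast}$ is admissible — is the heart of the matter, and it is precisely what forces the three different shapes of the right action in \ref{ilem:bimodmultinvolhomleib}, \ref{iilem:bimodmultinvolhomleib} and \ref{iiilem:bimodmultinvolhomleib}. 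For the truncations $(l^{\ast},0,\beta^{\ast})$ and $(0,r^{\ast},\beta^{\ast})$ of \ref{iiilem:bimodmultinvolhomleib} the coupling largely disappears because one of the two actions is zero, so that the mixed conditions degenerate to at most a single homogeneous condition already treated above; for the combined right action $-l^{\ast}-r^{\ast}$ of \ref{iilem:bimodmultinvolhomleib} I would group the dualized cross-terms and cancel them in pairs via \eqref{Cond:HomBimod:rpblar}. I expect this sign- and grouping-bookkeeping in the mixed conditions to be the only delicate step; the remaining identities are direct transcriptions of the original bimodule axioms through the dual pairing, modulo involutivity and multiplicativity.
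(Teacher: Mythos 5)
Your mechanism is the same as the paper's: pair against $v\in V$, use the order-reversing transposition rule with cancelling signs, re-index the original axioms at $(\alpha(x),\alpha(y))$ using $\alpha^{2}=\mathrm{id}$, and absorb $\beta$ via multiplicativity and \eqref{Hombimodule:bl:eq1}--\eqref{Hombimodule:br:eq2}. Carried out carefully, this does prove \eqref{Cond:HomBimod:lpb} and \eqref{Cond:HomBimod:rpb} for the actions in \ref{ilem:bimodmultinvolhomleib} and \ref{iilem:bimodmultinvolhomleib} (with one caveat: already for \eqref{Cond:HomBimod:lpb} you must read $\beta^{\ast}$ as the plain transpose, not as the sign-twisted dual of Definition 3.1, otherwise even that condition fails; the paper tacitly does the same). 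Moreover, for item \ref{iilem:bimodmultinvolhomleib} your key point is exactly right: the dualized \eqref{Cond:HomBimod:lar} reduces, after substituting \eqref{Cond:HomBimod:lpb}--\eqref{Cond:HomBimod:rpb} at $(\alpha(x),\alpha(y))$, to the vanishing of $r(x)\circ(l+r)(\alpha(y))$, and since $\alpha$ is bijective this is precisely \eqref{Cond:HomBimod:rpblar}. So your plan closes for \ref{iilem:bimodmultinvolhomleib} and for the $(l^{\ast},0,\beta^{\ast},V^{\ast})$ half of \ref{iiilem:bimodmultinvolhomleib}, and there it makes the role of \eqref{Cond:HomBimod:rpblar} clearer than the paper's computation does.

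The gap is your blanket claim that the residual cross-term can always be arranged in the form $r(\alpha(y))\circ(l+r)(x)$. For item \ref{ilem:bimodmultinvolhomleib} it cannot: with right action $r^{\ast}$, the dualized \eqref{Cond:HomBimod:lar} reduces to $\bigl(l(x)+r(x)\bigr)\circ r(\alpha(y))=0$, the composition in the \emph{opposite} order, about which \eqref{Cond:HomBimod:rpblar} says nothing; and no regrouping can repair this, because item \ref{ilem:bimodmultinvolhomleib} is false as stated. Take $A=\mathbb{K}e\oplus\mathbb{K}f$ with $[e,f]=f$ and all other products zero (a left Leibniz algebra), $\alpha=\mathrm{id}$, and the regular bimodule $(L,R,\mathrm{id},A)$: in \eqref{Cond:HomBimod:lar} for $(L^{\ast},R^{\ast},\mathrm{id}^{\ast})$ with $x=e$, $y=f$, both right-hand terms vanish (since $R(e)=0$ and $R(f)L(e)=0$), while the left-hand side is $\pm R^{\ast}(f)\neq 0$ under either sign convention for $\mathrm{id}^{\ast}$. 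The same example kills the $(0,r^{\ast},\beta^{\ast},V^{\ast})$ half of \ref{iiilem:bimodmultinvolhomleib}, whose mixed conditions do not, as you claim, "degenerate": setting the left action to zero in \eqref{Cond:HomBimod:rpb} forces $r^{\ast}([x,y])\beta^{\ast}=0$, i.e.\ $R([e,f])=R(f)=0$, which is false. You are in good company — the paper's own chain for \ref{ilem:bimodmultinvolhomleib} correctly derives $r^{\ast}([x,y])\beta^{\ast}(u^{\ast})=l^{\ast}(\alpha(x))r^{\ast}(y)u^{\ast}-r^{\ast}(\alpha(y))l^{\ast}(x)u^{\ast}$ and then silently identifies the last term with the required $+\,r^{\ast}(\alpha(y))r^{\ast}(x)u^{\ast}$, the discrepancy being exactly the reversed-order composition above, and \ref{iiilem:bimodmultinvolhomleib} is waved through as "analogous" — but an honest execution of your plan stalls at this step, and the correct conclusion there is that \ref{ilem:bimodmultinvolhomleib} and the $(0,r^{\ast})$ part of \ref{iiilem:bimodmultinvolhomleib} do not hold without additional hypotheses.
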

\begin{proof}
{\rm \ref{ilem:bimodmultinvolhomleib}} For any $x,y\in A,~u^{\ast}\in A^{\ast}$ and $v\in V$,
\begin{eqnarray*}
\langle l^{\ast}(\alpha(x))l^{\ast}(y)u^{\ast},v\rangle  &=& \langle  l(y)l(\alpha(x))v,u^{\ast}\rangle\cr
&=&\langle l(\alpha^{2}(y))l(\alpha(x))v,u^{\ast}\rangle \cr
\mbox{(by~\eqref{Cond:HomBimod:lpb})}
&=&\langle l(\alpha^{2}(x))l(\alpha(y))v-l([\alpha(x),\alpha(y)])\beta(v),u^{\ast} \rangle \cr
&=&\langle l(x)l(\alpha(y))v-\beta(l([x,y])v),u^{\ast} \rangle\cr
&=&\langle l^{\ast}(\alpha(y))l^{\ast}(x)u^{\ast}+l^{\ast}([x,y])\beta^{\ast}(u^{\ast}),v \rangle, \\*[1mm]
\langle l^{\ast}(\alpha(x))r^{\ast}(y)u^{\ast},v\rangle
&=& \langle  r(y)l(\alpha(x))v,u^{\ast}\rangle \cr
&=&-\langle r(x)l(\alpha(y))v-r([\alpha(x),\alpha(y)])\beta(v),u^{\ast} \rangle \cr
&=& \langle  r^{\ast}(\alpha(y))l^{\ast}(x)u^{\ast}+r^{\ast}([x,y])\beta^{\ast}(u^{\ast}),v\rangle,\\*[1mm]
\langle r^{\ast}([x,y])\beta^{\ast}(u^{\ast}),v\rangle
&=& -\langle r([\alpha(x),\alpha(y)])\beta(v)u^{\ast}\rangle \cr
&=&\langle r(y)l(\alpha(x))v-l(x)r(\alpha(y))v,u^{\ast} \rangle \cr
&=& \langle l^{\ast}(\alpha(x))r^{\ast}(y)u^{\ast}-r^{\ast}(\alpha(y))l^{\ast}(x)u^{\ast},v \rangle.
\end{eqnarray*}
{\rm \ref{iilem:bimodmultinvolhomleib}} For any $x,y\in A,~u^{\ast}\in A^{\ast}$ and $v\in V$,
\begin{eqnarray*}
 \langle l^{\ast}(\alpha(x))l^{\ast}(y)u^{\ast},v\rangle
 &=& \langle  l(y)l(\alpha(x))v,u^{\ast}\rangle \cr
 &=&\langle l(\alpha^{2}(y))l(\alpha(x))v,u^{\ast}\rangle \cr
 \mbox{(by \eqref{Cond:HomBimod:lpb})} &=&\langle l(\alpha^{2}(x))l(\alpha(y))v-l([\alpha(x),\alpha(y)])\beta(v),u^{\ast} \rangle \cr
 &=&\langle l(x)l(\alpha(y))v-\beta(l([x,y])v),u^{\ast} \rangle\cr
 &=&\langle l^{\ast}(\alpha(y))l^{\ast}(x)u^{\ast}+l^{\ast}([x,y])\beta^{\ast}(u^{\ast}),v \rangle,\\*[1mm]
 \langle l^{\ast}(\alpha(x))(-l^{\ast}-r^{\ast})(y)u^{\ast},v\rangle
 &=& -\langle  (l+r)(y)l(\alpha(x))v,u^{\ast}\rangle \cr
 &=&-\langle (l+r)(\alpha^{2}(y)l(\alpha(x))v,u^{\ast} \rangle \cr
\mbox{(by \eqref{Cond:HomBimod:lpb})}
&=&\langle l(\alpha^{2}(x))l(\alpha(y))v-l([\alpha(x),\alpha(y)])\beta(v),u^{\ast} \rangle \cr
&&-\langle l(\alpha^{2}(x))r(\alpha(y))v-r([\alpha(x),\alpha(y)])\beta(v),u^{\ast}\rangle\cr
&=&-\langle l(x)(l+r)(\alpha(y))v-(l+r)([\alpha(x),\alpha(y)])\beta(v),u^{\ast}  \rangle\cr
&=& -\langle
(l^{\ast}+r^{\ast})(\alpha(y))l^{\ast}(x)u^{\ast}+(l^{\ast}+r^{\ast})[x,y]\beta^{\ast}(u^{\ast}),v\rangle\cr
&=& \langle  (-l^{\ast}-r^{\ast})(\alpha(y))l^{\ast}(x)u^{\ast}+(-l^{\ast}-r^{\ast})[x,y]\beta^{\ast}(u^{\ast}),v\rangle\rangle.
\end{eqnarray*}
Similarly, one can show $$(-l^{\ast}-r^{\ast})([x,y])\beta^{\ast}(u^{\ast})=
(-l^{\ast}-r^{\ast})(\alpha(y))(-l^{\ast}-r^{\ast})(x)u^{\ast}+l(\alpha(x))(-l^{\ast}-r^{\ast})(y)u^{\ast}.$$
{\rm \ref{iiilem:bimodmultinvolhomleib}} It is proved analogously.
\end{proof}
\begin{defn}
Let $( A, [\cdot,\cdot], \alpha)$ be a Hom-Leibniz algebra, and $B:  A\times A\rightarrow \mathbb{K}$ be a bilinear form on $ A.$ Then,
\begin{enumerate}
\item $B$ is said  to be nondegenerate if
\begin{eqnarray}
 A^{\bot}=\left\lbrace x\in A \mid B(y, x)=0, \forall y\in A \right\rbrace =\{0_A\};
\end{eqnarray}
\item  $B$ is said to be skew-symmetric if for all $x,y \in A$,
\begin{eqnarray}
B(x, y)=-B(y, x);
\end{eqnarray}
\item  $B$ is said to be  $\alpha$-invariant if for all $x,y \in A$,
\begin{eqnarray}
B([\alpha(x),\alpha(z)]+[\alpha(z),\alpha(x)], \alpha(y))= B(\alpha(x), [\alpha(y),\alpha(z)]).
\end{eqnarray}
\end{enumerate}
\end{defn}
\begin{defn}
 A quadratic Hom-Leibniz algebra is a Hom-Leibniz algebra equipped with a non-degenerate skew-symmetric invariant bilinear form.
\end{defn}
Recall that a quadratic Hom-Lie algebra is a Hom-Lie algebra $(\mathfrak{g},[\cdot,\cdot],\alpha)$ equipped with a non-degenerate symmetric bilinear form $B$ which is invariant in the sense that
$$B([\alpha(x),\alpha(y)],\alpha(z))=B(\alpha(x),[\alpha(y),\alpha(z)]),~~~\forall x,y,z\in\mathfrak{g}.$$
\begin{defn}
Let $(A,[\cdot,\cdot],\alpha)$ be a Hom-Leibniz algebras. A subalgebra $H$ of $A$ is called isotropic if $H\subseteq H^{\perp}$.
\end{defn}
\begin{defn}
Manin triples for Hom-Leibniz algebras are defined as quadruples $(A, A_1, A_2,\alpha)$ where
\begin{enumerate}
    \item $(A, \cdot, \alpha,B)$
is a quadratic Hom-Leibniz algebra;
\item
both $(A_1, \cdot_1, \alpha_1)$ and $(A_2, \cdot_2, \alpha_2)$ are isotropic
Hom-Leibniz subalgebras of $(A,\alpha)$ with $\alpha=\alpha_1+\alpha_2$;
\item
$A=A_1\oplus A_2$ as linear spaces.
\end{enumerate}
\end{defn}
Two Manin triples $( A,A_1, A_2,\alpha)$ and $(  C,   C_1,   C_2,\beta)$ with the bilinear forms $B_1$ and $B_2$ respectively are isomorphic if there exists an isomorphism of Hom-Leibniz algebras $f : A \rightarrow   C$ such that for all $x, y \in A$,
\begin{equation}
f\circ \alpha=\beta\circ f,~~f(A_1) =   C_1,~~ f(A_2) =   C_2, ~~B_1(x, y) = B_2(f(x), f(y)).
\end{equation}
\begin{ex}
If $(A,[\cdot,\cdot],\alpha)$ is a Hom-Leibniz algebra, then $(A\times_{L^{\ast},-L^{\ast}-R^{\ast},\alpha,\alpha^{\ast}}A^{\ast},A,A^{\ast})$ is a Manin triple of Hom-Leibniz algebras, where the natural non-degenerate skew-symmetric bilinear form $B$ on $A\oplus A^{\ast}$ is given by
\begin{equation}\label{quadreq}
    B(x+a^{\ast},y+b^\ast)=\langle a^{\ast},y\rangle-\langle b^{\ast},x\rangle,~~\forall x,y\in A,~a^\ast,b^\ast\in A^\ast,
\end{equation}
which is called the standard Manin
triple.
\end{ex}
\begin{prop}
 Every Manin triple is isomorphic to the standard Manin triple.
\end{prop}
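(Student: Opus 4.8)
The plan is to use the non-degenerate form to identify $A_2$ with the dual $A_1^{\ast}$ and then transport the entire Manin-triple datum along this identification. Given a Manin triple $(A,A_1,A_2,\alpha)$ with non-degenerate skew-symmetric invariant form $B$, I would first define $\theta\colon A_2\to A_1^{\ast}$ by $\langle\theta(a),x\rangle=B(a,x)$ for $x\in A_1$ and $a\in A_2$. If $\theta(a)=0$, then $B(a,x)=0$ for all $x\in A_1$, and since $A_2$ is isotropic we also have $B(a,b)=0$ for all $b\in A_2$; hence $B(a,z)=0$ for every $z\in A=A_1\oplus A_2$, so $a=0$ by non-degeneracy and $\theta$ is injective. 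As $A_1$ and $A_2$ are isotropic with $A=A_1\oplus A_2$, a dimension count forces $\dim A_1=\dim A_2=\tfrac12\dim A$ (both are Lagrangian), so $\theta$ is a linear isomorphism.

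Set $f=\mathrm{id}_{A_1}\oplus\theta\colon A_1\oplus A_2\to A_1\oplus A_1^{\ast}$; the content is to check that $f$ is an isomorphism of Manin triples. That $f(A_1)=A_1$ and $f(A_2)=A_1^{\ast}$ is immediate. For the form, isotropy annihilates the $A_1$--$A_1$ and $A_2$--$A_2$ contributions, so for $x,y\in A_1$ and $a,b\in A_2$ one gets $B(x+a,y+b)=B(a,y)+B(x,b)=\langle\theta(a),y\rangle-\langle\theta(b),x\rangle$, which is exactly the standard form \eqref{quadreq} evaluated on $f(x+a)$ and $f(y+b)$. For the twists, writing $\alpha=\alpha_1+\alpha_2$ one must verify $\theta\circ\alpha_2=\alpha_1^{\ast}\circ\theta$, which unwinds to the skew-compatibility $B(\alpha(a),x)=-B(a,\alpha(x))$ of $\alpha$ with $B$; this is what makes $f$ intertwine $\alpha$ with $\alpha_1+\alpha_1^{\ast}$ in the multiplicative involutive setting, and it is checked by evaluating both sides against the definition of the dual map.

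The substantive step, which I expect to be the main obstacle, is that $f$ preserves the bracket. Here I would decompose the product of $A$ through $A=A_1\oplus A_2$ into the matched-pair data of Theorem~\ref{thm:matchedpairs}: the two subalgebra brackets furnish $[\cdot,\cdot]$ on $A_1$ and $\{\cdot,\cdot\}$ on $A_2$, while the mixed products $[x,a]$ and $[a,x]$ (for $x\in A_1$, $a\in A_2$) define the actions $l_{A_1},r_{A_1}\colon A_1\to gl(A_2)$ and $l_{A_2},r_{A_2}\colon A_2\to gl(A_1)$. Feeding the invariance of $B$ into these mixed products is the crux: invariance rewrites the $A_1$-action on $A_2$ as the coadjoint action on $A_1^{\ast}$, so that under $\theta$ the pair $(l_{A_1},r_{A_1})$ becomes exactly $(L^{\ast},-L^{\ast}-R^{\ast})$, the bimodule produced in Lemma~\ref{lem:bimodmultinvolhomleib}\ref{iilem:bimodmultinvolhomleib} from the regular bimodule $(L,R)$ of $A_1$, and the transported bracket on $A_1^{\ast}$ is the one dictated by the standard construction. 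Matching both cross-actions component by component then shows that $f$ carries $[\cdot,\cdot]$ to the product of the standard triple. Having reconciled subspaces, form, twist and bracket, I conclude that $f$ is an isomorphism of Manin triples, proving the claim; the only genuine computation is the invariance manipulation, the remainder being bookkeeping on the decomposition $A=A_1\oplus A_2$.
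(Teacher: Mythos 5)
The paper states this proposition without proof, so your argument has to stand on its own. Your overall strategy is the natural one: define $\theta\colon A_2\to A_1^{\ast}$ by $\langle\theta(a),x\rangle=B(a,x)$, get injectivity from isotropy plus non-degeneracy, and transport the form and the bracket along $f=\mathrm{id}_{A_1}\oplus\theta$. The injectivity, dimension and form-preservation steps are correct (modulo the implicit finite-dimensionality needed for the dimension count), and the bracket-transport step --- using invariance of $B$ to turn the mixed products into the coadjoint-type actions $(L^{\ast},-L^{\ast}-R^{\ast})$ --- is the right idea; it is essentially the computation the paper itself performs in the converse direction of the proof of Theorem \ref{Frobenius theorem}, so that part, though only sketched, can be completed.

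The genuine gap is the twist compatibility. You need $\theta\circ\alpha_2=\alpha_1^{\ast}\circ\theta$, and you correctly unwind it (with the paper's sign convention $\langle v,\phi^{\ast}(u^{\ast})\rangle=-\langle\phi(v),u^{\ast}\rangle$) to the identity $B(\alpha(a),x)=-B(a,\alpha(x))$ for $a\in A_2$, $x\in A_1$; but you then assert this holds ``in the multiplicative involutive setting,'' and it does not follow from the paper's definitions. A quadratic Hom-Leibniz algebra only requires $B$ to be non-degenerate, skew-symmetric and invariant, and invariance constrains $B$ only on bracket terms, so it yields no relation between $B(\alpha(\cdot),\cdot)$ and $B(\cdot,\alpha(\cdot))$. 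Concretely (in characteristic $\neq 2$), take $A=A_1\oplus A_2$ two-dimensional with zero bracket, $B\bigl((x,a),(y,b)\bigr)=xb-ya$ and $\alpha=\mathrm{id}$: every Manin-triple axiom holds, yet your identity reads $B(a,x)=-B(a,x)$, forcing $B(A_2,A_1)=0$ and contradicting non-degeneracy. Indeed, with the paper's convention one has $\mathrm{id}^{\ast}=-\mathrm{id}$, so the standard triple on $A_1\oplus A_1^{\ast}$ carries the twist $\mathrm{id}\oplus(-\mathrm{id})$, which no bijection can intertwine with $\mathrm{id}$; so either the proposition needs an extra compatibility hypothesis between $\alpha$ and $B$ (precisely the skew-adjointness you invoke, which does hold on the standard triple and is therefore necessary), or it fails as stated. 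Your proof silently assumes exactly this missing hypothesis, and that is the one step that cannot be repaired from the definitions given in the paper.
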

For a Hom-Leibniz algebra $(A^{\ast},\{\cdot,\cdot\},\alpha^{\ast})$, let $\Delta:A\rightarrow\otimes^{2}A$ be the dual map of $\{\cdot,\cdot\}:\otimes^{2}A^\ast\rightarrow A^{\ast}$, that is
$$\langle \Delta x,a^\ast\otimes b^\ast\rangle=\langle x,\{a^\ast,b^\ast\}\rangle.$$
\begin{defn}\label{defi:Leibnizbialgebra}
Let $(A,[\cdot,\cdot],\alpha)$ and $(A^\ast,\{\cdot,\cdot\},\alpha^\ast)$ be a Hom-Leibniz algebras. Then $(A,A^\ast)$ is called a Hom-Leibniz bialgebra if for all $x,y\in A$,
  \begin{align}
  \label{bialgebra1}
   &(R_{[,]}(x)\otimes id)\Delta\alpha(y)=\sigma((R_{[,]}(\alpha(y))\otimes id)\Delta x), \\ \label{bialgebra2}
  &(\alpha\otimes id)\Delta[x,y]=-(id\otimes R_{[,]}(\alpha(y)))\Delta x-(id\otimes L_{[,]}(\alpha(x)))\Delta y\nonumber\\
  &\quad +\Big(L_{[,]}(y)\otimes id+R_{[,]}(y)\otimes id)+\sigma(id\otimes L_{[,]}(y)+id\otimes R_{[,]}(y))\Big)\Delta\alpha(x)\nonumber\\
  &\quad -(L_{[,]}(x)\otimes id+R_{[,]}(x)\otimes id)\Delta\alpha(y),
  \end{align}
   where  $\sigma:A\otimes A\rightarrow A\otimes A$ is the exchange operator defined by
$
\sigma(x\otimes y)=y\otimes x.
$
\end{defn}
Now we are ready to give the main results of this section.
\begin{thm}\label{Frobenius theorem}
Let $( A,[\cdot,\cdot], \alpha) $ be an involutive Hom-Leibniz algebra. Suppose that there is an involutive Hom-Leibniz algebra structure $"\{\cdot,\cdot\}"$ on its
dual space $  A^{\ast} $. Then, there is a standard Manin triple of $( A, [\cdot,\cdot], \alpha)$
and $( A^{\ast}, \{\cdot,\cdot\}, \alpha^{\ast})$ if and only if $( A,  A^{\ast}, L^{\ast}_{[,]}, -L^{\ast}_{[,]}-R^{\ast}_{[,]}, \alpha^{\ast}, L^{\ast}_{\{,\}},-L^{\ast}_{\{,\}}-R^{\ast}_{\{,\}}, \alpha)$
is a matched pair of involutive Hom-Leibniz algebras.
\end{thm}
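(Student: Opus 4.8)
The plan is to run the classical ``Manin triple $\Leftrightarrow$ matched pair'' argument, using the canonical pairing \eqref{quadreq} as the bridge that rigidifies the cross products. Throughout, the underlying space is $\mathcal{A}=A\oplus A^{\ast}$ with twisting map $\Phi=\alpha\oplus\alpha^{\ast}$, and the only freedom in a Hom-Leibniz structure on $\mathcal{A}$ that restricts to $[\cdot,\cdot]$ on $A$ and to $\{\cdot,\cdot\}$ on $A^{\ast}$, with $A,A^{\ast}$ as subalgebras, lies in the four cross actions obtained by decomposing, for $x\in A$ and $a^{\ast}\in A^{\ast}$, the products $[x,a^{\ast}]'$ and $[a^{\ast},x]'$ into their $A$- and $A^{\ast}$-components; by \eqref{matchedpairproduct} these components define maps $l_A,r_A\colon A\to gl(A^{\ast})$ and $l_B,r_B\colon A^{\ast}\to gl(A)$. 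I first record that, by Lemma \ref{lem:bimodmultinvolhomleib}\ref{iilem:bimodmultinvolhomleib} applied to the regular bimodules $(L_{[,]},R_{[,]},\alpha,A)$ and $(L_{\{,\}},R_{\{,\}},\alpha^{\ast},A^{\ast})$, the quadruples $(L^{\ast}_{[,]},-L^{\ast}_{[,]}-R^{\ast}_{[,]},\alpha^{\ast},A^{\ast})$ and $(L^{\ast}_{\{,\}},-L^{\ast}_{\{,\}}-R^{\ast}_{\{,\}},\alpha,A)$ are automatically bimodules; hence the two bimodule hypotheses of Theorem \ref{thm:matchedpairs} come for free, and ``being a matched pair'' reduces to the six compatibility conditions \eqref{match. pair1}--\eqref{match. pair6}.

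For the direction ``matched pair $\Rightarrow$ standard Manin triple'', I would start from the matched pair and invoke Theorem \ref{thm:matchedpairs} to obtain the Hom-Leibniz algebra $(A\bowtie A^{\ast},[\cdot,\cdot]',\Phi)$. That $B$ of \eqref{quadreq} is nondegenerate and skew-symmetric, that $A$ and $A^{\ast}$ are subalgebras carrying the prescribed brackets, and that each is isotropic, are all immediate from the shape of $B$ and of \eqref{matchedpairproduct}. The only substantive point is the $\Phi$-invariance of $B$, namely $B([\Phi(u),\Phi(w)]'+[\Phi(w),\Phi(u)]',\Phi(v))=B(\Phi(u),[\Phi(v),\Phi(w)]')$ for all $u,v,w\in\mathcal{A}$; this I would verify by splitting into the cases where $u,v,w$ range over $A$ and $A^{\ast}$ and substituting $l_A=L^{\ast}_{[,]}$, $r_A=-L^{\ast}_{[,]}-R^{\ast}_{[,]}$, $l_B=L^{\ast}_{\{,\}}$, $r_B=-L^{\ast}_{\{,\}}-R^{\ast}_{\{,\}}$, so that each instance collapses, via the defining relation $\langle l^{\ast}(x)u^{\ast},v\rangle=-\langle l(x)v,u^{\ast}\rangle$ (and its $r$-analogue) together with $\alpha^{2}=\mathrm{id}$, either to a tautology or to one of \eqref{match. pair1}--\eqref{match. pair6}.

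For the converse ``standard Manin triple $\Rightarrow$ matched pair'', I would argue that invariance of the \emph{nondegenerate} form $B$ pins down the cross actions uniquely. Decomposing $[x,a^{\ast}]'$ and $[a^{\ast},x]'$ as above and pairing the unknown components against test elements of $A$ and of $A^{\ast}$, the invariance identity together with the definition of $B$ and involutivity forces exactly $l_A=L^{\ast}_{[,]}$, $r_A=-L^{\ast}_{[,]}-R^{\ast}_{[,]}$, $l_B=L^{\ast}_{\{,\}}$ and $r_B=-L^{\ast}_{\{,\}}-R^{\ast}_{\{,\}}$; thus the Manin-triple product is necessarily the matched-pair product \eqref{matchedpairproduct} for this data. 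Since $(\mathcal{A},[\cdot,\cdot]',\Phi)$ is a Hom-Leibniz algebra, expanding the Hom-Leibniz identity \eqref{Leibnizidentity} on the mixed triples (one entry in $A$, one in $A^{\ast}$, and so on), exactly as in the proof accompanying Theorem \ref{thm:matchedpairs}, reproduces the bimodule relations (already granted) and the six conditions \eqref{match. pair1}--\eqref{match. pair6}; hence the octuple is a matched pair of involutive Hom-Leibniz algebras.

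The main obstacle I anticipate is the bookkeeping in the equivalence between $\Phi$-invariance of $B$ and the precise dual-map form of the cross actions: one must keep careful track of the left and right actions and of the nonsymmetric combination $-L^{\ast}-R^{\ast}$, which encodes the failure of skew-symmetry in the Leibniz setting, and repeatedly use $\alpha^{2}=\mathrm{id}_{A}$, $(\alpha^{\ast})^{2}=\mathrm{id}_{A^{\ast}}$ and the sign convention in the definition of the dual map. Once the cross actions are identified, both implications close because the matched-pair conditions are, by construction, the expansion of the Hom-Leibniz identity on $A\oplus A^{\ast}$.
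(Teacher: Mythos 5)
Your proposal is correct and takes essentially the same route as the paper's proof: the forward direction applies Theorem \ref{thm:matchedpairs} to build $(A\bowtie A^{\ast},[\cdot,\cdot]',\alpha+\alpha^{\ast})$ and then checks $(\alpha\oplus\alpha^{\ast})$-invariance of the form \eqref{quadreq} by direct expansion, while the converse uses invariance together with nondegeneracy to force $l_A=L^{\ast}_{[,]}$, $r_A=-L^{\ast}_{[,]}-R^{\ast}_{[,]}$, $l_B=L^{\ast}_{\{,\}}$, $r_B=-L^{\ast}_{\{,\}}-R^{\ast}_{\{,\}}$, exactly as in the paper. Your explicit appeal to Lemma \ref{lem:bimodmultinvolhomleib}\ref{iilem:bimodmultinvolhomleib} for the bimodule hypotheses, and to the expansion of the Hom-Leibniz identity on mixed triples for the six conditions \eqref{match. pair1}--\eqref{match. pair6}, merely makes explicit steps the paper leaves implicit.
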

\begin{proof}
Let us consider the four maps such that for all $ x, v, u \in  A $, $ x^{\ast}, v^{\ast}, u^{\ast} \in  A^{\ast} $,
\begin{align*}
& L^{\ast}_{[,]}:  A \rightarrow gl( A^{\ast}), && \langle L^{\ast}_{[,]}(x)u^{\ast}, v \rangle = -\langle
L_{[,]}(x)v, u^{\ast} \rangle =- \langle [x, v], u^{\ast} \rangle,\cr
& R^{\ast}_{[,]} :  A \rightarrow gl( A^{\ast}), && \langle R^{\ast}_{[,]}(x)u^{\ast}, v \rangle = -\langle
R_{[,]}(x)v, u^{\ast} \rangle = -\langle [v, x], u^{\ast} \rangle,\cr
& L^{\ast}_{\{,\}} :  A^{\ast} \rightarrow gl( A), && \langle L^{\ast}_{\{,\}}(x^{\ast})u,
 v^{\ast} \rangle =- \langle L_{\{,\}}(x^{\ast})v^{\ast}, u \rangle = -\langle \{x^{\ast} , v^{\ast}\},
  u\rangle,\cr
& R^{\ast}_{\{,\}} :  A^{\ast} \rightarrow gl( A), && \langle R^{\ast}_{\{,\}}(x^{\ast})u,
v^{\ast} \rangle = -\langle R_{\{,\}}(x^{\ast})v^{\ast}, u \rangle = -\langle \{v^{\ast} , x^{\ast}\}, u\rangle.
\end{align*}
If $( A,  A^{\ast}, L^{\ast}_{[,]}, -L^{\ast}_{[,]}-R^{\ast}_{[,]}, \alpha^{\ast}, L^{\ast}_{\{,\}},  -L^{\ast}_{\{,\}}-R^{\ast}_{\{,\}}, \alpha)$ is a matched pair of multiplicative Hom-Leibniz algebras, then $( A\bowtie  A^{\ast}, [\cdot,\cdot]', \alpha + \alpha^{\ast})$ is a multiplicative Hom-Leibniz algebra with the product $ [\cdot,\cdot]' $  given by \eqref{matchedpairproduct}, and the bilinear form $ {B}(\cdot, \cdot) $ defined by \eqref{quadreq} is $(\alpha\oplus\alpha^{\ast})$-invariant, that is
\begin{multline*} {B}([\alpha(x) + \alpha^{\ast}(a^{\ast}), \alpha(z)+\alpha^{\ast}(c^{\ast})]'+[\alpha(z)+\alpha^{\ast}(c^{\ast}),
\alpha(x)+\alpha^{\ast}(a^{\ast})]',\alpha(y)+\alpha^{\ast}(b^{\ast})) \\
= {B}((\alpha(x) + \alpha^{\ast}(a^{\ast}), [\alpha(y) + \alpha^{\ast}(b^{\ast})),
(\alpha(z) + \alpha^{\ast}(c^{\ast}))]')
\end{multline*}
for all $ x, y \in  A^{\ast},  a^{\ast}, b^{\ast} \in  A^{\ast},$ and $$ [(x + a^{\ast}),
(y + b^{\ast})]' = ([x,y] + l_{{A^{\ast}}}(a^{\ast})y + r_{{A^{\ast}}}(b^{\ast})x ) + (\{a^{\ast}, b^{\ast}\} +
l_{ A}(x)b^{\ast} + r_{ A}(y)a^{\ast} ), $$ with $l_{ A} = L^{\ast}_{[,]}, r_{ A} = -L^{\ast}_{[,]}-R^{\ast}_{[,]}, l_{{A^{\ast}}} = L^{\ast}_{\{,\}}, r_{{A^{\ast}}} = -L^{\ast} _{\{,\}} - R^{\ast} _{\{,\}}$. Indeed,
\begin{eqnarray*}
&&{B}(\alpha(x) + \alpha^{\ast}(a^{\ast}),[\alpha(y)+\alpha^{\ast}(b^{\ast}),\alpha(z)+\alpha^{\ast}(c^{\ast})]')\\
&&\quad=B\Big(\alpha(x)+\alpha^{\ast}(a^{\ast}),[\alpha(y),\alpha(z)]+l_{A^{\ast}}(\alpha^{\ast}(b^{\ast}))\alpha(z)+r_{A^{\ast}}(\alpha^{\ast}(c^{\ast}))\alpha(y)\\
&& \quad\quad +\{\alpha^{\ast}(b^{\ast}),\alpha^{\ast}(c^{\ast})\}+l_A(\alpha(y))\alpha^{\ast}(c^{\ast})+r_A(\alpha(z))\alpha^{\ast}(b^{\ast})\Big)\\
&&\quad=\langle \alpha^{\ast}(a^{\ast}),[\alpha(y),\alpha(z)]\rangle-\langle\{\alpha^{\ast}(b^{\ast}),\alpha^{\ast}(a^{\ast})\},\alpha(z)\rangle+\langle \{\alpha^{\ast}(c^{\ast}),\alpha^{\ast}(a^{\ast})\},\alpha(y)\rangle\\
&&\quad\quad +\langle\{\alpha^{\ast}(a^{\ast}),\alpha^{\ast}(c^{\ast})\},\alpha(y)\rangle-\langle\{\alpha^{\ast}(b^{\ast}),\alpha^{\ast}(c^{\ast})\},\alpha(x)\rangle+\langle [\alpha(y),\alpha(x)],\alpha^{\ast}(c^{\ast})\rangle\\
&&\quad\quad
-\langle [\alpha(z),\alpha(x)],\alpha^{\ast}(b^{\ast})\rangle-\langle [\alpha(x),\alpha(z)],\alpha^{\ast}(b^{\ast})\rangle,
\\*[3mm]
&&{B}([\alpha(x) + \alpha^{\ast}(a^{\ast}), \alpha(z)+\alpha^{\ast}(c^{\ast})]'+[\alpha(z)+\alpha^{\ast}(c^{\ast}),\alpha(x)+\alpha^{\ast}(a^{\ast})]',\alpha(y)+\alpha^{\ast}(b^{\ast}))\\
&&\quad=B\Big([\alpha(x),\alpha(z)]+l_{A^{\ast}}(\alpha^{\ast}(a^{\ast}))\alpha(z)+r_{A^{\ast}}(\alpha^{\ast}(c^{\ast}))\alpha(y)+\{\alpha^{\ast}(a^{\ast}),\alpha^{\ast}(c^{\ast})\}\\
&&\quad\quad +l_A(\alpha(x))\alpha^{\ast}(c^{\ast})+r_A(\alpha(z))\alpha^{\ast}(a^{\ast})
+[\alpha(z),\alpha(x)]+l_{A^{\ast}}(\alpha^{\ast}(c^{\ast}))\alpha(x)\\
&&\quad\quad +r_{A^{\ast}}(\alpha^{\ast}(a^{\ast}))\alpha(z)+\{\alpha^{\ast}(c^{\ast}),\alpha^{\ast}(a^{\ast})\}+l_A(\alpha(z))\alpha^{\ast}(a^{\ast})+r_A(\alpha(y))\alpha^{\ast}(b^{\ast})\Big)\\
&&\quad=-\langle[\alpha(x),\alpha(z)],\alpha^{\ast}(b^{\ast})\rangle
+\langle\alpha(z),\{\alpha^{\ast}(a^{\ast}),\alpha^{\ast}(b^{\ast})\}\rangle-\langle\alpha(x),\{\alpha^{\ast}(c^{\ast}),\alpha^{\ast}(b^{\ast})\}\rangle\\
&&\quad\quad +\langle\alpha(y),\{\alpha^{\ast}(a^{\ast}),\alpha^{\ast}(c^{\ast})\}\rangle+\langle[\alpha(y),\alpha(x)],\alpha^{\ast}(c^{\ast})\rangle+\langle[\alpha(z),\alpha(y)],\alpha^{\ast}(a^{\ast})\rangle\\
&&\quad\quad +\langle[\alpha(y),\alpha(z)],\alpha^{\ast}(a^{\ast})\rangle-\langle[\alpha(z),\alpha(x)],\alpha^{\ast}(b^{\ast})\rangle+
\langle\alpha(x),\{\alpha^{\ast}(c^{\ast}),\alpha^{\ast}(b^{\ast})\}\rangle\\
&&\quad\quad-\langle\{\alpha^{\ast}(b^{\ast}),\alpha^{\ast}(c^{\ast})\},\alpha(x)\rangle-\langle\alpha(z),\{\alpha^{\ast}(a^{\ast}),\alpha^{\ast}(b^{\ast})\}\rangle-\langle\alpha(z),\{\alpha^{\ast}(b^{\ast}),\alpha^{\ast}(a^{\ast})\}\rangle\\
&&\quad\quad +\langle\{\alpha^{\ast}(c^{\ast}),\alpha^{\ast}(a^{\ast})\},\alpha(y)\rangle-\langle[\alpha(z),\alpha(y)],\alpha^{\ast}(a^{\ast})\rangle\\
&&\quad=\langle \alpha^{\ast}(a^{\ast}),[\alpha(y),\alpha(z)]\rangle-\langle\{\alpha^{\ast}(b^{\ast}),\alpha^{\ast}(a^{\ast})\},\alpha(z)\rangle+\langle \{\alpha^{\ast}(c^{\ast}),\alpha^{\ast}(a^{\ast})\},\alpha(y)\rangle\\
&&\quad\quad+\langle\{\alpha^{\ast}(a^{\ast}),\alpha^{\ast}(c^{\ast})\},\alpha(y)\rangle-\langle\{\alpha^{\ast}(b^{\ast}),\alpha^{\ast}(c^{\ast})\},\alpha(x)\rangle+\langle [\alpha(y),\alpha(x)],\alpha^{\ast}(c^{\ast})\rangle\\
&&\quad\quad -\langle [\alpha(z),\alpha(x)],\alpha^{\ast}(b^{\ast})\rangle-\langle [\alpha(x),\alpha(z)],\alpha^{\ast}(b^{\ast})\rangle.
\end{eqnarray*}
Thus,  $ {B} $ is well $(\alpha\oplus\alpha^{\ast})$-invariant.
 Conversely,
if $(A\oplus A^{\ast},A,A^{\ast})$ is a Manin triple of Hom-Leibniz algebras. For all $x\in A,~a^{\ast},b^{\ast}\in A^{\ast}$,
\begin{align*}
B(b^{\ast},r_{2}(a^{\ast})x)&=B(b^{\ast},[x,a^{\ast}]')\\
&=B(\{b^{\ast},a^{\ast}\}+\{a^{\ast},b^{\ast}\},x)\\&=\langle R_{\{,\}}(a^{\ast})b^{\ast}+L_{\{,\}}(a^{\ast})b^{\ast},x\rangle\\
&=-\langle b^{\ast},(L^{\ast}_{\{,\}}+R^{\ast}_{\{,\}})(x)\rangle,
\end{align*}
which implies that $r_{2}=-L^{\ast}_{\{,\}}-R^{\ast}_{\{,\}}$. Now,
\begin{align*}\langle L_{1}(x)a^{\ast},y\rangle &=-B(y,[x,a^{\ast}]')\\
&=-B([y,a^{\ast}]'+[a^{\ast},y]',x)\\
&=-B(a^{\ast},[x,y])\\
&=-\langle a^{\ast},L_{[]}(x)y\rangle\\
&=\langle y,L^{\ast}_{[,]}(x)a^{\ast}\rangle,\end{align*}
which implies that $L_1=L^{\ast}_{[,]}$.
Similarly, $L_{2}=L^{\ast}_{\{,\}}$ and $r_1=-L^{\ast}_{[,]}-R^{\ast}_{[,]}$.
Thus, $( A,  A^{\ast}, L^{\ast}_{[,]}, -L^{\ast}_{[,]}-R^{\ast}_{[,]}, \alpha^{\ast}, L^{\ast}_{\{,\}},-L^{\ast}_{\{,\}}-R^{\ast}_{\{,\}}, \alpha)$ is a matched pair.
\end{proof}
\begin{thm}\label{homMathched pair's theorem}
Let $( A,[\cdot,\cdot], \alpha) $ be an involutive Hom-Leibniz algebra. Suppose that there is an involutive Hom-Leibniz algebra structure $"\{\cdot,\cdot\}"$ on its
dual space $  (A^{\ast},\alpha^{\ast}) $. Then, $( A,  A^{\ast}, L^{\ast}_{[,]}, -L^{\ast}_{[,]}-R^{\ast}_{[,]}, \alpha^{\ast}, L^{\ast}_{\{,\}},-L^{\ast}_{\{,\}}-R^{\ast}_{\{,\}}, \alpha)$
is a matched pair of involutive Hom-Leibniz algebras
  if and only if, for any $ x \in A$ and $ a^{\ast}, b^{\ast} \in A^{\ast},$
\begin{align}\label{infinitesimal cond.}
   & L^{\ast}_{\{,\}}(R^{\ast}_{[,]}(x)a^{\ast})\alpha(y)+[R^{\ast}_{\{,\}}a^{\ast})x,\alpha(y)]=0,\\
\label{antisymmetric cond.}
  &L^{\ast}_{\{,\}}(\alpha^{\ast}(a^{\ast}))([x,y])-[L^{\ast}_{\{,\}}(a^{\ast})x,\alpha(y)]-[\alpha(x),L^{\ast}_{\{,\}}a^{\ast})y]\nonumber\\&+L^{\ast}_{\{,\}}(L^{\ast}_{[,]}(x)a^{\ast})\alpha(y)+L^{\ast}_{\{,\}}(R^{\ast}_{[,]}(x) a^{\ast})\alpha(y)-L^{\ast}_{\{,\}}(L^{\ast}_{[,]}(y)a^{\ast})\alpha(x)\nonumber\\&-L^{\ast}_{\{,\}}(R^{\ast}_{[,]}(y)a^{\ast})\alpha(x)-R^{\ast}_{\{,\}}(L^{\ast}_{[,]}(x)(a^{\ast})\alpha(x)-R^{\ast}_{\{,\}}(R^{\ast}_{[,]}(x)a^{\ast})\alpha(x)=0.
\end{align}
\end{thm}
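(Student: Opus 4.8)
The plan is to strip the matched-pair requirement down to its genuinely independent content. First I would observe that the bimodule axioms built into the definition of a matched pair are automatic here: applying part~\ref{iilem:bimodmultinvolhomleib} of Lemma~\ref{lem:bimodmultinvolhomleib} to the regular bimodule $(L_{[,]},R_{[,]},\alpha,A)$ of $A$ shows that $(L^{\ast}_{[,]},-L^{\ast}_{[,]}-R^{\ast}_{[,]},\alpha^{\ast},A^{\ast})$ is a bimodule of $A$, and symmetrically $(L^{\ast}_{\{,\}},-L^{\ast}_{\{,\}}-R^{\ast}_{\{,\}},\alpha,A)$ is a bimodule of $A^{\ast}$. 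Hence being a matched pair in the sense of Theorem~\ref{thm:matchedpairs} is equivalent to the six compatibility conditions \eqref{match. pair1}--\eqref{match. pair6} for the specific data $l_A=L^{\ast}_{[,]}$, $r_A=-L^{\ast}_{[,]}-R^{\ast}_{[,]}$, $l_B=L^{\ast}_{\{,\}}$, $r_B=-L^{\ast}_{\{,\}}-R^{\ast}_{\{,\}}$, $\beta=\alpha^{\ast}$.

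Next I would exploit the self-dual nature of the construction. Conditions \eqref{match. pair1}--\eqref{match. pair3} are $A^{\ast}$-valued while \eqref{match. pair4}--\eqref{match. pair6} are $A$-valued. Pairing each of \eqref{match. pair4}--\eqref{match. pair6} with an arbitrary $c^{\ast}\in A^{\ast}$ through $\langle\cdot,\cdot\rangle$, transporting every coadjoint operator to the other slot by its defining identity (e.g. $\langle L^{\ast}_{[,]}(x)u^{\ast},v\rangle=-\langle[x,v],u^{\ast}\rangle$), and using $\alpha^{2}=\mathrm{id}_A$, $(\alpha^{\ast})^{2}=\mathrm{id}_{A^{\ast}}$ to absorb the twists, I expect each of \eqref{match. pair1}--\eqref{match. pair3} to become one of \eqref{match. pair4}--\eqref{match. pair6}; since the form is nondegenerate this identifies the first three conditions with the last three, so it suffices to analyse \eqref{match. pair4}, \eqref{match. pair5}, \eqref{match. pair6}.

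Then I would substitute the four maps explicitly. For \eqref{match. pair6} I would use its reformulation in the Remark following Theorem~\ref{thm:matchedpairs}: after inserting $l_A,r_A,l_B,r_B$ the two terms carrying $L^{\ast}_{\{,\}}(L^{\ast}_{[,]}(x)a^{\ast})$ cancel and the two bracket terms combine by bilinearity of $[\cdot,\cdot]$ into $-[R^{\ast}_{\{,\}}(a^{\ast})x,\alpha(y)]$, so \eqref{match. pair6} becomes exactly \eqref{infinitesimal cond.}. For \eqref{match. pair4}, expanding $r_A=-L^{\ast}_{[,]}-R^{\ast}_{[,]}$ and $r_B=-L^{\ast}_{\{,\}}-R^{\ast}_{\{,\}}$ and collecting everything on one side reproduces \eqref{antisymmetric cond.}. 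Finally, performing the same substitution in \eqref{match. pair5} and writing $E_4$, $E_5$ for the one-sided expressions obtained from \eqref{match. pair4} and \eqref{match. pair5}, a term-by-term comparison gives the linear identity $E_4+E_5=[R^{\ast}_{\{,\}}(a^{\ast})x,\alpha(y)]+L^{\ast}_{\{,\}}(R^{\ast}_{[,]}(x)a^{\ast})\alpha(y)$, that is, $E_4+E_5$ equals the left-hand side of \eqref{infinitesimal cond.}. Thus any two of \eqref{match. pair4}, \eqref{match. pair5}, \eqref{match. pair6} force the third, so \eqref{match. pair5} is redundant once \eqref{infinitesimal cond.} and \eqref{antisymmetric cond.} hold, and the equivalence follows in both directions.

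The main obstacle I anticipate is the self-duality step of the second paragraph: one must control the signs produced by the dual maps, the interchange of $\alpha$ with $\alpha^{\ast}$, and the repeated use of involutivity while transposing \eqref{match. pair1}--\eqref{match. pair3} into \eqref{match. pair4}--\eqref{match. pair6}, and confirm that no extra relation survives. The only other delicate point is the bookkeeping behind $E_4+E_5=\;$LHS of \eqref{infinitesimal cond.}, where the full expansions of $r_A$ and $r_B$ generate many coadjoint-action terms that must cancel in pairs; here the relation \eqref{Cond:HomBimod:rpblar} together with the compatibilities \eqref{Hombimodule:bl:eq1}--\eqref{Hombimodule:br:eq2} are the tools that make the cancellation go through.
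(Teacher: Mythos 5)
Your proposal is correct, and it rests on the same two pillars as the paper's own proof: substituting the coadjoint data $l_A=L^{\ast}_{[,]}$, $r_A=-L^{\ast}_{[,]}-R^{\ast}_{[,]}$, $l_B=L^{\ast}_{\{,\}}$, $r_B=-L^{\ast}_{\{,\}}-R^{\ast}_{\{,\}}$ into \eqref{match. pair1}--\eqref{match. pair6}, and then using dual pairings together with involutivity to cut the six conditions down to the two stated ones. The organization, however, is genuinely different in the part that matters. The paper identifies \eqref{antisymmetric cond.} with \eqref{match. pair4} (as you also do), but asserts that ``obviously'' \eqref{infinitesimal cond.} gives \eqref{match. pair5}, and then disposes of the remaining conditions through explicitly computed pairings $\eqref{match. pair2}\Leftrightarrow\eqref{match. pair5}$, $\eqref{match. pair3}\Leftrightarrow\eqref{match. pair6}$, $\eqref{match. pair1}\Leftrightarrow\eqref{match. pair4}$, plus a claimed (not computed) equivalence $\eqref{match. pair4}\Leftrightarrow\eqref{match. pair6}$. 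You instead identify \eqref{infinitesimal cond.} with \eqref{match. pair6} via the Remark following Theorem \ref{thm:matchedpairs}, and eliminate \eqref{match. pair5} through the linear identity $E_4+E_5=L^{\ast}_{\{,\}}(R^{\ast}_{[,]}(x)a^{\ast})\alpha(y)+[R^{\ast}_{\{,\}}(a^{\ast})x,\alpha(y)]$; both of these claims check out by direct expansion (the identity is in fact pure term-by-term cancellation, so your appeal to \eqref{Cond:HomBimod:rpblar} and \eqref{Hombimodule:bl:eq1}--\eqref{Hombimodule:br:eq2} there is unnecessary). Your route makes the redundancy of \eqref{match. pair5} transparent, whereas the paper's statement that \eqref{infinitesimal cond.} gives \eqref{match. pair5} is, as written, only accurate modulo \eqref{match. pair4} --- exactly the content of your identity. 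What the paper's proof buys in exchange is that the dual-pairing computations you only anticipate in your second paragraph (the transposition of \eqref{match. pair1}--\eqref{match. pair3} into \eqref{match. pair4}--\eqref{match. pair6}, with all signs and $\alpha$-twists tracked) are carried out explicitly there, and they do close as you expect; also making explicit the bimodule prerequisite via Lemma \ref{lem:bimodmultinvolhomleib}\ref{iilem:bimodmultinvolhomleib}, as you do, is a point the paper leaves implicit. One caveat common to both arguments: the dualization silently identifies $A^{\ast\ast}$ with $A$, i.e.\ assumes finite dimensionality.
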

\begin{proof}
Obviously, \eqref{infinitesimal cond.} gives \eqref{match. pair5}, and
 \eqref{antisymmetric cond.} reduces to \eqref{match. pair4}  when
 $$ l_{A} = L^{\ast}_{[,]},\quad r_{A} = -L^{\ast}_{[,]}-R^{\ast}_{[,]},\quad
 l_{{B}} = L^{\ast}_{\{,\}},\quad
 r_{{B}} =-L^{\ast}_{\{,\}}-R^{\ast}_{\{,\}}.$$
 Now, we show that
\begin{eqnarray*}
\eqref{match. pair2} \Longleftrightarrow \eqref{match. pair5};\cr
{\mbox{and}}\quad
\eqref{match. pair1} \Longleftrightarrow  \eqref{match. pair3} \Longleftrightarrow \eqref{match. pair4}  \Longleftrightarrow \eqref{match. pair6}.
\end{eqnarray*}
Suppose  \eqref{infinitesimal cond.}  and \eqref{antisymmetric cond.} are satisfied. We have
for all $ x, y \in A, a^{\ast}, b^{\ast} \in A^{\ast}$,
\begin{eqnarray*}
\langle R^{\ast}_{[,]}(x)a^{\ast}, y \rangle &=& \langle L^{\ast}_{[,]}(y)a^{\ast}, x \rangle = -\langle   [ y, x], a^{\ast}\rangle, \cr
\langle R^{\ast}_{\{,\}}(b^{\ast})x, a^{\ast} \rangle &=& \langle L^{\ast}_{\{,\}}(a^{\ast})x, b^{\ast} \rangle =-\{\langle a^{\ast}, b^{\ast}\}, x \rangle,
\cr
\alpha^{\ast}(R^{\ast}_{[,]}(x)a^{\ast}) &=& R^{\ast}_{[,]}(\alpha(x))\alpha^{\ast}(a^{\ast}),\quad \alpha^{\ast}(L^{\ast}_{[,]}(x)a^{\ast})= L^{\ast}_{[,]}(\alpha(x))\alpha^{\ast}(a^{\ast}),
\cr
\alpha(R^{\ast}_{\{,\}}(a^{\ast})x) &=& R^{\ast}_{\{,\}}(\alpha^{\ast}(a^{\ast}))\alpha(x),\quad  \alpha(L^{\ast}_{\{,\}}(a^{\ast})x)= L^{\ast}_{\{,\}}(\alpha^{\ast}(a^{\ast}))\alpha(x),
\end{eqnarray*}
Set $\alpha(x)= z,\alpha(y)= t,$ $\alpha^{\ast}(a^{\ast})= c^{\ast}$ and $ \alpha^{\ast}(b^{\ast})= d^{\ast}.$
Then \eqref{match. pair2} $ \Longleftrightarrow $ \eqref{match. pair5} follows from
 \begin{eqnarray*}
 \langle L^{\ast}_{\{,\}}(R^{\ast}_{[,]}(x)a^{\ast})\alpha(y),b^{\ast}\rangle
&=& -\langle \{(R^{\ast}_{[,]}(x)a^{\ast}),b^{\ast}\},\alpha(y)\rangle\cr
&=&\langle \alpha^{\ast}\{R^{\ast}_{[,]}(x)a^{\ast}),b^{\ast}\},y\rangle\cr
&=&\langle \{R^{\ast}_{[,]}(\alpha(x))\alpha^{\ast}(a^{\ast}),\alpha^{\ast}(b^{\ast})\},y\rangle\cr
&=&\langle \{R^{\ast}_{[,]}(z)c^{\ast},\alpha^{\ast}(b^{\ast}\}),y\rangle;\cr
\langle[R^{\ast}_{\{,\}}(a^{\ast})x,\alpha(y)],b^{\ast}\rangle
&=&- \langle\alpha(y),L^{\ast}_{[,]}(R^{\ast}_{\{,\}}(a^{\ast})x)b^{\ast} \rangle
 \cr&=&\langle y,\alpha^{\ast}(L^{\ast}_{[,]}(R^{\ast}_{\{,\}}(\alpha^{\ast}(a^{\ast}))\alpha(x))b^{\ast})\rangle \cr
&=& \langle y,L^{\ast}_{[,]}(R^{\ast}_{\{,\}}(c^{\ast})z)\alpha^{\ast}(b^{\ast})\rangle.
\end{eqnarray*}
The statement \eqref{match. pair3} $ \Longleftrightarrow $ \eqref{match. pair6} follows from
\begin{eqnarray*}
\langle \{\alpha^{\ast}(a^{\ast}),(L^{\ast}_{[,]}(x)b^{\ast})\}, y\rangle
&=&- \langle \alpha^{\ast}(a^{\ast}), R^{\ast}_{\{,\}}L^{\ast}_{[,]}(x)b^{\ast}y\rangle  \cr&=&\langle a^{\ast}, \alpha[R^{\ast}_{\{,\}}(L^{\ast}_{[,]}(x)b^{\ast})y]\rangle\cr
&=&\langle a^{\ast}, R^{\ast}_{\{,\}}[\alpha^{\ast}(L^{\ast}_{[,]}(x)b^{\ast})]\alpha(y)\rangle\cr
&=&\langle a^{\ast}, R^{\ast}_{\{,\}}(L^{\ast}_{[,]}(\alpha(x))\alpha^{\ast}(b^{\ast}))\alpha(y)\rangle\cr
&=&\langle a^{\ast}, R^{\ast}_{\{,\}}(L^{\ast}_{[,]}(z)d^{\ast})\alpha(y)\rangle; \cr
\langle \{\alpha^{\ast}(a^{\ast}),(R^{\ast}_{[,]}(x)b^{\ast})\}, y\rangle
&=& -\langle \alpha^{\ast}(a^{\ast}), R^{\ast}_{\{,\}}(R^{\ast}_{[,]}(x)b^{\ast})y\rangle  \cr&=&\langle a^{\ast}, \alpha[R^{\ast}_{\{,\}}(R^{\ast}_{[,]}(\alpha(x))\alpha(b^{\ast})\alpha(y)]\rangle\cr
&=&\langle a^{\ast}, R^{\ast}_{\{,\}}(R^{\ast}_{[,]}(\alpha(x)\alpha^{\ast}(b^{\ast}))\alpha(y)\rangle\cr
&=&\langle a^{\ast}, R^{\ast}_{\{,\}}(R^{\ast}_{[,]}(z)d^{\ast})\alpha(y)\rangle; \cr
\langle L^{\ast}_{[,]}(L^{\ast}_{\{,\}}(b^{\ast})x)\alpha^{\ast}(a^{\ast}), y\rangle &=&- \langle [(L^{\ast}_{\{,\}}(b^{\ast})x), y], \alpha^{\ast}(a^{\ast})\rangle  \cr&=&\langle a^{\ast}, [\alpha(L^{\ast}_{\{,\}}(b^{\ast})x), \alpha(y)] \rangle \cr
&=& \langle a^{\ast}, L^{\ast}_{\{,\}}([\alpha^{\ast}(b^{\ast}))\alpha(x), \alpha(y)]\rangle  \cr&=&\langle a^{\ast}, [L^{\ast}_{\{,\}}(d^{\ast})z, \alpha(y)]\rangle;\cr
\langle R^{\ast}_{[,]}(L^{\ast}_{\{,\}}(a^{\ast})x)\alpha^{\ast}(b^{\ast}), y\rangle &=&-\langle [y, L^{\ast}_{\{,\}}(a^{\ast})x], \alpha^{\ast}(b^{\ast})\rangle  \cr&=&\langle [\alpha(y), \alpha(L^{\ast}_{\{,\}}(a^{\ast})x)], b^{\ast}\rangle \cr
&=&\langle [\alpha(y), L^{\ast}_{\{,\}}(\alpha^{\ast}(a^{\ast}))\alpha(x)], b^{\ast}\rangle  \cr&=&\langle [\alpha(y), L^{\ast}_{\{,\}}(c^{\ast})z], b^{\ast}\rangle;\cr
\langle\{ L^{\ast}(\alpha(x))(a^{\ast} , b^{\ast})\}, y\rangle
&=&- \langle \{a^{\ast}, b^{\ast}\}, [\alpha(x), y] \rangle
 \cr&=&\langle R^{\ast}_{\{,\}}(b^{\ast})([\alpha(x), y]), a^{\ast}\rangle \cr
&=& \langle R^{\ast}_{\{,\}}(\alpha^{\ast}(d^{\ast}))([z, y]), a^{\ast}\rangle; \cr
\langle \{R^{\ast}(\alpha(x))(a^{\ast}, b^{\ast})\}, y \rangle &=&- \langle \{a^{\ast}, b^{\ast}\}, [y, \alpha(x)]\rangle  \cr&=&-\langle L^{\ast}_{[,]}(a^{\ast})b^{\ast}, [y, z] \rangle \cr
 &=& \langle b^{\ast},L^{\ast}_{\{,\}}(a^{\ast})([y, z])\rangle \cr
&=&\langle b^{\ast}, L^{\ast}_{\{,\}}(\alpha^{\ast}(c^{\ast}))([y, z])\rangle;
\cr
\langle \{\alpha^{\ast}(b^{\ast}),(L^{\ast}_{[,]}(x)a^{\ast})\}, y\rangle
&=&- \langle \alpha^{\ast}(b^{\ast}), R^{\ast}_{\{,\}}(L^{\ast}_{[,]}(x)a^{\ast})y\rangle  \cr&=&\langle b^{\ast}, \alpha[R^{\ast}_{\{,\}}(L^{\ast}_{[,]}(x)a^{\ast})y]\rangle\cr
&=&\langle b^{\ast}, R^{\ast}_{\{,\}}[\alpha^{\ast}(L^{\ast}_{[,]}(x)a^{\ast})]\alpha(y)\rangle\cr
&=&\langle b^{\ast}, R^{\ast}_{\{,\}}[L^{\ast}_{[,]}(\alpha(x))\alpha^{\ast}(a^{\ast})]\alpha(y)\rangle\cr
&=&\langle b^{\ast}, R^{\ast}_{\{,\}}(L^{\ast}_{[,]}(z)d^{\ast})\alpha(y)\rangle; \cr
\langle \{\alpha^{\ast}(b^{\ast}), (R^{\ast}_{[,]}(x)a^{\ast})\}, y\rangle &=&- \langle R^{\ast}_{\{,\}}(R^{\ast}_{\{,\}}(x)a^{\ast})y, \alpha^{\ast}(b^{\ast})\rangle  \cr&=& -\langle \{\alpha^{\ast}(b^{\ast}), (R^{\ast}_{[,]}(x)a^{\ast})\}, y\rangle\cr
&=& \langle \alpha[R^{\ast}_{\{,\}}(R^{\ast}_{\{,\}}(x)a^{\ast})y], b^{\ast}\rangle\cr
 &=&\langle R^{\ast}_{\{,\}}[R^{\ast}_{\{,\}}(\alpha(x))\alpha^{\ast}(a^{\ast})]\alpha(y), b^{\ast}\rangle\cr
&=&\langle R^{\ast}_{\{,\}}(R^{\ast}_{[,]}(z)c^{\ast})\alpha(y), b^{\ast}\rangle; \cr
\langle L^{\ast}_{[,]}(L^{\ast}_{\{,\}}(a^{\ast})x)\alpha^{\ast}(b^{\ast}), y\rangle &=&- \langle [(L^{\ast}_{\{,\}}(a^{\ast})x), y], \alpha^{\ast}(b^{\ast})\rangle  \cr&=&\langle b^{\ast}, [\alpha(L^{\ast}_{\{,\}}(a^{\ast})x), \alpha(y)] \rangle \cr
&=& \langle b^{\ast}, [L^{\ast}_{\{,\}}(\alpha^{\ast}(a^{\ast}))\alpha(x), \alpha(y)]\rangle  \cr&=&\langle b^{\ast}, [L^{\ast}_{\{,\}}(c^{\ast})z, \alpha(y)]\rangle;\cr
\langle R^{\ast}_{[,]}(L^{\ast}_{\{,\}}(b^{\ast})x)\alpha(a^{\ast}), y \rangle &=& -\langle [y, L^{\ast}_{\{,\}}(b^{\ast})x], \alpha^{\ast}(a^{\ast}) \rangle  \cr&=& \langle L^{\ast}_{[,]}(y)(\alpha^{\ast}(a^{\ast})), L^{\ast}_{\{,\}}(b^{\ast})x \rangle \cr
&=& \langle L^{\ast}_{\{,\}}(b^{\ast})x, L^{\ast}_{[,]}(y)(\alpha^{\ast}(a^{\ast})) \rangle\cr
&=&- \langle \{b^{\ast},(L^{\ast}_{[,]}(y)(\alpha^{\ast}(a^{\ast})))\}, x\rangle \cr
  &=& \langle \{\alpha^{\ast}(c^{\ast}),(L^{\ast}_{[,]}(y)(d^{\ast}))\}, x\rangle.
\end{eqnarray*}
The statements
$\eqref{match. pair1} \Longleftrightarrow  \eqref{match. pair4}$ and $\eqref{match. pair4} \Longleftrightarrow  \eqref{match. pair6}$ are proved analogously.
\end{proof}

\begin{thm}\label{bialgebra theorem}
Let $( A,[\cdot,\cdot], \alpha) $ be an involutive Hom-Leibniz algebra. Suppose that there is an involutive Hom-Leibniz algebra structure $\{\cdot,\cdot\}$ on its
dual space $  (A^{\ast},\alpha^{\ast}) $. Then, $(A,A^{\ast})$ is a Hom-Leibniz bialgebra
if and only if $( A,  A^{\ast}, L^{\ast}_{[,]}, -L^{\ast}_{[,]}-R^{\ast}_{[,]}, \alpha^{\ast}, L^{\ast}_{\{,\}},\\-L^{\ast}_{\{,\}}-R^{\ast}_{\{,\}}, \alpha)$
is a matched pair of involutive Hom-Leibniz algebras.
\end{thm}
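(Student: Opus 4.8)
The plan is to route the proof through Theorem \ref{homMathched pair's theorem}, which already collapses the six defining conditions of a matched pair to the two identities \eqref{infinitesimal cond.} and \eqref{antisymmetric cond.}. Thus the tuple $( A,  A^{\ast}, L^{\ast}_{[,]}, -L^{\ast}_{[,]}-R^{\ast}_{[,]}, \alpha^{\ast}, L^{\ast}_{\{,\}}, -L^{\ast}_{\{,\}}-R^{\ast}_{\{,\}}, \alpha)$ is a matched pair exactly when \eqref{infinitesimal cond.} and \eqref{antisymmetric cond.} hold, and it suffices to prove the two scalar-level equivalences \eqref{bialgebra1} $\Longleftrightarrow$ \eqref{infinitesimal cond.} and \eqref{bialgebra2} $\Longleftrightarrow$ \eqref{antisymmetric cond.}. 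The single device that powers all of this is the defining duality $\langle \Delta x, a^{\ast}\otimes b^{\ast}\rangle = \langle x, \{a^{\ast}, b^{\ast}\}\rangle$ between the cobracket and the bracket on $A^{\ast}$, together with the defining relations of the four dual operators, the dual-map sign convention $\langle v, \phi^{\ast}(u^{\ast})\rangle = -\langle \phi(v), u^{\ast}\rangle$, and the equivariance identities $\alpha^{\ast}(R^{\ast}_{[,]}(x)a^{\ast}) = R^{\ast}_{[,]}(\alpha(x))\alpha^{\ast}(a^{\ast})$ and $\alpha(L^{\ast}_{\{,\}}(a^{\ast})x) = L^{\ast}_{\{,\}}(\alpha^{\ast}(a^{\ast}))\alpha(x)$ (and their analogues) already established in the proof of Theorem \ref{homMathched pair's theorem} from multiplicativity and involutivity $\alpha^{2}=id$.

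For the first equivalence I would pair each side of the tensor identity \eqref{bialgebra1} with an arbitrary elementary tensor $a^{\ast}\otimes b^{\ast}\in A^{\ast}\otimes A^{\ast}$. On the left, $\langle R_{[,]}(x)w, a^{\ast}\rangle = -\langle w, R^{\ast}_{[,]}(x)a^{\ast}\rangle$ turns $(R_{[,]}(x)\otimes id)\Delta\alpha(y)$ into $-\langle \alpha(y), \{R^{\ast}_{[,]}(x)a^{\ast}, b^{\ast}\}\rangle$; on the right, the exchange operator $\sigma$ interchanges $a^{\ast}$ and $b^{\ast}$ and the same dualization yields $-\langle x, \{R^{\ast}_{[,]}(\alpha(y))b^{\ast}, a^{\ast}\}\rangle$. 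Independently, pairing \eqref{infinitesimal cond.}, an identity in $A$, against $b^{\ast}$ and expanding $L^{\ast}_{\{,\}}$ and $R^{\ast}_{\{,\}}$ through their defining relations reproduces exactly these two scalars. Since $x, y, a^{\ast}, b^{\ast}$ are arbitrary and the pairing is nondegenerate, \eqref{bialgebra1} and \eqref{infinitesimal cond.} are equivalent.

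The second equivalence follows the identical template but is substantially longer, and this is where the real effort lies. The identity \eqref{bialgebra2} is an equality in $A\otimes A$ comprising nine terms: the left-hand side $(\alpha\otimes id)\Delta[x,y]$ and eight right-hand contractions built from $id\otimes R_{[,]}(\alpha(y))$, $id\otimes L_{[,]}(\alpha(x))$, the four terms $L_{[,]}(y)\otimes id$, $R_{[,]}(y)\otimes id$, $\sigma(id\otimes L_{[,]}(y))$, $\sigma(id\otimes R_{[,]}(y))$ applied to $\Delta\alpha(x)$, and $L_{[,]}(x)\otimes id$, $R_{[,]}(x)\otimes id$ applied to $\Delta\alpha(y)$. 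Pairing with $a^{\ast}\otimes b^{\ast}$ and dualizing converts every $\Delta$ into a bracket $\{\cdot,\cdot\}$ on $A^{\ast}$, every $\sigma$ into a transposition of $a^{\ast}$ and $b^{\ast}$, and uses $\alpha^{2}=id$ together with the equivariance relations to move twists onto the dual arguments. Collecting the nine resulting scalars and comparing with \eqref{antisymmetric cond.} paired against $y$ yields a term-by-term match.

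The principal obstacle is entirely bookkeeping in this last step: with nine terms, a sign contributed to each contraction by the dual-map convention, the composite right actions $-L^{\ast}_{[,]}-R^{\ast}_{[,]}$ and $-L^{\ast}_{\{,\}}-R^{\ast}_{\{,\}}$ splitting and recombining, and the exchange operator permuting arguments, it is easy to misplace a sign or a twist. The safeguard is to fix once and for all the dictionary sending each operator contraction to its scalar pairing and to apply the equivariance relations of Theorem \ref{homMathched pair's theorem} uniformly; once this dictionary is in place, the verification that the nine dualized terms reassemble into the nine terms of \eqref{antisymmetric cond.} is mechanical. No new idea beyond the duality $\langle \Delta x, a^{\ast}\otimes b^{\ast}\rangle = \langle x, \{a^{\ast}, b^{\ast}\}\rangle$ and Theorem \ref{homMathched pair's theorem} is required.
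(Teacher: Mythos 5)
Your proposal is correct and follows essentially the same route as the paper: the paper likewise invokes Theorem \ref{homMathched pair's theorem} to reduce the matched-pair condition to \eqref{infinitesimal cond.} and \eqref{antisymmetric cond.}, and then establishes the equivalences with \eqref{bialgebra1} and \eqref{bialgebra2} by pairing each identity against $b^{\ast}$ (equivalently against $a^{\ast}\otimes b^{\ast}$) and dualizing via $\langle \Delta x, a^{\ast}\otimes b^{\ast}\rangle = \langle x, \{a^{\ast},b^{\ast}\}\rangle$, the operator duality conventions, and the $\alpha^{2}=\mathrm{id}$ equivariance relations. The bookkeeping of the nine terms you describe is exactly the content of the paper's long displayed computation.
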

\begin{proof}
Let $x,y\in A$ and $a^{\ast},b^{\ast}\in A^{\ast}$.
First, the left hand side of  \eqref{infinitesimal cond.}, is equal to $$L^{\ast}_{\{,\}}(R^{\ast}_{[,]}(x)a^{\ast})\alpha(y)+[R^{\ast}_{\{,\}}(a^{\ast})x,\alpha(y)].$$ Furthermore, we have
\begin{align*}
    &\langle L^{\ast}_{\{,\}}(R^{\ast}_{[,]}(x)a^{\ast})\alpha(y)+[R^{\ast}_{\{,\}}a^{\ast})x,\alpha(y)],b^{\ast}\rangle\\
    &\quad=\langle L^{\ast}_{\{,\}}(R^{\ast}_{[,]}(x)a^{\ast})\alpha(y),b^{\ast}\rangle+\langle[R^{\ast}_{\{,\}}(a^{\ast})x,\alpha(y)],b^{\ast}\rangle\\
        &\quad=\langle L^{\ast}_{\{,\}}(R^{\ast}_{[,]}(x)(a^{\ast})\alpha(y),b^{\ast}\rangle+\langle R_{[,]}(\alpha(y))(R^{\ast}_{\{,\}}(a^{\ast})x),b^{\ast}\rangle\\
    &\quad=-\langle\alpha(y),\{R^{\ast}_{[,]}(x)a^{\ast},b^{\ast}\}\rangle+\langle x,\{R^{\ast}_{[,]}(\alpha(y))b^{\ast},a^{\ast}\}\rangle\\
    &\quad=-\langle\Delta\alpha(y),R^{\ast}_{[,]}(x)a^{\ast}\otimes b^{\ast}\rangle+\langle \Delta x,R^{\ast}_{[,]}(\alpha(y))b^{\ast}\otimes a^{\ast}\rangle\\
    &\quad=\langle(R_{[,]}(x)\otimes id)\Delta\alpha(y),a^{\ast}\otimes b^{\ast}\rangle-\langle(R_{[,]}(\alpha(y))\otimes id)\Delta x,b^{\ast}\otimes a^{\ast}\rangle\\
    &\quad=\langle(R_{[,]}(x)\otimes id)\Delta\alpha(y)-\sigma((R_{[,]}(\alpha(y))\otimes id)\Delta x),a^{\ast}\otimes b^{\ast}\rangle.
\end{align*}
Thus, we conclude that \eqref{infinitesimal cond.} corresponds to \eqref{bialgebra1}.

Next, the left hand side of \eqref{antisymmetric cond.} is equal to
   \begin{align*}
  &L^{\ast}_{\{,\}}(\alpha^{\ast}(a^{\ast}))([x,y])-[L^{\ast}_{\{,\}}(a^{\ast})x,\alpha(y)]-[\alpha(x),L^{\ast}_{\{,\}}a^{\ast})y]\nonumber\\&+L^{\ast}_{\{,\}}(L^{\ast}_{[,]}(x)a^{\ast})\alpha(y)+L^{\ast}_{\{,\}}(R^{\ast}_{[,]}(x) a^{\ast})\alpha(y)-L^{\ast}_{\{,\}}(L^{\ast}_{[,]}(y)a^{\ast})\alpha(x)\nonumber\\
  &-L^{\ast}_{\{,\}}(R^{\ast}_{[,]}(y)a^{\ast})\alpha(x)-R^{\ast}_{\{,\}}(L^{\ast}_{[,]}(x)a^{\ast})\alpha(x)-R^{\ast}_{\{,\}}(R^{\ast}_{[,]}(x)a^{\ast})\alpha(x).\end{align*} Furthermore, we have
\begin{align*}
  &\langle L^{\ast}_{\{,\}}(\alpha^{\ast}(a^{\ast}))([x,y])-[L^{\ast}_{\{,\}}(a^{\ast})x,\alpha(y)]-[\alpha(x),L^{\ast}_{\{,\}}a^{\ast})y]\\
  &\quad\quad+L^{\ast}_{\{,\}}(L^{\ast}_{[,]}(x)a^{\ast})\alpha(y)+L^{\ast}_{\{,\}}(R^{\ast}_{[,]}(x) a^{\ast})\alpha(y)-L^{\ast}_{\{,\}}(L^{\ast}_{[,]}(y)a^{\ast})\alpha(x)\\
  &\quad\quad-L^{\ast}_{\{,\}}(R^{\ast}_{[,]}(y)a^{\ast})\alpha(x)-R^{\ast}_{\{,\}}(L^{\ast}_{[,]}(x)a^{\ast})\alpha(x)-R^{\ast}_{\{,\}}(R^{\ast}_{[,]}(x)a^{\ast})\alpha(x),b^{\ast}\rangle\\
  &\quad=\langle L^{\ast}_{\{,\}}(\alpha^{\ast}(a^{\ast}))([x,y]),b^{\ast}\rangle-\langle[L^{\ast}_{\{,\}}(a^{\ast})x,\alpha(y)],b^{\ast}\rangle-\langle[\alpha(x),L^{\ast}_{\{,\}}a^{\ast})y],b^{\ast}\rangle\\
  &\quad\quad+\langle L^{\ast}_{\{,\}}(L^{\ast}_{[,]}(x)a^{\ast})\alpha(y),b^{\ast}\rangle+\langle L^{\ast}_{\{,\}}(R^{\ast}_{[,]}(x) a^{\ast})\alpha(y),b^{\ast}\rangle-\langle L^{\ast}_{\{,\}}(L^{\ast}_{[,]}(y)a^{\ast})\alpha(x),b^{\ast}\rangle\\
  &\quad\quad-\langle L^{\ast}_{\{,\}}(R^{\ast}_{[,]}(y)a^{\ast})\alpha(x),b^{\ast}\rangle-R^{\ast}_{\{,\}}(L^{\ast}_{[,]}(x)a^{\ast})\alpha(x),b^{\ast}\rangle-\langle R^{\ast}_{\{,\}}(R^{\ast}_{[,]}(x)a^{\ast})\alpha(x),b^{\ast}\rangle\\
  &\quad=-\langle[x,y],\{\alpha^{\ast}(a^{\ast}),b^{\ast}\}\rangle-\langle x,\{a^{\ast},R^{\ast}_{\{,\}}(\alpha(y))b^{\ast}\}\rangle-\langle y,\{a^{\ast},L^{\ast}_{[,]}(\alpha(x)) b^{\ast}\}\rangle\\
  &\quad\quad-\langle\alpha(y),\{L^{\ast}_{[,]}(x)a^{\ast},b^{\ast}\}\rangle-\langle\alpha(y),\{R^{\ast}_{[,]}(x)a^{\ast},b^{\ast}\}\rangle+\langle\alpha(x),\{ L^{\ast}_{[,]}(y)a^{\ast},b^{\ast}\}\\
  &\quad\quad+\langle\alpha(x),\{R^{\ast}_{[,]}(y)a^{\ast},b^{\ast}\}\rangle+\langle\alpha(x),\{a^{\ast},L^{\ast}(y)a^{\ast}\}\rangle+\langle\alpha(x),\{b^{\ast},R^{\ast}_{[,]}(y)a^{\ast}\}\rangle\\
  &\quad=-\langle\Delta[x,y],\alpha^{\ast}(a^{\ast})\otimes b^{\ast}\rangle-\langle\Delta x,a^{\ast}\otimes R^{\ast}_{[,]}(\alpha(y))b^{\ast}\rangle-\langle\Delta y,a^{\ast}\otimes L^{\ast}_{[,]}(\alpha(x))b^{\ast}\rangle\\&\quad\quad-\langle\Delta\alpha y,L^{\ast}_{[,]}(x)a^{\ast}\otimes b^{\ast}\rangle-\langle\Delta\alpha(y),R^{\ast}_{[,]}(x)a^{\ast}\otimes b^{\ast}\rangle+\langle\Delta\alpha(x),L^{\ast}_{[,]}(y)a^{\ast}\otimes b^{\ast}\rangle\\
  &\quad\quad+\langle\Delta\alpha(x),R^{\ast}_{[,]}(y)a^{\ast}\otimes b^{\ast}\rangle+\langle\Delta\alpha(x),a^{\ast}\otimes L^{\ast}_{[,]}(y)a^{\ast}\rangle+\langle\Delta\alpha(x),b^{\ast}\otimes R^{\ast}_{[,]}(y)a^{\ast}\rangle\\
  &\quad=\langle(\alpha\otimes id)\Delta[x,y],a^{\ast}\otimes b^{\ast}\rangle+\langle(id\otimes R_{[,]}(\alpha(y)))\Delta x,a^{\ast}\otimes b^{\ast}\rangle\\
  &\quad\quad+\langle(id\otimes L_{[,]}(\alpha(x)))\Delta y,a^{\ast}\otimes b^{\ast}\rangle+\langle (L_{[,]}(x)\otimes id)\Delta\alpha(y),a^{\ast}\otimes b^{\ast}\rangle\\
  &\quad\quad+\langle(R_{[,]}(x)\otimes id)\Delta\alpha(y),a^{\ast}\otimes b^{\ast}\rangle-\langle(L_{[,]}(y)\otimes id)\Delta\alpha(x),a^{\ast}\otimes b^{\ast}\rangle\\
  &\quad\quad-\langle(R_{[,]}(y)\otimes id)\Delta\alpha(x),a^{\ast}\otimes b^{\ast}\rangle-\langle(id\otimes L_{[,]}(y))\Delta\alpha(x),b^{\ast}\otimes a^{\ast}\rangle\\
  &\quad\quad-\langle(id\otimes R_{[,]}(y))\Delta\alpha(x),b^{\ast}\otimes a^{\ast}\rangle\\
   &\quad=\langle(\alpha\otimes id)\Delta[x,y],a^{\ast}\otimes b^{\ast}\rangle+\langle(id\otimes R_{[,]}(\alpha(y)))\Delta x,a^{\ast}\otimes b^{\ast}\rangle\\
  &\quad\quad+\langle(id\otimes L_{[,]}(\alpha(x)))\Delta y,a^{\ast}\otimes b^{\ast}\rangle+\langle (L_{[,]}(x)\otimes id)\Delta\alpha(y),a^{\ast}\otimes b^{\ast}\rangle\\
  &\quad\quad+\langle(R_{[,]}(x)\otimes id)\Delta\alpha(y),a^{\ast}\otimes b^{\ast}\rangle-\langle(L_{[,]}(y)\otimes id)\Delta\alpha(x),a^{\ast}\otimes b^{\ast}\rangle\\
  &\quad\quad-\langle(R_{[,]}(y)\otimes id)\Delta\alpha(x),a^{\ast}\otimes b^{\ast}\rangle-\langle\sigma((id\otimes L_{[,]}(y))\Delta\alpha(x)),a^{\ast}\otimes b^{\ast}\rangle\\
  &\quad\quad-\langle\sigma((id\otimes R_{[,]}(y))\Delta\alpha(x)),a^{\ast}\otimes b^{\ast}\rangle\\
   &\quad=\langle(\alpha\otimes id)\Delta[x,y]+(id\otimes R_{[,]}(\alpha(y)))\Delta x+(id\otimes L_{[,]}(\alpha(x)))\Delta y\\
   &\quad\quad+ (L_{[,]}(x)\otimes id)\Delta\alpha(y)+(R_{[,]}(x)\otimes id)\Delta\alpha(y) \\ &\quad\quad-(L_{[,]}(y)\otimes id)\Delta\alpha(x)-(R_{[,]}(y)\otimes id)\Delta\alpha(x) \\ &\quad\quad-\sigma((id\otimes L_{[,]}(y))\Delta\alpha(x))-\sigma((id\otimes R_{[,]}(y))\Delta\alpha(x)),a^{\ast}\otimes b^{\ast}\rangle.
\end{align*}
Thus, we conclude that \eqref{antisymmetric cond.} corresponds to \eqref{bialgebra2}. This completes the proof.
\end{proof}
\section{Hom-Leibniz dendriform algebras}
\label{sec:homleibnizdendrifalgs}
\begin{defn} \label{def:homleibnizdendriform}
Hom-Leibniz dendriform algebras are quadruples $( A, \prec, \succ, \alpha)$ consisting of a linear space $ A$, bilinear maps $\prec, \succ:  A\times  A\rightarrow  A,$ and linear maps $\alpha:  A\rightarrow  A$ satisfying
\begin{eqnarray}
([x , y])\succ\alpha(z) &=& \alpha(x)\succ (y \succ z)-\alpha(y)\succ(x\succ z),\label{Hom-Leibniz dendriform1}\\
\alpha(x)\succ(y\prec z)&=&(x\succ y) \prec\alpha(z)+\alpha(y)\prec ([x,z]),\label{Hom-Leibniz dendriform2}\\
\alpha(x)\prec([y,z])&=&(x\prec y) \prec\alpha(z)+\alpha(y)\succ (x \prec z),\label{Hom-Leibniz dendriform3}
\end{eqnarray}
where
\begin{eqnarray}\label{associative-dendriform}
[x,y] = x \prec y + x \succ y.
\end{eqnarray}
A Hom-Leibniz dendriform algebra is called multiplicative if
the linear map $\alpha:A\rightarrow A$ is multiplicative with respect to $\prec$ and $\succ$, that is,
for all $x,y \in A$,
$$
\alpha (x \prec y) =\alpha (x) \prec \alpha (y), \quad \alpha (x \succ y) =\alpha (x) \succ \alpha (y).
$$
\end{defn}

\begin{defn}
Let $( A, \prec, \succ, \alpha)$ and $( A', \prec',  \succ', \alpha')$ be two Hom-Leibniz dendriform algebras. A linear map $f:  A\rightarrow  A'$ is a Hom-Leibniz dendriform algebra morphism if
\begin{eqnarray*}
\prec'\circ(f\otimes f)= f\circ\prec,\quad  \succ'\circ(f\otimes f)= f\circ\succ \mbox{ and } f\circ\alpha= \alpha'\circ f.
\end{eqnarray*}
\end{defn}
\begin{prop}
Let $(A, \prec,\succ,\alpha)$ be a Hom-Leibniz dendriform algebra. Then the binary operation
$[\cdot,\cdot]: A\otimes A\rightarrow A$, given for all $x, y \in A$ by
$[x,y] = x \prec y + x \succ y,$
defines a Hom-Leibniz algebra, which is called the sub-adjacent Hom-Leibniz algebra of $(A, \prec,\succ,\alpha)$, and
$(A, \prec,\succ,\alpha)$ is called a compatible Hom-Leibniz dendriform algebra structure on $(A, [\cdot,\cdot],\alpha)$.
\end{prop}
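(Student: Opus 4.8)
The plan is to verify directly the Hom-Leibniz identity \eqref{Leibnizidentity} for the bracket $[x,y]=x\prec y+x\succ y$; since $\alpha$ is a given linear map and $[\cdot,\cdot]$ is bilinear, checking Definition \ref{def:homleibnizalg} reduces to this single identity. The strategy is purely computational: expand both $[\alpha(x),[y,z]]$ and $[[x,y],\alpha(z)]+[\alpha(y),[x,z]]$ by repeatedly applying the definition \eqref{associative-dendriform} together with bilinearity of $\prec$ and $\succ$, and then rewrite the resulting summands using the three defining axioms \eqref{Hom-Leibniz dendriform1}--\eqref{Hom-Leibniz dendriform3} until the two sides agree term by term.

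First I would expand the left-hand side. Writing $[\alpha(x),[y,z]]=\alpha(x)\prec[y,z]+\alpha(x)\succ[y,z]$ and splitting $[y,z]=y\prec z+y\succ z$ in the $\succ$-slot, one obtains the three summands $\alpha(x)\prec([y,z])$, $\alpha(x)\succ(y\prec z)$, and $\alpha(x)\succ(y\succ z)$. The first is rewritten by \eqref{Hom-Leibniz dendriform3} and the second by \eqref{Hom-Leibniz dendriform2}, producing the four terms $(x\prec y)\prec\alpha(z)$, $\alpha(y)\succ(x\prec z)$, $(x\succ y)\prec\alpha(z)$, and $\alpha(y)\prec([x,z])$, while the term $\alpha(x)\succ(y\succ z)$ is left untouched at this stage.

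Next I would expand the right-hand side. In $[[x,y],\alpha(z)]$ the $\succ$-part gives $([x,y])\succ\alpha(z)$, to which \eqref{Hom-Leibniz dendriform1} applies and yields $\alpha(x)\succ(y\succ z)-\alpha(y)\succ(x\succ z)$, whereas the $\prec$-part contributes $(x\prec y)\prec\alpha(z)+(x\succ y)\prec\alpha(z)$. Expanding $[\alpha(y),[x,z]]$ produces $\alpha(y)\prec([x,z])+\alpha(y)\succ(x\prec z)+\alpha(y)\succ(x\succ z)$. The two occurrences of $\alpha(y)\succ(x\succ z)$ carry opposite signs and cancel, and the remaining five summands coincide exactly with the five terms collected from the left-hand side, establishing \eqref{Leibnizidentity}.

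There is no deep obstacle here; the entire content lies in the bookkeeping. The one step that deserves attention is the role of axiom \eqref{Hom-Leibniz dendriform1}: the term $\alpha(x)\succ(y\succ z)$ admits no simplification on the left-hand side, so the argument hinges on recognizing that it is reproduced on the right-hand side precisely through \eqref{Hom-Leibniz dendriform1}, in tandem with the cancellation of the $\alpha(y)\succ(x\succ z)$ contributions. The only place an error could creep in is keeping careful track of which bracket $[x,z]$ or $[y,z]$ must be treated as an atomic argument (so that \eqref{Hom-Leibniz dendriform2} and \eqref{Hom-Leibniz dendriform3} can be invoked verbatim) as opposed to split via \eqref{associative-dendriform}.
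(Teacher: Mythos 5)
Your proposal is correct and takes essentially the same route as the paper: both are direct verifications of the Hom-Leibniz identity \eqref{Leibnizidentity} obtained by expanding the bracket via \eqref{associative-dendriform} and invoking each of the axioms \eqref{Hom-Leibniz dendriform1}--\eqref{Hom-Leibniz dendriform3} exactly once, the only difference being presentational (you rewrite both sides and match the five surviving terms, while the paper moves everything to one side and groups the nine terms into three blocks, each vanishing by one axiom). No gaps.
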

\begin{proof}
For all $x, y, z\in A$,
\begin{align*}
&[\alpha(x),[y,z]]-[[x,y],\alpha(z)]-[\alpha(y),[x,z]]\\
&=[\alpha(x),y\prec z+y\succ z]-[x\prec y+x\succ y,\alpha(z)]-[\alpha(y),x\prec z+x\succ z]\\
&=\alpha(x)\prec(y\prec z+y\succ z)+\alpha(x)\succ(y\prec z+y\succ z) \\
&\quad -(x\prec y+ x\succ y)\prec\alpha(z)-(x\prec y+x\succ y)\succ\alpha(z)\\
&\quad -\alpha(y)\prec(x\prec z+ x\succ z)-\alpha(y)\succ(x\prec z+x\succ z)\\
&=\underbrace{\Big(-[x,y]\succ\alpha(z)+\alpha(x)\succ(y\succ z)-\alpha(y)\succ(x\succ z)\Big)}_{\text{$=0$ by \eqref{Hom-Leibniz dendriform1}}} \\
&\quad +\underbrace{\Big(\alpha(x)\succ(y\prec z)-(x\succ y)\prec\alpha(z)-\alpha(y)\succ([x,z])\Big)}_{\text{$=0$ by \eqref{Hom-Leibniz dendriform2}}}\\
&\quad +\underbrace{\Big(\alpha(x)\prec([y,z])-(x\prec y)\prec\alpha(z)-\alpha(y)\succ(x\prec z)\Big)}_{\text{$=0$ by \eqref{Hom-Leibniz dendriform3}}}=0.
\qedhere \end{align*}
\end{proof}
\begin{ex}\label{example-7}
Let $V$ be a linear space. On the linear space $\mathfrak{gl}(V)\oplus V$, define two binary operations $\prec,~\succ:(\mathfrak{gl}(V)\oplus V)\otimes (\mathfrak{gl}(V)\oplus V)\rightarrow \mathfrak{gl}(V)\oplus V$ and a linear map $\delta_\beta:\mathfrak{gl}(V)\oplus V\rightarrow \mathfrak{gl}(V)\oplus V$ by
\begin{eqnarray*}
(A+u)\prec(B+v)&=&AB+Av,\\ (A+u)\succ(B+v)&=&-BA,\\
\delta_\beta(A+u)&=&Ad_\beta(A)+\beta(u),\quad
\forall A,B\in\mathfrak{gl}(V),~ u,v\in V,
\end{eqnarray*}
with the adjoint action $Ad_\beta(A)=\beta A\beta^{-1}$ for all $\beta\in\mathfrak{gl}(V).$
Then $(\mathfrak{gl}(V)\oplus V,\prec,\succ,\delta_\beta)$ is a Hom-Leibniz dendriform algebra. Its sub-adjacent Hom-Leibniz algebra is exactly the one underlying a Hom-omni-Lie algebra \cite{ArmakanSilvestrov2020:colorhomliehomleibnomnihomlie,ShengXiong:LMLA2015:OnHomLiealg}.
\end{ex}
\begin{thm} \label{thm:homLeibDenr:ytwist}
Let $\mathcal{A}=(A, \prec ,\succ)$ be a Leibniz dendriform algebra and
$\alpha :A\rightarrow A$ be a Leibniz dendriform algebras
morphism of  $\mathcal{A}.$ With bilinear maps $\prec_{\alpha}: A
\times A\rightarrow A$ and $\succ_\alpha: A
\times A\rightarrow A$ defined for all $x, y\in A$ by
$
x\prec _{\alpha}y=\alpha (x\prec y)$ and $x\succ_{\alpha}y=\alpha(x\succ y),$
the quadruple  $\mathcal{A}_\alpha=(A_\alpha=A, \prec _{\alpha},\succ_\alpha, \alpha)$ is a Hom-Leibniz dendriform algebra, called $\alpha$-twist (Yau twist) of $\mathcal{A}$. Moreover, assume that $\mathcal{A}'=(A', \prec',\succ')$ is another Leibniz dendriform algebra, and
$\alpha':\mathcal{A}'\rightarrow \mathcal{A}'$ is a Leibniz dendriform algebras morphism.
Let $f:\mathcal{A}\rightarrow \mathcal{A}'$ be a
Leibniz dendriform algebras morphism satisfying $f\circ \alpha =\alpha
'\circ f$. Then, $f:\mathcal{A}_{\alpha }\rightarrow \mathcal{A}'_{\alpha'}$ is a
Hom-Leibniz dendriform algebras morphism.
\end{thm}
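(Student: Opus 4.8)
The plan is to verify directly that the quadruple $\mathcal{A}_\alpha=(A,\prec_\alpha,\succ_\alpha,\alpha)$ satisfies the three defining identities \eqref{Hom-Leibniz dendriform1}--\eqref{Hom-Leibniz dendriform3}, and then to dispatch the morphism claim. The entire argument rests on one bookkeeping principle: each twisted product is $a\star_\alpha b=\alpha(a\star b)$ for $\star\in\{\prec,\succ\}$, and because $\alpha$ is a morphism of $\mathcal{A}$ (so that $\alpha(a\prec b)=\alpha(a)\prec\alpha(b)$ and $\alpha(a\succ b)=\alpha(a)\succ\alpha(b)$), any twisted expression can be rewritten by pulling a single outer $\alpha$ to the front and replacing all arguments by their $\alpha$-images.

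First I would record the behaviour of the twisted bracket. Writing $[\cdot,\cdot]$ for the Leibniz bracket of $\mathcal{A}$ and $[\cdot,\cdot]_\alpha$ for that of $\mathcal{A}_\alpha$, the definition \eqref{associative-dendriform} gives $[x,y]_\alpha=x\prec_\alpha y+x\succ_\alpha y=\alpha(x\prec y+x\succ y)=\alpha([x,y])$, which in turn equals $[\alpha(x),\alpha(y)]$ by the morphism property. Thus $[\cdot,\cdot]_\alpha$ is simultaneously $\alpha$ applied to the old bracket and the old bracket evaluated on $\alpha$-images, and this is the only nontrivial identification needed.

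Next, for each axiom I would expand both sides of the twisted identity by means of $a\star_\alpha b=\alpha(a\star b)$ and the morphism property, transporting every $\alpha$ outward. For \eqref{Hom-Leibniz dendriform1}, the left-hand side becomes $[x,y]_\alpha\succ_\alpha\alpha(z)=\alpha\bigl([\alpha(x),\alpha(y)]\succ\alpha(z)\bigr)$, while the right-hand side becomes $\alpha\bigl(\alpha(x)\succ(\alpha(y)\succ\alpha(z))-\alpha(y)\succ(\alpha(x)\succ\alpha(z))\bigr)$; the two agree precisely by the untwisted identity \eqref{Hom-Leibniz dendriform1} of $\mathcal{A}$ evaluated at the triple $(\alpha(x),\alpha(y),\alpha(z))$. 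Axioms \eqref{Hom-Leibniz dendriform2} and \eqref{Hom-Leibniz dendriform3} follow the same template: in each case the twisted identity is exactly $\alpha$ applied to the corresponding untwisted identity of $\mathcal{A}$ at $(\alpha(x),\alpha(y),\alpha(z))$. Multiplicativity of $\mathcal{A}_\alpha$ is then immediate, since $\alpha(x\star_\alpha y)=\alpha^2(x\star y)=\alpha(\alpha(x)\star\alpha(y))=\alpha(x)\star_\alpha\alpha(y)$ for $\star\in\{\prec,\succ\}$.

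Finally, for the morphism statement I would check the two product-compatibilities together with the intertwining condition. The condition $f\circ\alpha=\alpha'\circ f$ is assumed. For the products, using $f\alpha=\alpha'f$ and that $f$ is a Leibniz dendriform morphism, $f(x\prec_\alpha y)=f(\alpha(x\prec y))=\alpha'(f(x\prec y))=\alpha'(f(x)\prec'f(y))=f(x)\prec'_{\alpha'}f(y)$, and identically for $\succ$. I expect no genuine obstacle; the only risk is bookkeeping, and the point to keep in view is that it is precisely the hypothesis that $\alpha$ is a morphism which lets the single $\alpha$ be carried across each product, so that the twisted axioms collapse onto the untwisted ones.
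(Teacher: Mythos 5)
Your proposal is correct and follows essentially the same route as the paper: expand the twisted products $x\star_\alpha y=\alpha(x\star y)$, use the morphism property of $\alpha$ to transport it across the operations, and reduce each twisted axiom to the corresponding untwisted axiom of $\mathcal{A}$ evaluated at $\alpha$-shifted arguments (the paper pushes $\alpha$ inward to land on the triple $(\alpha^2(x),\alpha^2(y),\alpha^2(z))$, or uses an add-and-subtract step at $(x,y,z)$, whereas you keep a single outer $\alpha$ and evaluate at $(\alpha(x),\alpha(y),\alpha(z))$ — a purely cosmetic difference). Your verification of the morphism statement coincides with the paper's computation verbatim.
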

\begin{proof}
Being a Leibniz dendriform algebras
morphism, $\alpha:A\rightarrow A$ is a linear map which is multiplicative with respect to $\prec$ and $\succ$, that is,
$$ \forall\ x,y \in A: \quad
\alpha (x \prec y) =\alpha (x) \prec \alpha (y), \quad \alpha (x \succ y) =\alpha (x) \succ \alpha (y).
$$
The equalities \eqref{Hom-Leibniz dendriform1}, \eqref{Hom-Leibniz dendriform2} and \eqref{Hom-Leibniz dendriform3} in $\mathcal{A}_{\alpha}$
are proved as follows:
\begin{align*}
([x,y]_\alpha)\succ_{\alpha}\alpha(z) = & (x\prec_\alpha y+x\succ_\alpha y)\succ_{\alpha}\alpha(z)\\
=&
(\alpha (x\prec y)+\alpha(x\succ y))\succ_{\alpha}\alpha(z) \\
=&\alpha((\alpha (x\prec y)+\alpha(x\succ y))\succ\alpha(z)) \\
^\text{(multiplicativity)}=&(\alpha^2 (x\prec y)+\alpha^2(x\succ y))\succ\alpha^2(z)
\\
^\text{(multiplicativity)}=&
(\alpha^2 (x)\prec \alpha^2 (y)+\alpha^2(x)\succ \alpha^2 (y))\succ\alpha^2(z)
\\
=&[\alpha^{2}(x),\alpha^{2}(y)]\succ\alpha^{2}(z)\\
^\text{($\mathcal{A}$ is Leibniz dendriform)} =&\alpha^{2}(x)\succ(\alpha^{2}(y)\succ\alpha^{2}(z))-\alpha^{2}(y)\succ(\alpha^{2}(x)\succ\alpha^{2}(z))\\
^\text{(multiplicativity)}=&\alpha^{2}(x)\succ\alpha(\alpha(y)\succ\alpha(z))-\alpha^{2}(y)\succ\alpha(\alpha(x)\succ\alpha(z))\\
=&\alpha(x)\succ_{\alpha}(\alpha(y)\succ\alpha(z))-\alpha(y)\succ_{\alpha}(\alpha(x)\succ\alpha(z))\\
=&\alpha(x)\succ_{\alpha}(y\succ_{\alpha} z)-\alpha(y)\succ_{\alpha}(x\succ_{\alpha} z), \\*[0,2cm] \alpha(x)\succ_{\alpha}(y\prec_{\alpha} z)=& \alpha(x \succ \alpha (y \prec z)) \\
^\text{(multiplicativity)}=& \alpha^2(x \succ(y\prec z)) \\
=& \alpha^2((x\succ y)\prec z))+\alpha^2(x \succ(y\prec z))-\alpha^2((x\succ y) \prec z )\\
=& \alpha^2((x\succ y)\prec z))+\alpha^2(x \succ(y\prec z)-(x\succ y) \prec z )\\
^\text{($\mathcal{A}$ is Leibniz dendriform)} =& \alpha^2((x\succ y)\prec z))+\alpha^2(y\prec [x,z])\\
^\text{(multiplicativity)} =& \alpha(\alpha(x\succ y)\prec \alpha(z))+\alpha(\alpha(y)\prec \alpha([x,z]))\\
=& \alpha(\alpha(x\succ y)\prec \alpha(z))+\alpha(\alpha(y)\prec \alpha(x\prec z+x\succ z))\\
=& \alpha(\alpha(x\succ y)\prec \alpha(z))+\alpha(\alpha(y)\prec (\alpha(x\prec z)+\alpha(x\succ z)))\\
=&\alpha(\alpha(x\succ y)\prec \alpha(z))+\alpha(\alpha(y)\prec (x\prec_{\alpha} z+x\succ_{\alpha} z))\\
=&(x\succ_{\alpha} y) \prec_{\alpha}\alpha(z)+\alpha(y)\prec_{\alpha} ([x,z]_{\alpha}), \\*[0,2cm]
\alpha(x)\prec_\alpha ([y,z]_\alpha)=&\alpha(x)\prec_\alpha (y\prec_{\alpha} z+y\succ_{\alpha} z) \\
^\text{(multiplicativity)}=&\alpha^2(x\prec (y\prec z+y\succ z)) \\
^\text{(multiplicativity)}=&\alpha^2(x\prec [y,z]) \\
^\text{($\mathcal{A}$ is Leibniz dendriform)}=&\alpha^2((x\prec y) \prec z + y\succ (x \prec z)) \\
=&\alpha^2((x\prec y) \prec z)+\alpha^2(y\succ (x \prec z)) \\
^\text{(multiplicativity)}=&\alpha(\alpha(x\prec y) \prec \alpha(z))+\alpha(\alpha(y)\succ \alpha(x \prec z)) \\
=&(x\prec_\alpha y) \prec_\alpha \alpha(z)+\alpha(y)\succ_\alpha (x \prec_\alpha z).
\end{align*}

The second assertion follows from
\begin{align*}
    f(x\prec_{\alpha} y)&=f(\alpha(x\prec y))
    =\alpha'( f(x\prec' y)) = \alpha' (f(x) \prec' f(y))
    =f(x)\prec'_{\alpha'} f(y), \\
    f(x\succ_{\alpha} y)&=f(\alpha(x\succ y))
    =\alpha' (f(x\succ' y)) = \alpha' (f(x) \succ' f(y))
    =f(x)\succ'_{\alpha'} f(y).
\qedhere
\end{align*}
\end{proof}

\begin{prop}\label{prop:isma:HomLeibdendriform}
Let $\mathcal{A}=(A, \prec ,\succ,\alpha)$ be a Hom-Leibniz dendriform algebra and
$\alpha' :A\rightarrow A$ be a Hom-Leibniz dendriform algebras
morphism of $\mathcal{A}.$ With bilinear maps $\prec_{\alpha'}: A
\times A\rightarrow A$ and $\succ_{\alpha'}: A
\times A\rightarrow A$ defined for all $x, y\in A$ by
$
x\prec _{\alpha'}y=\alpha' (x\prec y)$ and $x\succ_{\alpha'}y=\alpha'(x\succ y),$
the quadruple  $\mathcal{A}_{\alpha'}=(A, \prec _{\alpha'},\succ_{\alpha'}, \alpha\circ\alpha')$ is a Hom-Leibniz dendriform algebra.

\end{prop}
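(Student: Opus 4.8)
The plan is to verify directly that the three defining identities \eqref{Hom-Leibniz dendriform1}--\eqref{Hom-Leibniz dendriform3} hold for the quadruple $(A,\prec_{\alpha'},\succ_{\alpha'},\alpha\circ\alpha')$, proceeding exactly as in the Yau-twist argument of Theorem \ref{thm:homLeibDenr:ytwist}, with the one difference that the base algebra already carries the twisting map $\alpha$, so that the original identities must be applied with $\alpha$ left in place on the appropriate factor.

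First I would record the three elementary facts that drive the whole computation. Since $\alpha'$ is a morphism of $\mathcal{A}$ into itself, it is multiplicative with respect to both products, $\alpha'(x\prec y)=\alpha'(x)\prec\alpha'(y)$ and $\alpha'(x\succ y)=\alpha'(x)\succ\alpha'(y)$, and it commutes with $\alpha$, i.e. $\alpha'\circ\alpha=\alpha\circ\alpha'$. From multiplicativity one gets at once that $\alpha'$ preserves the sub-adjacent bracket, $\alpha'([x,y])=[\alpha'(x),\alpha'(y)]$, whence the twisted bracket satisfies $[x,y]_{\alpha'}=x\prec_{\alpha'}y+x\succ_{\alpha'}y=\alpha'([x,y])$. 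These identities, together with repeated use of commutativity in the form $\alpha'\circ\alpha\circ\alpha'=\alpha\circ\alpha'^{2}$, are the only structural inputs needed.

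Then I would check the identities one at a time. For \eqref{Hom-Leibniz dendriform1}, expanding $([x,y]_{\alpha'})\succ_{\alpha'}(\alpha\alpha')(z)=\alpha'\big(\alpha'([x,y])\succ\alpha\alpha'(z)\big)$ and pushing $\alpha'$ through by multiplicativity reduces the left-hand side to $[\alpha'^{2}(x),\alpha'^{2}(y)]\succ\alpha(\alpha'^{2}(z))$. Applying the original identity \eqref{Hom-Leibniz dendriform1} of $\mathcal{A}$ to the arguments $\alpha'^{2}(x),\alpha'^{2}(y),\alpha'^{2}(z)$ turns this into $\alpha(\alpha'^{2}(x))\succ(\alpha'^{2}(y)\succ\alpha'^{2}(z))-\alpha(\alpha'^{2}(y))\succ(\alpha'^{2}(x)\succ\alpha'^{2}(z))$, which is precisely the expansion of the target right-hand side $(\alpha\alpha')(x)\succ_{\alpha'}(y\succ_{\alpha'}z)-(\alpha\alpha')(y)\succ_{\alpha'}(x\succ_{\alpha'}z)$ after the same manipulations. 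Identities \eqref{Hom-Leibniz dendriform2} and \eqref{Hom-Leibniz dendriform3} follow by the identical scheme, each time factoring out one $\alpha'$ from both sides, collecting the twisting maps into $\alpha\circ\alpha'^{2}$ on the correct factor via commutativity, invoking the corresponding original identity, and reassembling.

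The calculations are routine and contain no genuine difficulty; the only thing to watch is the bookkeeping of the two commuting maps and making sure the surviving $\alpha$ lands on the same factor as in the untwisted identity. The point worth stressing is that multiplicativity of $\alpha$ is never used: the argument relies solely on $\alpha'$ being a morphism and on $\alpha,\alpha'$ commuting, so $\mathcal{A}$ itself need not be multiplicative for $\mathcal{A}_{\alpha'}$ to be a Hom-Leibniz dendriform algebra. This is exactly why the new twisting map emerges as the composite $\alpha\circ\alpha'$ rather than a single endomorphism.
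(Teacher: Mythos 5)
Your proposal is correct and follows essentially the same route as the paper's proof: expand the twisted operations, push $\alpha'$ through by its morphism property, use the commutation $\alpha\alpha'=\alpha'\alpha$ to collect the maps, apply the original axioms of $\mathcal{A}$ at the arguments $\alpha'^{2}(x),\alpha'^{2}(y),\alpha'^{2}(z)$, and reassemble. Your closing observation that multiplicativity of $\alpha$ is never needed is also consistent with the paper, whose computation likewise uses only the morphism property of $\alpha'$ and the commutativity of the two maps.
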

\begin{proof}
Being a Hom-Leibniz dendriform algebras
morphism, $\alpha':A\rightarrow A$ is a linear map which is multiplicative with respect to $\prec$ and $\succ$. Then \eqref{Hom-Leibniz dendriform1}, \eqref{Hom-Leibniz dendriform2} and \eqref{Hom-Leibniz dendriform3} in $\mathcal{A}_{\alpha'}$
are proved for all $x,y,z\in A$ as follows:
\begin{align*}
([x,y]_{\alpha'})\succ_{\alpha'}\alpha\alpha'(z)=&(x\prec_{\alpha'}y+x\succ_{\alpha'} y)\succ_{\alpha'}\alpha\alpha'(z)\\
=&(\alpha'(x\prec y)+\alpha'(x\succ y))\succ_{\alpha'}\alpha\alpha'(z)\\
=&\alpha'((\alpha'(x\prec y)+\alpha'(x\succ y))\succ\alpha\alpha'(z))\\
^\text{($\alpha'$ morphism)}=&(\alpha'^{2}(x\prec y)+\alpha'^{2}(x\succ y))\succ\alpha'\alpha\alpha'(z)\\
^\text{($\alpha'$ morphism)} =&(\alpha'^{2}(x)\prec\alpha'^{2}(y)+\alpha'^{2}(x)\succ\alpha'^{2}(y))\succ\alpha'\alpha\alpha'(z)\\
^\text{($\alpha$, $\alpha'$ commute)} =&(\alpha'^{2}(x)\prec\alpha'^{2}(y)+\alpha'^{2}(x)\succ\alpha'^{2}(y))\succ\alpha\alpha'^{2}(z)\\
=&[\alpha'^{2}(x),\alpha'^{2}(y)]\succ\alpha\alpha'^{2}(z)\\
^\text{($\mathcal{A}$ Hom-Leib. dendr.)} =&\alpha\alpha'^{2}(x)\succ(\alpha'^{2}(y)\succ\alpha'^{2}(z))-\alpha\alpha'^{2}(y)\succ (\alpha'^{2}(x)\succ\alpha'^{2}(z))\\
^\text{($\alpha'$ morphism)}=&\alpha\alpha'^{2}(x)\succ\alpha'(\alpha'(y)\succ\alpha'(z))
-\alpha\alpha'^{2}(y)\succ\alpha'(\alpha'(x)\succ\alpha'(z))\\
^\text{($\alpha$, $\alpha'$  commute)} =&\alpha'\alpha\alpha'(x)\succ\alpha'(\alpha'(y)\succ\alpha'(z))
-\alpha'\alpha\alpha'(y)\succ\alpha'(\alpha'(x)\succ\alpha'(z))\\
=&\alpha\alpha'(x)\succ_{\alpha'}(\alpha'(y)\succ\alpha'(z))
-\alpha\alpha'(y)\succ_{\alpha'}(\alpha'(x)\succ\alpha'(z))\\
=&\alpha\alpha'(x)\succ_{\alpha'}(y\succ_{\alpha'} z)
-\alpha\alpha'(y)\succ_{\alpha'}(x\succ_{\alpha'} z),\\*[0,2cm]
\alpha\alpha'(x)\succ_{\alpha'}(y\prec_{\alpha'} z)
=&\alpha\alpha'(x)\succ_{\alpha'}(\alpha'(y\prec z))\\
=&\alpha'(\alpha\alpha'(x)\succ\alpha'(y\prec z))\\
^\text{($\alpha'$ morphism)}=&\alpha'\alpha\alpha'(x)\succ\alpha'^{2}(y\prec z)\\
^\text{($\alpha'$ morphism)}=&\alpha'\alpha\alpha'(x)\succ(\alpha'^{2}(y)\prec \alpha'^{2}(z))\\
^\text{($\alpha$, $\alpha'$ commute)}=&\alpha\alpha'^{2}(x)\succ(\alpha'^{2}(y)\prec \alpha'^{2}(z))\\
^\text{($\mathcal{A}$ Hom-Leib. dendr.)} =&(\alpha'^{2}(x)\succ\alpha'^{2}(y))\prec\alpha\alpha'^{2}(z)
+\alpha\alpha'^{2}(y)\prec([\alpha'^{2}(x),\alpha'^{2}(z)])\\
=&(\alpha'^{2}(x)\succ\alpha'^{2}(y))\prec\alpha\alpha'^{2}(z)\\
&+\alpha\alpha'^{2}(y)\prec(\alpha'^{2}(x)\prec\alpha'^{2}(z)+\alpha'^{2}(x)\succ\alpha'^{2}(z))\\
^\text{($\alpha$, $\alpha'$  commute)} =&(\alpha'^{2}(x)\succ\alpha'^{2}(y))\prec\alpha'\alpha\alpha'(z)\\
&+\alpha'\alpha\alpha'(y)\prec(\alpha'^{2}(x)\prec\alpha'^{2}(z)+\alpha'^{2}(x)\succ\alpha'^{2}(z))\\
^\text{($\alpha'$ morphism)}= &\alpha'(\alpha'(x)\succ\alpha'(y))\prec\alpha'\alpha\alpha'(z)\\
&+\alpha'\alpha\alpha'(y)\prec\alpha'(\alpha'(x)\prec\alpha'(z)+\alpha'(x)\succ\alpha'(z))\\
=&(x\succ_{\alpha'} y) \prec_{\alpha'}\alpha\alpha'(z)+\alpha\alpha'(y)\prec_{\alpha'}(x\prec_{\alpha'}z+x\succ_{\alpha'} z)\\
=&(x\succ_{\alpha'} y) \prec_{\alpha'}\alpha\alpha'(z)+\alpha\alpha'(y)\prec_{\alpha'}([x,z]_{\alpha'}),\\*[0,2cm]
\alpha\alpha'(x)\prec_{\alpha'}([y,z]_{\alpha'})
=&\alpha\alpha'(x)\prec_{\alpha'}(y\prec_{\alpha'} z+ y\succ_{\alpha'} z)\\
=&\alpha\alpha'(x)\prec_{\alpha'}(\alpha'(y\prec z)+ \alpha'(y\succ z))\\
=&\alpha'(\alpha\alpha'(x)\prec(\alpha'(y\prec z)+ \alpha'(y\succ z)))\\
^\text{($\alpha'$ morphism)}=&\alpha'\alpha\alpha'(x)\prec(\alpha'^{2}(y\prec z)+ \alpha'^{2}(y\succ z))\\
^\text{($\alpha'$ morphism)}=&\alpha'\alpha\alpha'(x)\prec(\alpha'^{2}(y)\prec \alpha'^{2}(z)+ \alpha'^{2}(y)\succ \alpha'^{2}(z))\\
=&\alpha'\alpha\alpha'(x)\prec([\alpha'^{2}(y), \alpha'^{2}(z)])\\
^\text{($\alpha$, $\alpha'$ commute)}=&\alpha\alpha'^{2}(x)\prec([\alpha'^{2}(y), \alpha'^{2}(z)])\\
^\text{($\mathcal{A}$ Hom-Leib. dendr.)} =&(\alpha'^{2}(x)\prec\alpha'^{2}(y))\prec\alpha\alpha'^{2}(z)
+\alpha\alpha'^{2}(y)\succ(\alpha'^{2}(x)\prec\alpha'^{2}(z))\\
^\text{($\alpha$, $\alpha'$ commute)}
=&(\alpha'^{2}(x)\prec\alpha'^{2}(y))\prec\alpha'\alpha\alpha'(z)
+\alpha'\alpha\alpha'(y)\succ(\alpha'^{2}(x)\prec\alpha'^{2}(z))\\
^\text{($\alpha'$ morphism)} =&\alpha'(\alpha'(x)\prec\alpha'(y))\prec\alpha'\alpha\alpha'(z)
+\alpha'\alpha\alpha'(y)\succ\alpha'(\alpha'(x)\prec\alpha'(z))\\
=&(\alpha'(x)\prec\alpha'(y))\prec_{\alpha'}\alpha\alpha'(z)
+\alpha\alpha'(y)\succ_{\alpha'}(\alpha'(x)\prec\alpha'(z))\\
=&(x\prec_{\alpha'}y)\prec_{\alpha'}\alpha\alpha'(z)
+\alpha\alpha'(y)\succ_{\alpha'}(x\prec_{\alpha'}z).
\qedhere \end{align*}
\end{proof}

\begin{cor} \label{cor:isma:HomLeibdend}
If $\mathcal{A}=(A, \prec,\succ,\alpha)$ is a multiplicative Hom-Leibniz dendriform
algebra, then for any positive integer $n$,
\begin{enumerate}
\item
The $n{\rm th}$ derived Hom-Leibniz dendriform algebra of type $1$ of $\mathcal{A}$ is
defined by
$$\mathcal{A}_{1}^{n}=(A,\prec^{(n)}=\alpha^{n}\circ\prec,\succ^{(n)}=\alpha^{n}\circ\succ,\alpha^{n+1}).$$
\item
The $n{\rm th}$ derived Hom-Leibniz dendriform algebra of type $2$ of $A$ is
defined by
$$\mathcal{A}_{2}^{n}=(A,\prec^{(2^n-1)}=\alpha^{2^n-1}\circ\prec,\succ^{(2^n-1)}=\alpha^{2^n-1}\circ\succ,\alpha^{2^n}).$$
\end{enumerate}
\end{cor}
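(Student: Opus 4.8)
The plan is to obtain both statements as iterated applications of Proposition \ref{prop:isma:HomLeibdendriform}. The preliminary observation I would record is that, because $\mathcal{A}$ is multiplicative, every power $\alpha^{k}$ is itself a Hom-Leibniz dendriform algebra morphism of $\mathcal{A}$: iterating $\alpha(x\prec y)=\alpha(x)\prec\alpha(y)$ and $\alpha(x\succ y)=\alpha(x)\succ\alpha(y)$ gives $\alpha^{k}(x\prec y)=\alpha^{k}(x)\prec\alpha^{k}(y)$ and $\alpha^{k}(x\succ y)=\alpha^{k}(x)\succ\alpha^{k}(y)$ by induction on $k$, while $\alpha^{k}$ commutes with $\alpha$ trivially. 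Thus each $\alpha^{k}$ satisfies the hypotheses demanded of the twisting map $\alpha'$ in Proposition \ref{prop:isma:HomLeibdendriform}.

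For part (i), I would apply Proposition \ref{prop:isma:HomLeibdendriform} a single time, with $\alpha'=\alpha^{n}$. Since $\alpha^{n}$ is a morphism by the observation above, the proposition yields the Hom-Leibniz dendriform algebra $(A,\ \alpha^{n}\circ\prec,\ \alpha^{n}\circ\succ,\ \alpha\circ\alpha^{n})$; as $\alpha\circ\alpha^{n}=\alpha^{n+1}$, this is exactly $\mathcal{A}_{1}^{n}$.

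For part (ii), I would argue by induction on $n$. The base case $n=0$ reads $2^{0}-1=0$ and $2^{0}=1$, so $\mathcal{A}_{2}^{0}=(A,\prec,\succ,\alpha)=\mathcal{A}$, which is a multiplicative Hom-Leibniz dendriform algebra by hypothesis. For the inductive step, assume $\mathcal{A}_{2}^{n}=(A,\ \alpha^{2^{n}-1}\circ\prec,\ \alpha^{2^{n}-1}\circ\succ,\ \alpha^{2^{n}})$ is a multiplicative Hom-Leibniz dendriform algebra, and apply Proposition \ref{prop:isma:HomLeibdendriform} to it with $\alpha'$ equal to its own structure map $\alpha^{2^{n}}$. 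The twisted products become $\alpha^{2^{n}}\circ(\alpha^{2^{n}-1}\circ\prec)=\alpha^{2^{n+1}-1}\circ\prec$ and likewise for $\succ$, while the new structure map is $\alpha^{2^{n}}\circ\alpha^{2^{n}}=\alpha^{2^{n+1}}$; these are precisely the data of $\mathcal{A}_{2}^{n+1}$. Multiplicativity of $\mathcal{A}_{2}^{n+1}$, needed to continue the induction, follows since $\alpha^{2^{n+1}-1}$ is multiplicative with respect to $\prec$ and $\succ$.

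The one point requiring care — and the main (though modest) obstacle — is verifying at each stage that the chosen twisting map genuinely meets the morphism hypothesis of Proposition \ref{prop:isma:HomLeibdendriform} relative to the current \emph{twisted} products, not the original ones, and that it commutes with the current structure map (which in part (ii) is automatic, the twisting map being that structure map). Concretely, one must check $\alpha^{2^{n}}(x\prec_{n}y)=\alpha^{2^{n}}(x)\prec_{n}\alpha^{2^{n}}(y)$ where $\prec_{n}=\alpha^{2^{n}-1}\circ\prec$; using that all powers of $\alpha$ commute and that $\alpha$ is multiplicative, this reduces to the identity $\alpha^{2^{n+1}-1}(x\prec y)=\alpha^{2^{n}-1}\!\big(\alpha^{2^{n}}(x)\prec\alpha^{2^{n}}(y)\big)$, which holds. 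The remaining content is the exponent bookkeeping $2^{n}+(2^{n}-1)=2^{n+1}-1$ and $2^{n}+2^{n}=2^{n+1}$.
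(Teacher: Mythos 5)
Your proof is correct, and part (i) coincides exactly with the paper's argument: one application of Proposition \ref{prop:isma:HomLeibdendriform} with $\alpha'=\alpha^{n}$, noting that multiplicativity makes every power $\alpha^{k}$ a morphism of $\mathcal{A}$. For part (ii), however, you take a genuinely different route. The paper again applies Proposition \ref{prop:isma:HomLeibdendriform} only once, now with $\alpha'=\alpha^{2^{n}-1}$ (a morphism of the \emph{original} algebra $\mathcal{A}$ by the same observation), so that the twisted products are $\alpha^{2^{n}-1}\circ\prec$, $\alpha^{2^{n}-1}\circ\succ$ and the twisted structure map is $\alpha\circ\alpha^{2^{n}-1}=\alpha^{2^{n}}$; no induction is needed. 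Your argument instead realizes $\mathcal{A}_{2}^{n+1}$ as the Yau twist of $\mathcal{A}_{2}^{n}$ by its own structure map $\alpha^{2^{n}}$, which obliges you to verify the morphism hypothesis relative to the \emph{twisted} products $\prec_{n}=\alpha^{2^{n}-1}\circ\prec$, $\succ_{n}=\alpha^{2^{n}-1}\circ\succ$ rather than the original ones --- a check you correctly identify as the crux and carry out (it reduces, via commutation of powers of $\alpha$ and iterated multiplicativity, to $\alpha^{2^{n+1}-1}(x\prec y)=\alpha^{2^{n}-1}\bigl(\alpha^{2^{n}}(x)\prec\alpha^{2^{n}}(y)\bigr)$). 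Both proofs are valid; the paper's is shorter, while yours has the conceptual merit of explaining the terminology and the exponents: it exhibits $\mathcal{A}_{2}^{n}$ as the $n$-fold iterate of the operation ``twist an algebra by its own structure map,'' which is why the exponent doubles at each step ($2^{n}+2^{n}-1=2^{n+1}-1$, $2^{n}+2^{n}=2^{n+1}$), in contrast to type $1$, which corresponds to repeatedly twisting by the fixed original map $\alpha$. One cosmetic remark: since the corollary is stated for positive integers $n$, your induction could equally start at $n=1$ (which is the case $\mathcal{A}_{\alpha}$ of Proposition \ref{prop:isma:HomLeibdendriform} with $\alpha'=\alpha$), though starting at $n=0$ as you do is harmless.
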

\begin{proof}
Apply Proposition \ref{prop:isma:HomLeibdendriform} with $\alpha'=\alpha^{n}$ and
$\alpha'=\alpha^{2^n-1}$ respectively.
\end{proof}

\begin{ex}
There is a three-dimensional multiplicative Hom-Leibniz dendriform algebras $(A, \prec,\succ,\alpha)$ with multiplications tables for a basis $\{e_1, e_2, e_3\}$:
\begin{center}
\begin{tabular}{c|cccc}
$\prec$ & $e_1$ & $e_2$ & $e_3$ \\ \hline
$e_1$ & $0$ & $0$ & $-e_2$  \\
$e_2$ & $0$ & $0$ & $0$  \\
$e_3$ & $-e_2$ & $0$ & $2e_2$ \\
\end{tabular}
\hspace{1 cm}
\begin{tabular}{c|cccc}
$\succ$ & $e_1$ & $e_2$ & $e_3$ \\ \hline
$e_1$ & $0$ & $0$ & $0$  \\
$e_2$ & $0$ & $0$ & $0$  \\
$e_3$ & $-e_2$ & $0$ & $2e_2$
\end{tabular}
\end{center}
$$\begin{array}{llll}
    \alpha(e_1)=-\frac{p}{2}e_1, &\alpha(e_2)=\frac{p^{2}}{2}e_2,&\alpha(e_3)=pe_3,
\end{array}$$
with a parameter $p\in \mathbb{K}\setminus\{0\}$.

Then there are Hom-Leibniz dendriform algebras $\mathcal{A}_{1}^{n}$ and $\mathcal{A}_{2}^{n}$ with multiplications
tables respectively:
\begin{center}
\begin{tabular}{c|cccc}
$\prec^{(n)}$ & $e_1$ & $e_2$ & $e_3$ \\ \hline
$e_1$ & $0$ & $0$ & $-(-\frac{p^{2}}{2})^{n}e_2$  \\
&  &  &   \\
$e_2$ & $0$ & $0$ & $0$  \\
&  &  &   \\
$e_3$ & $-(-\frac{p^{2}}{2})^{n}e_2$ & $0$ & $2p^{2n} e_2$
\end{tabular}
\hspace{1 cm}
\begin{tabular}{c|cccc}
$\succ^{(n)}$ & $e_1$ & $e_2$ & $e_3$ \\ \hline
$e_1$ & $0$ & $0$ & $0$  \\
&  &  &   \\
$e_2$ & $0$ & $0$ & $0$  \\
&  &  &   \\
$e_3$ & $-(-\frac{p^{2}}{2})^{n}e_2$ & $0$ & $2p^{2n} e_2$
\end{tabular}
\end{center}

$$\alpha^{n+1}(e_1)=\left(-\frac{p}{2}\right)^{n+1}e_1, \quad \alpha^{n+1}(e_2)=\left(\frac{p^{2}}{2}\right)^{n+1}e_2, \quad \alpha^{n+1}(e_3)=p^{n+1}e_3,$$
\vspace{0,1 cm}
\begin{center}
\begin{tabular}{c|cccc}
$\prec^{(2^{n}-1)}$ & $e_1$ & $e_2$ & $e_3$ \\ \hline
$e_1$ & $0$ & $0$ & $(\frac{p^{2}}{2})^{2^{n}-1}e_2$  \\
&  &  &   \\
$e_2$ & $0$ & $0$ & $0$  \\
&  &  &   \\
$e_3$ & $(\frac{p^{2}}{2})^{2^{n}-1}e_2$ & $0$ & $2(p^{2})^{2^{n}-1}e_2$
\end{tabular}
\hspace{0,1 cm}
\begin{tabular}{c|cccc}
$\succ^{(2^{n}-1)}$ & $e_1$ & $e_2$ & $e_3$ \\ \hline
$e_1$ & $0$ & $0$ & $0$  \\
&  &  &   \\
$e_2$ & $0$ & $0$ & $0$  \\
&  &  &   \\
$e_3$ & $(\frac{p^{2}}{2})^{2^{n}-1}e_2$ & $0$ & $2(p^{2})^{2^{n}-1}e_2$
\end{tabular}
\end{center}

$$\alpha^{2^{n}}(e_1)=\left(\frac{p}{2}\right)^{2^{n}}e_1, \quad \alpha^{2^{n}}(e_2)=\left(\frac{p^{2}}{2}\right)^{2^{n}}e_2, \quad \alpha^{2^{n}}(e_3)=p^{2^{n}}e_3.
$$
\end{ex}

Next we define the notion of bimodule of a Hom-Leibniz dendriform algebra. 
\begin{defn}
Let $\mathcal{A}=(A, \prec,\succ, \alpha)$ be a Hom-Leibniz dendriform algebra, and $(V, \beta)$ be a Hom-module. Let $ l_\prec, r_\prec, l_\succ, r_\succ: A \rightarrow gl(V) $ be four linear maps. The sextuple $(l_\prec, r_\prec, l_\succ, r_\succ, \beta, V)$ is called a bimodule of 
$\mathcal{A}$ if for all $ x, y \in  A, v \in V $,
\begin{eqnarray}
l_{\succ}([x,y])\beta(v)&=&l_\succ(\alpha(x))l_\succ(y)v-l_\succ(\alpha(y))l_\succ(x)v,\label{def:bimodhomdendif:1}\\
r_\succ(\alpha(y))l(x)v&=&l_\succ(\alpha(x))r_\succ(y)v-r_\succ(x\succ y)\beta(v),\label{def:bimodhomdendif:2}\\
r_{\succ}(\alpha(y))r(x)v&=&r_\succ(x\succ y)\beta(v)-l_\succ(\alpha(x))r_\succ(y)v,\label{def:bimodhomdendif:3}\\
l_\succ(\alpha(x))l_\prec(y)v&=& l_\prec(x\succ y)\beta(v) +l_\prec(\alpha(y))l(x)v,\label{def:bimodhomdendif:4}\\
l_\succ(\alpha(x))r_\prec(y)v&=&r_\prec(\alpha(y))l_\succ(x)v+r_\prec([x,y])\beta(v),\label{def:bimodhomdendif:5}\\
r_\succ(x\prec y)\beta(v)&=&r_\prec(\alpha(y))r_\succ(x)v+l_\prec(\alpha(x))r(y)v,\label{def:bimodhomdendif:6}\\
l_\prec(\alpha(x))l(y)v&=&l_\prec(x\prec y)\beta(v)+l_\succ(\alpha(y))l_\prec(x)v,\label{def:bimodhomdendif:7}\\
l_\prec(\alpha(x))r(y)v&=&r_{\prec}(\alpha(y))l_{\prec}(x)v+r_{\succ}(x\prec y)\beta(v),\label{def:bimodhomdendif:8}\\
r_\prec([x,y])\beta(v)&=&r_{\prec}(\alpha(y))r_{\prec}(x)v+l_\succ(\alpha(x))r_\prec(y)v,\label{def:bimodhomdendif:9}\\
\beta(l_\prec(x)v)&=&l_\prec(\alpha(x))\beta(v)\label{def:bimodhomdendif:10}\\
\beta(r_\prec(x)v)&=&l_\prec(\alpha(x))\beta(v)\label{def:bimodhomdendif:11}\\
\beta(l_\succ(x)v)&=&l_\succ(\alpha(x))\beta(v)\label{def:bimodhomdendif:12}\\
\beta(r_\succ(x)v)&=&l_\succ(\alpha(x))\beta(v)\label{def:bimodhomdendif:13}
\end{eqnarray}
where $[x,y]=x\prec y+x\succ y,~~l=l_\prec+l_\succ$ and $r=r_\prec+r_\succ.$
\end{defn}
\begin{prop}\label{bimodHom-Leibniz dendriform}
Let $(l_\prec, r_\prec,l_\succ, r_\succ,\beta, V)$ be a bimodule of a Hom-Leibniz-dendri\-form algebra
$(A, \prec,\succ, \alpha)$. Then, the direct sum $A \oplus V$ of linear spaces is a
Hom-Leibniz dendriform algebra with multiplication given for all $ x_{1}, x_{2} \in  A, v_{1}, v_{2} \in V$ by
\begin{eqnarray*}
(x_{1} + v_{1}) \prec' (x_{2} + v_{2}) & = & x_{1} \prec x_{2} + (l_\prec(x_{1})v_{2} + r_\prec(x_{2})v_{1}),\cr
(x_{1} + v_{1}) \succ' (x_{2} + v_{2}) & = & x_{1} \succ x_{2} + (l_\succ(x_{1})v_{2} + r_\succ(x_{2})v_{1}),\cr
(\alpha\oplus\beta)(x_{1} + v_{1}) & = & \alpha(x_{1}) + \beta(v_{1}).
\end{eqnarray*}
\end{prop}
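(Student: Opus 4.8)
The plan is to establish $(A\oplus V,\prec',\succ',\alpha\oplus\beta)$ as a Hom-Leibniz dendriform algebra by verifying directly the three defining identities \eqref{Hom-Leibniz dendriform1}, \eqref{Hom-Leibniz dendriform2} and \eqref{Hom-Leibniz dendriform3} of Definition \ref{def:homleibnizdendriform}. Mirroring the semidirect-product argument used for the plain Hom-Leibniz bimodule case, I would substitute general elements $x_i+v_i$ $(i=1,2,3)$ with $x_i\in A$ and $v_i\in V$, and expand each identity using only the definitions of $\prec'$, $\succ'$ and $(\alpha\oplus\beta)$. It is convenient to record first that, on the direct sum, the sub-adjacent bracket induced by \eqref{associative-dendriform} satisfies
\[
[x_1+v_1,x_2+v_2]'=[x_1,x_2]+\bigl(l(x_1)v_2+r(x_2)v_1\bigr),
\]
where $l=l_\prec+l_\succ$ and $r=r_\prec+r_\succ$; this is exactly the bracket appearing in the bimodule axioms \eqref{def:bimodhomdendif:1}--\eqref{def:bimodhomdendif:9}.

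Each expanded identity lands in $A\oplus V$, so I would split it into its $A$-component and its $V$-component. The $A$-component reproduces verbatim the corresponding identity for the original Hom-Leibniz dendriform algebra $(A,\prec,\succ,\alpha)$ and therefore holds by hypothesis. For the $V$-component I would collect separately the terms carrying $v_3$, $v_2$ and $v_1$; because the three variables enter in distinct argument slots, their coefficients decouple and each coefficient equation is precisely one bimodule axiom. Concretely, I expect identity \eqref{Hom-Leibniz dendriform1} to split into \eqref{def:bimodhomdendif:1} (coefficient of $v_3$, with $x=x_1,y=x_2$), \eqref{def:bimodhomdendif:2} (coefficient of $v_2$, with $x=x_1,y=x_3$) and \eqref{def:bimodhomdendif:3} (coefficient of $v_1$, with $x=x_2,y=x_3$); identity \eqref{Hom-Leibniz dendriform2} to split into \eqref{def:bimodhomdendif:4}, \eqref{def:bimodhomdendif:5} and \eqref{def:bimodhomdendif:6}; and identity \eqref{Hom-Leibniz dendriform3} to split into \eqref{def:bimodhomdendif:7}, \eqref{def:bimodhomdendif:8} and \eqref{def:bimodhomdendif:9}.

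Once this correspondence is set up, the verification is purely mechanical. The only genuine obstacle is bookkeeping: one must track which of $v_1,v_2,v_3$ occupies the module slot after each expansion and match the resulting index pattern $(x,y)$ to the correct axiom — for instance the $v_2$-term of \eqref{Hom-Leibniz dendriform1} pairs $x_1$ with $x_3$ rather than with $x_2$, so a careless relabelling would point at the wrong condition. I would therefore carry out the three expansions one at a time, align each $V$-component as a sum of three coefficient-operators acting on $\beta(v_i)$ or $v_i$, and read off the match. I note that the equivariance conditions \eqref{def:bimodhomdendif:10}--\eqref{def:bimodhomdendif:13}, asserting that $\beta$ intertwines the four actions, are not required for the three structural identities; they guarantee compatibility of $(\alpha\oplus\beta)$ with the actions and would enter only if one additionally wished to show that the extension is multiplicative whenever $(A,\prec,\succ,\alpha)$ is.
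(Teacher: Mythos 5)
Your proposal is correct and follows essentially the same route as the paper: expand each identity in $A\oplus V$, observe that the $A$-component is the corresponding identity in $(A,\prec,\succ,\alpha)$, and match the coefficients of $v_3$, $v_2$, $v_1$ to the bimodule axioms --- for \eqref{Hom-Leibniz dendriform1} these are exactly \eqref{def:bimodhomdendif:1}--\eqref{def:bimodhomdendif:3} with the substitutions you state, which is the one case the paper writes out in full (declaring the other two identities analogous, where your pairings with \eqref{def:bimodhomdendif:4}--\eqref{def:bimodhomdendif:6} and \eqref{def:bimodhomdendif:7}--\eqref{def:bimodhomdendif:9} are the correct ones). Your remark that the equivariance conditions \eqref{def:bimodhomdendif:10}--\eqref{def:bimodhomdendif:13} are not needed for the three structural identities is also accurate.
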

\begin{proof}
We prove the axiom \eqref{Hom-Leibniz dendriform1} in $A\oplus V$ as the others relations are proved analogously. For any
$x_{1},x_{2},x_{3}\in A$ and $v_1, v_2, v_3\in V$,
\begin{align*}
&([x_1+v_1,x_2+v_2]')\succ'(\alpha+\beta)(x_3+v_3)\\
&\quad -(\alpha+\beta)(x_1+v_1)\succ'((x_2+v_2)\succ'(x_3+v_3))\\
&\quad +(\alpha+\beta)(x_2+v_2)\succ'((x_1+v_1)\succ'(x_3+v_3))\\
&=([x_1,x_2]l(x_1)v_2+r(x_2)v_1)\succ'(\alpha+\beta)(x_3+v_3)\\
&\quad-(\alpha+\beta)(x_1+v_1)\succ'(x_2\succ x_3+l_\succ(x_2)v_3+r_\succ(x_3)v_2)\\
&\quad+(\alpha+\beta)(x_2+v_2)\succ'(x_1\succ x_3+l_\succ(x_1)v_3+r_\succ(x_3)v_1)\\
&=[x_1,x_2]\succ\alpha(x_3)+l_\succ([x_1,x_2])\beta(v_3)+r_\succ(\alpha(x_3))l(x_1)v_2\\
&\quad+r_\succ(\alpha(x_3))r(x_2)v_1-\alpha(x_1)\succ(x_2\succ x_3)-l_\succ(\alpha(x_1))l_\succ(x_2)v_3\\
&\quad-l_\succ(\alpha(x_1))r_\succ'x_3)v_2-r_\succ(x_2\succ x_3)\beta(v_1)+\alpha(x_2)\succ(x_1\succ x_3)\\
&\quad+l_\succ(\alpha(x_2))l_\succ(x_1)v_3+l_\succ(\alpha(x_2))r_\succ(x_3)v_1+r_\succ(x_1\succ x_3)\beta(v_2).
 \end{align*}
By \eqref{def:bimodhomdendif:1}-\eqref{def:bimodhomdendif:3},
and \eqref{Hom-Leibniz dendriform1} in $A$,
\begin{align*}
([x_1+v_1,x_2+v_2]')\succ'(\alpha+\beta)&(x_3+v_3)-(\alpha+\beta)(x_1+v_1)\succ'((x_2+v_2)\succ'(x_3+v_3))\\
&+(\alpha+\beta)(x_2+v_2)\succ'((x_1+v_1)\succ'(x_3+v_3))=0.
\qedhere \end{align*}
\end{proof}
We denote such a Hom-Leibniz dendriform algebra by $(A\oplus V, \prec',\succ', \alpha+ \beta),$
or $A\times_{l_\prec, r_\prec, l_\succ, r_\succ,\alpha, \beta} V.$
\begin{ex}
Let $(A,\prec,\succ,\alpha)$ be a Hom-Leibniz dendriform algebra, and
\begin{align*}
\forall \quad & (x,y)\in A\times A: \\
& L_{\prec}(x)y=x\prec y, \quad R_{\prec}(x)y=y\prec x, \\
& L_{\succ}(x)y=x\succ y,\quad R_{\succ}(x)y=y\succ x.
\end{align*}
Then, $(L_{\prec},R_{\prec},L_{\succ}, R_{\succ},\alpha,A)$ is called a regular
bimodule of $(A,\prec,\succ,\alpha)$.
\end{ex}

\begin{prop}\label{prop of bimodules}
Let $(l_{\prec}, r_{\prec}, l_{\succ}, r_{\succ}, \beta, V)$ be a bimodule of a Hom-Leibniz-dendri\-form algebra $( A,
\prec, \succ, \alpha)$. Let $( A, [\cdot,\cdot], \alpha)$ be the associated Hom-Leibniz algebra. Then,
\begin{enumerate}[label=\upshape{\arabic*)}]
\item\label{propr1} $ (l_{\succ}, r_{\prec}, \beta, V) $ and $ (l_{\prec} + l_{\succ}, r_{\prec} + r_{\succ}, \beta, V)$ are bimodules of $ ( A, [\cdot,\cdot], \alpha). $
\item\label{propr2} For any bimodule $(l, r, \beta, V)$ of $( A, [\cdot,\cdot], \alpha)$,
$(0, l, r, 0, \beta, V)$ is a bimodule \\ of $( A, \prec,\succ,\alpha). $
\item\label{propr3} $(0,l_{\prec} + l_{\succ},  r_{\prec} + r_{\succ},0, \beta, V)$ and $(0,l_{\succ},  r_{\prec},0, \beta, V)$ are bimodules \\
 of $( A, \prec, \succ, \alpha).$
\item\label{propr4}  The Hom-Leibniz dendriform algebras $  A \times_{l_{\prec}, r_{\prec}, l_{\succ}, r_{\succ}, \alpha, \beta} V $ and \\  $  A \times_{0,l_{\prec} +  l_{\succ} , r_{\prec} + r_{\succ},0, \alpha, \beta} V $ have the same associated
 Hom-Leibniz algebra\\ $ A \times_{l_{\prec} +  l_{\succ}, r_{\prec} + r_{\succ}, \alpha, \beta} V.$
 \end{enumerate}
\end{prop}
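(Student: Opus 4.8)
The plan is to reduce all four assertions to bookkeeping with the thirteen defining identities \eqref{def:bimodhomdendif:1}--\eqref{def:bimodhomdendif:13} of a Hom-Leibniz dendriform bimodule, the five Hom-Leibniz bimodule axioms \eqref{Cond:HomBimod:lpb}--\eqref{Hombimodule:br:eq2}, and the splitting $[x,y]=x\prec y+x\succ y$. Nothing here is conceptually deep; the content is in matching the two systems of axioms correctly.

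For \ref{propr1} I would treat the two claimed bimodules separately. The pair $(l_\succ,r_\prec,\beta,V)$ requires essentially no computation: writing the three axioms \eqref{Cond:HomBimod:lpb}--\eqref{Cond:HomBimod:lar} with left action $l_\succ$ and right action $r_\prec$, one sees that they are exactly \eqref{def:bimodhomdendif:1}, \eqref{def:bimodhomdendif:5} and \eqref{def:bimodhomdendif:9} after transposing terms, while the equivariance conditions \eqref{Hombimodule:bl:eq1}--\eqref{Hombimodule:br:eq2} are \eqref{def:bimodhomdendif:12} and \eqref{def:bimodhomdendif:11}. For the combined pair $(l_\prec+l_\succ,\,r_\prec+r_\succ,\beta,V)$ I would verify each axiom by summing a column of the $3\times 3$ array \eqref{def:bimodhomdendif:1}--\eqref{def:bimodhomdendif:9}: \eqref{Cond:HomBimod:lpb} is the sum of \eqref{def:bimodhomdendif:1}, \eqref{def:bimodhomdendif:4}, \eqref{def:bimodhomdendif:7}; \eqref{Cond:HomBimod:rpb} is the sum of \eqref{def:bimodhomdendif:2}, \eqref{def:bimodhomdendif:5}, \eqref{def:bimodhomdendif:8}; and \eqref{Cond:HomBimod:lar} is the sum of \eqref{def:bimodhomdendif:3}, \eqref{def:bimodhomdendif:6}, \eqref{def:bimodhomdendif:9}. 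In each case the three $\beta(v)$-terms recombine via $l_\prec(x\prec y)+l_\prec(x\succ y)=l_\prec([x,y])$ (and the $r$-analogue), while the products of $l_\prec,l_\succ,r_\prec,r_\succ$ reassemble into products of the combined maps $l=l_\prec+l_\succ$ and $r=r_\prec+r_\succ$; the two equivariances follow by adding \eqref{def:bimodhomdendif:10} to \eqref{def:bimodhomdendif:12} and \eqref{def:bimodhomdendif:11} to \eqref{def:bimodhomdendif:13}.

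A conceptual shortcut worth recording is that, by Proposition \ref{bimodHom-Leibniz dendriform}, the semidirect product $A\times_{l_\prec,r_\prec,l_\succ,r_\succ,\alpha,\beta}V$ is a Hom-Leibniz dendriform algebra, whose sub-adjacent Hom-Leibniz algebra has bracket $[X,Y]'=X\prec'Y+X\succ'Y$. Computing this bracket shows it equals the semidirect product Hom-Leibniz algebra with actions $l_\prec+l_\succ$ and $r_\prec+r_\succ$; this simultaneously identifies $(l_\prec+l_\succ,r_\prec+r_\succ,\beta,V)$ as a bimodule of $(A,[\cdot,\cdot],\alpha)$ and settles \ref{propr4}, since the sub-adjacent bracket only feels the \emph{sums} of the left actions and of the right actions, so both dendriform semidirect products in \ref{propr4} share the associated Hom-Leibniz algebra $A\times_{l_\prec+l_\succ,r_\prec+r_\succ,\alpha,\beta}V$.

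For \ref{propr2} the plan is to substitute into the thirteen identities the four maps obtained by placing the given Hom-Leibniz actions in the slots $l_\succ$ and $r_\prec$ and setting $l_\prec=r_\succ=0$. Every identity carrying a factor $l_\prec$ or $r_\succ$, namely \eqref{def:bimodhomdendif:2}--\eqref{def:bimodhomdendif:4}, \eqref{def:bimodhomdendif:6}--\eqref{def:bimodhomdendif:8}, \eqref{def:bimodhomdendif:10} and \eqref{def:bimodhomdendif:13}, then collapses to $0=0$, while the survivors \eqref{def:bimodhomdendif:1}, \eqref{def:bimodhomdendif:5}, \eqref{def:bimodhomdendif:9} become respectively \eqref{Cond:HomBimod:lpb}, \eqref{Cond:HomBimod:rpb}, \eqref{Cond:HomBimod:lar}, and \eqref{def:bimodhomdendif:12}, \eqref{def:bimodhomdendif:11} give \eqref{Hombimodule:bl:eq1}, \eqref{Hombimodule:br:eq2}, for the bimodule $(l,r,\beta,V)$. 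Then \ref{propr3} is immediate: apply \ref{propr2} to each of the two Hom-Leibniz bimodules produced in \ref{propr1}. The main obstacle is purely organizational, concentrated in the column-summation of \ref{propr1}: each four-term product of the combined maps must be split into its $\prec/\succ$ components and matched against the correct trio, keeping the recombination $[x,y]=x\prec y+x\succ y$ and the signs appearing in \eqref{def:bimodhomdendif:2}--\eqref{def:bimodhomdendif:3} under control.
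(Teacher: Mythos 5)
Your proposal is correct and follows essentially the same route as the paper's proof: the paper likewise identifies the axioms for $(l_{\succ},r_{\prec},\beta,V)$ with \eqref{def:bimodhomdendif:1}, \eqref{def:bimodhomdendif:5}, \eqref{def:bimodhomdendif:9}, verifies the combined pair by exactly your column sums \eqref{def:bimodhomdendif:1}+\eqref{def:bimodhomdendif:4}+\eqref{def:bimodhomdendif:7}, \eqref{def:bimodhomdendif:2}+\eqref{def:bimodhomdendif:5}+\eqref{def:bimodhomdendif:8}, \eqref{def:bimodhomdendif:3}+\eqref{def:bimodhomdendif:6}+\eqref{def:bimodhomdendif:9}, proves \ref{propr2} by the same substitution and collapse, deduces \ref{propr3} from \ref{propr1} and \ref{propr2}, and settles \ref{propr4} by the same observation that the sub-adjacent bracket of the semidirect product depends only on the sums $l_{\prec}+l_{\succ}$ and $r_{\prec}+r_{\succ}$. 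Your handling of the equivariance identities \eqref{def:bimodhomdendif:10}--\eqref{def:bimodhomdendif:13} is in fact slightly more careful than the paper, which omits them.
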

\begin{proof}
\ref{propr1}
    We prove that $ (l_{\succ}, r_{\prec}, \beta, V) $ and $ (l_{\prec} + l_{\succ}, r_{\prec} + r_{\succ}, \beta, V)$ respectively satisfies the axioms \eqref{Cond:HomBimod:lpb}- \eqref{Cond:HomBimod:lar}. For any $x,y\in A$ and $v\in V$,
    \begin{alignat*}{2}
    &l_{\succ}(\alpha(x))l_\succ(y)v-l_\succ(\alpha(y))l_\succ(x)v-l_\succ([x,y])\beta(v)=0, & \quad \text{(by \eqref{def:bimodhomdendif:1})}\\
  & l_\succ(\alpha(x))r_\prec(y)v-r_\prec(\alpha(y))l_\succ(x)v-r_\prec([x,y])\beta(v)=0, & \quad
  \text{(by \eqref{def:bimodhomdendif:5})}\\
  & r_\prec([x,y])\beta(v)-r_\prec(\alpha(y))r_\prec(x)v-l_\succ(\alpha(x))r_\prec(y)v=0. & \quad
  \text{(by \eqref{def:bimodhomdendif:9})}
       \end{alignat*}
       Then $(l_{\succ}, r_{\prec}, \beta, V) $ is a bimodule of $( A, [\cdot,\cdot], \alpha).$
       \begin{align*}
       &(l_\prec+l_\succ)(\alpha(x))(l_\prec+l_\succ)(y)v-(l_\prec+l_\succ)([x,y])\beta(v)\\
       &\quad\quad-(l_\prec+l_\succ)(\alpha(y))(l_\prec+l_\succ)(x)v\\
       &\quad=
       \overbrace{l_\prec(\alpha(x))l_\prec(y)v+l_\prec(\alpha(x))l_\succ(y)v}^{*}+l_\succ(\alpha(x))l_\prec(y)v +l_\succ(\alpha(x))l_\succ(y)v\\
       &\quad\quad \overbrace{-l_\prec([x,y])\beta(v)}^{**}-l_\succ([x,y])\beta(v)\\
       &\quad\quad \overbrace{-l_\prec(\alpha(y))l_\prec(x)v-l_\prec(\alpha(y))l_\succ(x)v}^{***}-l_\succ(\alpha(y))l_\prec(x)v-l_\succ(\alpha(y))l_\succ(x)v\\
       &\quad=\underbrace{\Big(\overbrace{l_\prec(\alpha(x))l(y)v}^{*}
       \overbrace{-l_\prec(x\prec y)\beta(v)}^{**}-l_\succ(\alpha(y))l_\prec(x)v\Big)}_{\text{= 0 by \eqref{def:bimodhomdendif:7}}}\\
       &\quad\quad+ \underbrace{\Big(l_\succ(\alpha(x))l_\prec(y)v
       \overbrace{-l_\prec(x\succ y) \beta(v)}^{**} \overbrace{-l_\prec(\alpha(y))l(x)v}^{***}\Big)}_{ \text{=0 by \eqref{def:bimodhomdendif:4}}}\\
       &\quad\quad+
       \underbrace{\Big(l_\succ(\alpha(x))l_\succ(y)v-l_\succ([x,y])\beta(v)-l_\succ(\alpha(y))l_\succ(x)v\Big)}_{\text{= 0 by \eqref{def:bimodhomdendif:1}}}=0,\\
       &(l_\prec+l_\succ)(\alpha(x))(r_\prec+r_\succ)(y)v-(r_\prec+r_\succ)(\alpha(y))(l_\prec+l_\succ)(x)v\\
       &\quad\quad-(r_\prec+r_\succ)([x,y])\beta(v)\\
       &\quad=
       \overbrace{l_\prec(\alpha(x))r_\prec(y)v+l_\prec(\alpha(x))r_\succ(y)v}^{*}+l_\succ(\alpha(x))r_\prec(y)v
       +l_\succ(\alpha(x))r_\succ(y)v\\
       &\quad\quad-r_\prec(\alpha(y))l_\prec(x)v-r_\prec(\alpha(y))l_\succ(x)v
       \overbrace{-r_\succ(\alpha(y))l_\prec(x)v-r_\succ(\alpha(y))l_\succ(x)v}^{***}\\
       &\quad\quad-r_\prec([x,y])\beta(v)\overbrace{-r_\succ([x,y])\beta(v)}^{**}\\
       &\quad=\underbrace{\Big(\overbrace{l_\prec(\alpha(x))r(y)v}^{*}-r_\prec(\alpha(y))l_\prec(x)v
       \overbrace{-r_\succ(x\prec y)\beta(v)}^{**}\Big)}_{\text{=0 by \eqref{def:bimodhomdendif:8}}}\\
       &\quad\quad+\underbrace{\Big(l_\succ(\alpha(x))r_\succ(y)v \overbrace{-r_\succ(\alpha(y))l(x)v}^{***}
       \overbrace{-r_\succ(x\succ y)\beta(v)}^{**}\Big)}_{\text{=0 by \eqref{def:bimodhomdendif:2}}}\\
       &\quad\quad+\underbrace{\Big(l_\succ(\alpha(x))r_\prec(y)v-r_\prec(\alpha(y))l_\succ(x)v
       -r_\prec([x,y])\beta(v)\Big)}_{\text{=0 by \eqref{def:bimodhomdendif:5}}}=0,\\
      & (r_\prec+r_\succ)([x,y])\beta(v)-(r_\prec+r_\succ)(\alpha(y))(r_\prec+r_\succ)(x)v\\
      &\quad\quad-(l_\prec+l_\succ)(\alpha(x))(r_\prec+r_\succ)(y)v\\
      &\quad=r_\prec([x,y])\beta(v)+\overbrace{r_\succ([x,y])\beta(v)}^{**}\\
      &\quad\quad-r_\prec(\alpha(y))r_\prec(x)v-r_\prec(\alpha(y))r_\succ(x)v
      \overbrace{-r_\succ(\alpha(y))r_\prec(x)v-r_\succ(\alpha(y))r_\succ(x)v}^{***}\\
      &\quad\quad \overbrace{-l_\prec(\alpha(x))r_\prec(y)v-l_\prec(\alpha(x))r_\succ(y)v}^{*}-l_\succ(\alpha(x))r_\prec(y)v-l_\succ(\alpha(x))r_\succ(y)v\\
      &\quad=\underbrace{\Big(r_\prec([x,y])\beta(v)-r_\prec(\alpha(y))r_{\prec}(x)v
      -l_\succ(\alpha(x))r_\prec(y)v\Big)}_{\text{=0 by \eqref{def:bimodhomdendif:9}}}\\
      &\quad\quad +\underbrace{\Big(\overbrace{r_\succ(x\prec y)\beta(v)}^{**}-r_\prec(\alpha(y))r_\succ(x)v \overbrace{-l_\prec(\alpha(x))r(y)v}^{*}\Big)}_{\text{=0 by \eqref{def:bimodhomdendif:6}}}\\
      &\quad\quad+\underbrace{
      \Big(\overbrace{r_\succ(x\succ y)\beta(v)}^{**} \overbrace{-r_\succ(\alpha(y))r(x)v}^{***}-l_\succ(\alpha(x))r_\succ(y)v\Big)}_{\text{=0 by \eqref{def:bimodhomdendif:3}}}=0.
\end{align*}
   Then $(l_\prec+l_{\succ}, r_{\prec}+l_\succ, \beta, V) $ is a bimodule of $( A, [\cdot,\cdot], \alpha).$

\ref{propr2}
In this case, we prove only the axioms \eqref{def:bimodhomdendif:1}, \eqref{def:bimodhomdendif:5} and \eqref{def:bimodhomdendif:9}, the others axioms satisfied automatically. For any $x,y\in A$ and $v\in V$, we have
 \begin{alignat*}{2}
&l([x,y])\beta(v)=l(\alpha(x))l(y)v-l(\alpha(y))l(x)v, & \quad \text{(by~\eqref{Cond:HomBimod:lpb})}\\
&l(\alpha(x))r(y)v=r(\alpha(y))l(x)v+r([x,y])\beta(v), & \quad \text{(by~\eqref{Cond:HomBimod:rpb})}\\
&r([x,y])\beta(v)=r(\alpha(y))r(x)v+l(\alpha(x))r(y)v. & \quad \text{(by~\eqref{Cond:HomBimod:lar})}
 \end{alignat*}
 Then $(0, l, r, 0, \beta, V)$ is a bimodule of $( A, \prec,\succ,\alpha).$

\ref{propr3}
It follows directly by \ref{propr1} and \ref{propr2}, after applying \ref{propr2} with $l=l_\succ,~r=r_\prec$ and $l=l_\prec+l_\succ,~r=r_\prec+r_\succ$ respectively.

\ref{propr4}
 Let $(A\oplus V,[\cdot,\cdot]'_1,\alpha+\beta)$ and $(A\oplus V,[\cdot,\cdot]'_2,\alpha+\beta)$ the Hom-Leibniz algebras associated respectively to Hom-Leibniz dendriform algebras $  A \times_{l_{\prec}, r_{\prec}, l_{\succ}, r_{\succ}, \alpha, \beta} V $ and  $  A \times_{0,l_{\prec} +  l_{\succ} , r_{\prec} + r_{\succ},0, \alpha, \beta} V $.
 For all $x_1,x_2\in A$, $v_1,v_2\in V$,
 \begin{align*}
    & [x_1+v_1,x_2+v_2]'_1=(x_1+v_1)\prec'_1(x_2+v_2)+(x_1+v_1)\succ'_1(x_2+v_2)\\&\quad=x_1\prec x_2+x_1\succ x_2+l_\prec(x_1)v_2+l_\succ(x_1)v_2+r_\prec(x_2)v_1)+r_\succ(x_2)v_1\\
     &\quad=[x_1,x_2]+(l_\prec+l_\succ)(x_1)v_2+(r_\prec+r_\succ)(x_2)v_1\\
      &\quad=x_1\prec x_2+(l_\prec+l_\succ)(x_1)v_2+x_1\succ x_2+(r_\prec+r_\succ)(x_2)v_1\\
      &\quad =(x_1+v_1)\prec'_2(x_2+v_2)+(x_1+v_1)\succ'_2(x_2+v_2)\\
      &\quad =[x_1+v_1,x_2+v_2]'_2.
 \end{align*}
Hence, $[\cdot,\cdot]'_1=[\cdot,\cdot]'_2$.
\end{proof}
\begin{prop}
If $f:(A_1,\prec_1,\succ_1,\alpha)\longrightarrow(A_2,\prec_2,\succ_2,\beta)$ is a Hom-Leibniz dendriform algebras morphism, then
$(l_{\prec_1},r_{\prec_1},l_{\succ_1},r_{\succ_1},\beta,A_2)$
becomes a bimodule of $ (A_1,\prec_1,\succ_1,\alpha)$ via $f$, that is, for all $(x,y)\in A_1\times A_2$,
$l_{\prec_1}(x)y=f(x)\prec_2 y,$ \ $r_{\prec_1}(x)y=y \succ_2 f(x),$\ $l_{\succ_1}(x)y=f(x)\succ_2 y,$ \ $r_{\succ_1}(x)y=y \succ_2 f(x).$
\end{prop}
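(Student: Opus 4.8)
The plan is to recognise the four maps defined through $f$ as the pullback along $f$ of the regular bimodule of $(A_2,\prec_2,\succ_2,\beta)$. Writing $L_{\prec_2},R_{\prec_2},L_{\succ_2},R_{\succ_2}$ for the regular left and right multiplication operators of $A_2$ (as in the regular bimodule example), the hypotheses amount to $l_{\prec_1}(x)=L_{\prec_2}(f(x))$, $r_{\prec_1}(x)=R_{\prec_2}(f(x))$, $l_{\succ_1}(x)=L_{\succ_2}(f(x))$ and $r_{\succ_1}(x)=R_{\succ_2}(f(x))$, all acting on $V=A_2$ with twisting map $\beta$. First I would record the three facts that drive the whole computation: since $f$ is a morphism one has $f(x\prec_1 y)=f(x)\prec_2 f(y)$, $f(x\succ_1 y)=f(x)\succ_2 f(y)$ and $f\circ\alpha=\beta\circ f$; consequently $f([x,y]_1)=[f(x),f(y)]_2$, where $[\cdot,\cdot]_i$ denotes the sub-adjacent bracket $a\prec_i b+a\succ_i b$ of $A_i$. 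I would also note that the composite operators appearing in the axioms satisfy $l(x)v=(l_{\prec_1}+l_{\succ_1})(x)v=[f(x),v]_2$ and $r(x)v=[v,f(x)]_2$.

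With these identities in hand, each of the thirteen bimodule axioms \eqref{def:bimodhomdendif:1}--\eqref{def:bimodhomdendif:13} reduces to a single relation already available in $A_2$. For instance, to check \eqref{def:bimodhomdendif:4} I would compute the left-hand side as
\[
l_{\succ_1}(\alpha(x))\,l_{\prec_1}(y)v=\beta(f(x))\succ_2\bigl(f(y)\prec_2 v\bigr),
\]
using $f(\alpha(x))=\beta(f(x))$, while the right-hand side becomes
\[
l_{\prec_1}(x\succ_1 y)\beta(v)+l_{\prec_1}(\alpha(y))\,l(x)v=\bigl(f(x)\succ_2 f(y)\bigr)\prec_2\beta(v)+\beta(f(y))\prec_2[f(x),v]_2,
\]
where the composite $l$ has produced the bracket $[f(x),v]_2$. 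Equality of the two sides is exactly the dendriform relation \eqref{Hom-Leibniz dendriform2} in $A_2$ evaluated at $(f(x),f(y),v)$. The same mechanism applies throughout: axioms \eqref{def:bimodhomdendif:1}--\eqref{def:bimodhomdendif:3} are the three variable-placements of \eqref{Hom-Leibniz dendriform1}, axioms \eqref{def:bimodhomdendif:4}--\eqref{def:bimodhomdendif:6} of \eqref{Hom-Leibniz dendriform2}, and axioms \eqref{def:bimodhomdendif:7}--\eqref{def:bimodhomdendif:9} of \eqref{Hom-Leibniz dendriform3}, in each case after pushing $[x,y]_1\mapsto[f(x),f(y)]_2$ and $\alpha\mapsto\beta$ through $f$.

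For the four equivariance axioms \eqref{def:bimodhomdendif:10}--\eqref{def:bimodhomdendif:13} I would substitute the definitions directly; for example \eqref{def:bimodhomdendif:10} requires $\beta(f(x)\prec_2 v)=\beta(f(x))\prec_2\beta(v)$, which is precisely the multiplicativity of $\beta$ with respect to $\prec_2$, and similarly the remaining three reduce to the multiplicativity of $\beta$ with respect to $\prec_2$ and $\succ_2$. These hold since $A_2$ is a (multiplicative) Hom-Leibniz dendriform algebra, so no further input is needed.

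The main obstacle is organisational rather than conceptual: one must correctly expand the composite operators $l=l_{\prec_1}+l_{\succ_1}$ and $r=r_{\prec_1}+r_{\succ_1}$ wherever they occur, and then match the left/right and $\prec$/$\succ$ positions on each side against the correct instance of \eqref{Hom-Leibniz dendriform1}--\eqref{Hom-Leibniz dendriform3} once the bracket $[\cdot,\cdot]_1$ and the twist $\alpha$ have been transported through $f$. Fixing this dictionary between the nine structural axioms and the three dendriform relations is the only delicate point; once it is set up, the individual verifications are routine manipulations of the form illustrated above.
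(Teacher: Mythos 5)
Your proposal is correct and takes essentially the same route as the paper: the paper verifies only axiom \eqref{def:bimodhomdendif:1}, computing $l_{\succ_1}([x,y]_1)\beta(z)=f([x,y]_1)\succ_2\beta(z)=[f(x),f(y)]_2\succ_2\beta(z)$ and then invoking \eqref{Hom-Leibniz dendriform1} in $A_2$ together with $f\circ\alpha=\beta\circ f$ --- precisely the transport-through-$f$ mechanism you describe --- and declares the remaining axioms similar. Your explicit dictionary matching axioms \eqref{def:bimodhomdendif:1}--\eqref{def:bimodhomdendif:9} with the three dendriform relations at their three variable placements, and your remark that \eqref{def:bimodhomdendif:10}--\eqref{def:bimodhomdendif:13} amount to multiplicativity of $\beta$ (which the proposition's hypotheses do not literally grant, a point the paper's ``proved similarly'' also glosses over), is more careful bookkeeping of the same argument rather than a different method.
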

\begin{proof}
We prove the axiom \eqref{def:bimodhomdendif:1}, as the others being proved similarly. For any
$x,y\in A_1$ and $z\in A_2$,
\begin{align*}
    l_{\succ_1}([x,y]_1)\beta(z)&=f([x,y]_1)\succ_2\beta(z)=[f(x),f(y)]_2\succ_2\beta(z)\\
   \mbox{(by~\eqref{Hom-Leibniz dendriform1})} &=\beta(f(x))\succ_2(f(y)\succ_2 z)-\beta(f(y))\succ_2(f(x)\succ_2 z)\\
    &=f(\alpha(x))\succ_2(f(y)\succ_2 z)-f(\alpha(y))\succ_2(f(x)\succ_2 z)\\
    &=l_{\succ_1}(\alpha(x))(f(y)\succ_2 z)-l_{\succ_1}(\alpha(y))(f(x)\succ_2 z)\\
    &=l_{\succ_1}(\alpha(x))l_{\succ_1}(y)z-l_{\succ_1}(\alpha(y))l_{\succ_1}(x)z.
\qedhere \end{align*}
\end{proof}
\begin{thm}\label{mamm}
Let $ \mathcal{A}=(A,\prec,\succ,\alpha)$ be a
Hom-Leibniz dendriform algebra, and let
$\mathcal{V}=(l_{\prec_1},r_{\prec_1},l_{\prec},r_{\succ},\beta,V)$ be a bimodule of $ A$. 
Let $\alpha'$ be a Hom-Leibniz dendriform algebra morphism of $ \mathcal{A}$ \textup{(}in particular, $\alpha$ and $\alpha'$ commute\textup{)},
and let $\beta'$ be a linear map on $V$ such that the maps $\beta$ and $\beta'$
commute. Furthermore, suppose that
$$\left\{
   \begin{array}{lllllll}
      \beta'\circ l_\prec=(l_\prec\circ\alpha')\beta',\\
       \beta'\circ r_\prec=(r_\prec\circ\alpha')\beta',\\
      \beta'\circ l_\succ=(l_\succ\circ\alpha')\beta',\\
       \beta'\circ r_\succ=(r_\succ\circ\alpha')\beta',
   \end{array}
 \right.$$
$ \mathcal{A}_{\alpha'}$ is the Hom-Leibniz dendriform algebra
$(A,\prec_{\alpha'}, \succ_{\alpha'},\alpha\alpha')$,
$$
\widetilde{l}_{\prec}=(l_{\prec}\circ\alpha')\beta',~\widetilde{r}_{\prec}=(r_{\prec}\circ\alpha')\beta,~\widetilde{l}_{\succ}
=(l_{\succ}\circ\alpha')\beta',~\widetilde{r}_{\succ}=(r_{\succ}\circ\alpha')\beta'$$ and
$\mathcal{V}_{\beta'}=(\widetilde{l}_{\prec},\widetilde{r}_{\prec},\widetilde{l}_{\succ},\widetilde{r}_{\succ},\beta\beta',V)$,
Then, $\mathcal{V}_{\beta'}$ is a bimodule of $ \mathcal{A}_{\alpha'}$.
\end{thm}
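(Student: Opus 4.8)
The plan is to reduce everything to Proposition~\ref{prop:isma:HomLeibdendriform}, which already supplies that $\mathcal{A}_{\alpha'}=(A,\prec_{\alpha'},\succ_{\alpha'},\alpha\alpha')$ is a Hom-Leibniz dendriform algebra; what remains is to check that the sextuple $\mathcal{V}_{\beta'}$ satisfies the thirteen bimodule axioms \eqref{def:bimodhomdendif:1}--\eqref{def:bimodhomdendif:13} with respect to $\mathcal{A}_{\alpha'}$. First I would record the book-keeping identities that drive the whole computation. Because $\alpha'$ is a dendriform morphism, $x\prec_{\alpha'}y=\alpha'(x\prec y)$ and $x\succ_{\alpha'}y=\alpha'(x\succ y)$, so the sub-adjacent bracket twists as $[x,y]_{\alpha'}=\alpha'([x,y])$ and $\alpha'$ remains multiplicative for $[\cdot,\cdot]$. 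By definition the tilded operators act by $\widetilde{l}_\prec(x)=l_\prec(\alpha'(x))\circ\beta'$, and similarly for $\widetilde{r}_\prec,\widetilde{l}_\succ,\widetilde{r}_\succ$; consequently the combined operators twist compatibly, $\widetilde{l}:=\widetilde{l}_\prec+\widetilde{l}_\succ=(l\circ\alpha')\beta'$ and $\widetilde{r}=(r\circ\alpha')\beta'$. Finally, the four hypotheses $\beta'\circ l_\prec=(l_\prec\circ\alpha')\beta'$, and its analogues, give by linearity the summed intertwining relations $\beta'\circ l=(l\circ\alpha')\beta'$ and $\beta'\circ r=(r\circ\alpha')\beta'$.

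The heart of the proof is one uniform reduction that I would spell out on \eqref{def:bimodhomdendif:1} and then indicate is identical for the rest. On the twisted left-hand side, $\widetilde{l}_\succ([x,y]_{\alpha'})(\beta\beta')(v)=l_\succ(\alpha'^2([x,y]))\beta'^2\beta(v)$, using $[x,y]_{\alpha'}=\alpha'([x,y])$ and $\beta\beta'=\beta'\beta$. On the twisted right-hand side, each composition $\widetilde{l}_\succ(\,\cdot\,)\widetilde{l}_\succ(\,\cdot\,)v$ is simplified by transporting the inner $\beta'$ through the outer operator with $\beta'\circ l_\succ=(l_\succ\circ\alpha')\beta'$ and by commuting $\alpha$ past $\alpha'$, turning the two terms into $l_\succ(\alpha\alpha'^2(x))l_\succ(\alpha'^2(y))\beta'^2(v)$ and $l_\succ(\alpha\alpha'^2(y))l_\succ(\alpha'^2(x))\beta'^2(v)$. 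The two sides now agree by the original axiom \eqref{def:bimodhomdendif:1} evaluated at the arguments $\alpha'^2(x),\alpha'^2(y)$ and the vector $\beta'^2(v)$, once one rewrites $[\alpha'^2(x),\alpha'^2(y)]=\alpha'^2([x,y])$. Every relation \eqref{def:bimodhomdendif:1}--\eqref{def:bimodhomdendif:9} falls to the same three moves: twist the products via $\alpha'$, pull each inner $\beta'$ outward with the intertwining relations, and invoke the matching original axiom at $\alpha'^2$-twisted data; in the mixed relations one simply uses in addition that $\widetilde{l}$ and $\widetilde{r}$ decompose as the sums of their $\prec$- and $\succ$-parts, so the combined operators $l,r$ and the twisted products $x\prec_{\alpha'}y$, $x\succ_{\alpha'}y$ match up exactly.

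The four $\beta$-compatibility relations \eqref{def:bimodhomdendif:10}--\eqref{def:bimodhomdendif:13} are handled even more directly. For \eqref{def:bimodhomdendif:10}, applying $\beta\beta'$ to $\widetilde{l}_\prec(x)v=l_\prec(\alpha'(x))\beta'(v)$ and using first $\beta'\circ l_\prec=(l_\prec\circ\alpha')\beta'$ and then the original relation \eqref{def:bimodhomdendif:10} gives $l_\prec(\alpha\alpha'^2(x))\beta\beta'^2(v)$, which is exactly $\widetilde{l}_\prec(\alpha\alpha'(x))(\beta\beta')(v)$ after commuting $\alpha$ with $\alpha'$ and $\beta$ with $\beta'$. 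The remaining three are verbatim with $r_\prec,l_\succ,r_\succ$ in place of $l_\prec$.

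I do not expect a genuine conceptual obstacle; the difficulty is purely one of organized book-keeping. The one delicate point is keeping track of the powers $\alpha'^2$ and $\beta'^2$ that accumulate whenever a twisted operator is composed with another: the inner $\beta'$ must be carried through the outer operator before the original axiom becomes applicable, and it is exactly this transport (not mere multiplicativity of $\alpha'$) that the commutativity of $\alpha,\alpha'$ together with the four intertwining hypotheses are designed to license. In the mixed axioms, where products $x\prec y$ and $x\succ y$ sit inside the operator slots, the only real risk is confusing $\alpha'^2$ with $\alpha\alpha'^2$; keeping that distinction clean makes every identity reduce to its untwisted counterpart.
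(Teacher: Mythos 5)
Your proposal is correct and follows essentially the same route as the paper's proof: the paper also verifies \eqref{def:bimodhomdendif:1} by exactly your three moves (rewrite $[x,y]_{\alpha'}=[\alpha'(x),\alpha'(y)]$ via the morphism property, transport the inner $\beta'$ outward with the intertwining hypotheses and the commutations $\alpha\alpha'=\alpha'\alpha$, $\beta\beta'=\beta'\beta$, then invoke the original axiom at $\alpha'^2(x),\alpha'^2(y),\beta'^2(v)$), and declares the remaining axioms analogous. If anything, your write-up is slightly more complete, since you also spell out the reduction for the compatibility relations \eqref{def:bimodhomdendif:10}--\eqref{def:bimodhomdendif:13}, which the paper leaves implicit.
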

\begin{proof}
We prove only the axiom \eqref{def:bimodhomdendif:1}, as the  others being proved similarly. For any
$x,y\in A$ and $v\in V$,
\begin{align*}
    \widetilde{l_\succ}([x,y]_{\alpha'})\beta\beta'(v)&=\widetilde{l_\succ}([\alpha'(x),\alpha'(y)])\beta\beta'(v)\\
    &=l_\succ([\alpha'^{2}(x),\alpha'^{2}(y)])\beta'\beta\beta'(v)\\
    &=l_\succ([\alpha'^{2}(x),\alpha'^{2}(y)])\beta\beta'^{2}(v)\\
    &=l_\succ(\alpha\alpha'^{2}(x))l_\succ(\alpha'^{2}(y))\beta'^{2}(v)-l_{\succ}(\alpha\alpha'^{2}(y))l_\succ(\alpha'^{2}(x))\beta'^{2}(v)\\
    &=\widetilde{l_\succ}(\alpha\alpha'(x))\widetilde{l_\succ}(y)v-\widetilde{l_\succ}(\alpha\alpha'(y))\widetilde{l_\succ}(x)v.
\qedhere \end{align*}
\end{proof}
\begin{cor}
Let $ \mathcal{A}=(A,\prec,\succ,\alpha)$ be a multiplicative Hom-Leibniz dendriform algebra, and
$(l_{\prec},r_{\prec},l_{\succ},r_{\succ},\beta,V)$ a bimodule of
$ \mathcal{A}$. Then, $\mathcal{V}_{\beta^{q}}$ is a bimodule of $\mathcal{A}_{\alpha^{p}}$ for any
nonnegative integers $p$ and $q$.
\end{cor}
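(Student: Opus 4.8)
The plan is to obtain this as a direct specialization of the bimodule-twisting Theorem~\ref{mamm}, in exactly the way Corollary~\ref{cor:isma:HomLeibdend} was deduced from Proposition~\ref{prop:isma:HomLeibdendriform}. Concretely, I would invoke Theorem~\ref{mamm} with the twisting data $\alpha'=\alpha^{p}$ on $\mathcal{A}$ and $\beta'=\beta^{q}$ on $V$. With these choices the theorem produces the twisted algebra $\mathcal{A}_{\alpha^{p}}=(A,\prec_{\alpha^{p}},\succ_{\alpha^{p}},\alpha\,\alpha^{p})$ and the twisted datum $\mathcal{V}_{\beta^{q}}=(\widetilde{l}_{\prec},\widetilde{r}_{\prec},\widetilde{l}_{\succ},\widetilde{r}_{\succ},\beta\,\beta^{q},V)$ with $\widetilde{l}_{\prec}=(l_{\prec}\circ\alpha^{p})\beta^{q}$ and the analogous formulas for the other three actions, so that the conclusion of Theorem~\ref{mamm} is precisely the assertion of the corollary.

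First I would dispatch the structural hypotheses on the twisting maps. Since $\mathcal{A}$ is multiplicative, $\alpha$ is itself a Hom-Leibniz dendriform algebra morphism, i.e.\ multiplicative with respect to both $\prec$ and $\succ$; consequently every power $\alpha^{p}$ is again such a morphism, and it commutes with $\alpha$ tautologically. In the same way $\beta^{q}$ commutes with $\beta$. Thus the requirements that $\alpha'$ be a morphism commuting with $\alpha$, and that $\beta'$ commute with $\beta$, are met at once, with no computation.

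The substantive step is to verify the four intertwining relations $\beta'\circ l_{\prec}=(l_{\prec}\circ\alpha')\beta'$ and its analogues for $r_{\prec}$, $l_{\succ}$, $r_{\succ}$, for the chosen $\alpha'=\alpha^{p}$ and $\beta'=\beta^{q}$. These I would derive by iterating the bimodule compatibility axioms \eqref{def:bimodhomdendif:10}--\eqref{def:bimodhomdendif:13}. For example, \eqref{def:bimodhomdendif:12} states $\beta(l_{\succ}(x)v)=l_{\succ}(\alpha(x))\beta(v)$; applying it repeatedly, and using that each $\alpha^{k}$ remains a morphism, yields $\beta^{q}(l_{\succ}(x)v)=l_{\succ}(\alpha^{q}(x))\beta^{q}(v)$, and likewise for the remaining three actions. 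These iterated identities are exactly the commutation relations that feed into the proof of Theorem~\ref{mamm}, so once they are in place the theorem applies verbatim and delivers $\mathcal{V}_{\beta^{q}}$ as a bimodule of $\mathcal{A}_{\alpha^{p}}$.

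The one point I would watch most carefully --- the main, if modest, obstacle --- is the \emph{bookkeeping of the powers}: in the thirteen axioms for a bimodule of $\mathcal{A}_{\alpha^{p}}$ the structure map is $\alpha^{p+1}$ and the module map is $\beta^{q+1}$, so one must confirm that, after moving the twists $\alpha^{p}$ and $\beta^{q}$ through every occurrence of $\prec$, $\succ$ and of the actions via multiplicativity and the iterated intertwining identities, the shifted arguments on the two sides align. This is precisely where multiplicativity is indispensable, and it is the only place where a genuine check beyond the statement of Theorem~\ref{mamm} is required; the remainder is a formal substitution.
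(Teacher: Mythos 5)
Your overall strategy is the one the paper intends: the corollary carries no separate proof and is meant to be read off from Theorem~\ref{mamm} with $\alpha'=\alpha^{p}$, $\beta'=\beta^{q}$, exactly as Corollary~\ref{cor:isma:HomLeibdend} is read off from Proposition~\ref{prop:isma:HomLeibdendriform}; your handling of the structural hypotheses ($\alpha^{p}$ is a morphism by multiplicativity, $\beta^{q}$ commutes with $\beta$) is also fine. The gap is in the step you flag and then wave through. Iterating the compatibility axioms \eqref{def:bimodhomdendif:10}--\eqref{def:bimodhomdendif:13} gives
\[
\beta^{q}\bigl(l_{\succ}(x)v\bigr)=l_{\succ}\bigl(\alpha^{q}(x)\bigr)\beta^{q}(v),
\]
that is, $\beta^{q}\circ l_{\succ}=(l_{\succ}\circ\alpha^{q})\beta^{q}$, with the \emph{same} exponent $q$ on both sides. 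But invoking Theorem~\ref{mamm} with $\alpha'=\alpha^{p}$, $\beta'=\beta^{q}$ requires $\beta^{q}\circ l_{\succ}=(l_{\succ}\circ\alpha^{p})\beta^{q}$, with exponent $p$ inside the action. These identities coincide only when $p=q$; for $p\neq q$ nothing in the bimodule axioms or in multiplicativity lets you trade $l_{\succ}(\alpha^{q}(x))$ for $l_{\succ}(\alpha^{p}(x))$, so the hypothesis of Theorem~\ref{mamm} is simply not available, and your claim that the iterated identities are ``exactly'' what the theorem needs is false off the diagonal. The mismatch is not repairable by more careful bookkeeping: checking axiom \eqref{def:bimodhomdendif:1} directly for $\mathcal{A}_{\alpha^{p}}$ and $\mathcal{V}_{\beta^{q}}$, the left-hand side becomes $l_{\succ}([\alpha^{2p}(x),\alpha^{2p}(y)])\beta^{2q+1}(v)$, while the right-hand side, after pushing $\beta^{q}$ through the actions, consists of terms such as $l_{\succ}(\alpha^{2p+1}(x))\,l_{\succ}(\alpha^{p+q}(y))\beta^{2q}(v)$, which the untwisted axiom recombines into a bracket $[\alpha^{2p}(x),\alpha^{p+q}(y)]$ --- this aligns with the left-hand side only when $p+q=2p$, i.e.\ $p=q$.

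So what your argument actually establishes is the diagonal case: $\mathcal{V}_{\beta^{q}}$ is a bimodule of $\mathcal{A}_{\alpha^{q}}$ for every nonnegative integer $q$. It does not prove the statement with independent exponents $p$ and $q$. In fairness, this defect is inherited from the corollary itself: as stated, with $p$ and $q$ unrelated, it does not follow from Theorem~\ref{mamm} (the theorem's intertwining hypotheses tie the twist on $V$ to the twist on $A$), and a correct write-up should either take $p=q$ or explicitly assume the conditions $\beta^{q}\circ l_{\bullet}=(l_{\bullet}\circ\alpha^{p})\beta^{q}$ and their three analogues. But since your proposal asserts that the theorem ``applies verbatim'' for arbitrary $p$ and $q$, the power mismatch is a genuine gap in the argument as written.
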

\begin{thm}
Let $\mathcal{A}=(A, \prec_{A},\succ_A, \alpha)$ and $\mathcal{B}=(B,  \prec_{B}, \succ_B,\beta)$
 be two Hom-Leibniz-dendri\-form algebras. If there are linear maps
$ l_{\prec_A},r_{\prec_A} ,l_{\succ_A},r_{\succ_A} : A \rightarrow gl(B),$
and $ l_{\prec_B},r_{\prec_B},l_{\succ_B},r_{\succ_B}: B \rightarrow gl(A)$
such that $( l_{\prec_{ A}},   r_{\prec_{ A}},  l_{\succ_{ A}},  r_{\succ_{ A}}, \beta,  B)$ is a bimodule of $ \mathcal{A},$ and  $(l_{\prec_{ B}},   r_{\prec_{ B}},  l_{\succ_{ B}},  r_{\succ_{ B}}, \alpha,  A)$ is a bimodule  of $ \mathcal{B},$ satisfying the following  relations
for $ x, y \in  A,$ $a, b \in  B $, and $ l_{ A} = l_{\prec_{ A}} +
 l_{\succ_{ A}},$ $r_{ A} =  r_{\prec_{ A}} +  r_{\succ_{ A}},$ $l_{ B} =
 l_{\prec_{ B}} +  l_{\succ_{ B}},$ $r_{ B} =  r_{\prec_{ B}} +  r_{\succ_{ B}}:$
\begin{eqnarray} \label{eq35}
l_{\succ_A}(r_B(a)x)\beta(b)+l_A(x)a\succ_B\beta(b)=l_{\succ_A}(\alpha'(x))(a\succ_B b)\nonumber\\-\beta(a)\succ_B l_{\succ_A}(x)b-r_{\succ_A}(r_{\succ_B}(b)x)\beta(a),
\\
\label{eq36}
r_A(x)a\succ_B\beta(b)+l_{\succ_A}(l_B(a)x)\beta(b)=\beta(a)\succ_B(l_{\succ_A}(x)b)\nonumber\\
+r_{\succ_A}(r_{\succ_B}(b)x)\beta(a)-l_{\succ_A}(\alpha(x))(a\succ_B b),
\\
\label{eq37}
r_{\succ_A}(\alpha(x))([a,b])=\beta(a)\succ_{B}(r_{\succ_A}(x)b)+r_{\succ_A}(l_{\succ_B}(b)x)\beta(a)\nonumber\\
-\beta(b)\succ_B(r_{\succ_A}(x)a)-r_{\succ_A}(l_{\succ_B}(a)x)\beta(b),
\\
\label{eq38}
l_{\succ_A}(\alpha(x))(a\prec_B b)=(l_{\succ_A}(x)a)\prec_B \beta(b)+l_{\prec_A}(r_{\succ_B}(a)x)\beta(b)\nonumber\\
+\beta(a)\prec(l_A(x)b)+r_{\prec_A}(r_B(b)x)\beta(a),
\\
\label{eq39}
\beta(a)\succ_B(l_{\prec_A}(x)b)+r_{\succ_A}(r_{\prec_B}(b)x)\beta(a)=(r_{\succ_A}(x)a)\prec_B\beta(b)\nonumber\\
+l_{\prec_A}(l_{\succ_B}(a)x)\beta(b)+l_{\prec_A}(\alpha(x))([a,b]),
\\
\label{eq40}
\beta(a)\succ_B(r_{\prec_A}(x)b)+r_{\succ_A}(l_{\prec_B}(b)x)\beta(a)=r_{\prec_A}(\alpha(x))(a\succ_B b)\nonumber\\+\beta(b)\prec_B(r_A(x)a)+r_{\prec_A}(l_B(a)x)\beta(b),
\\
\label{eq41}
l_{\prec}(\alpha(x))([a,b])=(l_{\prec_A}(x)a)\prec_B\beta(b)+l_{\prec_A}(r_{\prec_B}(a)x)\beta(b)\nonumber\\
+\beta(a)\succ_B(l_{\prec_A}(x)b)+r_{\succ_A}(r_{\prec_B}(b)x)\beta(a),
\\
\label{eq42}
\beta(a)\prec_B(l_A(x)b)+r_{\prec_A}(r_B(b)x)\beta(a)=(r_{\prec_A}(x)a)\prec_B \beta(b)\nonumber\\+l_{\prec_A}(l_{\prec_B}(a)x)\beta(b)+l_{\succ_A}(\alpha(x))(a\prec_B b),
\\
\label{eq43}
\beta(a)\prec_B(r_A(x)b)+r_{\prec_A}(l_B(b)x)\beta(a)=r_{\prec_A}(\alpha(x))(a\prec_B b)\nonumber\\+\beta(b)\succ_B(r_{\prec_A}(x)a)+r_{\succ_A}(l_{\prec_B}(a)x)\beta(b),
\\
 \label{eq44}
l_{\succ_B}(r_A(x)a)\alpha(y)+l_B(a)x\succ_A\alpha(y)=l_{\succ_B}(\beta'a))(x\succ_A y)\nonumber\\-\alpha(x)\succ_A l_{\succ_B}(a)y-r_{\succ_B}(r_{\succ_A}(y)a)\alpha(x)
\\
\label{eq45}
r_B(a)x\succ_A\beta(y)+l_{\succ_B}(l_A(x)a)\alpha(y)=\alpha(y)\succ_A(l_{\succ_B}(a)y)\nonumber\\
+r_{\succ_B}(r_{\succ_A}(y)a)\beta(x)-l_{\succ_B}(\beta(a))(x\succ_A y)
\\
\label{eq46}
r_{\succ_B}(\beta(a))([x,y])=\alpha(x)\succ_{A}(r_{\succ_B}(a)y)+r_{\succ_B}(l_{\succ_A}(y)a)\alpha(x)\nonumber\\
-\alpha(y)\succ_A(r_{\succ_B}(a)x)-r_{\succ_B}(l_{\succ_A}(x)a)\alpha(y),
\\
\label{eq47}
l_{\succ_B}(\beta(a))(x\prec_A y)=(l_{\succ_B}(a)x)\prec_A \alpha(y)+l_{\prec_B}(r_{\succ_A}(x)a)\alpha(y)\nonumber\\
+\alpha(x)\prec(l_B(a)y)+r_{\prec_B}(r_A(y)a)\alpha(x),
\\
\label{eq48}
\alpha(x)\succ_A(l_{\prec_B}(a)y)+r_{\succ_B}(r_{\prec_A}(y)a)\alpha(x)=(r_{\succ_B}(a)x)\prec_A\alpha(y)\nonumber\\
+l_{\prec_B}(l_{\succ_A}(x)a)\alpha(y)+l_{\prec_B}(\beta(a))([x,y]),
\\
\label{eq49}
\alpha(x)\succ_A(r_{\prec_B}(a)y)+r_{\succ_B}(l_{\prec_A}(y)a)\alpha(x)=r_{\prec_B}(\beta(a))(x\succ_A y)\nonumber\\+\alpha(y)\prec_A(r_B(a)x)+r_{\prec_B}(l_A(x)a)\alpha(y),
\\
\label{eq50}
l_{\prec}(\beta(a))([x,y])=(l_{\prec_B}(a)x)\prec_A\alpha(y)+l_{\prec_B}(r_{\prec_A}(x)a)\alpha(y)\nonumber\\
+\beta(x)\succ_A(l_{\prec_B}(a)y)+r_{\succ_B}(r_{\prec_A}(y)a)\alpha(x),
\\
\label{eq51}
\alpha(x)\prec_A(l_B(a)y)+r_{\prec_B}(r_A(y)a)\alpha(x)=(r_{\prec_B}(a)x)\prec_A \alpha(y)\nonumber\\+l_{\prec_B}(l_{\prec_A}(x)a)\alpha(y)+l_{\succ_B}(\beta(a))(x\prec_A y),
\\
\label{eq52}
\alpha(x)\prec_A(r_B(a)y)+r_{\prec_B}(l_A(y)a)\alpha(x)=r_{\prec_B}(\beta(a))(x\prec_A y)\nonumber\\+\alpha(y)\succ_A(r_{\prec_B}(a)x)+r_{\succ_B}(l_{\prec_A}(x)a)\alpha(y),
\end{eqnarray}
then, there is a Hom-Leibniz dendriform algebra structure on the direct sum $  A \oplus  B $ of the underlying linear spaces of
 $  A $ and $  B $ given for any $ x, y \in  A, a, b \in  B $ by
\begin{eqnarray*}
(x + a) \prec( y + b )&=&(x \prec_{ A} y + r_{\prec_{ B}}(b)x + l_{\prec_{ B}}(a)y) + \cr &&
 (l_{\prec_{ A}}(x)b + r_{\prec_{ A}}(y)a + a \prec_{ B} b), \cr
(x + a) \succ ( y + b )&=&(x \succ_{ A} y + r_{\succ_{ B}}(b)x + l_{\succ_{ B}}(a)y) + \cr &&
(l_{\succ_{ A}}(x)b + r_{\succ_{ A}}(y)a + a \succ_{ B} b),\cr
(\alpha\oplus\beta)(x + a)&=&\alpha(x) + \beta(a).
\end{eqnarray*}
\end{thm}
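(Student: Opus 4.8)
The plan is to show directly that the quadruple $(A\oplus B,\prec',\succ',\alpha\oplus\beta)$, with $\prec'$ and $\succ'$ the two operations displayed in the statement and induced bracket $[\cdot,\cdot]'=\prec'+\succ'$, satisfies the three defining identities \eqref{Hom-Leibniz dendriform1}, \eqref{Hom-Leibniz dendriform2}, \eqref{Hom-Leibniz dendriform3} of Definition \ref{def:homleibnizdendriform}. Since $A\oplus B$ is a direct sum of linear spaces, it suffices to test each identity on arbitrary elements $x+a$, $y+b$, $z+c$ with $x,y,z\in A$ and $a,b,c\in B$, and then to compare the $A$-components and the $B$-components of the two sides separately. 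Throughout I would use the abbreviations $l_A=l_{\prec_A}+l_{\succ_A}$, $r_A=r_{\prec_A}+r_{\succ_A}$ and the analogous $l_B$, $r_B$, exactly as in the hypotheses, to recombine partial actions into full ones.

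First I would expand \eqref{Hom-Leibniz dendriform1} for the triple $x+a$, $y+b$, $z+c$, substituting the formulas for $\prec'$, $\succ'$ and $\alpha\oplus\beta$; every resulting term then lands either in $A$ or in $B$. The terms built solely from $\succ_A$ (resp.\ solely from $\succ_B$) reproduce \eqref{Hom-Leibniz dendriform1} for $\mathcal{A}$ (resp.\ $\mathcal{B}$) and hence cancel. Collecting the surviving cross-terms in the $B$-component yields relations among $l_{\succ_A}$, $r_{\succ_A}$ and the operations of $\mathcal{B}$; these are precisely the bimodule axioms \eqref{def:bimodhomdendif:1}--\eqref{def:bimodhomdendif:3} for $(l_{\prec_A},r_{\prec_A},l_{\succ_A},r_{\succ_A},\beta,B)$ together with the matched-pair conditions \eqref{eq35}, \eqref{eq36}, \eqref{eq37}. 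Symmetrically, the cross-terms in the $A$-component are governed by the bimodule axioms for $(l_{\prec_B},r_{\prec_B},l_{\succ_B},r_{\succ_B},\alpha,A)$ and by \eqref{eq44}, \eqref{eq45}, \eqref{eq46}.

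The same mechanism disposes of the two remaining axioms. For \eqref{Hom-Leibniz dendriform2}, the mixed $B$-terms are controlled by \eqref{eq38}--\eqref{eq40} together with \eqref{def:bimodhomdendif:4}--\eqref{def:bimodhomdendif:6}, and the mixed $A$-terms by \eqref{eq47}--\eqref{eq49}; for \eqref{Hom-Leibniz dendriform3}, the $B$-terms are handled by \eqref{eq41}--\eqref{eq43} together with \eqref{def:bimodhomdendif:7}--\eqref{def:bimodhomdendif:9}, and the $A$-terms by \eqref{eq50}--\eqref{eq52}. In each case the procedure is identical to that of Proposition \ref{bimodHom-Leibniz dendriform}, which is exactly the degenerate situation in which one of the two algebras acts trivially, and it parallels the Hom-Leibniz matched-pair theorem, Theorem \ref{thm:matchedpairs}: expand, split by component, discard the intrinsic terms, and read off the required compatibility from the leftover cross-terms.

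The main obstacle is organizational rather than conceptual. Each of the three axioms, once expanded, produces a large number of terms that must be sorted correctly into the $\prec$/$\succ$ and $A$/$B$ types, and one must confirm that every one of the eighteen conditions \eqref{eq35}--\eqref{eq52} is invoked exactly once and suffices to annihilate its designated block of cross-terms. I would therefore carry out one representative block in full detail, as is done for \eqref{def:bimodhomdendif:1} in Proposition \ref{bimodHom-Leibniz dendriform}, and indicate that the remaining blocks follow by the same pattern, taking care that each partial action is recombined into a full $l_A$, $r_A$, $l_B$, $r_B$ before the corresponding matched-pair condition is applied.
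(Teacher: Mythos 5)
Your proposal is correct and takes essentially the same route as the paper: the paper's entire proof is the remark that the result ``is obtained in a similar way as for Theorem \ref{thm:matchedpairs}'', i.e.\ precisely the direct expansion on $A\oplus B$, split into $A$- and $B$-components, with the pure terms cancelled by the dendriform axioms of $\mathcal{A}$ and $\mathcal{B}$ and the cross-terms absorbed by the bimodule axioms and the conditions \eqref{eq35}--\eqref{eq52}. Your outline is in fact more explicit than the paper about which conditions annihilate which block of cross-terms.
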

\begin{proof}
It  is obtained in a similar way as for Theorem \ref{thm:matchedpairs}.
\end{proof}
\begin{defn}
Let $ ( A,   \prec_{ A},  \succ_{ A}, \alpha) $ and $  ( B,   \prec_{ B},  \succ_{ B}, \beta) $
be two Hom-Leibniz-dendri\-form algebras. Suppose there are linear maps
$ l_{  \prec_{ A}}, r_{  \prec_{ A}}, l_{ \succ_{ A}}, r_{ \succ_{ A}} :  A \rightarrow gl( B) $
and $ l_{  \prec_{ B}}, r_{  \prec_{ B}}, l_{ \succ_{ B}}, r_{ \succ_{ B}} :  B \rightarrow gl( A) $
 such that $(l_{  \prec_{ A}}, r_{  \prec_{ A}}, l_{ \succ_{ A}}, r_{ \succ_{ A}}, \beta)$ is a bimodule of $  A, $
and $(l_{  \prec_{ B}}, r_{  \prec_{ B}}, l_{ \succ_{ B}}, r_{ \succ_{ B}}, \alpha)$ is a bimodule of $  B $.
If \eqref{eq35}-\eqref{eq52} are satisfied, then $$( A,  B, l_{  \prec_{ A}},
r_{  \prec_{ A}}, l_{ \succ_{ A}}, r_{ \succ_{ A}}, \beta, l_{  \prec_{ B}}, r_{  \prec_{ B}}, l_{ \succ_{ B}},
 r_{ \succ_{ B}}, \alpha)$$
 is called a matched pair of Hom-Leibniz dendriform algebras.
\end{defn}

\begin{cor}\label{match dendriform-associative algebras}
If $( A,  B, l_{  \prec_{ A}}, r_{  \prec_{ A}}, l_{ \succ_{ A}},
r_{ \succ_{ A}}, \beta,
 l_{  \prec_{ B}}, r_{  \prec_{ B}}, l_{ \succ_{ B}}, r_{ \succ_{ B}},
 \alpha) $ is a matched pair of Hom-Leibniz dendriform algebras, then
 \begin{eqnarray*}( A,  B, l_{  \prec_{ A}} + l_{ \succ_{ A}},
r_{  \prec_{ A}} + r_{ \succ_{ A}},\beta,
l_{  \prec_{ B}} + l_{ \succ_{ B}},  r_{  \prec_{ B}} + r_{ \succ_{ B}}, \alpha )\end{eqnarray*} is a matched pair of the associated
Hom-Leibniz algebras $ ( A, [\cdot,\cdot]_{ A}, \alpha) $ and  $( B, [\cdot,\cdot]_{ B}, \beta)$.
\end{cor}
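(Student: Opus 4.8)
The plan is to reduce the claim to two facts that are already available: summing the four dendriform actions produces bimodules of the sub-adjacent Hom-Leibniz algebras, and each of the six compatibility conditions of a Hom-Leibniz matched pair is the sum of three of the eighteen dendriform compatibility conditions \eqref{eq35}--\eqref{eq52}.

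First I would dispose of the bimodule part. By Proposition~\ref{prop of bimodules}, part~\ref{propr1}, applied to the bimodule $(l_{\prec_A},r_{\prec_A},l_{\succ_A},r_{\succ_A},\beta,B)$ of $\mathcal{A}$, the quadruple $(l_{\prec_A}+l_{\succ_A},\,r_{\prec_A}+r_{\succ_A},\,\beta,B)$ is a bimodule of the associated Hom-Leibniz algebra $(A,[\cdot,\cdot]_A,\alpha)$; applying the same result to the bimodule of $\mathcal{B}$ shows that $(l_{\prec_B}+l_{\succ_B},\,r_{\prec_B}+r_{\succ_B},\,\alpha,A)$ is a bimodule of $(B,[\cdot,\cdot]_B,\beta)$. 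Hence the two bimodule hypotheses of Theorem~\ref{thm:matchedpairs} are met with $l_A=l_{\prec_A}+l_{\succ_A}$, $r_A=r_{\prec_A}+r_{\succ_A}$, $l_B=l_{\prec_B}+l_{\succ_B}$ and $r_B=r_{\prec_B}+r_{\succ_B}$, and only the compatibility conditions \eqref{match. pair1}--\eqref{match. pair6} remain to be checked.

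For these I would proceed exactly as in the proof of Proposition~\ref{prop of bimodules}, part~\ref{propr1}. Inserting the summed maps into \eqref{match. pair1}--\eqref{match. pair6} and expanding $\{a,b\}=a\prec_B b+a\succ_B b$ together with $[x,y]=x\prec_A y+x\succ_A y$, each identity splits into terms that are matched by adding three suitable dendriform identities: the nine ``$A$-acting-on-$B$'' identities \eqref{eq35}--\eqref{eq43} combine in three triples to give \eqref{match. pair1}--\eqref{match. pair3}, and symmetrically the nine ``$B$-acting-on-$A$'' identities \eqref{eq44}--\eqref{eq52} give \eqref{match. pair4}--\eqref{match. pair6}; within each triple the mixed terms carrying one $\prec$- and one $\succ$-factor cancel in pairs, just as the starred terms cancelled previously. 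A conceptual shortcut avoids this bookkeeping entirely: by the preceding matched-pair theorem the direct sum carries a Hom-Leibniz dendriform structure $(A\oplus B,\prec',\succ',\alpha\oplus\beta)$, and summing the given formulas for $\prec'$ and $\succ'$ shows that the bracket $\prec'+\succ'$ of its sub-adjacent Hom-Leibniz algebra coincides termwise with the matched-pair product \eqref{matchedpairproduct} assembled from the summed maps; since \eqref{match. pair1}--\eqref{match. pair6} are precisely the $A$- and $B$-components of the Hom-Leibniz identity for this bracket, they hold automatically.

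The genuine obstacle is only combinatorial: pairing each of \eqref{match. pair1}--\eqref{match. pair6} with its correct triple among \eqref{eq35}--\eqref{eq52} and confirming that every $\prec$/$\succ$-mixed term is cancelled. This is mechanical but error-prone, which is precisely why the sub-adjacent argument is preferable, trading the eighteen-into-six verification for a single invocation of the associated-Hom-Leibniz-algebra construction; the left-right symmetry between the $A$- and $B$-sides halves the labour in either route.
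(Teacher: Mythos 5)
Your proposal is correct, and its first route is essentially the paper's own proof: the paper likewise invokes Proposition \ref{prop of bimodules} to obtain the bimodules $(l_{\prec_A}+l_{\succ_A},\,r_{\prec_A}+r_{\succ_A},\beta,B)$ and $(l_{\prec_B}+l_{\succ_B},\,r_{\prec_B}+r_{\succ_B},\alpha,A)$ of the sub-adjacent Hom-Leibniz algebras, and then simply asserts that \eqref{match. pair1}--\eqref{match. pair3} amount to \eqref{eq35}--\eqref{eq43} while \eqref{match. pair4}--\eqref{match. pair6} amount to \eqref{eq44}--\eqref{eq52}; this is exactly the triple-splitting you describe, and the paper does not write out the cancellation of the mixed $\prec$/$\succ$ terms either. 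Your ``conceptual shortcut'' is a genuinely different route that the paper does not take: pass to the Hom-Leibniz dendriform structure on $A\oplus B$ supplied by the preceding theorem, note that its sub-adjacent bracket $\prec'+\succ'$ coincides termwise with the product \eqref{matchedpairproduct} built from the summed maps, and conclude. This is cleaner, but be aware that it silently uses the converse direction of Theorem \ref{thm:matchedpairs}: in this paper ``matched pair'' is \emph{defined} by the identities \eqref{match. pair1}--\eqref{match. pair6}, so knowing that the direct sum is a Hom-Leibniz algebra yields a matched pair only after one checks that each of \eqref{match. pair1}--\eqref{match. pair6} is the $A$- or $B$-component of the Hom-Leibniz identity on $A\oplus B$ evaluated on a mixed triple, the complementary components being the bimodule axioms already secured. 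That check is routine --- for instance the triple $(x,a,y)$ with $x,y\in A$, $a\in B$ gives \eqref{match. pair5} as its $A$-component and \eqref{Cond:HomBimod:rpb} as its $B$-component --- but it is a statement the paper never records, so it should be spelled out if you take the shortcut.
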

\begin{proof}
Let $(A, B, l_{\prec_{A}}, r_{\prec_{A}}, l_{\succ_{A}}, r_{\succ_{A}}, \beta,
 l_{\prec_{B}}, r_{\prec_{B}}, l_{\succ_{B}}, r_{\succ_{B}}, \alpha)$ be a matched pair of a Hom-Leibniz dendriform algebras $(A, \prec_{A}, \succ_{A}, \alpha)$ and $(B, \prec_{B}, \succ_{B},\beta)$. In view of Proposition \ref{prop of bimodules}, the linear maps $l_{\prec_{A}} + l_{\succ_{A}}, r_{\prec_{A}} + r_{\succ_{A}}:A\rightarrow gl(B)$ and  $l_{\prec_{B}} + l_{\succ_{B}},  r_{\prec_{B}} + r_{\succ_{B}}:B\rightarrow gl(A)$ are bimodules of the underlying Hom-Leibniz algebras $(A,\ast_A, \alpha)$ and $(B,\ast_B,\beta)$, respectively. Therefore,
 \eqref{match. pair1}, \eqref{match. pair3} are equivalent to \eqref{eq35}-\eqref{eq43} and \eqref{match. pair4}, \eqref{match. pair6} are equivalent to \eqref{eq44}-\eqref{eq52}.
\end{proof}
\begin{defn}
Let $( A, [\cdot,\cdot], \alpha)$ be a Hom-Leibniz algebra, and $(l, r, \beta, V)$ be a bimodule. 
A linear map $ T : V \rightarrow  A $
is called an $ \mathcal{O} $-operator associated to $(l, r, \beta, V)$,  if $ T $ satisfies
\begin{align*}\alpha T &= T\beta, \\ 
[T(u),T(v)] &= T(l(T(u))v + r(T(v))u) \mbox { for all } u, v \in V.
\end{align*}
\end{defn}

\begin{ex}
Let $( A, [\cdot,\cdot], \alpha)$ be a multiplicative Hom-Leibniz algebra. Then,
 the identity map $id$ is an $ \mathcal{O} $-operator associated to the bimodule $(L,0, \alpha)$ or $(0,R, \alpha)$.
\end{ex}
\begin{ex}
Let ${\mathcal A}=( A, [\cdot,\cdot], \alpha)$ be a multiplicative Hom-Leibniz algebra.
A linear map $ K :  A \rightarrow  A $ is called a Rota-Baxter operator on ${\mathcal A}$ of weight zero if $ K $
satisfies
\begin{align*}
\alpha\circ K&=K\circ\alpha,\\
[K(x),K(y)] &= K([K(x), y] + [x, K(y)]) \mbox { for all } x, y \in  A.
\end{align*}
In fact, a Rota-Baxter operator on $  {\mathcal A} $ is just an $ \mathcal{O} $-operator associated to the regular bimodule $(L, R, \alpha)$.
\end{ex}
\begin{thm}
Let $( A, [\cdot,\cdot], \alpha)$ be a Hom-Leibniz algebra, and $(l, r, \beta, V) $ be a bimodule.
Let $ T : V \rightarrow  A $ be an $ \mathcal{O} $-operator associated to $(l, r, \beta, V)$. Then, there exists a Hom-Leibniz dendriform
algebra structure on $ V $ given for all $u, v \in V$ by
\begin{eqnarray*}
 u \prec v =r(T(v))u ,\quad  u \succ v = l(T(u))v. 
\end{eqnarray*}
So, there is an associated Hom-Leibniz algebra structure on $ V $ given by the equation \eqref{associative-dendriform},  and $ T $
is a homomorphism of Hom-Leibniz algebras. Moreover, $ T(V) = \lbrace { T(v) \setminus v \in V }  \rbrace  \subset  A $ is a Hom-Leibniz
subalgebra of $  A, $ and there is an induced Hom-Leibniz dendriform algebra structure on $ T(V) $ given for all $ u, v \in V $ by
\begin{eqnarray}
T(u) \prec T(v) = T(u \prec v), \quad T(u) \succ T(v) = T(u \succ v).\end{eqnarray}
Its corresponding associated Hom-Leibniz algebra structure on $ T(V) $ given by the equation \eqref{associative-dendriform} is
just the Hom-Leibniz subalgebra structure of $  A, $ and $ T $ is a homomorphism of Hom-Leibniz dendriform algebras.
\end{thm}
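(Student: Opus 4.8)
The plan is to define the two operations exactly as prescribed, $u \prec v = r(T(v))u$ and $u \succ v = l(T(u))v$, and to verify the three axioms \eqref{Hom-Leibniz dendriform1}--\eqref{Hom-Leibniz dendriform3} for $(V,\prec,\succ,\beta)$ by direct substitution. The one identity that drives the whole computation is that $T$ intertwines the sub-adjacent bracket \eqref{associative-dendriform} on $V$ with the bracket on $A$: from the defining relation of an $\mathcal{O}$-operator, $T([u,v]) = T\big(l(T(u))v + r(T(v))u\big) = [T(u),T(v)]$. Before the main verification I would record the twist compatibilities $\beta(u\prec v)=\beta(u)\prec\beta(v)$ and $\beta(u\succ v)=\beta(u)\succ\beta(v)$, which follow from the bimodule identities \eqref{Hombimodule:bl:eq1}--\eqref{Hombimodule:br:eq2} once the relation $\alpha T = T\beta$ is used to replace $\alpha(T(\cdot))$ by $T(\beta(\cdot))$.

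Then I would expand each dendriform axiom, in every term pushing a $\beta$ on $V$ through $T$ to an $\alpha$ on $A$ and inserting $T([u,v])=[T(u),T(v)]$. The key point is that each axiom collapses onto exactly one bimodule condition. Writing $X=T(u)$, $Y=T(v)$, $Z=T(w)$, axiom \eqref{Hom-Leibniz dendriform1} becomes $l([X,Y])\beta(w)=l(\alpha(X))l(Y)w-l(\alpha(Y))l(X)w$, which is \eqref{Cond:HomBimod:lpb}; axiom \eqref{Hom-Leibniz dendriform2} becomes $l(\alpha(X))r(Y)w=r(\alpha(Y))l(X)w+r([X,Y])\beta(w)$, which is \eqref{Cond:HomBimod:rpb}; and axiom \eqref{Hom-Leibniz dendriform3} becomes $r([Y,Z])\beta(u)=r(\alpha(Z))r(Y)u+l(\alpha(Y))r(Z)u$, which is \eqref{Cond:HomBimod:lar}. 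Hence $(V,\prec,\succ,\beta)$ is a Hom-Leibniz dendriform algebra, and the sub-adjacent construction supplies the associated Hom-Leibniz algebra structure \eqref{associative-dendriform} on $V$.

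The homomorphism statements are then formal. Since $T([u,v])=[T(u),T(v)]$ and $\alpha T=T\beta$, the map $T$ is a homomorphism of the associated Hom-Leibniz algebras, so its image $T(V)$ is closed under both the bracket and $\alpha$, hence a Hom-Leibniz subalgebra of $A$. For the induced structure I would transport the operations along $T$, declaring $T(u)\prec T(v)=T(u\prec v)$ and $T(u)\succ T(v)=T(u\succ v)$; then the sum $T(u)\prec T(v)+T(u)\succ T(v)=T([u,v])=[T(u),T(v)]$ identifies the associated Hom-Leibniz structure on $T(V)$ with the subalgebra bracket, and applying the linear map $T$ to the three axioms already established on $V$ yields the corresponding axioms on $T(V)$, so $T$ is a homomorphism of Hom-Leibniz dendriform algebras.

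The work is bookkeeping rather than conceptual, the only care needed being to keep the twist insertions aligned so that the expanded axioms match \eqref{Cond:HomBimod:lpb}--\eqref{Cond:HomBimod:lar} term by term. The one genuinely substantive point is well-definedness of the transported operations on $T(V)$ when $T$ is not injective, and this I would settle directly from the $\mathcal{O}$-operator relation: if $v\in\ker T$ then $r(T(v))=0$ and $0=[T(u),T(v)]=T(l(T(u))v+r(T(v))u)=T(l(T(u))v)$, so $u\succ v\in\ker T$, and symmetrically $u\in\ker T$ forces $u\prec v\in\ker T$. Thus $\ker T$ is absorbed by both operations, which makes $T(u)\prec T(v)$ and $T(u)\succ T(v)$ independent of the chosen preimages and completes the argument.
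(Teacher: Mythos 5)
Your proof is correct and follows essentially the same route as the paper: expand the operations, use $\alpha T = T\beta$ together with the $\mathcal{O}$-operator identity $T([u,v]) = [T(u),T(v)]$, and observe that the three dendriform axioms on $V$ collapse exactly onto the bimodule conditions \eqref{Cond:HomBimod:lpb}, \eqref{Cond:HomBimod:rpb} and \eqref{Cond:HomBimod:lar} respectively (the paper writes out only the first reduction and declares the other two similar). Your closing argument that $\ker T$ is absorbed by both operations, so the transported products on $T(V)$ are independent of the chosen preimages, is a genuine improvement in rigor, since the paper asserts the induced structure on $T(V)$ without addressing well-definedness when $T$ fails to be injective.
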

\begin{proof}
For any $x, y, z\in V,$ 
\begin{eqnarray*}
&&([x,y])\succ \beta(z)=\beta(x)\succ(y\succ z)-\beta(y)\succ(x\succ z)\\
&&=(r(T(y))x+l(T(x)y)\succ \beta(z)-\beta(x)\succ(l(T(y))z)-\beta(y)\succ(l(T(x))z)\\
&&=l(T(r(T(y))x)\beta(z)+l(T(l(T(x))y)\beta(z)-l(T(\beta(x))l(T(y))z-l(T(\beta(y))l(T(x))z\\
&&=l(T(r(T(y))x+l(T(x))y)\beta(z)-l(\alpha T(x))l(T(y))z-l(\alpha T(y))l(T(x))z\\
&&=l([T(x),T(y)])\beta(z)-l([T(x),T(y)])\beta(z)=0.
\end{eqnarray*}
The two other axioms are proved similarly.
\end{proof}
\begin{cor}
Let $T:V\rightarrow A$ be an $\mathcal{O}$-operator on a Hom-Leibniz algebra $(A,[\cdot,\cdot],\alpha)$ with respect to a representation $(l,r, \beta,V)$. Let $[u,v]_C=u\prec v+u\succ v,~$ for all $u,v\in V.$ Then, $T$ is a morphism from the Hom-Leibniz algebra
$(V, [\cdot,\cdot]_C, \beta)$ to $(A,[\cdot,\cdot],\alpha)$.
\end{cor}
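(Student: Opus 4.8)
The plan is to verify directly that $T$ satisfies the two defining conditions for a morphism of Hom-Leibniz algebras, namely compatibility with the twisting maps and preservation of the bracket. The preceding theorem already equips $V$ with the Hom-Leibniz dendriform operations $u \prec v = r(T(v))u$ and $u \succ v = l(T(u))v$, whose sub-adjacent bracket is precisely $[u,v]_C = u \prec v + u \succ v = r(T(v))u + l(T(u))v$. Thus both the source bracket on $V$ and the target bracket on $A$ can be expressed entirely through $T$ and the structure maps $l, r$, and the whole argument reduces to unwinding these formulas against the two $\mathcal{O}$-operator relations $\alpha T = T\beta$ and $[T(u),T(v)] = T(l(T(u))v + r(T(v))u)$.

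First I would note that the compatibility $\alpha \circ T = T \circ \beta$ with the twisting maps is immediate, since it is literally one of the two defining relations of an $\mathcal{O}$-operator associated to $(l, r, \beta, V)$. This supplies the condition $f\circ\alpha = \alpha'\circ f$ required of a Hom-Leibniz algebra morphism with $f = T$, $\alpha' = \alpha$ on $A$ and the twist $\beta$ on $V$. Next, to check that $T$ intertwines the brackets, I would compute, for all $u, v \in V$, the quantity $T([u,v]_C)$. Substituting the definition of $[\cdot,\cdot]_C$ on $V$ gives $T([u,v]_C) = T\bigl(r(T(v))u + l(T(u))v\bigr) = T\bigl(l(T(u))v + r(T(v))u\bigr)$, and the second defining relation of the $\mathcal{O}$-operator identifies the right-hand side with $[T(u),T(v)]$. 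Hence $T([u,v]_C) = [T(u),T(v)]$, which is exactly the bracket-preservation required.

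This is a direct consequence of the definitions together with the dendriform operations constructed in the preceding theorem, so I do not anticipate a genuine obstacle; the corollary simply isolates the homomorphism assertion already embedded in that theorem. The only point deserving care is the bookkeeping that the sub-adjacent bracket $[u,v]_C$ on $V$ coincides term-by-term with the combination $l(T(u))v + r(T(v))u$ appearing in the $\mathcal{O}$-operator identity, and this holds because $\succ$ contributes the $l$-term and $\prec$ the $r$-term by their very definitions. I would therefore present the proof as the short two-line verification above rather than as an extended computation.
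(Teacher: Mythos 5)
Your proof is correct and follows essentially the same route as the paper: both reduce the bracket preservation $T([u,v]_C)=[T(u),T(v)]$ to a one-line unwinding of $[u,v]_C = r(T(v))u + l(T(u))v$ against the defining $\mathcal{O}$-operator identity, the paper merely phrasing the middle step through the induced operations $T(u)\prec T(v)=T(u\prec v)$, $T(u)\succ T(v)=T(u\succ v)$ on $T(V)$. Your explicit check of the twist compatibility $\alpha\circ T = T\circ\beta$ (which the paper leaves implicit) is a welcome addition but not a different method.
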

\begin{proof}
For all $u,v\in V$,
\begin{equation*}
    T([u,v]_C)=T(u\prec v+u\succ v)=T(u)\prec T(v)+T(u)\succ T(v)=[T(u),T(v)].
\qedhere \end{equation*}
\end{proof}
\begin{cor}\label{dendriform-invertible operator}
Let $( A, [\cdot,\cdot], \alpha)$ be a multiplicative Hom-Leibniz algebra. Then, there is a compatible multiplicative Hom-Leibniz dendriform algebra structure on $ A$
if and only if there exists an invertible $ \mathcal{O} $-operator of $ ( A, [\cdot,\cdot], \alpha)$.
\end{cor}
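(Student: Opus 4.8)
The statement is an equivalence, and the plan is to reduce each implication to results already in hand: the preceding theorem producing a Hom-Leibniz dendriform structure from an $\mathcal{O}$-operator, and part \ref{propr1} of Proposition \ref{prop of bimodules} on bimodules of the sub-adjacent algebra. In both directions I would make the correspondence between the two dendriform products and the maps $l,r$ of a bimodule explicit, and then read off compatibility and multiplicativity from those structural facts rather than by recomputing the dendriform axioms.

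For ($\Leftarrow$), I would start from an invertible $\mathcal{O}$-operator $T\colon V\to A$ associated to a bimodule $(l,r,\beta,V)$ of $(A,[\cdot,\cdot],\alpha)$. By the preceding theorem the image $T(V)$ carries the induced dendriform structure $T(u)\prec T(v)=T(r(T(v))u)$, $T(u)\succ T(v)=T(l(T(u))v)$, whose sub-adjacent Hom-Leibniz algebra is the subalgebra structure of $A$. Since $T$ is invertible, $T(V)=A$, so setting $u=T^{-1}x$, $v=T^{-1}y$ transports this to genuine operations $x\prec y=T(r(y)T^{-1}x)$ and $x\succ y=T(l(x)T^{-1}y)$ on $A$; compatibility $x\prec y+x\succ y=[x,y]$ is then immediate from the $\mathcal{O}$-operator relation $[T(u),T(v)]=T(l(T(u))v+r(T(v))u)$. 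For multiplicativity I would use $\alpha T=T\beta$, hence $\beta T^{-1}=T^{-1}\alpha$, together with the bimodule identities $\beta(l(x)w)=l(\alpha(x))\beta(w)$ and $\beta(r(x)w)=r(\alpha(x))\beta(w)$, to get $\alpha(x\succ y)=T\beta(l(x)T^{-1}y)=T\bigl(l(\alpha(x))T^{-1}\alpha(y)\bigr)=\alpha(x)\succ\alpha(y)$, and symmetrically for $\prec$.

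For ($\Rightarrow$), I would take a compatible multiplicative dendriform structure $(A,\prec,\succ,\alpha)$, so that $x\prec y+x\succ y=[x,y]$, and produce the operator explicitly. The natural candidate is $T=\mathrm{id}_A$, which is certainly invertible, but the decisive step is to pair it with the correct bimodule. By part \ref{propr1} of Proposition \ref{prop of bimodules}, the quadruple $(L_\succ,R_\prec,\alpha,A)$, with $L_\succ(x)y=x\succ y$ and $R_\prec(x)y=y\prec x$, is a bimodule of $(A,[\cdot,\cdot],\alpha)$. Checking that $\mathrm{id}_A$ is an $\mathcal{O}$-operator for it is then routine: the twisting condition $\alpha\,\mathrm{id}=\mathrm{id}\,\alpha$ is trivial, and the operator identity collapses to $[u,v]=L_\succ(u)v+R_\prec(v)u=u\succ v+u\prec v$, which is exactly the compatibility hypothesis.

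The one place that needs care, and the step I expect to be the crux, is the choice of bimodule in ($\Rightarrow$): using the full regular bimodule $(L_\prec+L_\succ,\,R_\prec+R_\succ)$ would give $l(u)v+r(v)u=2[u,v]$, so $\mathrm{id}_A$ would fail the operator identity. Matching the single copy $x\prec y+x\succ y=[x,y]$ forces the asymmetric choice $(L_\succ,R_\prec)$, and once this is identified both directions close with only the short verifications above.
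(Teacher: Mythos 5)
Your proposal is correct and takes essentially the same route as the paper: the backward direction produces the products $x\succ y=T(l(x)T^{-1}(y))$, $x\prec y=T(r(y)T^{-1}(x))$ from the invertible $\mathcal{O}$-operator, and the forward direction exhibits $\mathrm{id}_A$ as an invertible $\mathcal{O}$-operator associated to the bimodule $(L_{\succ},R_{\prec},\alpha)$, exactly as in the paper's proof. Your explicit check of multiplicativity via $\beta T^{-1}=T^{-1}\alpha$ together with the bimodule identities, and your remark that the asymmetric choice $(L_{\succ},R_{\prec})$ (rather than the full regular bimodule, which would give $2[u,v]$) is forced, supply details the paper leaves implicit.
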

\begin{proof}
If $ T $ is an invertible $ \mathcal{O}- $operator associated to a bimodule $(l, r, \beta, V)$, then,
the compatible multiplicative Hom-Leibniz dendriform algebra structure on $  A $ is given by
\begin{eqnarray*}
x \succ y = T(l(x)T^{-1}(y)), \quad x \prec y = T(r(y)T^{-1}(x)) \ \mbox { for all } x, y \in  A.
\end{eqnarray*}
Conversely, let $( A, \prec, \succ, \alpha)$ be a Hom-Leibniz dendriform algebra, and $( A,[\cdot,\cdot], \alpha)$ be
 the associated multiplicative Hom-Leibniz algebra. Then, the identity map $
 id$ is an $ \mathcal{O}-$operator
associated to the bimodule $(L_{\succ}, R_{\prec}, \alpha)$ of $( A, [\cdot,\cdot], \alpha)$.
\end{proof}
\begin{prop}
Let $(l_{\prec}, r_{\prec}, l_{\succ}, r_{\succ}, \beta, V)$ be a bimodule of a  Hom-Leibniz-dendri\-form algebra $( A,
\prec, \succ, \alpha)$, and let $( A, [\cdot,\cdot], \alpha)$ be the associated involutive Hom-Leibniz algebra. Let $ l^{\ast}_{\prec},r^{\ast}_{\prec},l^{\ast}_{\succ}, r^{\ast}_{\succ} :  A
\rightarrow gl(V^{\ast}) $ be the linear maps given by
\begin{align*}
\langle l^{\ast}_{\prec}(x)a^{\ast}, y \rangle &=- \langle l_{\prec}(x)y, a^{\ast} \rangle, \quad
\langle r^{\ast}_{\prec}(x)a^{\ast},  y \rangle = -\langle r_{\prec}(x)y, a^{\ast} \rangle, \cr
\langle l^{\ast}_{\succ}(x)a^{\ast},  y \rangle &= -\langle l_{\succ}(x)y, a^{\ast} \rangle, \quad
\langle r^{\ast}_{\succ}(x)a^{\ast}, y \rangle = -\langle r_{\succ}(x)y, a^{\ast} \rangle.
\end{align*}
\begin{enumerate}[label=\upshape{\arabic*)}]
\item $(0,l^{\ast}_{\prec} + l^{\ast}_{\succ}, r^{\ast}_{\prec} + r^{\ast}_{\succ},  0, \beta^{\ast}, V^{\ast}),~(0,l^{\ast}_{\succ}, -l^{\ast}_{\succ}-r^{\ast}_{\prec},0, \beta^{\ast}, V^{\ast})$
 and $(0,l^{\ast}_{\succ}, r^{\ast}_{\prec},  0, \beta^{\ast}, V^{\ast})$ are bimodules
 of $ ( A,[\cdot,\cdot], \alpha); $
\item $(l^{\ast}_{\prec} +  l^{\ast}_{\succ}, r^{\ast}_{\prec} + r^{\ast}_{\succ}, \beta^{\ast}, V^{\ast}),~(l^{\ast}_{\succ}, -l^{\ast}_{\succ}-r^{\ast}_{\prec}, \beta^{\ast}, V^{\ast}),$ and
$(l^{\ast}_{\succ}, r^{\ast}_{\prec}, \beta^{\ast}, V^{\ast})$ are bimodules of $( A, [\cdot,\cdot], \alpha). $
\end{enumerate}
\end{prop}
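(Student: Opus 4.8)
The plan is to reduce both assertions entirely to the already-established Proposition~\ref{prop of bimodules} and Lemma~\ref{lem:bimodmultinvolhomleib}, using only that dualization of operators is linear and that the associated Hom-Leibniz algebra $(A,[\cdot,\cdot],\alpha)$ is multiplicative and involutive. Multiplicativity follows at once from the multiplicativity of $\prec,\succ$ with respect to $\alpha$ (so that $\alpha([x,y])=\alpha(x\prec y)+\alpha(x\succ y)=[\alpha(x),\alpha(y)]$), while involutivity is part of the hypothesis; together these are exactly the conditions required to invoke Lemma~\ref{lem:bimodmultinvolhomleib}. I therefore intend to prove the two assertions in the reverse of their stated order, first producing bimodules of the Hom-Leibniz algebra and then transporting them into the dendriform setting.

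First I would treat the second assertion, which concerns genuine bimodules of $(A,[\cdot,\cdot],\alpha)$. By part~\ref{propr1} of Proposition~\ref{prop of bimodules}, both $(l_{\prec}+l_{\succ},\,r_{\prec}+r_{\succ},\,\beta,\,V)$ and $(l_{\succ},\,r_{\prec},\,\beta,\,V)$ are bimodules of the associated Hom-Leibniz algebra. Dualizing the first of these with Lemma~\ref{lem:bimodmultinvolhomleib}\ref{ilem:bimodmultinvolhomleib}, and observing that $(l_{\prec}+l_{\succ})^{\ast}=l^{\ast}_{\prec}+l^{\ast}_{\succ}$ and $(r_{\prec}+r_{\succ})^{\ast}=r^{\ast}_{\prec}+r^{\ast}_{\succ}$ by linearity of the dual map, yields the bimodule $(l^{\ast}_{\prec}+l^{\ast}_{\succ},\,r^{\ast}_{\prec}+r^{\ast}_{\succ},\,\beta^{\ast},\,V^{\ast})$. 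Dualizing the second bimodule with Lemma~\ref{lem:bimodmultinvolhomleib}\ref{ilem:bimodmultinvolhomleib} produces $(l^{\ast}_{\succ},\,r^{\ast}_{\prec},\,\beta^{\ast},\,V^{\ast})$, whereas dualizing that same bimodule instead with Lemma~\ref{lem:bimodmultinvolhomleib}\ref{iilem:bimodmultinvolhomleib} (taking $l=l_{\succ}$, $r=r_{\prec}$, so that $-l^{\ast}-r^{\ast}=-l^{\ast}_{\succ}-r^{\ast}_{\prec}$) produces $(l^{\ast}_{\succ},\,-l^{\ast}_{\succ}-r^{\ast}_{\prec},\,\beta^{\ast},\,V^{\ast})$. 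These are precisely the three quadruples claimed.

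The first assertion then follows immediately, since each of its three sextuples has the shape $(0,\,l,\,r,\,0,\,\beta^{\ast},\,V^{\ast})$ in which $(l,\,r,\,\beta^{\ast},\,V^{\ast})$ is exactly one of the three bimodules of $(A,[\cdot,\cdot],\alpha)$ just constructed. Applying part~\ref{propr2} of Proposition~\ref{prop of bimodules}, with the dual data $(l,\,r,\,\beta^{\ast},\,V^{\ast})$ playing the role of $(l,\,r,\,\beta,\,V)$, converts each such Hom-Leibniz bimodule into a bimodule of the Hom-Leibniz dendriform algebra $(A,\prec,\succ,\alpha)$, which is what is required.

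I expect no genuine obstacle: no new twisted identity needs to be verified, and the argument is pure bookkeeping built on top of the two prior results. The only points demanding care are confirming that $(A,[\cdot,\cdot],\alpha)$ meets the multiplicative-and-involutive hypothesis of Lemma~\ref{lem:bimodmultinvolhomleib}, and correctly matching which of the two Hom-Leibniz bimodules furnished by part~\ref{propr1} is fed into case~\ref{ilem:bimodmultinvolhomleib} versus case~\ref{iilem:bimodmultinvolhomleib} of the lemma, so that the signs in the term $-l^{\ast}_{\succ}-r^{\ast}_{\prec}$ emerge correctly.
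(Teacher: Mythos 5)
The paper states this proposition without any proof (it is followed immediately by an example), so there is no written argument to compare yours against; judged on its own, your proof is correct and is evidently the intended derivation: part 2) comes from feeding the two Hom-Leibniz bimodules supplied by Proposition~\ref{prop of bimodules}~\ref{propr1} into parts \ref{ilem:bimodmultinvolhomleib} and \ref{iilem:bimodmultinvolhomleib} of Lemma~\ref{lem:bimodmultinvolhomleib} (with $(l,r)=(l_{\prec}+l_{\succ},\,r_{\prec}+r_{\succ})$ and $(l,r)=(l_{\succ},\,r_{\prec})$ respectively, using linearity of dualization), and part 1) then follows by applying Proposition~\ref{prop of bimodules}~\ref{propr2} to each of the three dual bimodules so obtained. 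Two caveats are worth recording, both of which you essentially flag. First, Lemma~\ref{lem:bimodmultinvolhomleib} is stated for a \emph{multiplicative} involutive Hom-Leibniz algebra, while the present proposition's hypotheses only assert involutivity; your justification of multiplicativity via $\alpha([x,y])=\alpha(x\prec y)+\alpha(x\succ y)=[\alpha(x),\alpha(y)]$ tacitly assumes the dendriform structure is multiplicative, a hypothesis the paper itself suppresses, so this is looseness inherited from the source rather than a flaw in your reasoning. Second, item 1) as printed claims the sextuples are bimodules ``of $(A,[\cdot,\cdot],\alpha)$''; your reading of them as bimodules of the dendriform algebra $(A,\prec,\succ,\alpha)$ is the only coherent one (a Hom-Leibniz bimodule is a quadruple, not a sextuple), and it is what makes the appeal to Proposition~\ref{prop of bimodules}~\ref{propr2} land exactly on the stated conclusion, in parallel with part \ref{propr3} of that proposition.
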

\begin{ex}
If $( A, \prec, \succ, \alpha)$ is an involutive Hom-Leibniz dendriform algebra, then
\begin{eqnarray*}
(L_{\prec},
R_{\prec}, L_{\succ}, R_{\succ}, \alpha,  A),\ (0,L_{\succ}, R_{\prec},  0, \alpha,  A),\ 
 (0,L_{\prec} + L_{\succ},  R_{\prec} + R_{\succ},  0, \alpha,  A)
\end{eqnarray*}
are bimodules of $( A, \prec, \succ, \alpha)$. On the other hand,
$$(0,L^{\ast}_{\succ},  -L^{\ast}_{\succ}-R^{\ast}_{\prec},  0, \alpha^{\ast},  A^{\ast}),~~ (0, L^{\ast}_{\prec} + L^{\ast}_{\succ},R^{\ast}_{\prec}
+ R^{\ast}_{\succ} ,  0, \alpha^{\ast},  A^{\ast}),$$
are bimodules of $( A, \prec, \succ, \alpha)$  too.
\end{ex}
\begin{thm}\label{dendriform-symplectic theorem}
Let $( A, [\cdot,\cdot], \alpha)$ be an involutive Hom-Leibniz algebra, and let $\omega $ be a non-degenerate skew-symmetric bilinear form on $ A$ such that the following identity \textup{(}invariance condition\textup{)} is satisfied for all $x, y, z\in A,$
\begin{eqnarray}
\omega([\alpha(x),\alpha(y)], \alpha(z)) + \omega([\alpha(y),\alpha(z)], \alpha(x)) + \omega([\alpha(z),\alpha(x)], \alpha(y))=0.
\end{eqnarray}
Then,  there exists a compatible Hom-Leibniz dendriform algebra structure $ \prec, \succ $ on $(  A, \alpha)$ given by
\begin{eqnarray} \label{dendriform-symplectic equation}
\omega(x \prec y, z) = \omega(y, \{z , x\}), \ \ \ \omega(x \succ y, z) = \omega(x, \{y , z\}) \mbox { for all } x, y \in  A.
\end{eqnarray}
\end{thm}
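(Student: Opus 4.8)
The plan is to take \eqref{dendriform-symplectic equation} as the \emph{definition} of $\prec$ and $\succ$ (reading the bracket $\{\cdot,\cdot\}$ there as the given Hom-Leibniz bracket $[\cdot,\cdot]$) and to verify the three axioms together with compatibility. First I note that, because $\omega$ is non-degenerate, for fixed $x,y$ the functionals $z\mapsto\omega(y,[z,x])$ and $z\mapsto\omega(x,[y,z])$ are represented by unique elements, so $x\prec y$ and $x\succ y$ are well defined; accordingly the entire argument proceeds by checking equalities of elements of $A$ against an arbitrary $w\in A$ under $\omega$ and invoking non-degeneracy. It is convenient to record at the outset that, since $\alpha$ is bijective (indeed involutive), the invariance condition is equivalent to the cyclic identity $\omega([a,b],c)+\omega([b,c],a)+\omega([c,a],b)=0$ for all $a,b,c\in A$; combined with skew-symmetry this yields the rewriting rule $\omega(a,[b,c])=\omega([a,b],c)+\omega([c,a],b)$, which I would use throughout.

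Compatibility is the cleanest step and I would carry it out first. Pairing $x\prec y+x\succ y$ against $z$ gives $\omega(x\prec y,z)+\omega(x\succ y,z)=\omega(y,[z,x])+\omega(x,[y,z])=-\omega([z,x],y)-\omega([y,z],x)$ by skew-symmetry, and the cyclic identity rewrites this as $\omega([x,y],z)$. Non-degeneracy then gives $x\prec y+x\succ y=[x,y]$, i.e.\ \eqref{associative-dendriform} holds, so the sub-adjacent bracket coincides with the given one and the candidate structure is compatible with $(A,[\cdot,\cdot],\alpha)$.

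For the three axioms \eqref{Hom-Leibniz dendriform1}--\eqref{Hom-Leibniz dendriform3} I would, for each one, pair both sides with $w$ and peel off the outermost operations using the defining relations $\omega(u\succ v,\cdot)=\omega(u,[v,\cdot])$ and $\omega(u\prec v,\cdot)=\omega(v,[\cdot,u])$ together with skew-symmetry and the cyclic rewriting rule. The subtlety is that peeling the outer product of a nested term such as $\alpha(x)\succ(y\succ z)$ pushes the inner product $y\succ z$ \emph{inside} a bracket, producing terms like $\omega([\alpha(x),y\succ z],w)$ that these rules cannot reduce further. The key device is to expand, via compatibility, the brackets appearing on the opposite side of the identity, writing $[\alpha(x),[y,z]]=[\alpha(x),y\prec z]+[\alpha(x),y\succ z]$: the Hom-Leibniz identity \eqref{Leibnizidentity} turns the bracket expression on that side into exactly such double brackets, and the unreducible $\succ$- and $\prec$-pieces then cancel in pairs across the identity. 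After this cancellation only ``pure-bracket'' expressions of the form $\omega(\,\cdot\,,[\,\cdot\,,[\,\cdot\,,w]])$ survive, and these are matched by a final application of \eqref{Leibnizidentity} and the cyclic identity, with $\alpha^{2}=\mathrm{id}$ used to absorb the twists introduced along the way.

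The main obstacle is precisely this bookkeeping of the non-reducible ``product-inside-a-bracket'' terms: one must split every relevant bracket into its $\prec$- and $\succ$-components at exactly the right moment so that the stuck terms cancel, and then track the $\alpha$'s carefully so that the residual pure-bracket identity is genuinely an instance of \eqref{Leibnizidentity}. A conceptually cleaner but not obviously shorter alternative would be to exhibit the inverse of $x\mapsto\omega(x,\cdot)$ as an invertible $\mathcal{O}$-operator for a suitable bimodule on $A^{\ast}$ and then quote Corollary \ref{dendriform-invertible operator}; there the difficulty merely migrates into verifying the $\mathcal{O}$-operator equation and pinning down the bimodule, which again rests on the cyclic identity.
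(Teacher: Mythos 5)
Your preliminaries are fine: well-definedness of $\prec,\succ$ from non-degeneracy, the equivalence of the $\alpha$-twisted invariance with the untwisted cyclic identity (via bijectivity of $\alpha$), and the compatibility $x\prec y+x\succ y=[x,y]$ are all correct. The gap is in the central step, the verification of \eqref{Hom-Leibniz dendriform1}--\eqref{Hom-Leibniz dendriform3}: your claimed cancellation does \emph{not} leave only pure-bracket expressions. Carry your own recipe out for \eqref{Hom-Leibniz dendriform1}. Pairing against $w$ and applying your rewriting rule to $\omega([x,y],[\alpha(z),w])$, then \eqref{Leibnizidentity} to $[[x,y],\alpha(z)]$, then splitting $[y,z]=y\prec z+y\succ z$ and $[x,z]=x\prec z+x\succ z$, one finds that the terms $\omega([\alpha(x),y\succ z],w)$ and $\omega([\alpha(y),x\succ z],w)$ do cancel against the corresponding pieces of $\omega(\alpha(x),[y\succ z,w])-\omega(\alpha(y),[x\succ z,w])$, exactly as you predict; but what survives is
$\omega([\alpha(x),y\prec z],w)-\omega([\alpha(y),x\prec z],w)$
together with pure-bracket terms. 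These $\prec$-inside-bracket terms (equivalently, product--product pairings such as $\omega(y\prec z,\,w\succ\alpha(x))$) cannot be reduced by the rules you allow yourself: every further application of skew-symmetry, the cyclic rewriting rule, peeling via \eqref{dendriform-symplectic equation}, or compatibility splitting provably reproduces the term one started from (each such attempt collapses to a tautology of the form $P=P$), so one is left with an identity of exactly the same nature as the axiom being proved. Your proposal offers no mechanism for these residual terms, and this is where the entire difficulty of the theorem sits.

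The paper avoids this dead end by never computing inside $\omega$ at all: it defines $T\colon A\to A^{\ast}$ by $\langle T(x),y\rangle=\omega(x,y)$, notes $T$ is invertible, asserts that $T^{-1}$ is an invertible $\mathcal{O}$-operator for the dual bimodule $(L^{\ast},-L^{\ast}-R^{\ast},\alpha^{\ast})$ of Lemma \ref{lem:bimodmultinvolhomleib}\ref{iilem:bimodmultinvolhomleib}, and then quotes Corollary \ref{dendriform-invertible operator}, whose formulas reproduce \eqref{dendriform-symplectic equation}. There the Hom-Leibniz identity enters once and for all through the bimodule axioms on $A^{\ast}$, and the three dendriform axioms are checked at the operator level (as in the paper's theorem that an $\mathcal{O}$-operator induces a Hom-Leibniz dendriform structure), which is precisely the bookkeeping your direct computation cannot close. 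So the viable repair is to develop the route you relegate to your final paragraph: prove $T([x,y])=L^{\ast}(x)T(y)-(L^{\ast}+R^{\ast})(y)T(x)$ by pairing with an arbitrary $z$ and invoking skew-symmetry and the cyclic identity, check the intertwining condition $\alpha\circ T^{-1}=T^{-1}\circ\alpha^{\ast}$ required by the definition of an $\mathcal{O}$-operator (a genuine compatibility between $\alpha$ and $\omega$ that needs its own argument, and which the paper also passes over in silence), and then cite Corollary \ref{dendriform-invertible operator}.
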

\begin{proof}
Define a linear map $ T :  A \rightarrow  A^{\ast} $ by $ \langle T(x), y \rangle = \omega(x, y) $ for all $ x, y \in  A $.
Then, $ T $ is invertible and $ T^{-1} $ is an $  \mathcal{O}-$operator of the involutive Hom-Leibniz algebra $( A, [\cdot,\cdot], \alpha)$ associated to the
bimodule $(L^{\ast}, -L^{\ast}-R^{\ast}, \alpha^{\ast})$. By Corollary \ref{dendriform-invertible operator}, there is a compatible Hom-Leibniz dendriform algebra structure
$ \prec, \succ $ on $( A, [\cdot,\cdot])$ given by
\begin{eqnarray*}
x \prec y = T^{-1}L^{\ast}(x)T(y), \ \ x \succ y =  T^{-1}(-L^{\ast}-R^{\ast})(y)T(x)
\end{eqnarray*}
for all $ x, y \in  A $, which gives exactly \eqref{dendriform-symplectic equation}.
\end{proof}

\begin{thm}
Let $( A, \prec_{ A}, \succ_{ A}, \alpha)$ be an involutive Hom-Leibniz dendriform algebra, and $( A, [\cdot,\cdot], \alpha)$ be
the associated involutive Hom-Leibniz algebra. Suppose  there is an involutive Hom-Leibniz dendriform algebra  structure
$"\prec_{ A^{\ast}}, \succ_{ A^{\ast}}, \alpha^{\ast}"$ on its dual space $ A^{\ast},$ and
 $ ( A^{\ast}, \{\cdot,\cdot\}, \alpha^{\ast})$ is the associated Hom-Leibniz algebra. Then, is a standard Manin triple of involutive Hom-Leibniz algebras associated to $( A, [\cdot,\cdot], \alpha)$ and $( A^{\ast}, \{\cdot,\cdot\}, \alpha^{\ast})$
if and only if the octuple
$( A,  A^{\ast}, L^{\ast}_{\succ_{ A}}, -L^{\ast}_{\succ_{ A}}-R^{\ast}_{\prec_{ A}}, \alpha^{\ast}, L^{\ast}_{\succ_{ A^{\ast}}},
-L^{\ast}_{\succ_{ A^{\ast}}}-R^{\ast}_{\prec_{ A^{\ast}}}, \alpha)$ is a matched pair of Hom-Leibniz algebras.
\end{thm}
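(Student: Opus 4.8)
The plan is to transcribe the proof of Theorem~\ref{Frobenius theorem} into the dendriform setting, the only change being that the coadjoint-type bimodule $(L^{\ast}_{[,]},-L^{\ast}_{[,]}-R^{\ast}_{[,]})$ is replaced throughout by the dendriform-derived bimodule $(L^{\ast}_{\succ},-L^{\ast}_{\succ}-R^{\ast}_{\prec})$. First I would record the admissibility of the octuple: by the Proposition on duals of bimodules of Hom-Leibniz dendriform algebras, applied to the regular bimodule $(L_{\prec_{A}},R_{\prec_{A}},L_{\succ_{A}},R_{\succ_{A}},\alpha,A)$, the quadruple $(L^{\ast}_{\succ_{A}},-L^{\ast}_{\succ_{A}}-R^{\ast}_{\prec_{A}},\alpha^{\ast},A^{\ast})$ is a bimodule of the associated Hom-Leibniz algebra $(A,[\cdot,\cdot],\alpha)$, and symmetrically $(L^{\ast}_{\succ_{A^{\ast}}},-L^{\ast}_{\succ_{A^{\ast}}}-R^{\ast}_{\prec_{A^{\ast}}},\alpha,A)$ is a bimodule of $(A^{\ast},\{\cdot,\cdot\},\alpha^{\ast})$. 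This guarantees that the octuple is of the correct type to be tested against the matched-pair axioms \eqref{match. pair1}--\eqref{match. pair6}.

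Next I would set $l_{A}=L^{\ast}_{\succ_{A}}$, $r_{A}=-L^{\ast}_{\succ_{A}}-R^{\ast}_{\prec_{A}}$, $l_{A^{\ast}}=L^{\ast}_{\succ_{A^{\ast}}}$, $r_{A^{\ast}}=-L^{\ast}_{\succ_{A^{\ast}}}-R^{\ast}_{\prec_{A^{\ast}}}$ and use these to form the bracket $[\cdot,\cdot]'$ of \eqref{matchedpairproduct} on $A\oplus A^{\ast}$, equipped with the canonical nondegenerate skew-symmetric form $B$ of \eqref{quadreq}. For the direction ``matched pair $\Rightarrow$ Manin triple'' I would invoke Theorem~\ref{thm:matchedpairs} to obtain that $(A\bowtie A^{\ast},[\cdot,\cdot]',\alpha+\alpha^{\ast})$ is an involutive Hom-Leibniz algebra in which $A$ and $A^{\ast}$ sit as isotropic subalgebras, and then verify directly that $B$ is $(\alpha\oplus\alpha^{\ast})$-invariant. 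This verification is the heart of the argument: one expands both $B\big([\alpha(x)+\alpha^{\ast}(a^{\ast}),\,\alpha(z)+\alpha^{\ast}(c^{\ast})]'+[\alpha(z)+\alpha^{\ast}(c^{\ast}),\,\alpha(x)+\alpha^{\ast}(a^{\ast})]',\,\alpha(y)+\alpha^{\ast}(b^{\ast})\big)$ and $B\big(\alpha(x)+\alpha^{\ast}(a^{\ast}),\,[\alpha(y)+\alpha^{\ast}(b^{\ast}),\,\alpha(z)+\alpha^{\ast}(c^{\ast})]'\big)$ exactly as in Theorem~\ref{Frobenius theorem}, substitutes the defining dualities $\langle L^{\ast}_{\succ}(x)u^{\ast},v\rangle=-\langle x\succ v,u^{\ast}\rangle$ and $\langle R^{\ast}_{\prec}(x)u^{\ast},v\rangle=-\langle v\prec x,u^{\ast}\rangle$ together with their $A^{\ast}$-analogues, and collapses the two expansions to a common expression using involutivity $\alpha^{2}=\mathrm{id}$, $(\alpha^{\ast})^{2}=\mathrm{id}$ and the dendriform axioms \eqref{Hom-Leibniz dendriform1}--\eqref{Hom-Leibniz dendriform3}.

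For the converse I would start from the standard Manin triple and, as in the corresponding part of Theorem~\ref{Frobenius theorem}, read off the cross-action maps from the invariance of $B$ and the isotropy of $A,A^{\ast}$: pairing $B(b^{\ast},[x,a^{\ast}]')$ and $\langle l_{A}(x)a^{\ast},y\rangle$ against the defining relations recovers precisely $l_{A}=L^{\ast}_{\succ_{A}}$, $r_{A}=-L^{\ast}_{\succ_{A}}-R^{\ast}_{\prec_{A}}$, $l_{A^{\ast}}=L^{\ast}_{\succ_{A^{\ast}}}$ and $r_{A^{\ast}}=-L^{\ast}_{\succ_{A^{\ast}}}-R^{\ast}_{\prec_{A^{\ast}}}$, so that the octuple is a matched pair. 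The structural input that makes the dendriform maps (rather than the full bracket maps) appear is that, by Corollary~\ref{dendriform-invertible operator}, the identity map is an $\mathcal{O}$-operator of the associated Hom-Leibniz algebra for the bimodule $(L_{\succ},R_{\prec},\alpha)$, so that the splitting $[x,y]=x\prec y+x\succ y$ is exactly the one seen by $B$. The main obstacle is the invariance computation in the forward direction: it is long and must be organised so that the $\succ$- and $\prec$-contributions cancel along separate streams, which is why I would mirror the bookkeeping of Theorem~\ref{Frobenius theorem} term by term rather than attempt a shortcut.
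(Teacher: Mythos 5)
Your overall strategy --- transplant Theorem~\ref{Frobenius theorem} with the coadjoint bimodule replaced by the dendriform-derived one --- is indeed the strategy the paper follows (its proof opens by saying exactly this), and your admissibility step via the dual-bimodule proposition applied to the regular bimodule is correct. But there is a genuine gap at what you yourself call the heart of the argument: the invariance identity you propose to verify is the wrong one, and it is in fact false here. The two-sided invariance $B([X,Z]'+[Z,X]',Y)=B(X,[Y,Z]')$ from the definition of a quadratic Hom-Leibniz algebra is tailored to the coadjoint actions $(L^{\ast}_{[,]},-L^{\ast}_{[,]}-R^{\ast}_{[,]})$ used in Theorem~\ref{Frobenius theorem}. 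Take $X=b^{\ast}$, $Y=x\in A$, $Z=a^{\ast}$ in the bracket \eqref{matchedpairproduct} built from your actions: since $A^{\ast}$ is a subalgebra, the left side is $\langle\{b^{\ast},a^{\ast}\}+\{a^{\ast},b^{\ast}\},x\rangle$, i.e. all four dendriform products of $a^{\ast},b^{\ast}$ paired with $x$, while the right side is $\langle b^{\ast},(-L^{\ast}_{\succ_{A^{\ast}}}-R^{\ast}_{\prec_{A^{\ast}}})(a^{\ast})x\rangle=\langle a^{\ast}\succ_{A^{\ast}}b^{\ast}+b^{\ast}\prec_{A^{\ast}}a^{\ast},x\rangle$. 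Equality would force $a^{\ast}\prec_{A^{\ast}}b^{\ast}+b^{\ast}\succ_{A^{\ast}}a^{\ast}=0$ identically, which no Hom-Leibniz dendriform axiom implies (Example~\ref{example-7} already violates it). This is precisely why the paper's own computation does not check that identity: it checks the cyclic, symplectic-type condition $\omega([X,Y]',Z)+\omega([Y,Z]',X)+\omega([Z,X]',Y)=0$, i.e. the invariance condition of Theorem~\ref{dendriform-symplectic theorem}, which is the notion of invariance adapted to dendriform splittings. So ``transcribing Theorem~\ref{Frobenius theorem} with only the bimodule changed'' is not enough; the invariance notion must change with it.

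The same issue undermines your converse. Reading off the cross-actions from the two-sided invariance of $B$, exactly as in the converse of Theorem~\ref{Frobenius theorem}, recovers $l_{A}=L^{\ast}_{[,]}=L^{\ast}_{\prec_{A}}+L^{\ast}_{\succ_{A}}$ and $r_{A}=-L^{\ast}_{[,]}-R^{\ast}_{[,]}$, the duals of the bracket multiplications, not $L^{\ast}_{\succ_{A}}$ and $-L^{\ast}_{\succ_{A}}-R^{\ast}_{\prec_{A}}$. Your appeal to Corollary~\ref{dendriform-invertible operator} does not bridge this: that $id$ is an $\mathcal{O}$-operator for $(L_{\succ},R_{\prec},\alpha)$ concerns the splitting $[x,y]=x\prec y+x\succ y$ inside $A$, but says nothing about which dual maps the pairing $B$ of \eqref{quadreq} forces as cross-actions. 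What actually produces the dendriform duals in the converse are the symplectic pairing identities of Theorem~\ref{dendriform-symplectic theorem}, $\omega(x\prec y,z)=\omega(y,[z,x])$ and $\omega(x\succ y,z)=\omega(x,[y,z])$, applied to the cyclic-invariant form on $A\oplus A^{\ast}$ and restricted to cross-terms (the paper itself, for this direction, simply displays the resulting matched-pair identities \eqref{match. pairr1}--\eqref{match. pairr6}). As written, both directions of your argument rest on an identity that fails for the dendriform actions.
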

\begin{proof}
The conclusion can be obtained by a similar proof as in Theorem \ref{Frobenius theorem}.
Then, if  $( A,  A^{\ast}, L^{\ast}_{\succ_{ A}}, -L^{\ast}_{\succ_{ A}}-R^{\ast}_{\prec_{ A}}, \alpha^{\ast}, L^{\ast}_{\succ_{ A^{\ast}}},
-L^{\ast}_{\succ_{ A^{\ast}}}-R^{\ast}_{\prec_{ A^{\ast}}}, \alpha)$ is a matched pair of the involutive Hom-Leibniz algebras $( A, [\cdot,\cdot], \alpha)$ and $( A^{\ast}, \{\cdot,\cdot\}, \alpha^{\ast})$,
it is straightforward to show that the bilinear form \eqref{quadreq} is an  $(\alpha_{1}\oplus\alpha^{\ast}_{1})$-invariant on the Hom-Leibniz algebra
$  A \bowtie^{ L^{\ast}_{\succ_{ A}}, -L^{\ast}_{\succ_{ A}}-R^{\ast}_{\prec_{ A}}, \alpha^{\ast}}_{L^{\ast}_{\succ_{ A^{\ast}}},
-L^{\ast}_{\succ_{ A^{\ast}}}-R^{\ast}_{\prec_{ A^{\ast}}}, \alpha}  A^{\ast} $ given by
\begin{eqnarray*}
(x + a^{\ast})\ast_{ A \oplus  A^{\ast}} (y + b^{\ast}) &=&([x , y] + L^{\ast}_{\succ_{ A^{\ast}}}(a^{\ast})y
+(- L^{\ast}_{\succ_{ A^{\ast}}}- R^{\ast}_{\prec_{ A^{\ast}}})(b^{\ast})x)\cr
&&+ (\{a^{\ast} , b^{\ast}\} + L^{\ast}_{\succ_{ A}}(x)b^{\ast} +(- L^{\ast}_{\succ_{ A}}- R^{\ast}_{\prec_{ A}})(y)a^{\ast}).
\end{eqnarray*}
In fact, we have
\begin{eqnarray*}
&&\omega(\alpha(x) + \alpha^{\ast}(a^{\ast}), \alpha(y) + \alpha^{\ast}(b^{\ast})]', \alpha(z) + \alpha^{\ast}(c^{\ast}))\cr
&& \quad + \omega(\alpha(y) + \alpha^{\ast}(b^{\ast}), \alpha(z) + \alpha^{\ast}(c^{\ast})]', \alpha(x) + \alpha^{\ast}(a^{\ast})] \cr
&& \quad +\omega(\alpha(z) + \alpha^{\ast}(c^{\ast}), \alpha(x) + \alpha^{\ast}(a^{\ast})]', \alpha(y) + \alpha^{\ast}(b^{\ast})) \cr
&&= -\langle [\alpha(x) , \alpha(y)] + L^{\ast}_{\succ_{ A^{\ast}}}(\alpha^{\ast}(a^{\ast}))\alpha(y) -
(L^{\ast}_{\prec_{ A^{\ast}}}+L^{\ast}_{\prec_{ A^{\ast}}})(\alpha^{\ast}(b^{\ast}))\alpha(x), \alpha^{\ast}(c^{\ast})\rangle \cr
&& \quad + \langle \{\alpha^{\ast}(a^{\ast})
,\alpha^{\ast}(b^{\ast})\} +L^{\ast}_{\succ_{ A}}(\alpha(x))\alpha^{\ast}(b^{\ast}) -( L^{\ast}_{\prec_{ A}}+R^{\ast}_{\succ_{ A}})(\alpha(y))\alpha^{\ast}(a^{\ast}), \alpha(z)\rangle \cr
&&\quad-\langle [\alpha(y) , \alpha(z)] + L^{\ast}_{\succ_{ A^{\ast}}}(\alpha^{\ast}(a^{\ast}))\alpha(z) -
(L^{\ast}_{\prec_{ A^{\ast}}}+L^{\ast}_{\prec_{ A^{\ast}}})(\alpha^{\ast}(c^{\ast}))\alpha(y), \alpha^{\ast}(a^{\ast})\rangle \cr
&& \quad + \langle \{\alpha^{\ast}(b^{\ast})
,\alpha^{\ast}(c^{\ast})\} +L^{\ast}_{\succ_{ A}}(\alpha(y))\alpha^{\ast}(c^{\ast})
 -( L^{\ast}_{\prec_{ A}}+R^{\ast}_{\succ_{ A}})(\alpha(z))\alpha^{\ast}(b^{\ast}), \alpha(x)\rangle \cr
&&\quad-\langle [\alpha(x) , \alpha(y)] + L^{\ast}_{\succ_{ A^{\ast}}}(\alpha^{\ast}(a^{\ast}))\alpha(y) -
(L^{\ast}_{\prec_{ A^{\ast}}}+L^{\ast}_{\prec_{ A^{\ast}}})(\alpha^{\ast}(b^{\ast}))\alpha(x), \alpha^{\ast}(c^{\ast})\rangle \cr
&&\quad + \langle \{\alpha^{\ast}(a^{\ast})
,\alpha^{\ast}(b^{\ast})\} +L^{\ast}_{\succ_{ A}}(\alpha(x))\alpha^{\ast}(b^{\ast})
 -( L^{\ast}_{\prec_{ A}}+R^{\ast}_{\succ_{ A}})(\alpha(y))\alpha^{\ast}(a^{\ast}), \alpha(z)\rangle \cr
 &&=-\langle[\alpha(x),\alpha(y)],\alpha^{\ast}(c^{\ast})\rangle
+\langle\alpha(y),\alpha^{\ast}(a^{\ast})\succ \alpha^{\ast}(c^{\ast})-\langle\alpha(x),\alpha^{\ast}(b^{\ast})\succ \alpha^{\ast}(c^{\ast})\rangle\cr&&\quad-\langle\alpha(x),\alpha^{\ast}(c^{\ast})\prec \alpha^{\ast}(b^{\ast})\rangle+\langle\{\alpha^{\ast}(a^{\ast}),\alpha^{\ast}(b^{\ast})\},\alpha(z)\rangle-\langle \alpha(x)\succ\alpha(z),\alpha^{\ast}(b^{\ast})\rangle\cr&&\quad+\langle\alpha(y)\succ\alpha(z),\alpha^{\ast}(a^{\ast})\rangle+\langle\alpha(z)\prec\alpha(y),\alpha^{\ast}(a^{\ast})\rangle-\langle[\alpha(y),\alpha(z)],\alpha^{\ast}(a^{\ast})\rangle
\cr
&&\quad+\langle\alpha(z),\alpha^{\ast}(b^{\ast})\succ \alpha^{\ast}(a^{\ast})-\langle\alpha(y),\alpha^{\ast}(c^{\ast})\succ \alpha^{\ast}(a^{\ast})\rangle-\langle\alpha(y),\alpha^{\ast}(a^{\ast})\prec \alpha^{\ast}(c^{\ast})\rangle\cr&&\quad+\langle\{\alpha^{\ast}(b^{\ast}),\alpha^{\ast}(c^{\ast})\},\alpha(x)\rangle-\langle \alpha(y)\succ\alpha(x),\alpha^{\ast}(c^{\ast})\rangle+\langle\alpha(z)\succ\alpha(x),\alpha^{\ast}(b^{\ast})\rangle\cr&&\quad+\langle\alpha(x)\prec\alpha(z),\alpha^{\ast}(b^{\ast})\rangle-\langle[\alpha(z),\alpha(x)],\alpha^{\ast}(b^{\ast})\rangle
+\langle\alpha(x),\alpha^{\ast}(c^{\ast})\succ \alpha^{\ast}(b^{\ast})\cr
&&\quad-\langle\alpha(z),\alpha^{\ast}(a^{\ast})\succ \alpha^{\ast}(b^{\ast})\rangle-\langle\alpha(y),\alpha^{\ast}(b^{\ast})\prec \alpha^{\ast}(a^{\ast})\rangle+\langle\{\alpha^{\ast}(c^{\ast}),\alpha^{\ast}(a^{\ast})\},\alpha(y)\rangle\cr&&\quad-\langle \alpha(z)\succ\alpha(y),\alpha^{\ast}(a^{\ast})\rangle +\langle\alpha(x)\succ\alpha(y),\alpha^{\ast}(c^{\ast})\rangle+\langle\alpha(y)\prec\alpha(x),\alpha^{\ast}(c^{\ast})\rangle =0.
\end{eqnarray*}
Conversely, if there exists a standard Manin triple of involutive Hom-Leibniz algebras associated to $( A, [\cdot,\cdot], \alpha)$ and $( A^{\ast}, \{\cdot,\cdot\}, \alpha^{\ast})$, then, the octuple
$( A,  A^{\ast}, L^{\ast}_{\succ_{ A}}, -L^{\ast}_{\succ_{ A}}-R^{\ast}_{\prec_{ A}}, \alpha^{\ast}, L^{\ast}_{\succ_{ A^{\ast}}},
-L^{\ast}_{\succ_{ A^{\ast}}}-R^{\ast}_{\prec_{ A^{\ast}}}, \alpha)$
is a matched pair of the involutive Hom-Leibniz algebras  given by
\begin{align} \label{match. pairr1}
&L^{\ast}_{\succ_{A}}(\alpha(x))(\{a,b\})=L^{\ast}_{\succ_{A}}((-L^{\ast}_{\succ_{ A^{\ast}}}-R^{\ast}_{\prec_{ A^{\ast}}})(a)x)\beta(b)\nonumber\\
&\quad +(-L^{\ast}_{\succ_{ A}}-R^{\ast}_{\prec_{ A}})((-L^{\ast}_{\succ_{ A^{\ast}}}-R^{\ast}_{\prec_{ A^{\ast}}})(b)x)\beta(a)+\{L^{\ast}_{\succ_{A}}(x)a,\beta(b)\}\nonumber\\
&\quad +\{\beta(a),L^{\ast}_{\succ_{A}}(x)b\},\\*[0,2cm]
\label{match. pairr2}
&\{\beta(a),L^{\ast}_{\succ_{A}}(x)b\}+(-L^{\ast}_{\succ_{ A}}-R^{\ast}_{\prec_{ A}})((-L^{\ast}_{\succ_{ A^{\ast}}}-R^{\ast}_{\prec_{ A^{\ast}}})(b)x)\beta(a)\nonumber\\
&\quad =\{(-L^{\ast}_{\succ_{ A}}-R^{\ast}_{\prec_{ A}})(x)a,\beta(b)\}+L^{\ast}_{\succ_{A}}(L^{\ast}_{\succ_{A^{\ast}}}(a)x)\beta(b)\nonumber\\
&\quad +L^{\ast}_{\succ_{A}}(\alpha(x))([a,b]),
\\*[0,2cm]
\label{match. pairr3}
&\{\beta(a),(-L^{\ast}_{\succ_{ A}}-R^{\ast}_{\prec_{ A}})(x)b\}+(-L^{\ast}_{\succ_{ A}}-R^{\ast}_{\prec_{ A}})(L^{\ast}_{\succ_{A^{\ast}}}(b)x)\beta(a)\nonumber\\
&\quad =(-L^{\ast}_{\succ_{ A}}-R^{\ast}_{\prec_{ A}})(\alpha(x))([a,b])+\{\beta(b),(-L^{\ast}_{\succ_{ A}}-R^{\ast}_{\prec_{ A}})(x)a\}\nonumber\\
&\quad +(-L^{\ast}_{\succ_{ A}}-R^{\ast}_{\prec_{ A}})(L^{\ast}_{\succ_{A^{\ast}}}(a)x)\beta(b)
\\*[0,2cm]
\label{match. pairr4}
&L^{\ast}_{\succ_{A^{\ast}}}(\beta(a))(\{x,y\})=L^{\ast}_{\succ_{A^{\ast}}}((-L^{\ast}_{\succ_{ A}}-R^{\ast}_{\prec_{ A}})(x)a)\alpha(y)\nonumber\\
&\quad +(-L^{\ast}_{\succ_{ A^{\ast}}}-R^{\ast}_{\prec_{ A^{\ast}}})((-L^{\ast}_{\succ_{ A}}-R^{\ast}_{\prec_{ A}})(x)a)\alpha(x)+\{L^{\ast}_{\succ_{A^{\ast}}}(a)x,\alpha(y)\}\nonumber\\
&\quad +\{\alpha(x),L^{\ast}_{\succ_{A^{\ast}}}(a)y\},
\\*[0,2cm]
\label{match. pairr5}
&\{\alpha(x),L^{\ast}_{\succ_{A^{\ast}}}(a)y\}+(-L^{\ast}_{\succ_{ A^{\ast}}}-R^{\ast}_{\prec_{ A^{\ast}}})((-L^{\ast}_{\succ_{ A}}-R^{\ast}_{\prec_{ A}})(y)a)\alpha(x)\nonumber\\
&\quad =\{(-L^{\ast}_{\succ_{ A^{\ast}}}-R^{\ast}_{\prec_{ A^{\ast}}})(a)x,\alpha(y)\}
+L^{\ast}_{\succ_{A^{\ast}}}(L^{\ast}_{\succ_{A}}(x)a)\alpha(y)\nonumber\\
&\quad +L^{\ast}_{\succ_{A^{\ast}}}(\beta(a))([x,y]),
\\*[0,2cm]
\label{match. pairr6}
&\{\alpha(x),(-L^{\ast}_{\succ_{ A^{\ast}}}-R^{\ast}_{\prec_{ A^{\ast}}})(a)y\}+(-L^{\ast}_{\succ_{ A^{\ast}}}-R^{\ast}_{\prec_{ A}})(L^{\ast}_{\succ_{A^{\ast}}}(y)a)\alpha(x)\nonumber\\
&\quad =(-L^{\ast}_{\succ_{ A^{\ast}}}-R^{\ast}_{\prec_{ A^{\ast}}})(\beta(a))([x,y])+\{\alpha(y),(-L^{\ast}_{\succ_{ A^{\ast}}}-R^{\ast}_{\prec_{ A^{\ast}}})(a)x\}\nonumber\\
&\quad +(-L^{\ast}_{\succ_{ A^{\ast}}}-R^{\ast}_{\prec_{ A^{\ast}}})(L^{\ast}_{\succ_{A}}(x)a)\alpha(y),
\end{align}
which completes the proof. \end{proof}
\begin{cor} Let $ ( A, \prec, \succ, \alpha) $ be an involutive Hom-Leibniz dendriform algebra, and the triple                            $(L^{\ast}_{\succ}, -L^{\ast}_{\succ} -R^{\ast}_{\prec}, \alpha^{\ast})$ be
the bimodule of the associated involutive Hom-Leibniz algebra $( A, [\cdot,\cdot], \alpha)$. Then,
 $ (T( A) =  A \times _{L^{\ast}_{\succ},-L^{\ast}_{\succ} -R^{\ast}_{\prec}, \alpha, \alpha^{\ast}}  A^{\ast}, \omega)$ is a standard Manin triple of involutive Hom-Leibniz algebras.
\end{cor}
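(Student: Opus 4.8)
The plan is to obtain the statement as the degenerate instance of the preceding theorem in which the dual space carries the zero dendriform product. First I would endow $A^{\ast}$ with the trivial structure $\prec_{A^{\ast}}=\succ_{A^{\ast}}=0$. Since $\alpha$ is involutive, so is $\alpha^{\ast}$, and the three axioms \eqref{Hom-Leibniz dendriform1}--\eqref{Hom-Leibniz dendriform3} hold vacuously; hence $(A^{\ast},\prec_{A^{\ast}},\succ_{A^{\ast}},\alpha^{\ast})$ is an involutive Hom-Leibniz dendriform algebra whose associated Hom-Leibniz bracket $\{\cdot,\cdot\}$ is identically zero, so that $(A^{\ast},\{\cdot,\cdot\},\alpha^{\ast})$ is abelian. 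Consequently $L^{\ast}_{\succ_{A^{\ast}}}=R^{\ast}_{\prec_{A^{\ast}}}=0$, and the octuple appearing in the preceding theorem collapses to $(A,A^{\ast},L^{\ast}_{\succ},-L^{\ast}_{\succ}-R^{\ast}_{\prec},\alpha^{\ast},0,0,\alpha)$.

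Next I would verify that this octuple is a matched pair of Hom-Leibniz algebras in the sense of Theorem~\ref{thm:matchedpairs}. The quadruple $(L^{\ast}_{\succ},-L^{\ast}_{\succ}-R^{\ast}_{\prec},\alpha^{\ast},A^{\ast})$ is a bimodule of $(A,[\cdot,\cdot],\alpha)$: this is the bimodule supplied in the hypothesis, and it can equally be produced by applying Proposition~\ref{prop of bimodules} to the regular bimodule (so that $(L_{\succ},R_{\prec},\alpha,A)$ is a bimodule of the associated Hom-Leibniz algebra) and then dualizing via Lemma~\ref{lem:bimodmultinvolhomleib}\ref{iilem:bimodmultinvolhomleib}. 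On the other side, $(0,0,\alpha,A)$ is trivially a bimodule of the abelian algebra $(A^{\ast},\{\cdot,\cdot\},\alpha^{\ast})$. It then remains to check conditions \eqref{match. pair1}--\eqref{match. pair6}: since $\{\cdot,\cdot\}\equiv 0$ and the $A^{\ast}$-actions $l_{B},r_{B}$ on $A$ vanish, every summand in each of the six identities is zero and all reduce to $0=0$. In other words, the octuple is simply the matched pair arising from the semidirect product $A\ltimes A^{\ast}$.

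Finally, the preceding theorem applies and produces a standard Manin triple of $(A,[\cdot,\cdot],\alpha)$ and $(A^{\ast},\{\cdot,\cdot\},\alpha^{\ast})$. Because $\{\cdot,\cdot\}\equiv 0$ and $A^{\ast}$ acts trivially on $A$, the matched pair product \eqref{matchedpairproduct} reduces to the semidirect bracket, so the underlying Hom-Leibniz algebra is precisely $T(A)=A\times_{L^{\ast}_{\succ},-L^{\ast}_{\succ}-R^{\ast}_{\prec},\alpha,\alpha^{\ast}}A^{\ast}$, equipped with the nondegenerate skew-symmetric form $\omega=B$ of \eqref{quadreq}. The remaining Manin triple axioms hold by construction: $A$ is a subalgebra carrying $[\cdot,\cdot]$, $A^{\ast}$ is an abelian ideal, both are isotropic for $B$, and $T(A)=A\oplus A^{\ast}$ as linear spaces. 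This exhibits $(T(A),\omega)$ as the desired standard Manin triple. I expect the main obstacle to be the bookkeeping of the middle step, namely confirming that setting the dual dendriform product to zero forces exactly $L^{\ast}_{\succ_{A^{\ast}}}=R^{\ast}_{\prec_{A^{\ast}}}=0$ and $l_{B}=r_{B}=0$, so that the matched pair identities degenerate; the invariance of $\omega$ is then inherited from the preceding theorem rather than recomputed.
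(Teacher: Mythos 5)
Your proof is correct and is essentially the paper's intended argument: the corollary is stated as an immediate consequence of the preceding theorem, obtained exactly as you do by giving $A^{\ast}$ the zero dendriform structure, so that $L^{\ast}_{\succ_{A^{\ast}}}=R^{\ast}_{\prec_{A^{\ast}}}=0$, the matched-pair conditions \eqref{match. pair1}--\eqref{match. pair6} degenerate to $0=0$, and the matched-pair bracket collapses to the semidirect product $T(A)$ equipped with the form \eqref{quadreq}. The paper prints no proof of this corollary, and your supporting details (producing the bimodule $(L^{\ast}_{\succ},-L^{\ast}_{\succ}-R^{\ast}_{\prec},\alpha^{\ast},A^{\ast})$ via Proposition \ref{prop of bimodules} together with Lemma \ref{lem:bimodmultinvolhomleib}, and taking the trivial bimodule $(0,0,\alpha,A)$ over the abelian dual) are a faithful completion of that implicit argument.
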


\section{BiHom-Leibniz algebra}
\label{sec:bihomleibniz}
\begin{defn}
A BiHom-Leibniz algebra is a quadruple $(A,[\cdot,\cdot],\alpha,\beta)$ consisting of a linear space $A$, bilinear map $[.,.]:A\times A\rightarrow A$, linear maps $\alpha,\beta:A\rightarrow A$ satisfying, for all $x,y,z\in A$,
\begin{alignat}{2}
    &\alpha\circ\beta=\beta\circ\alpha, & \quad \text{(commutativity)}\\
    &[\alpha\beta(x),[y,z]]=[[\beta(x),y],\beta(z)]+[\beta(y),[\alpha(x),z]]. & \quad \text{(BiHom-Leibniz identity)} \label{cond BiHom-Leibn}
\end{alignat}
If, in addition, $\alpha$ and $\beta$ satisfies the multiplicativity properties
\begin{equation}
    \alpha([x,y])=[\alpha(x),\alpha(y)],~  \beta([x,y])=[\beta(x),\beta(y)],
\end{equation}
then $(A, [\cdot, \cdot], \alpha,\beta)$ is said to be multiplicative.
\end{defn}

\begin{defn}
A BiHom-module is a triple $(M,\alpha,\beta )$, where $M$ is a $\mathbb{K}$-linear space,
and $\alpha,\beta : M\rightarrow M$ are two linear maps.
\end{defn}
\begin{defn}
Let $(A,[\cdot,\cdot],\alpha_1,\alpha_2)$ be a BiHom-Leibniz algebra and let $(V,\beta_1,\beta_2)$ be a BiHom-module. Let $l,r:A\rightarrow gl(V)$ be two linear maps. The quintuple $(l,r,\beta_1,\beta_2,V)$ is called a bimodule of $A$ if for all $ x, y \in  A, v \in V $,
\begin{eqnarray}
\label{Cond.biHom-bimod 1}
l(\alpha_1\alpha_2(x))l(y)v&=&l([\alpha_2(x),y])\beta_2(v)+l(\alpha_2(y))l(\alpha_1(x))v, \\
\label{Cond.biHom-bimod 2}
l(\alpha_1\alpha_2(x))r(y)v&=&r(\alpha_2(y))l(\alpha_2(x))v+r([\alpha_1(x),y])\beta_2(v),\\
\label{Cond.biHom-bimod 3}
r([x,y])\beta_1\beta_2(v)&=&r(\alpha_2(y))r(x)\beta_2(v)+l(\alpha_2(x))r(y)\beta_1(v),\\
\label{Cond.biHom-bimod 4}
\beta_1(l(x)v) &=&l(\alpha_1(x))\beta_1(v),\\
\label{Cond.biHom-bimod 5}
\beta_1(r(x)v) &=& r(\alpha_1(x))\beta_1(v),\\
\label{Cond.biHom-bimod 6}
\beta_2(l(x)v) &=& l(\alpha_2(x))\beta_2(v),\\
\label{Cond.biHom-bimod 7}
\beta_2(r(x)v) &=& l(\alpha_2(x))\beta_2(v).\end{eqnarray}
\end{defn}
\begin{prop}
Let $(l, r, \beta_{1}, \beta_{2}, V)$ be a bimodule of a BiHom-Leibniz algebra\\ $( A, [\cdot,\cdot], \alpha_{1}, \alpha_{2})$. Then, the direct sum $ A \oplus V$ of linear spaces is turned into a BiHom-Leibniz algebra  by defining multiplication in $ A \oplus V $ for all $ x_{1}, x_{2} \in   A, v_{1}, v_{2} \in V$ by
\begin{eqnarray*}
&&[x_{1} + v_{1} , x_{2} + v_{2}]'=[x_{1} , x_{2}] + (l(x_{1})v_{2} + r(x_{2})v_{1}),\cr
&&(\alpha_{1}\oplus\beta_{1})(x_{1} + v_{1})=\alpha_{1}(x_{1}) + \beta_{1}(v_{1}), (\alpha_{2}\oplus\beta_{2})(x_{1} + v_{1})=\alpha_{2}(x_{1}) + \beta_{2}(v_{1}).
\end{eqnarray*}
\end{prop}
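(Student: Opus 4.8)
The plan is to verify directly that the quadruple $(A\oplus V, [\cdot,\cdot]', \alpha_1\oplus\beta_1, \alpha_2\oplus\beta_2)$ satisfies the two axioms of a BiHom-Leibniz algebra, following the same semidirect-product strategy already used for the Hom-Leibniz case earlier in the paper. First I would check the commutativity condition $(\alpha_1\oplus\beta_1)(\alpha_2\oplus\beta_2) = (\alpha_2\oplus\beta_2)(\alpha_1\oplus\beta_1)$: on the $A$-component it is immediate from $\alpha_1\alpha_2 = \alpha_2\alpha_1$, and on the $V$-component it reduces to $\beta_1\beta_2 = \beta_2\beta_1$, which I take to be part of the data of the BiHom-module $(V,\beta_1,\beta_2)$. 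This step is routine.

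The substance of the proof is the BiHom-Leibniz identity \eqref{cond BiHom-Leibn} for the new bracket $[\cdot,\cdot]'$. I would take generic elements $X = x_1 + v_1$, $Y = x_2 + v_2$, $Z = x_3 + v_3$ with $x_i \in A$ and $v_i \in V$, and expand each of the three iterated brackets occurring in \eqref{cond BiHom-Leibn} using only the defining formula for $[\cdot,\cdot]'$ together with the two twisting maps. Each such bracket decomposes as the sum of an element of $A$ and an element of $V$, so the whole identity splits into an $A$-component and a $V$-component that may be analysed independently.

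The $A$-component is precisely the BiHom-Leibniz identity for $(A,[\cdot,\cdot],\alpha_1,\alpha_2)$ evaluated at $x_1,x_2,x_3$, hence it holds by hypothesis. For the $V$-component I would sort the surviving terms according to which of $v_1,v_2,v_3$ they act upon, using the linearity of $l$ and $r$. The terms acting on $v_3$ collapse to exactly \eqref{Cond.biHom-bimod 1} (with $x=x_1$, $y=x_2$), the terms acting on $v_2$ collapse to \eqref{Cond.biHom-bimod 2} (with $x=x_1$, $y=x_3$), and the terms acting on $v_1$ collapse to \eqref{Cond.biHom-bimod 3} (with $x=x_2$, $y=x_3$). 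Thus each group vanishes on account of a single bimodule axiom, and the $V$-component vanishes as well, which finishes the verification.

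I expect the only genuine difficulty to be the bookkeeping: tracking precisely where each of $\alpha_1,\alpha_2,\beta_1,\beta_2$ lands after the two successive applications of $[\cdot,\cdot]'$, and confirming that the three groups of $V$-terms reproduce the bimodule axioms verbatim, with no stray sign or misplaced twist. It is worth remarking that the equivariance axioms \eqref{Cond.biHom-bimod 4}--\eqref{Cond.biHom-bimod 7} play no role in verifying the bare BiHom-Leibniz identity; they would enter only if one wished in addition to make $\alpha_1\oplus\beta_1$ and $\alpha_2\oplus\beta_2$ multiplicative, which is not part of the present claim.
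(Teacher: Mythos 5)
Your proposal is correct and takes essentially the same route as the paper's proof: expand the BiHom-Leibniz identity for $[\cdot,\cdot]'$ on $x_1+v_1,\,x_2+v_2,\,x_3+v_3$, dispose of the $A$-component by the identity \eqref{cond BiHom-Leibn} in $A$, and cancel the three groups of $V$-terms (those acting on $v_3$, $v_2$, $v_1$) by the bimodule axioms \eqref{Cond.biHom-bimod 1}, \eqref{Cond.biHom-bimod 2}, \eqref{Cond.biHom-bimod 3} with exactly the substitutions you indicate. Your one refinement over the paper is the explicit commutativity check, which indeed requires $\beta_1\beta_2=\beta_2\beta_1$ — a hypothesis the paper's definition of BiHom-module does not state and its proof silently skips.
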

\begin{proof}
For all $v_{1}, v_{2}, v_{3}\in V$ and $x_{1}, x_{2},x_{3}\in  A$,
\begin{align*}\label{condit. du BiHomBimod.}
&[(\alpha_1+\beta_1)(\alpha_2+\beta_2)(x_1+v_1),[x_2+v_2,x_3+v_3]']'\\
&\quad\quad-[[(\alpha_2+\beta_2)(x_1+v_1),x_2+v_2]',(\alpha_2+\beta_2)(x_3+v_3)]'\\
&\quad\quad-[(\alpha_2+\beta_2)(x_2+v_2),[(\alpha_1+\beta_1)(x_1+v_1),x_3+v_3]']'\\
&\quad=[\alpha_1\alpha_2(x_1)+\beta_1\beta_2(v_1),[x_2,x_3]+l(x_2)v_3+r(x_3)v_2]'\\
&\quad\quad -[[\alpha_2(x_1),x_2]+l((\alpha_2(x_1))v_2+r(x_2)\beta_2(v_1),\alpha_2(x_3)+\beta_2(v_3)]'\\
&\quad\quad-[\alpha_2(x_2)+\beta_2(v_2),[\alpha_1(x_1),x_3]+l(\alpha_1(x_1))v_3+r(x_3)\beta_1(v_1)]\\
&\quad=[\alpha_1\alpha_2(x_1),[x_2,x_3]]+l(\alpha\alpha_2(x_1))l(x_2)v_3+l(\alpha_1\alpha_2(x_1))r(x_3)v_2\\
&\quad\quad+r([x_2,x_3])\beta_1\beta_2(v_1)-[[\alpha_2(x_1),x_2],\alpha_2(x_3)]-l([\alpha_2(x_1),x_2])r(x_2)\beta_2(v_1)\\
&\quad\quad-r(\alpha_2(x_3))r(x_2)\beta_2(v_1)
-[\alpha_2(x_2),[\alpha_1(x_1),x_3]]-l(\alpha_2(x_2))l(\alpha_1(x_1))v_3\\
&\quad\quad-l(\alpha_2(x_2))r(x_3)\beta_1(v_1)-r([\alpha_1(x_1),x_3])\beta_2(v_2).
\end{align*}
Then, by \eqref{cond BiHom-Leibn} and \eqref{Cond.biHom-bimod 1}-\eqref{Cond.biHom-bimod 3},
\begin{multline*}
[(\alpha_1+\beta_1)(\alpha_2+\beta_2)(x_1+v_1),[x_2+v_2,x_3+v_3]']' \\
-[[(\alpha_2+\beta_2)(x_1+v_1),x_2+v_2]',(\alpha_2+\beta_2)(x_3+v_3)]' \\
-[(\alpha_2+\beta_2)(x_2+v_2),[(\alpha_1+\beta_1)(x_1+v_1),x_3+v_3]']'=0,
\end{multline*}
which completes the proof.
\end{proof}
We denote such a BiHom-Leibniz algebra by $( A \oplus V, [\cdot,\cdot]', \alpha_{1} + \beta_{1}, \alpha_{2} + \beta_{2}),$
or $ A \times_{l, r, \alpha_{1}, \alpha_{2}, \beta_{1}, \beta_{2}} V.$

\begin{ex}\label{bmodex}
Let $( A, [\cdot,\cdot], \alpha, \beta)$ be a multiplicative BiHom-Leibniz algebra. Then the quadruples
$ (L, 0, \alpha, \beta)$ and $(L, R, \alpha, \beta) $ are bimodules of $( A, [\cdot,\cdot], \alpha, \beta)$.
\end{ex}
\begin{prop}
Let $\mathcal{V}=(l, r, \beta_{1}, \beta_{2}, V)$ be bimodule of a multiplicative BiHom-Leibniz algebra  $( A, [\cdot,\cdot], \alpha_{1}, \alpha_{2})$. Then, the quintuples $\mathcal{V}_1=(l\circ\alpha^{n}_{1}, r\circ\alpha^{n}_{1}, \beta_{1}, \beta_{2}, V)$ and $\mathcal{V}_2=(l\circ\alpha^{n}_{2}, r\circ\alpha^{n}_{2}, \beta_{1}, \beta_{2}, V)$ are  bimodules of $( A, [\cdot,\cdot], \alpha_{1}, \alpha_{2}),$
for any non-negative integer $n$, or if moreover $\alpha_1$ and $\alpha_2$ are invertible, then for any integer $n$.
\end{prop}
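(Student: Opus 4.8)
The plan is to verify directly that the twisted maps satisfy the seven bimodule axioms \eqref{Cond.biHom-bimod 1}--\eqref{Cond.biHom-bimod 7}. Write $\widetilde{l}=l\circ\alpha_1^n$ and $\widetilde{r}=r\circ\alpha_1^n$, so that $\widetilde{l}(x)=l(\alpha_1^n(x))$ and $\widetilde{r}(x)=r(\alpha_1^n(x))$ for all $x\in A$, while the structure maps $\beta_1,\beta_2$ of the BiHom-module are left unchanged. The whole argument rests on two facts coming from the hypotheses: since $(A,[\cdot,\cdot],\alpha_1,\alpha_2)$ is multiplicative, each power $\alpha_1^n$ is again an algebra endomorphism, so $\alpha_1^n([x,y])=[\alpha_1^n(x),\alpha_1^n(y)]$; and from commutativity $\alpha_1\alpha_2=\alpha_2\alpha_1$ the map $\alpha_1^n$ commutes with both $\alpha_1$ and $\alpha_2$. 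These are exactly the ingredients needed to move $\alpha_1^n$ freely across brackets and across the twisting endomorphisms.

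First I would treat the three ``mixed'' axioms \eqref{Cond.biHom-bimod 1}--\eqref{Cond.biHom-bimod 3}, which are the only ones where several occurrences of $l$, $r$, and the bracket interact. The mechanism is uniform: evaluate the original axiom not at $x,y$ but at $\alpha_1^n(x),\alpha_1^n(y)$, and then reorganize each term using multiplicativity and commutativity. For instance, for \eqref{Cond.biHom-bimod 1} the left-hand side becomes $\widetilde{l}(\alpha_1\alpha_2(x))\widetilde{l}(y)v=l(\alpha_1\alpha_2(\alpha_1^n(x)))\,l(\alpha_1^n(y))v$ after commuting $\alpha_1^n$ past $\alpha_1\alpha_2$; applying the original \eqref{Cond.biHom-bimod 1} with arguments $\alpha_1^n(x),\alpha_1^n(y)$ produces $l([\alpha_2\alpha_1^n(x),\alpha_1^n(y)])\beta_2(v)+l(\alpha_2\alpha_1^n(y))l(\alpha_1\alpha_1^n(x))v$, and a further use of multiplicativity, $\alpha_1^n([\alpha_2(x),y])=[\alpha_2\alpha_1^n(x),\alpha_1^n(y)]$, identifies this with $\widetilde{l}([\alpha_2(x),y])\beta_2(v)+\widetilde{l}(\alpha_2(y))\widetilde{l}(\alpha_1(x))v$, which is precisely the twisted right-hand side. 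The same substitution-and-reorganize recipe dispatches \eqref{Cond.biHom-bimod 2} and \eqref{Cond.biHom-bimod 3} verbatim.

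The compatibility axioms \eqref{Cond.biHom-bimod 4}--\eqref{Cond.biHom-bimod 7} are immediate: applying the corresponding original identity at $\alpha_1^n(x)$ and using $\alpha_1(\alpha_1^n(x))=\alpha_1^n(\alpha_1(x))$ (and likewise for $\alpha_2$) turns, e.g., \eqref{Cond.biHom-bimod 4} into $\beta_1(\widetilde{l}(x)v)=l(\alpha_1^n\alpha_1(x))\beta_1(v)=\widetilde{l}(\alpha_1(x))\beta_1(v)$. The statement for $\mathcal{V}_2$ is obtained by the identical computation with $\alpha_1^n$ replaced by $\alpha_2^n$, again using that $\alpha_2^n$ is a multiplicative endomorphism commuting with $\alpha_1,\alpha_2$. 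Finally, when $\alpha_1,\alpha_2$ are invertible, $\alpha_1^{-1}$ and $\alpha_2^{-1}$ are themselves multiplicative endomorphisms commuting with $\alpha_1,\alpha_2$, so the very same manipulations remain valid for negative $n$. I do not expect a genuine obstacle here; the only point demanding care is the consistent use of commutativity, so that every $\alpha_1^n$ (or $\alpha_2^n$) introduced by the twist can be pushed through the inner $\alpha_1,\alpha_2$ and through the bracket to realign with the twisted maps.
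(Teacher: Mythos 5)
Your proposal is correct and follows essentially the same route as the paper's own proof: substitute $\alpha_1^n(x),\alpha_1^n(y)$ into the original bimodule identities, then use multiplicativity of $\alpha_1^n$ and the commutation $\alpha_1\alpha_2=\alpha_2\alpha_1$ to pull the power through the bracket and the structure maps, with $\mathcal{V}_2$ handled identically and the invertible case reduced to the observation that $\alpha_1^{-1},\alpha_2^{-1}$ are again commuting multiplicative endomorphisms. The only difference is that you also spell out the compatibility conditions \eqref{Cond.biHom-bimod 4}--\eqref{Cond.biHom-bimod 7}, which the paper verifies only implicitly; this is a minor completeness improvement, not a different method.
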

\begin{proof}
For all $x,y\in A,$ $v\in V$ and non-negative integer $n$, the defining equalities \eqref{Cond.biHom-bimod 1}, \eqref{Cond.biHom-bimod 2} and \eqref{Cond.biHom-bimod 3} in $\mathcal{V}_{1}$
are proved as follows:
\begin{align*}
(l\circ\alpha_{1}^{n})(\alpha_1\alpha_2(x))(l\circ\alpha_1^{n})(y)v=&l(\alpha_1^{n+1}\alpha_2(x))l(\alpha_1^{n}(y))v\\
=&l([\alpha_1^{n}\alpha_2(x),\alpha_1^{n}(y)]\beta_2(v)+l(\alpha_2\alpha_1^{n}(y))l(\alpha_1^{n+1}(x))v\\
=&(l\circ\alpha_1^{n})([\alpha_2(x),y])\beta_2(v)\\&+(l\circ\alpha_1^{n})(\alpha_2(y))(l\circ\alpha_1^{n})(\alpha_1(x)),\\*[0,2cm]
(l\circ\alpha_1^{n})(\alpha_1\alpha_2(x))(r\circ\alpha_1^{n})(y)v=&l(\alpha_1^{n+1}\alpha_2(x))r(\alpha_1^{n}(y))v\\
=&r(\alpha_2\alpha_1^{n}(y))l(\alpha_1^{n}\alpha_2(x))v+r([\alpha_1^{n+1}(x),\alpha_1^{n}(y)]\beta_2(v)\\
=&(r\circ\alpha_1^{n})(\alpha_2(y))(l\circ\alpha_1^{n})(\alpha_2(x))v\\&+(r\circ\alpha_1^{n})([\alpha_1(x),y])\beta_2(v),\\*[0,2cm]
(r\circ\alpha_1^{n})([x,y])\beta_1\beta_2(v)=&r([\alpha_1^{n}(x),\alpha_1^{n}(y)])\beta_1\beta_2(v)\\
=&r(\alpha_2\alpha_1^{n}(y))r(\alpha_1^{n}(x))\beta_2(v)+l(\alpha_1^{n}\alpha_2(x))r(\alpha_1^{n}(y))\beta_2(v)\\
=&(r\circ\alpha_1^{n})(\alpha_2(y))(r\circ\alpha_1^{n})(x)\beta_2(v)\\
&+(l\circ\alpha_1^{n})(\alpha_2(x))(r\circ\alpha_1^{n})(y)\beta_2(v).
\end{align*}
Similarly it can be shown that for any non-negative integer $n$, $\mathcal{V}_2$ is a bimodule of $( A, [\cdot,\cdot], \alpha_{1}, \alpha_{2})$.
If moreover $\alpha_1$ and $\alpha_2$ are invertible, then in the same way it follows that
$\mathcal{V}_1=(l\circ\alpha^{n}_{1}, r\circ\alpha^{n}_{1}, \beta_{1}, \beta_{2}, V)$ and $\mathcal{V}_2=(l\circ\alpha^{n}_{2}, r\circ\alpha^{n}_{2}, \beta_{1}, \beta_{2}, V)$ are bimodules of
$( A, [\cdot,\cdot], \alpha_{1}, \alpha_{2}),$
for any integer $n,$ since $\alpha_1^{-1}$ and $\alpha_2^{-1}$ are BiHom-Leibniz algebras morphisms, and $\alpha_{i}^{j}\circ\alpha_{k}^{l}=\alpha_{k}^{l}\circ\alpha_{i}^{j},$
for all $(i,k)\in\{1,2\}^{2}$ and $(j,l)\in\{1,-1\}^{2}.$
\end{proof}
\begin{ex}
Let $( A,[\cdot,\cdot], \alpha_{1}, \alpha_{2})$ be a multiplicative BiHom-Leibniz algebra, then, $(L\circ\alpha^{n}_{1}, R\circ\alpha^{n}_{1}, \alpha_{1}, \alpha_{2},  A)$ and $(L\circ\alpha^{n}_{2}, R\circ\alpha^{n}_{2}, \alpha_{1}, \alpha_{2},  A)$ are bimodules of $( A,[\cdot,\cdot], \alpha_{1}, \alpha_{2})$ for any non-negative integer $n$, or for any integer $n$ if $\alpha_1$ and $\alpha_2$ are invertible.
\end{ex}

\begin{thm}
Let $( A, [\cdot,\cdot], \alpha_{1}, \alpha_{2}) $ and $(  B, \{\cdot,\cdot\}, \beta_{1}, \beta_{2})$ be two biHom-Leibniz
algebras. Suppose there exist linear maps $l_{  A}, r_{  A} :  A
\rightarrow gl(  B),$ and $l_{   B}, r_{   B} :    B \rightarrow
gl(  A) $ such that $ (l_{  A}, r_{  A},  \beta_{1}, \beta_{2},   B)$ is a bimodule of $ A, $ and $ (l_{   B}, r_{   B}, \alpha_{1}, \alpha_{2},  A)$ is a bimodule of $  B,$ satisfying, for any $ x, y \in   A, a,b \in B$ the following conditions:
\begin{eqnarray} \label{bimatch. pair1}
&l_{A}(\alpha_1\alpha_2(x))(\{a,b\})=l_A(r_B(a)\alpha_2(x))\beta_2(b)+r_A(r_B(b)\alpha_1(x))\beta_2(a)\nonumber\\
&+\{l_A(\alpha_2(x))a,\beta_2(b)\}+\{\beta_2(a),l_A(\alpha_1(x))b\},\\
\label{bimatch. pair2}
&\{\beta_1\beta_2(a),l_A(x)b\}+r_A(r_B(b)x)\beta_1\beta_2(a)=\{r_A(x)\beta_2(a),\beta_2(b)\}\nonumber\\
&+l_A(l_B(\beta_2(a))x)\beta_2(b)+l_A(\alpha_2(x))([\beta_1(a),b]),\\
\label{bimatch. pair3}
&\{\beta_1\beta_2(a),r_A(x)b\}+r_A(l_B(b)x)\beta_1\beta_2(a)=r_A(\alpha_2(x))(\{\beta_2(a),b\})\nonumber\\
&+\{\beta_2(b),r_A(x)\beta_1(a)\}+r_A(l_B(\beta_1(a))x)\beta_2(b),\\
 \label{bimatch. pair4}
&l_{B}(\beta_1\beta_2(a))([x,y])=l_B(r_A(x)\beta_2(a))\alpha_2(y)+r_B(r_A(y)\beta_1(a))\alpha_2(x)\nonumber\\
&+[l_B(\beta_2(a))x,\alpha_2(y)]+[\alpha_2(x),l_B(\beta_1(a))y],\\
\label{bimatch. pair5}
&[\alpha_1\alpha_2(x),l_B(a)y]+r_B(r_A(y)a)\alpha_1\alpha_2(x)=[r_B(a)\alpha_2(x),\alpha_2(y)]\nonumber\\
&+l_B(l_A(\beta_2(x))a)\alpha_2(y)+l_B(\beta_2(a))([\alpha_1(x),y]),\\
\label{bimatch. pair6}
&[\alpha_1\alpha_2(x),r_B(a)y]+r_B(l_A(y)a)\alpha_1\alpha_2(x)=r_B(\beta_2(a))([\alpha_2(x),y])\nonumber\\
&+[\alpha_2(y),r_B(a)\alpha_1(x)]+r_B(l_A(\alpha_1(x))a)\alpha_2(y).
\end{eqnarray}
 Then, there is a BiHom-Leibniz algebra
 structure on the direct sum $ A\oplus  B$ of
the underlying linear spaces of $ A$ and $  B$ given for all $ x, y \in   A, a,b \in B$ by
\begin{equation}\label{bimatch. pair product}
\begin{array}{l}
[x + a,y + b]' = ([x, y] + l_{B}(a)y + r_{ B}(b)x) + (\{a, b\} +  l_{  A}(x)b +  r_{  A}(y)a),\cr
(\alpha_{1}\oplus\beta_{1})(x + a) = \alpha_{1}(x) + \beta_{1}(a), (\alpha_{2}\oplus\beta_{2})(x + a)=\alpha_{2}(x) + \beta_{2}(a).
\end{array}
\end{equation}
\end{thm}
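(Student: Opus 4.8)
The plan is to verify directly the two defining axioms of a BiHom-Leibniz algebra for the quadruple $(A\oplus B, [\cdot,\cdot]', \alpha_1+\beta_1, \alpha_2+\beta_2)$, following the pattern of Theorem \ref{thm:matchedpairs} and of the split null extension proposition established earlier for BiHom-Leibniz algebras. First I would check the commutativity $(\alpha_1+\beta_1)(\alpha_2+\beta_2)=(\alpha_2+\beta_2)(\alpha_1+\beta_1)$ on $A\oplus B$. Evaluated on $x+a$, both sides equal $\alpha_1\alpha_2(x)+\beta_1\beta_2(a)$, since $\alpha_1,\alpha_2$ commute in $A$ and $\beta_1,\beta_2$ commute in $B$; this is immediate.

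The substantive step is the BiHom-Leibniz identity \eqref{cond BiHom-Leibn} for $[\cdot,\cdot]'$. I would substitute $X=x+a$, $Y=y+b$, $Z=z+c$ (with $x,y,z\in A$, $a,b,c\in B$) into
\[
[(\alpha_1+\beta_1)(\alpha_2+\beta_2)X, [Y,Z]']' - [[(\alpha_2+\beta_2)X, Y]', (\alpha_2+\beta_2)Z]' - [(\alpha_2+\beta_2)Y, [(\alpha_1+\beta_1)X, Z]']'
\]
and expand every bracket using the defining product \eqref{bimatch. pair product}. Each nested bracket produces an $A$-component (governed by the bracket of $A$ together with $l_B,r_B$) and a $B$-component (governed by the bracket of $B$ together with $l_A,r_A$), so the whole expression separates, as $A\oplus B$ is a direct sum of linear spaces, into a sum lying in $A$ and a sum lying in $B$, which I would treat independently.

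Next I would sort the resulting terms into three families. The purely $A$-valued terms involving only $x,y,z$ and the bracket of $A$ cancel by \eqref{cond BiHom-Leibn} in $A$; symmetrically the purely $B$-valued terms in $a,b,c$ cancel by the BiHom-Leibniz identity in $B$. The remaining mixed terms are the crux: collecting the $A$-valued mixed terms (those built from $l_B,r_B$ and the bracket of $A$ acting back on $A$) yields exactly the left-minus-right of \eqref{bimatch. pair4}, \eqref{bimatch. pair5}, \eqref{bimatch. pair6}, and hence vanishes; collecting the $B$-valued mixed terms (built from $l_A,r_A$ and the bracket of $B$) yields the left-minus-right of \eqref{bimatch. pair1}, \eqref{bimatch. pair2}, \eqref{bimatch. pair3}, and hence vanishes. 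The bimodule compatibilities \eqref{Cond.biHom-bimod 4}--\eqref{Cond.biHom-bimod 7} are invoked along the way to move $\beta_1,\beta_2$ through the actions $l_A,r_A$ (and $\alpha_1,\alpha_2$ through $l_B,r_B$) so that each term acquires the precise twist pattern appearing in the matched-pair conditions.

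I expect the main obstacle to be the bookkeeping of the twisting maps. Because $\alpha_1,\alpha_2,\beta_1,\beta_2$ occur in asymmetric positions (for instance $\alpha_1\alpha_2(x)$ in one slot but $\alpha_2(x)$ or $\alpha_1(x)$ in another), one must verify that each mixed term lands with exactly the arguments $\alpha_2(x)$, $\alpha_1(x)$, $\beta_2(a)$, $\beta_1(a)$ demanded by \eqref{bimatch. pair1}--\eqref{bimatch. pair6}, rather than matching only the untwisted ``Hom'' shadow of these identities. This alignment is precisely why the two distinct twists force the six conditions to be stated with their particular placements of $\alpha_i$ and $\beta_i$, and checking it term by term is the delicate part; the algebra itself is routine once the correspondence between the term families and the conditions is fixed.
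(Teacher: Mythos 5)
Your overall strategy is the natural one, and for what it is worth the paper offers nothing to compare it against: this theorem is stated there with no proof at all (it is the BiHom analogue of Theorem \ref{thm:matchedpairs}, which is itself quoted from the literature). The direct verification you outline --- check commutativity of $\alpha_1\oplus\beta_1$ and $\alpha_2\oplus\beta_2$, expand the BiHom-Leibniz identity for $[\cdot,\cdot]'$ on $x+a$, $y+b$, $z+c$, split by multilinearity into the eight sectors according to which arguments lie in $A$ or in $B$, and separate each sector into its $A$-valued and $B$-valued components --- is exactly how the result must be established, and your treatment of the commutativity check and of the two pure sectors is correct.

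However, your accounting of the mixed terms is wrong in a concrete way that would stall the computation as you describe it. It is not true that the $A$-valued mixed terms reduce exactly to \eqref{bimatch. pair4}--\eqref{bimatch. pair6}, nor the $B$-valued ones to \eqref{bimatch. pair1}--\eqref{bimatch. pair3}. A sector with two $A$-arguments and one $B$-argument produces, besides $A$-valued terms (which do match \eqref{bimatch. pair4}, \eqref{bimatch. pair5} or \eqref{bimatch. pair6} according to the position of the $B$-argument), also $B$-valued terms such as $l_A(\alpha_1\alpha_2(x))l_A(y)c-l_A([\alpha_2(x),y])\beta_2(c)-l_A(\alpha_2(y))l_A(\alpha_1(x))c$; these vanish by the composition axioms \eqref{Cond.biHom-bimod 1}--\eqref{Cond.biHom-bimod 3} of the bimodule $(l_A,r_A,\beta_1,\beta_2,B)$, not by any matched-pair condition. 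Symmetrically, a sector with two $B$-arguments produces $A$-valued terms like $l_B(\beta_1\beta_2(a))l_B(b)z-l_B(\{\beta_2(a),b\})\alpha_2(z)-l_B(\beta_2(b))l_B(\beta_1(a))z$, which no condition among \eqref{bimatch. pair1}--\eqref{bimatch. pair6} contains (those conditions never involve compositions of two $l_B,r_B$ actions); they are killed by \eqref{Cond.biHom-bimod 1}--\eqref{Cond.biHom-bimod 3} for $(l_B,r_B,\alpha_1,\alpha_2,A)$. Relatedly, the bimodule axioms you invoke, \eqref{Cond.biHom-bimod 4}--\eqref{Cond.biHom-bimod 7}, are not the relevant ones: no twist needs to be moved through an action here, because the product \eqref{bimatch. pair product} already places $\alpha_1\alpha_2$, $\alpha_2$, $\alpha_1$, $\beta_1\beta_2$, $\beta_2$, $\beta_1$ exactly where the hypotheses expect them (compare the paper's proof of the semidirect-product proposition for BiHom-Leibniz bimodules, which uses only \eqref{cond BiHom-Leibn} and \eqref{Cond.biHom-bimod 1}--\eqref{Cond.biHom-bimod 3}). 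Since the missing axioms are among your hypotheses, the gap is repairable, but your claimed dictionary must be corrected to: each mixed sector splits into one matched-pair condition (in the component where the action is by the ``minority'' algebra) and one bimodule composition axiom (in the other component).
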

We denote this BiHom-Leibniz algebra by
$( A\bowtie   B,[\cdot,\cdot]' , \alpha_{1} + \beta_{1}, \alpha_{2} + \beta_{2})$ or $ A \bowtie^{l_{A}, r_{A}, \beta_{1}, \beta_{2}}_{l_{B}, r_{B}, \alpha_{1}, \alpha_{2}}B.$

\begin{defn}
Let $ ( A,[\cdot,\cdot], \alpha_{1}, \alpha_{2})$ and $(B, \{\cdot,\cdot\}, \beta_{1}, \beta_{2})$ be two BiHom-Leibniz
algebras. Suppose  there exist linear maps
 $ l_{  A}, r_{  A} :   A \rightarrow gl(   B),$ and
$ l_{   B}, r_{   B} :    B \rightarrow gl(  A) $ such that
$ (l_{  A}, r_{  A}, \beta_{1}, \beta_{2}) $ is a bimodule of $ A,$ and $ (l_{   B}, r_{   B}, \alpha_{1}, \alpha_{2}) $
is a bimodule of $    B $. Then,
 $ (  A,    B, l_{  A}, r_{  A}, \beta_{1}, \beta_{2}, l_{   B}, r_{   B}, \alpha_{1}, \alpha_{2}) $
 is called a matched pair of BiHom-Leibniz algebras,  if the conditions \eqref{bimatch. pair1} - \eqref{bimatch. pair6} are satisfied.
\end{defn}
\begin{ex}
We consider the multiplicative BiHom-Leibniz algebra $(A, [\cdot,\cdot],\alpha_1,\alpha_2)$. By Example \ref{bmodex}, we have $(L,0,\alpha_1,\alpha_2,A)$ and $(L,R,\alpha,\alpha_2,A)$ are bimodules of $A$. Then $ (  A,    A, L, 0, \alpha_{1}, \alpha_{2}, L, R, \alpha_{1}, \alpha_{2}) $ is a matched pair of BiHom-Leibniz algebras if and only if for all $x,y,z\in A$, $[[\alpha_2(x),y],\alpha_2(z)]=0$
\end{ex}
\begin{ex}
Consider the three-dimensional BiHom-Leibniz algebras $(A, [\cdot,\cdot],\alpha_1,\alpha_2)$ and $(A, \{\cdot,\cdot\},\beta_1,\beta_2)$ with multiplications tables for a basis $\{e_1, e_2, e_3\}$:
\begin{center}
\begin{tabular}{c|cccc}
$[\cdot,\cdot]$ & $e_1$ & $e_2$ & $e_3$ \\ \hline
$e_1$ & $e_3$ & $-e_3$ & $0$  \\
$e_2$ & $-e_3$ & $3e_3$ & $0$  \\
$e_3$ & $0$ & $0$ & $0$ \\
\end{tabular}
$$\begin{array}{llll}
    \alpha_1(e_1)=-e_1, &\alpha_1(e_2)=-e_2,&\alpha_1(e_3)=e_3,\\
    \alpha_2(e_1)=-2e_1, &\alpha_2(e_2)=-2e_2, &\alpha_2(e_3)=4 e_3,
\end{array}$$
\hspace{1 cm}
\begin{tabular}{c|cccc}
$\{\cdot,\cdot\}$ & $e_1$ & $e_2$ & $e_3$ \\ \hline
$e_1$ & $-e_3$ & $0$ & $0$  \\
$e_2$ & $0$ & $2e_3$ & $0$  \\
$e_3$ & $0$ & $0$ & $0$ \\
\end{tabular}
\end{center}
$$\begin{array}{llll}
    \beta_1(e_1)=\frac{1}{2}e_1, &\beta_1(e_2)=\frac{1}{2}e_2, &\beta_1(e_3)=\frac{1}{4}e_3,\\
    \beta_2(e_1)=\sqrt{2}e_1, &\beta_2(e_2)=\sqrt{2}e_2, &\beta_2(e_3)=2e_3.
\end{array}$$
It is clear that $ (L, R, \beta_{1}, \beta_{2}) $ is a bimodule of $ (A,[\cdot,\cdot],\alpha_1,\alpha_2),$ and $ (\mathcal{L}, \mathcal{R}, \alpha_{1}, \alpha_{2}) $
is a bimodule of $(A,\{\cdot,\cdot\},\beta_1,\beta_2)$, where
\begin{align*}
\forall \quad & (x,y)\in A\times A: \\
& L(x)y=[x, y], \quad R(x)y=[y,x], \\
& \mathcal{L}(x)y=\{x,y\},\quad \mathcal{R}(x)y=\{y, x\}.
\end{align*}
It is easy to see that the conditions \eqref{bimatch. pair1} - \eqref{bimatch. pair6} are satisfied for $l_A=L,~r_A=R,~l_B=\mathcal{L},~r_B=\mathcal{R}$ and $B=A$. Then $ (  A,    A, L, R, \beta_{1}, \beta_{2}, \mathcal{L}, \mathcal{R}, \alpha_{1}, \alpha_{2}) $ is a matched pair of BiHom-Leibniz algebras.
\end{ex}
\section{BiHom-Leibniz dendriform algebras}
\label{sec:bileibnizdendriformalgs}
\begin{defn}
A BiHom-Leibniz dendriform algebra is a quintuple $( A, \prec, \succ, \alpha, \beta)$ consisting of a linear space $ A$ on which the operations $\prec, \succ:  A\otimes  A\rightarrow  A$ and $\alpha, \beta:  A\rightarrow  A$ are linear maps satisfying
\begin{eqnarray}
&&\alpha\circ\beta=\beta\circ\alpha,\\
\label{BiH-Leib 1}
&&([\beta(x),y])\succ\beta(z)=\alpha\beta(x)\succ(y\succ z)-\beta(y)\succ(\alpha(x)\succ z),\label{first cond}\\
&&\alpha\beta(x)\succ(y\prec z)=(\beta(x)\succ y)\prec\beta(z)+\beta(y)\prec([\alpha(x),z]),\label{second cond}\\
&&\alpha\beta(x)\prec([y,z])=(\beta(x)\prec y)\prec\beta(z)+\beta(y)\succ(\alpha(x)\prec z),\label{third cond}
\end{eqnarray}
where $ [x , y] = x \prec y + x \succ y $.
\end{defn}
\begin{defn}
Let $( A, \prec, \succ, \alpha, \beta)$ and $( A', \prec',  \succ', \alpha', \beta')$ be BiHom-Leibniz dendriform algebras. A linear map $f:  A\rightarrow  A'$ is a BiHom-Leibniz dendriform algebra morphism if
\begin{eqnarray*}
\prec'\circ(f\otimes f)= f\circ\prec,\ \succ'\circ(f\otimes f)= f\circ\succ, f\circ\alpha= \alpha'\circ f \mbox{ and } f\circ\beta= \beta'\circ f.
\end{eqnarray*}
\end{defn}
\begin{prop}
Let $( A, \prec, \succ, \alpha, \beta)$ be a BiHom-Leibniz dendriform algebra.

Then, $( A, [\cdot,\cdot]=\prec+\succ, \alpha, \beta)$ is a BiHom-Leibniz algebra.
\end{prop}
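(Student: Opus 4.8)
The plan is to verify the two defining axioms of a BiHom-Leibniz algebra for the bracket $[x,y]=x\prec y+x\succ y$ directly from the three dendriform axioms \eqref{first cond}, \eqref{second cond} and \eqref{third cond}. The commutativity condition $\alpha\circ\beta=\beta\circ\alpha$ is already part of the hypothesis on the BiHom-Leibniz dendriform algebra, so it transfers verbatim and requires no work; the entire content of the proposition is therefore the BiHom-Leibniz identity \eqref{cond BiHom-Leibn}.

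First I would form the expression
\[
E(x,y,z)=[\alpha\beta(x),[y,z]]-[[\beta(x),y],\beta(z)]-[\beta(y),[\alpha(x),z]]
\]
and expand every occurrence of the bracket via $[u,v]=u\prec v+u\succ v$, in both the outer and the inner positions. This produces twelve summands: four coming from $[\alpha\beta(x),[y,z]]$, four from $[[\beta(x),y],\beta(z)]$, and four from $[\beta(y),[\alpha(x),z]]$.

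Next, the key step is to regroup these twelve summands into three packets, exactly mirroring the bookkeeping used in the proof of the sub-adjacent Hom-Leibniz algebra proposition earlier in the paper. The first packet gathers $\alpha\beta(x)\succ(y\succ z)$, the recombination $(\beta(x)\prec y)\succ\beta(z)+(\beta(x)\succ y)\succ\beta(z)=([\beta(x),y])\succ\beta(z)$, and $\beta(y)\succ(\alpha(x)\succ z)$, so that the packet reads $\alpha\beta(x)\succ(y\succ z)-([\beta(x),y])\succ\beta(z)-\beta(y)\succ(\alpha(x)\succ z)$ and vanishes by \eqref{first cond}. The second packet, $\alpha\beta(x)\succ(y\prec z)-(\beta(x)\succ y)\prec\beta(z)-\beta(y)\prec([\alpha(x),z])$, vanishes by \eqref{second cond}, after recombining $\beta(y)\prec(\alpha(x)\prec z)+\beta(y)\prec(\alpha(x)\succ z)=\beta(y)\prec([\alpha(x),z])$. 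The third packet, $\alpha\beta(x)\prec([y,z])-(\beta(x)\prec y)\prec\beta(z)-\beta(y)\succ(\alpha(x)\prec z)$, vanishes by \eqref{third cond}, after recombining $\alpha\beta(x)\prec(y\prec z)+\alpha\beta(x)\prec(y\succ z)=\alpha\beta(x)\prec([y,z])$. Adding the three packets yields $E(x,y,z)=0$, which is precisely \eqref{cond BiHom-Leibn}.

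The only genuine care required — and the step I expect to be the main (purely clerical) obstacle — is confirming that the twelve expanded summands partition cleanly into these three packets with no leftover term and with the $\prec$/$\succ$ split landing in the intended groups. This is the same recombination of mixed $\prec,\succ$ products back into full brackets $[\alpha(x),z]$ and $[y,z]$ that occurs in the Hom-Leibniz version, and no additional twisting identities are needed beyond the three dendriform axioms and commutativity; hence the argument is a direct, finite computation with no conceptual difficulty.
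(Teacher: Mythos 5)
Your proposal is correct and is essentially identical to the paper's own proof: the paper likewise expands all brackets via $[u,v]=u\prec v+u\succ v$ into twelve summands and regroups them into exactly the same three packets, each annihilated by \eqref{first cond}, \eqref{second cond}, and \eqref{third cond} respectively. The partition you describe is clean (each of the twelve terms lands in precisely one packet), so no further work is needed.
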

\begin{proof}
For all $x,y,z\in A$,
\begin{align*}
    &[\alpha\beta(x),[y,z]]-[[\beta(x),y],\beta(z)]-[\beta(y),[\alpha(x),z]]\\
    &=[\alpha\beta(x),y\prec z+y\succ z]-[\beta(x)\prec y+\beta(x)\succ y,\beta(z)]\\
    &\quad-[\beta(y),\alpha(x)\prec z+\alpha(x)\succ z]\\
    &=[\alpha\beta(x),y\prec z+y\succ z]-[\beta(x)\prec y+\beta(x)\succ y,\beta(z)]\\
    &\quad-[\beta(y),\alpha(x)\prec z+\alpha(x)\succ z]\\
    &=\alpha\beta(x)\prec(y\prec z)+\alpha\beta(x)\succ(y\prec z)+\alpha\beta(x)\prec(y\succ z)\\
    &\quad+\alpha\beta(x)\succ(y\succ z)-(\beta(x)\prec y)\prec\beta(z)-(\beta(x)\prec y)\succ\beta(z)\\
    &\quad-(\beta(x)\succ y)\prec\beta(z)-(\beta(x)\succ y)\succ\beta(z)-\beta(y)\prec(\alpha(x)\prec z)\\
    &\quad-\beta(y)\succ(\alpha(x)\prec z)-\beta(y)\prec(\alpha(x)\succ z)-\beta(y)\succ (\alpha(x)\prec z)\\
    &=\underbrace{\Big(\alpha\beta(x)\succ(y\succ z)-\beta(y)\succ(\alpha(x)\succ z)-(\beta(x)\prec y+\beta(x)\succ y)\beta(z)\Big)}_{\text{$=0$ by \eqref{first cond}}}\\
    &\quad+\underbrace{\Big(\alpha\beta(x)\succ(y\prec z)-(\beta(x)\succ y)\prec\beta(z)-\beta(y)\prec(\alpha(x)\prec z+\alpha(x)\succ z)\Big)}_{\text{$=0$ by \eqref{second cond}}}\\
    &\quad+\underbrace{\Big(\alpha\beta(x)\prec(y\prec z+y\succ z)-(\beta(x)\prec y)\prec\beta(z)-\beta(y)\succ(\alpha(x)\prec z)\Big)}_{\text{$=0$ by \eqref{third cond}}}=0.
\qedhere \end{align*}
\end{proof}
We call $ ( A, [\cdot,\cdot], \alpha, \beta)$ the BiHom-Leibniz algebra of $( A, \prec, \succ, \alpha, \beta),$ and the quintuple  $( A, \prec, \succ, \alpha, \beta)$ is called a compatible BiHom-Leibniz dendriform algebra structure on the BiHom-Leibniz algebra $( A, [\cdot,\cdot], \alpha, \beta)$.

The following theorem shows that any Leibniz dendriform algebra \cite{ShengTang2017:LeibnizbialgsRotaBaxLeibnizYangBax} together with two commuting morphisms yield BiHom-Leibniz dendriform algebra, and morphisms of Leibniz dendriform algebra naturally correspond to
morphisms of BiHom-Leibniz dendriform algebra.

\begin{thm} \label{thm:BihomLeibDenr:ytwist}
Let $\mathcal{A} = (A, \prec , \succ )$ be a Leibniz dendriform algebra and $\alpha_1,\alpha_2 :A\rightarrow A$ be two morphisms of $\mathcal{A}$ such that $\alpha_1\alpha_2=\alpha_2\alpha_1$. Then,
\begin{enumerate}[label=\upshape{\arabic*)}]
\item
$\mathcal{A}_{\alpha_1,\alpha_2}:=(A, \prec _{\alpha_1,\alpha_2}=\prec\circ(\alpha_1\otimes\alpha_2), \succ_{\alpha_1,\alpha_2}=\succ\circ(\alpha_1\otimes\alpha_2), \alpha_1,\alpha_2)$ is a BiHom-Leibniz dendriform algebra;
\item
if $(A', \prec ', \succ')$ is another Leibniz dendriform algebra,
$\alpha'_1,\alpha'_2:A'\rightarrow A'$
are two commuting Leibniz dendriform algebra morphisms, and $f:A\rightarrow A'$ is a
Leibniz dendriform algebra morphism satisfying $f\circ \alpha_1 =\alpha'_1\circ f$ and $f\circ \alpha_2 =\alpha'_2\circ f$, then $f:\mathcal{A}_{\alpha_1,\alpha_2 }\rightarrow A'_{\alpha'_1,\alpha'_2}$ is a
BiHom-Leibniz dendriform algebra morphism.
\end{enumerate}
\end{thm}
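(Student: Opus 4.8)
The plan is to verify directly the three defining identities of a BiHom-Leibniz dendriform algebra for the twisted quintuple $\mathcal{A}_{\alpha_1,\alpha_2}$, in complete analogy with the Yau-twist construction of Theorem \ref{thm:homLeibDenr:ytwist} and Proposition \ref{prop:isma:HomLeibdendriform}, and then to record the short computation showing that $f$ respects the twisted operations. Throughout, the two hypotheses that $\alpha_1,\alpha_2$ are morphisms of $\mathcal{A}$ (hence multiplicative with respect to $\prec$ and $\succ$, and therefore with respect to the sub-adjacent bracket) and that $\alpha_1\alpha_2=\alpha_2\alpha_1$ are exactly the two facts that will be consumed.

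For part 1), I would first observe that, writing $x\prec_{\alpha_1,\alpha_2}y=\alpha_1(x)\prec\alpha_2(y)$ and $x\succ_{\alpha_1,\alpha_2}y=\alpha_1(x)\succ\alpha_2(y)$, the associated bracket satisfies $[x,y]_{\alpha_1,\alpha_2}=[\alpha_1(x),\alpha_2(y)]$, where $[\cdot,\cdot]$ is the sub-adjacent Leibniz bracket of $\mathcal{A}$. Taking the first axiom \eqref{first cond} as a representative, I would expand its left-hand side $([\alpha_2(x),y]_{\alpha_1,\alpha_2})\succ_{\alpha_1,\alpha_2}\alpha_2(z)$ and, pulling $\alpha_1$ inside by multiplicativity, rewrite it as $[\alpha_1^2\alpha_2(x),\alpha_1\alpha_2(y)]\succ\alpha_2^2(z)$. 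Applying the untwisted first Leibniz dendriform identity of $\mathcal{A}$ turns this into $\alpha_1^2\alpha_2(x)\succ(\alpha_1\alpha_2(y)\succ\alpha_2^2(z))-\alpha_1\alpha_2(y)\succ(\alpha_1^2\alpha_2(x)\succ\alpha_2^2(z))$. Expanding the right-hand side of \eqref{first cond} by the same recipe — pushing $\alpha_2$ through each inner product by multiplicativity and using $\alpha_1\alpha_2=\alpha_2\alpha_1$ to align the composite twists — reproduces exactly these two terms, so the identity holds. The axioms \eqref{second cond} and \eqref{third cond} are verified by the identical scheme: expand the twisted products, apply multiplicativity of $\alpha_1$ on the outer factor and of $\alpha_2$ on the inner factors, invoke commutativity to match the powers $\alpha_1^2\alpha_2,\ \alpha_1\alpha_2,\ \alpha_2^2$, and then use the corresponding untwisted identity of $\mathcal{A}$.

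For part 2), since $f$ is a Leibniz dendriform algebra morphism with $f\circ\alpha_i=\alpha_i'\circ f$, I would simply compute $f(x\prec_{\alpha_1,\alpha_2}y)=f(\alpha_1(x)\prec\alpha_2(y))=f(\alpha_1(x))\prec'f(\alpha_2(y))=\alpha_1'(f(x))\prec'\alpha_2'(f(y))=f(x)\prec'_{\alpha_1',\alpha_2'}f(y)$, and analogously for $\succ$. Together with the two given intertwining relations $f\circ\alpha_i=\alpha_i'\circ f$, this is precisely the definition of a BiHom-Leibniz dendriform algebra morphism, so the second assertion is immediate.

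The computations are routine and carry no conceptual difficulty; the only thing requiring care — and hence the main obstacle — is the bookkeeping of which twisting map acts on which argument. Because $\alpha_1$ and $\alpha_2$ enter asymmetrically (the left factor of each twisted product carries $\alpha_1$ and the right factor $\alpha_2$), one must apply the commutation relation $\alpha_1\alpha_2=\alpha_2\alpha_1$ at the right moments and resist symmetrizing prematurely, in contrast to the single-map Hom case of Theorem \ref{thm:homLeibDenr:ytwist} where this subtlety does not arise.
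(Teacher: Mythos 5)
Your proposal is correct and takes essentially the same route as the paper: the paper likewise verifies only the representative identity \eqref{first cond}, rewriting the twisted product as $[\alpha_1^{2}\alpha_2(x),\alpha_1\alpha_2(y)]\succ\alpha_2^{2}(z)$ via multiplicativity, applying the untwisted Leibniz dendriform identity, and re-folding the result into twisted operations using $\alpha_1\alpha_2=\alpha_2\alpha_1$, with the other two axioms declared analogous. Your part 2) computation is exactly the paper's chain $f(x\prec_{\alpha_1,\alpha_2}y)=f(\alpha_1(x)\prec\alpha_2(y))=\alpha'_1f(x)\prec'\alpha'_2f(y)=f(x)\prec'_{\alpha'_1,\alpha'_2}f(y)$.
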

\begin{proof}
We shall only prove relation \eqref{BiH-Leib 1} as the others are proved analogously.
\begin{align*}
& \mbox{For any } x, y, z \in A: \\
&([\alpha_2(x),y]_{\alpha_1,\alpha_2})\succ_{\alpha_1,\alpha_2}\alpha_2(z) \\
&\quad =\alpha_1([\alpha_1\alpha_2(x),\alpha_2(y)])\succ\alpha_2^{2}(z)
=[\alpha_{1}^{2}\alpha_2(x),\alpha_1\alpha_2(y)]\succ\alpha_{2}^{2}(z)\\
&\quad=\alpha_{1}^{2}\alpha_2(x)\succ(\alpha_1\alpha_2(y)\succ\alpha_2^{2}(z))-\alpha_1\alpha_2(y)\succ(\alpha_{1}^{2}\alpha_2(x)\succ\alpha_2^{2}(z))\\
&\quad=\alpha_{1}^{2}\alpha_2(x)\succ\alpha_2(\alpha_1(y)\succ\alpha_2(z))-\alpha_1\alpha_2(y)\succ\alpha_2(\alpha_1^{2}(x)\succ\alpha_2(z))\\
&\quad=\alpha_1\alpha_2(x)\succ_{\alpha_1,\alpha_2}(y\succ_{\alpha_1,\alpha_2}z)-\alpha_2(y)\succ_{\alpha_1,\alpha_2}(\alpha_1(x)\succ_{\alpha_1,\alpha_2} z).
\end{align*}
The second assertion follows from
\begin{align*}
    f(x\prec_{\alpha_1,\alpha_2} y)=&f(\alpha_1(x)\prec\alpha_2(y)) 
    =f(\alpha_1(x))\prec' f(\alpha_2(y))\\
    =&\alpha'_1 f(x)\prec'\alpha'_2 f(y)
    =f(x)\prec'_{\alpha_1,\alpha_2} f(y).
\end{align*}
Similarly, $ f(x\succ_{\alpha_1,\alpha_2}y)=f(x)\succ'_{\alpha'_1,\alpha'_2} f(y)$.
This completes the proof.
\end{proof}
\begin{prop}\label{isma}
Let $\mathcal{A}=(A, \prec , \succ, \alpha_1,\alpha_2)$ be a
BiHom-Leibniz dendriform algebra and $\alpha'_1,\alpha'_2: \mathcal{A}\rightarrow \mathcal{A}$ be a two BiHom-Leibniz dendriform algebra morphisms such that any two of the maps $\alpha_1,\alpha_2,\alpha'_1,\alpha'_2$
commute.
Then  $\mathcal{A}_{\alpha'_{1},\alpha'_2}=(A, \prec _{\alpha'_1,\alpha'_2}, \succ_{\alpha'_1,\alpha'_2},\alpha_1 \circ \alpha'_1,\alpha_2 \circ \alpha'_2)$ is a BiHom-Leibniz dendriform algebra.
\end{prop}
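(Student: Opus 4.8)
The plan is to verify directly that the twisted quintuple $\mathcal{A}_{\alpha'_1,\alpha'_2}$ satisfies the three defining axioms of a BiHom-Leibniz dendriform algebra, proceeding exactly as in Theorem~\ref{thm:BihomLeibDenr:ytwist} and Proposition~\ref{prop:isma:HomLeibdendriform} but now carrying two independent twisting maps through the left and right slots of each operation. By analogy with Theorem~\ref{thm:BihomLeibDenr:ytwist}, the twisted operations are $x\prec_{\alpha'_1,\alpha'_2}y=\alpha'_1(x)\prec\alpha'_2(y)$ and $x\succ_{\alpha'_1,\alpha'_2}y=\alpha'_1(x)\succ\alpha'_2(y)$, so that the associated bracket is $[x,y]_{\alpha'_1,\alpha'_2}=[\alpha'_1(x),\alpha'_2(y)]$, and the new twisting maps are $\alpha_1\alpha'_1$ and $\alpha_2\alpha'_2$. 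First I would record that the commutativity requirement $(\alpha_1\alpha'_1)(\alpha_2\alpha'_2)=(\alpha_2\alpha'_2)(\alpha_1\alpha'_1)$ is immediate from the hypothesis that any two of $\alpha_1,\alpha_2,\alpha'_1,\alpha'_2$ commute.

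The core of the argument is the verification of the three identities \eqref{BiH-Leib 1}, \eqref{second cond} and \eqref{third cond} in $\mathcal{A}_{\alpha'_1,\alpha'_2}$. For each, I would take the left-hand side, expand the twisted operations and the twisted bracket into the original ones, and then use that $\alpha'_1$ and $\alpha'_2$ are BiHom-Leibniz dendriform algebra morphisms to push them through $\prec$, $\succ$ and $[\cdot,\cdot]$. Repeatedly invoking the pairwise commutativity one collects all four maps into monomials in $\alpha_i,\alpha'_1,\alpha'_2$ acting on $x,y,z$, bringing the expression into the fixed shape $[\alpha_2(u),v]\succ\alpha_2(w)$ (respectively the shapes occurring in \eqref{second cond} and \eqref{third cond}) to which the corresponding axiom of $\mathcal{A}$ applies. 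Applying the original identity and then reassembling the resulting terms, again via morphism-multiplicativity and commutativity, recovers precisely the right-hand side expressed through $\prec_{\alpha'_1,\alpha'_2}$, $\succ_{\alpha'_1,\alpha'_2}$ and the new twisting maps $\alpha_1\alpha'_1$, $\alpha_2\alpha'_2$.

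As a representative computation, for \eqref{BiH-Leib 1} one finds that the left-hand side $([\alpha_2\alpha'_2(x),y]_{\alpha'_1,\alpha'_2})\succ_{\alpha'_1,\alpha'_2}\alpha_2\alpha'_2(z)$ equals $[\alpha_2\alpha_1'^{2}\alpha'_2(x),\alpha'_1\alpha'_2(y)]\succ\alpha_2\alpha_2'^{2}(z)$. Applying \eqref{BiH-Leib 1} in $\mathcal{A}$ with $u=\alpha_1'^{2}\alpha'_2(x)$, $v=\alpha'_1\alpha'_2(y)$, $w=\alpha_2'^{2}(z)$, and then pushing $\alpha'_1,\alpha'_2$ back inside using that they are morphisms, yields exactly $\alpha_1\alpha'_1\alpha_2\alpha'_2(x)\succ_{\alpha'_1,\alpha'_2}(y\succ_{\alpha'_1,\alpha'_2}z)-\alpha_2\alpha'_2(y)\succ_{\alpha'_1,\alpha'_2}(\alpha_1\alpha'_1(x)\succ_{\alpha'_1,\alpha'_2}z)$, which is the right-hand side of \eqref{BiH-Leib 1} for $\mathcal{A}_{\alpha'_1,\alpha'_2}$. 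The axioms \eqref{second cond} and \eqref{third cond} are handled in the same manner.

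I would expect the main obstacle to be organizational rather than conceptual: because each of $\prec$ and $\succ$ now twists its two arguments by \emph{different} maps $\alpha'_1$ and $\alpha'_2$, one must track which twist lands on which slot at every step and repeatedly use the pairwise commutativity to realign the accumulated maps, so that the fixed-shape hypotheses of the original axioms in $\mathcal{A}$ become applicable. The resulting computation is strictly longer than in the single-twist Hom case of Proposition~\ref{prop:isma:HomLeibdendriform}, but each step is forced, and no ingredient beyond morphism-multiplicativity, commutativity, and the three dendriform axioms of $\mathcal{A}$ is needed.
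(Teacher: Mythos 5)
Your proposal is correct and follows essentially the same route as the paper's own proof: expand the twisted operations $x\prec_{\alpha'_1,\alpha'_2}y=\alpha'_1(x)\prec\alpha'_2(y)$, $x\succ_{\alpha'_1,\alpha'_2}y=\alpha'_1(x)\succ\alpha'_2(y)$ (you correctly inferred this convention from Theorem~\ref{thm:BihomLeibDenr:ytwist}), push $\alpha'_1,\alpha'_2$ through via multiplicativity and pairwise commutativity, apply the three axioms of $\mathcal{A}$ in their fixed shapes, and reassemble. Your representative computation for \eqref{first cond} agrees, substitution for substitution, with the paper's (which, unlike yours, contains several typographical slips), and the remaining two axioms are handled there exactly as you indicate.
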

\begin{proof}
We prove the axioms \eqref{first cond}-\eqref{third cond} in $\mathcal{A}_{\alpha'_{1},\alpha'_2}$. For any $x,y,z\in A$, 
\begin{align*}
    &([\alpha_2\alpha'_2(x),y]_{\alpha'_1,\alpha'_2})\succ_{\alpha'_1,\alpha'_2}\alpha_2\alpha'_2(z)\\
    &\quad=\alpha'_{1}[\alpha'_1\alpha'_2(x),\alpha'_2(y)]\succ\alpha_2{\alpha'}_{2}^{2}(z)\\
    &\quad=[{\alpha'_{1}}^{2}\alpha'_{2}\alpha_2(x),\alpha'_1\alpha'_2(y)]\succ\alpha_2{\alpha'_{2}}^{2}(z)\\
    &\quad=\alpha_1\alpha_2{\alpha'_{1}}^{2}\alpha'_2(x)\succ(\alpha'_1\alpha'_2\succ{\alpha'_{2}}^{2}(z))-\alpha_2\alpha'_1\alpha'_2(y)\succ(\alpha_1\alpha'_1\alpha'_2(x)\succ{\alpha'_{2}}^{2}(z))\\
    &\quad=\alpha_1{\alpha'_{1}}^{2}\alpha_2\alpha'_2(x)\succ\alpha'_2(\alpha_1\succ\alpha'_2(z))-\alpha'_1\alpha_2\alpha'_2(y)\succ\alpha'_2(\alpha_1{\alpha'_{1}}^{2}(x)\succ\alpha'_2(z))\\
    &\quad=\alpha_1\alpha'_1\alpha_2\alpha'_2(x)\succ_{\alpha'_1,\alpha'_2}(y\succ_{\alpha'_1,\alpha'_2} z)-\alpha_2\alpha'_2(y)\succ_{\alpha'_1,\alpha'_2}(\alpha_1\alpha'_1(x)\succ_{\alpha'_1,\alpha'_2} z),\\*[0,1cm]
    &\alpha_1\alpha'_1\alpha_2\alpha'_2(x)\succ_{\alpha'_1,\alpha'_2}(y\prec_{\alpha'_1,\alpha'_2} z)\\
    &\quad=\alpha_1\alpha_2{\alpha'_{1}}^{2}\alpha'_2(x)\succ\alpha'_2(\alpha'_1(y)\prec\alpha'_2(z))\\
    &\quad=\alpha_1\alpha_2{\alpha'_{1}}^{2}\alpha'_2(x)\succ(\alpha'_1\alpha'_2(y)\prec{\alpha'_{2}}^{2}(z))\\
    &\quad=({\alpha'_{1}}^{2}\alpha'_{2}(x)\succ\alpha'_1\alpha'_2(y))\prec\alpha_2{\alpha'_{2}}^{2}(z)+\alpha_2\alpha'_1\alpha'_2(y)\succ([\alpha_1{\alpha'_{1}}^{2}\alpha'_{2}(x),{\alpha'_{2}}^{2}(z)])\\
    &\quad=(\alpha'_1\alpha_2\alpha'_2(x)\succ_{\alpha'_1,\alpha'_2}\alpha'_1(y))\prec\alpha_2{\alpha'_{2}}^{2}(z)+\alpha_2\alpha'_1\alpha'_2(y)\prec([\alpha_1\alpha'_1\alpha'_2(x),\alpha'_2(z)]_{\alpha'_1,\alpha'_2})\\
    &\quad=(\alpha_2\alpha'_2(x)\succ_{\alpha'_1,\alpha'_2} y)\prec_{\alpha'_1,\alpha'_2} \alpha_2\alpha'_2(z)+\alpha_2\alpha'_2(y)\prec_{\alpha'_1,\alpha'_2}([\alpha_1\alpha'_1(x),z]_{\alpha'_1,\alpha'_2}),\\
    &\alpha_1\alpha'_1\alpha_2\alpha'_2(x)\prec_{\alpha'_1,\alpha'_2}([y,z]_{\alpha'_1,\alpha'_2})\\
    &\quad=\alpha_1{\alpha'_{1}}^{2}\alpha_2\alpha'_2(x)\prec[\alpha'_{1}\alpha'_{2}(y),{\alpha'_{2}}^{2}]\\
    &\quad=(\alpha_2{\alpha'_{1}}^{2}\alpha'_2(x)\prec\alpha'_1\alpha'_2(y))\prec\alpha_2{\alpha'_{2}}^{2}(z)+\alpha_2\alpha'_1\alpha'_2(y)\succ(\alpha{\alpha'_{1}}^{2}\alpha'_2(x)\prec{\alpha'_{2}}^{2}(z))\\
    &\quad=(\alpha_2\alpha'_1\alpha'_2(x)\prec_{\alpha'_1,\alpha'_2}\alpha'_1(y))\prec\alpha_2{\alpha'_{2}}^{2}(z)+\alpha_2\alpha'_1\alpha'_2(y)\succ(\alpha_1\alpha'_1\alpha'_2(x)\succ_{\alpha'_1,\alpha'_2}\alpha'_2(z))\\
    &\quad=(\alpha_2\alpha'_2(x)\prec_{\alpha'_1,\alpha'_2} y)\prec_{\alpha'_1,\alpha'_2}\alpha_2\alpha'_2(z)+\alpha_2\alpha'_2(y)\succ_{\alpha'_1,\alpha'_2}(\alpha_1\alpha'_1(x)\prec_{\alpha'_1,\alpha'_2} z)
\end{align*}
Then  $\mathcal{A}_{\alpha'_{1},\alpha'_2}=(A, \prec _{\alpha'_1,\alpha'_2}, \succ_{\alpha'_1,\alpha'_2},\alpha_1 \circ \alpha'_1,\alpha_2 \circ \alpha'_2)$ 
is a BiHom-Leibniz dendriform algebra.
\end{proof}
\begin{ex}
There are three-dimensional BiHom-Leibniz dendriform algebras \\ 
$(A, \prec,\succ,\alpha_1,\alpha_2)$ with multiplications tables for a basis $\{e_1, e_2, e_3\}$:
\begin{center}
\begin{tabular}{c|cccc}
$\prec$ & $e_1$ & $e_2$ & $e_3$ \\ \hline
$e_1$ & $0$ & $0$ & $0$  \\
$e_2$ & $0$ & $0$ & $-2e_1$  \\
$e_3$ & $0$ & $0$ & $3e_1$ \\
\end{tabular}
\hspace{1 cm}
\begin{tabular}{c|cccc}
$\succ$ & $e_1$ & $e_2$ & $e_3$ \\ \hline
$e_1$ & $0$ & $0$ & $0$  \\
$e_2$ & $0$ & $0$ & $-2e_1$  \\
$e_3$ & $0$ & $-2e_1$ & $3e_1$ \\
\end{tabular}
\end{center}
$$\begin{array}{llll}
    \alpha_1(e_1)=\lambda_1e_1, &\alpha_1(e_2)=0, &\alpha_1(e_3)=\lambda_2e_3,\\
    \alpha_2(e_1)=0, &\alpha_2(e_2)=\mu_1 e_2, &\alpha_2(e_3)=\mu_2 e_3,
\end{array}$$
where $\lambda_1,~\lambda_2,~\mu_1$ and $\mu_2$ are parameters in $\mathbb{K}\setminus\{0\}$.
Using suitable algebra morphisms $\alpha'_{1}$ and $\alpha'_{2}$, we obtain the BiHom-Leibniz dendriform
algebras. The algebra morphisms
$\alpha'_{1},\alpha'_{2}:A\rightarrow A$ are given by
$$\begin{array}{llll}
    \alpha'_1(e_1)=\frac{p^{2}}{3}e_1, &\alpha'_1(e_2)=-\frac{2p}{3}e_2, &\alpha'_1(e_3)=pe_3,\\
    \alpha'_2(e_1)=\frac{q^{2}}{3}e_1, &\alpha'_2(e_2)=-\frac{2q}{3}e_2, &\alpha'_2(e_3)=q e_3,
\end{array}$$
where $p$ and $q$ are parameters in $\mathbb{K}\setminus\{0\}$.
Then there is a BiHom-Leibniz dendriform algebra $\mathcal{A}_{\alpha'_{1},\alpha'_2}=(A, \prec _{\alpha'_1,\alpha'_2}, \succ_{\alpha'_1,\alpha'_2},\alpha_1 \circ \alpha'_1,\alpha_2 \circ \alpha'_2)$ with multiplications
tables:
\begin{center}
\begin{tabular}{c|cccc}
$\prec_{\alpha'_1,\alpha'_2}$ & $e_1$ & $e_2$ & $e_3$ \\ \hline
$e_1$ & $0$ & $0$ & $0$  \\
$e_2$ & $0$ & $0$ & $\frac{4pq}{3}e_1$  \\
$e_3$ & $0$ & $0$ & $3pqe_1$ \\
\end{tabular}
\hspace{1 cm}
\begin{tabular}{c|cccc}
$\succ_{\alpha'_1,\alpha'_2}$ & $e_1$ & $e_2$ & $e_3$ \\ \hline
$e_1$ & $0$ & $0$ & $0$  \\
$e_2$ & $0$ & $0$ & $\frac{4pq}{3}e_1$  \\
$e_3$ & $0$ & $\frac{4pq}{3}e_1$ & $3pqe_1$ \\
\end{tabular}
\end{center}

$$\begin{array}{llll}
    \alpha_1\alpha'_1(e_1)=\frac{\lambda_{1}p^{2}}{3}e_1, & \alpha_1\alpha'_1(e_2)=0,& \alpha_1\alpha'_1(e_3)=\lambda_{2}pe_3,\\
    \alpha_2\alpha'_2(e_1)=0, & \alpha_2\alpha'_2(e_2)=-\frac{2\mu_1q}{3}e_2, & \alpha_2\alpha'_2(e_3)=\mu_{2}q e_3.
\end{array}$$
\end{ex}
\begin{cor}  \label{cor:isma}
Let $(A, \prec , \succ,\alpha_1,\alpha_2)$ be a multiplicative BiHom-Leibniz dendriform
algebra. Let $n$ be any positive integer.
\begin{enumerate}
\item \label{cor:isma:itemi}
The $nth$ derived BiHom-Leibniz dendriform algebra of type $1$ of $A$ is
defined by
$$\mathcal{A}_{1}^{n}=(A,\prec^{(n)}=\prec\circ(\alpha_1^{n}\otimes\alpha_2^{n}),\succ^{(n)}=\succ\circ(\alpha_1^{n}\otimes\alpha_2^{n})
,\alpha_1^{n+1},\alpha_2^{n+1}).$$
\item \label{cor:isma:itemii}
The $nth$ derived BiHom-Leibniz dendriform algebra of type $2$ of $A$ is
defined by
$$
\mathcal{A}_{2}^{n}=(A,\prec^{(2^n-1)}=\prec\circ(\alpha_1^{2^{n}-1}\otimes\alpha_2^{2^{n}-1}),\succ^{(2^n-1)}=\succ\circ(\alpha_1^{2^{n}-1}\otimes\alpha_2^{2^{n}-1}),\alpha_1^{2^n},\alpha_2^{2^n}).
$$
\end{enumerate}
\end{cor}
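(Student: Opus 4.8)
The plan is to deduce both parts directly from Proposition \ref{isma} by choosing the pair of twisting morphisms $(\alpha'_1,\alpha'_2)$ appropriately, exactly as Corollary \ref{cor:isma:HomLeibdend} was obtained from Proposition \ref{prop:isma:HomLeibdendriform} in the Hom setting. Since $\mathcal{A}$ is multiplicative, both $\alpha_1$ and $\alpha_2$ are BiHom-Leibniz dendriform algebra morphisms, hence so is every power $\alpha_1^{k}$ and $\alpha_2^{k}$; and because $\alpha_1\alpha_2=\alpha_2\alpha_1$ is part of the defining data, any two powers of $\alpha_1$ and $\alpha_2$ commute. This is all that is needed to invoke Proposition \ref{isma}.

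For part \ref{cor:isma:itemi} I would set $\alpha'_1=\alpha_1^{n}$ and $\alpha'_2=\alpha_2^{n}$. One first checks the hypothesis of Proposition \ref{isma}: the four maps $\alpha_1,\alpha_2,\alpha_1^{n},\alpha_2^{n}$ pairwise commute, which follows immediately from $\alpha_1\alpha_2=\alpha_2\alpha_1$, while $\alpha_1^{n}$ and $\alpha_2^{n}$ are morphisms by multiplicativity. The proposition then produces the BiHom-Leibniz dendriform algebra $\mathcal{A}_{\alpha'_1,\alpha'_2}$ with twisted operations $\prec_{\alpha'_1,\alpha'_2}=\prec\circ(\alpha_1^{n}\otimes\alpha_2^{n})$ and $\succ_{\alpha'_1,\alpha'_2}=\succ\circ(\alpha_1^{n}\otimes\alpha_2^{n})$, and with twisting maps $\alpha_1\circ\alpha'_1=\alpha_1^{n+1}$ and $\alpha_2\circ\alpha'_2=\alpha_2^{n+1}$. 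These match the claimed data of $\mathcal{A}_1^{n}$ term by term.

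For part \ref{cor:isma:itemii} the argument is identical with the choice $\alpha'_1=\alpha_1^{2^{n}-1}$ and $\alpha'_2=\alpha_2^{2^{n}-1}$: the commutation and morphism hypotheses hold for the same reasons, Proposition \ref{isma} applies, and the twisted operations become $\prec\circ(\alpha_1^{2^{n}-1}\otimes\alpha_2^{2^{n}-1})$ and $\succ\circ(\alpha_1^{2^{n}-1}\otimes\alpha_2^{2^{n}-1})$, while the twisting maps become $\alpha_1\circ\alpha_1^{2^{n}-1}=\alpha_1^{2^{n}}$ and $\alpha_2\circ\alpha_2^{2^{n}-1}=\alpha_2^{2^{n}}$, that is, precisely $\mathcal{A}_2^{n}$.

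There is no genuine obstacle here beyond bookkeeping; the only points requiring care are verifying the pairwise-commutation condition among the four maps (so that Proposition \ref{isma} is applicable) and confirming that the exponent arithmetic $\alpha_i\circ\alpha_i^{k}=\alpha_i^{k+1}$ reproduces the stated twisting maps. Both are immediate consequences of multiplicativity and of the commutativity $\alpha_1\alpha_2=\alpha_2\alpha_1$ built into the definition of a BiHom-Leibniz dendriform algebra, so the corollary follows at once.
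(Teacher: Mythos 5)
Your proposal is correct and follows essentially the same route as the paper: the paper's proof also consists of applying Proposition \ref{isma} with $\alpha'_1=\alpha_1^{n},\ \alpha'_2=\alpha_2^{n}$ (resp. $\alpha'_1=\alpha_1^{2^{n}-1},\ \alpha'_2=\alpha_2^{2^{n}-1}$), supplemented only by an explicit re-verification of axiom \eqref{first cond} in both cases. Your version makes the hypothesis-checking (powers are morphisms by multiplicativity, and all four maps pairwise commute) and the exponent bookkeeping explicit, which the paper leaves implicit, so nothing is missing.
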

\begin{proof}
Apply Proposition \ref{isma} with $\alpha'_1=\alpha_1^{n},~\alpha'_2=\alpha_2^{n}$ and
$\alpha'_1=\alpha_1^{2^{n}-1},~\alpha'_2=\alpha_2^{2^{n}-1}$ respectively.
We prove in more details, only the axiom \eqref{first cond} in both cases $\mathcal{A}_{1}^{n}$ and $\mathcal{A}_{2}^{n}$, as others proved similarly.

\ref{cor:isma:itemi} \  For any $x,y,z\in A$,
   \begin{align*}
  &([\alpha_{2}^{n+1}(x),y]^{(n)})\succ^{(n)}\alpha_{2}^{n+1}(z)\\
  &\quad =\alpha_{1}^{n}([\alpha_{1}^{n}\alpha_{2}^{n+1}(x),\alpha_{2}^{n}(y)])\succ\alpha_{2}^{2n+1}(z)\\
  &\quad=[\alpha_{1}^{2n}\alpha_{2}^{n+1}(x),\alpha_{1}^{n}\alpha_{2}^{n}(y)])\succ\alpha_{2}^{2n+1}(z)\\
  &\quad=\alpha_{1}^{2n+1}\alpha_{2}^{n+1}(x)\succ(\alpha_{1}^{n}\alpha_{2}^{n}(y)\succ\alpha_{2}^{2n}(z))-\alpha_{1}^{n}\alpha_{2}^{n+1}(y)\succ(\alpha_{1}^{2n+1}\alpha_{2}^{n}(x)\succ\alpha_{2}^{2n}(z))\\
  &\quad=\alpha_{1}^{2n+1}\alpha_{2}^{n+1}(x)\succ(\alpha_{2}^{n}(y)\succ^{(n)}\alpha_{2}^{n}(z))-\alpha_{1}^{n}\alpha_{2}^{n+1}(y)\succ(\alpha_{1}^{n+1}\alpha_{2}^{n}(x)\succ^{(n)}\alpha_{2}^{n}(z))\\
  &\quad=\alpha_{1}^{n+1}\alpha_{2}^{n+1}(x)\succ^{(n)}(y\succ^{(n)} z)-\alpha_{2}^{n+1}(y)\succ^{(n)}(\alpha_{1}^{n+1}(x)\succ^{(n)} z).
  \end{align*}

\ref{cor:isma:itemii} \ For any $x,y,z\in A$,
\begin{align*}
  &(\alpha_{2}^{2^{n}}(x),y]^{2^{n}-1})\succ^{2^{n}-1}\alpha_2^{2^{n}}(z)\\
  &\quad=\alpha_1^{2^{n}-1}([\alpha_1^{2^{n}-1}\alpha_2^{2^{n}}(x),\alpha_2^{2^{n}-1}(y)]\succ\alpha_2^{2^{n}-1}\alpha_2^{2^{n}}(z)\\
  &\quad=([\alpha_1^{2^{n}-1}\alpha_1^{2^{n}-1}\alpha_2^{2^{n}}(x),\alpha_1^{2^{n}-1}\alpha_2^{2^{n}-1}(y)]\succ\alpha_2^{2^{n}-1}\alpha_2^{2^{n}}(z)\\
   &\quad=\alpha_1^{2^{n}-1}\alpha_1^{2^{n}}\alpha_2^{2^{n}}(x)\succ(\alpha_2^{2^{n}-1}(y)\succ^{(2^{n}-1)}\alpha_2^{2^{n}-1}(z))\\&\quad\quad-\alpha_1^{2^{n}-1}\alpha_2^{2^{n}}(y)\succ(\alpha_1^{2^{n}}\alpha_2^{2^{n}-1}(x)\succ^{(2^{n}-1)}\alpha_2^{2^{n}-1}(z))\\
  &\quad=\alpha_1^{2^{n}-1}\alpha_1^{2^{n}}\alpha_2^{2^{n}}(x)\succ(\alpha_1^{2^{n}-1}\alpha_2^{2^{n}-1}(y)\succ\alpha_2^{2^{n}-1}\alpha_2^{2^{n}-1}(z))\\&\quad\quad-\alpha_1^{2^{n}-1}\alpha_2^{2^{n}}(y)\succ(\alpha_1^{2^{n}}\alpha_1^{2^{n}-1}\alpha_2^{2^{n}-1}(x)\succ\alpha_2^{2^{n}-1}\alpha_2^{2^{n}-1}(z))\\
  &\quad=\alpha_1^{2^{n}}\alpha_2^{2^{n}}(x)\succ^{(2^{n}-1)}(y\succ^{(2^{n}-1)} z)-\alpha_2^{2^{n}}(y)\succ^{(2^{n}-1)}(\alpha_1^{2^{n}}(x)\succ^{(2^{n}-1)}z).
 \qedhere   \end{align*}
\end{proof}

\begin{ex}
There are three-dimensional multiplicative BiHom-Leibniz dendriform algebras $(A, \prec,\succ,\alpha_1,\alpha_2)$ with multiplications tables for a basis $\{e_1, e_2, e_3\}$:
\begin{center}
\begin{tabular}{c|cccc}
$\prec$ & $e_1$ & $e_2$ & $e_3$ \\ \hline
$e_1$ & $0$ & $-2e_3$ & $0$  \\
$e_2$ & $-2e_3$ & $3e_3$ & $0$  \\
$e_3$ & $0$ & $0$ & $0$
\end{tabular}
\hspace{1 cm}
\begin{tabular}{c|cccc}
$\succ$ & $e_1$ & $e_2$ & $e_3$ \\ \hline
$e_1$ & $0$ & $-2e_3$ & $0$  \\
$e_2$ & $-2e_3$ & $3e_3$ & $0$  \\
$e_3$ & $0$ & $0$ & $0$
\end{tabular}
\end{center}
$$\begin{array}{llll}
    \alpha_1(e_1)=-\frac{2p}{3}e_1, &\alpha_1(e_2)=pe_2, &\alpha_1(e_3)=\frac{p^{2}}{3}e_3,\\
    \alpha_2(e_1)=-\frac{2q}{3}e_1, &\alpha_2(e_2)=qe_2, &\alpha_2(e_3)=\frac{q^{2}}{3} e_3,
\end{array}$$
where $p$ and $q$ are parameters in $\mathbb{K}\setminus\{0\}$.
Then there are BiHom-Leibniz dendriform algebras $\mathcal{A}_{1}^{n}$ and $\mathcal{A}_{2}^{n}$ with multiplications tables respectively:
\begin{center}
\begin{tabular}{c|cccc}
$\prec^{(n)}$ & $e_1$ & $e_2$ & $e_3$ \\ \hline
$e_1$ & $0$ & $-2(\frac{-2pq}{3})^{n}e_3$ & $0$  \\
&  &  &   \\
$e_2$ & $-2(\frac{-2pq}{3})^{n}e_3$ & $3(pq)^{n}e_3$ & $0$  \\
&  &  &   \\
$e_3$ & $0$ & $0$ & $0$
\end{tabular}
\vspace{1 cm}

\begin{tabular}{c|cccc}
$\succ^{(n)}$ & $e_1$ & $e_2$ & $e_3$ \\ \hline
$e_1$ & $0$ & $-2(\frac{-2pq}{3})^{n}e_3$ & $0$  \\
&  &  &   \\
$e_2$ & $-2(\frac{-2pq}{3})^{n}e_3$ & $3(pq)^{n}e_3$ & $0$  \\
&  &  &   \\
$e_3$ & $0$ & $0$ & $0$
\end{tabular}

$$\begin{array}{llll}
    \alpha_1^{n+1}(e_1)=(-\frac{2p}{3})^{n+1}e_1, &\alpha_1^{n+1}(e_2)=p^{n+1}e_2,&\alpha_1^{n+1}(e_3)=(\frac{p^{2}}{3})^{n+1}e_3,\\
    \alpha_2^{n+1}(e_1)=(-\frac{2q}{3})^{n+1}e_1, &\alpha_2^{n+1}(e_2)=q^{n+1}e_2, &\alpha_2^{n+1}(e_3)=(\frac{q^{2}}{3})^{n+1} e_3,
\end{array}$$
\end{center}
\vspace{0,3cm}
\begin{center}
\begin{tabular}{c|cccc}
$\prec^{(2^{n}-1)}$ & $e_1$ & $e_2$ & $e_3$ \\ \hline
$e_1$ & $0$ & $2(\frac{2pq}{3})^{2^{n}-1}e_3$ & $0$  \\
&  &  &   \\
$e_2$ & $2(\frac{2pq}{3})^{2^{n}-1}e_3$ & $3(pq)^{2^{n}-1}e_3$ & $0$  \\
&  &  &   \\
$e_3$ & $0$ & $0$ & $0$
\end{tabular}

\vspace{1 cm}
\begin{tabular}{c|cccc}
$\succ^{(2^{n}-1)}$ & $e_1$ & $e_2$ & $e_3$ \\ \hline
$e_1$ & $0$ & $2(\frac{2pq}{3})^{2^{n}-1}e_3$ & $0$   \\
&  &  &   \\
$e_2$ & $2(\frac{2pq}{3})^{2^{n}-1}e_3$ & $3(pq)^{2^{n}-1}e_3$ & $0$  \\
&  &  &   \\
$e_3$ & $0$ & $0$ & $0$
\end{tabular}
\end{center}

$$\begin{array}{llll}
    \alpha_1^{2^{n}}(e_1)=(\frac{2p}{3})^{2^{n}}e_1, &\alpha_1^{2^{n}}(e_2)=p^{2^{n}}e_2, &\alpha_1^{2^{n}}(e_3)=(\frac{p^{2}}{3})^{2^{n}}e_3,\\
    \alpha_2^{2^{n}}(e_1)=(\frac{2q}{3})^{2^{n}}e_1, &\alpha_2^{2^{n}}(e_2)=q^{2^{n}}e_2, &\alpha_2^{2^{n}}(e_3)=(\frac{q^{2}}{3})^{2^{n}} e_3.
\end{array}$$
\vspace{0,1cm}
\end{ex}

Next we define notion of bimodule of BiHomLeibniz dendriform algebra. 
\begin{defn}
Let $\mathcal{A}=( A, \prec, \succ, \alpha_{1}, \alpha_{2})$ be a BiHom-Leibniz dendriform algebra,  and $V$  be a linear space.
Let $l_{\prec}, r_{\prec}, l_{\succ}, r_{\succ} :  A \rightarrow gl(V),$ and $\beta_{1}, \beta_{2}: V \rightarrow V$ be six linear maps. Then, $( l_{\prec}, r_{\prec}, l_{\succ}, r_{\succ}, \beta_{1}, \beta_{2}, V)$ is called a bimodule of $\mathcal{A}$
 if for any $ x, y \in  A, v\in V$ and $ [x , y] = x \prec y + x \succ y, l = l_{\prec} + l_{\succ}, r = r_{\prec} + r_{\succ} $:
\begin{eqnarray}
\label{BiH-mod1}
l_{\succ}([\alpha_2(x),y])\beta_2(v)&=&l_{\succ}(\alpha_1\alpha_2(x)l_\succ(y)v-l_\succ(\alpha_2(y))l_\succ((\alpha_1(x))v,\\
\label{BiH-mod2}
r_\succ(\alpha_2(y))l(\alpha_2(x))v&=&l_\succ(\alpha_1\alpha_2(x))r_\succ(y)v-r_\succ(\alpha_1(x)\succ y)\beta_2(v),\\
\label{BiH-mod3}
r_\succ(\alpha_2(y))r(x)\beta_2(v)&=&r_\succ(x\succ y)\beta_1\beta_2(v)-l_\succ(\alpha_2(x))r_\succ(y)\beta_1(v),\\
\label{BiH-mod4}
l_\succ(\alpha_1\alpha_2(x))l_\prec(y)v&=&l_\prec(\alpha_2(x)\succ y)\beta_2(v)+l_\prec(\alpha_2(y))l(\alpha_1(x))v,\\
\label{BiH-mod5}
l_\succ(\alpha_1\alpha_2(x))r_\prec(y)v&=&r_\prec(\alpha_2(y))l_\succ(\alpha_2(x))\beta_1(v)+r_\prec([\alpha_1(x),y])\beta_2(v),\\
\label{BiH-mod6}
r_\succ(x\prec y)\beta_1\beta_2(v)&=&r_\prec(\alpha_2(y))r_\succ(x)\beta_2(v)+l_\prec(\alpha_2(x))r(y)\beta_1(v),\\
\label{BiH-mod7}
l_\prec(\alpha_1\alpha_2(x))r(y)v&=&l_\prec(\alpha_2(x)\prec y)\beta_2(v)+l_\succ(\alpha_2(y))l_\prec(\alpha_1(x))v,\\
\label{BiH-mod8}
l_\prec(\alpha_1\alpha_2(x))r(y)v&=&r_\prec(\alpha_2(y))l_\prec(\alpha_2(x))v+r_\succ(\alpha_1(x)\prec y)\beta_2(v),\\
\label{BiH-mod9}
r_\prec([x,y])\beta_1\beta_2(v)&=&r_\prec(\alpha_2(y))r_\prec(x)\beta_2(v)+l_\succ(\alpha_2(x))r_\prec(y)\beta_1(v),\\
\label{BiH-mod10}
\beta_1(l_\prec(x)v)&=&l_\prec(\alpha_1(x))\beta_1(v),\quad
\beta_1(r_\prec(x)v)=l_\prec(\alpha_1(x))\beta_1(v),\\
\label{BiH-mod12}
\beta_1(l_\succ(x)v)&=&l_\succ(\alpha_1(x))\beta_1(v),\quad
\beta_1(r_\succ(x)v)=l_\succ(\alpha_1(x))\beta_1(v),\\
\label{BiH-mod13}
\beta_2(l_\prec(x)v)&=&l_\prec(\alpha_2(x))\beta_2(v),\quad
\beta_2(r_\prec(x)v)=l_\prec(\alpha_2(x))\beta_2(v)\\
\label{BiH-mod14}
\beta_2(l_\succ(x)v)&=&l_\succ(\alpha_2(x))\beta_2(v),\quad
\beta_2(r_\succ(x)v)=l_\succ(\alpha_2(x))\beta_2(v).
\end{eqnarray}
\end{defn}
\begin{prop}
Let $(l_{\prec}, r_{\prec}, l_{\succ}, r_{\succ}, \beta_{1}, \beta_{2}, V)$ be a bimodule of a BiHom-Leibniz dendriform algebra $( A,\prec, \succ, \alpha_{1}, \alpha_{2}).$
On the direct sum $ A\oplus V $ of the linear spaces, there is a BiHom-Leibniz dendriform algebra structure given for $x, y \in  A, u, v \in V$ by
\begin{eqnarray*}
(x + u) \prec (y + v) &=& x \prec y + l_{\prec}(x)v + r_{\prec}(y)u, \cr
(x + u) \succ (y + v) &=& x \succ y + l_{\succ}(x)v + r_{\succ}(y)u.
\end{eqnarray*}
We denote it by $  A \times_{l_{\prec},r_{\prec}, l_{\succ}, r_{\succ}, \alpha_{1}, \alpha_{2}, \beta_{1}, \beta_{2}} V$.
\end{prop}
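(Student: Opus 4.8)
The plan is to follow exactly the template of Proposition \ref{bimodHom-Leibniz dendriform}, exploiting that the new operations $\prec'$ and $\succ'$ on $A\oplus V$ decompose each defining identity into an $A$-valued part and a $V$-valued part. I would write a generic element as $x+u$ with $x\in A$, $u\in V$, equip $A\oplus V$ with the two structure maps $\alpha_1\oplus\beta_1$ and $\alpha_2\oplus\beta_2$, and set $[\,\cdot\,,\cdot\,]'=\prec'+\succ'$. First I would record that the commutativity requirement $(\alpha_1\oplus\beta_1)(\alpha_2\oplus\beta_2)=(\alpha_2\oplus\beta_2)(\alpha_1\oplus\beta_1)$ reduces to $\alpha_1\alpha_2=\alpha_2\alpha_1$ on $A$, which is given, together with $\beta_1\beta_2=\beta_2\beta_1$ on $V$; thus only the three BiHom-Leibniz dendriform identities \eqref{first cond}, \eqref{second cond}, \eqref{third cond} remain to be verified for $(\prec',\succ')$.

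For each of these I would substitute $x_i+v_i$ ($i=1,2,3$) and expand both sides using the definitions of $\prec'$, $\succ'$ and of $[\,\cdot\,,\cdot\,]'$. By construction the result splits: the component lying in $A$ is precisely the same identity evaluated on $x_1,x_2,x_3$, which vanishes because $(A,\prec,\succ,\alpha_1,\alpha_2)$ is a BiHom-Leibniz dendriform algebra. The component lying in $V$ collects the terms carrying the actions $l_\prec,r_\prec,l_\succ,r_\succ$ and the twists $\beta_1,\beta_2$; these group into three expressions, one for each of the three slots in which a module element $v_i$ can sit. The key step is to match those $V$-valued expressions, slot by slot, to the bimodule axioms: axiom \eqref{first cond} reduces on the $V$-side to \eqref{BiH-mod1}-\eqref{BiH-mod3}, axiom \eqref{second cond} reduces to \eqref{BiH-mod4}-\eqref{BiH-mod6}, and axiom \eqref{third cond} reduces to \eqref{BiH-mod7}-\eqref{BiH-mod9}. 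As in the cited proposition, I would display the expansion of \eqref{first cond} in full and note that \eqref{second cond} and \eqref{third cond} follow by identical bookkeeping.

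The main obstacle I anticipate is purely the BiHom twisting bookkeeping: unlike the Hom case of Proposition \ref{bimodHom-Leibniz dendriform}, the two maps $\alpha_1,\alpha_2$ (respectively $\beta_1,\beta_2$) attach \emph{asymmetrically} to the three arguments, so after expansion one must check that each twisted action term, of the shape $l_\succ(\alpha_1\alpha_2(x))$, $r_\succ(\alpha_2(y))\beta_2(v)$, or $l_\prec(\alpha_2(x))r(y)\beta_1(v)$, lands with exactly the combination of $\alpha$'s and $\beta$'s that appears in \eqref{BiH-mod1}-\eqref{BiH-mod9}. The equivariance relations \eqref{BiH-mod10}-\eqref{BiH-mod14} are what make this match exact: they let one push a twist past an action, rewriting a term such as $\beta_1(l_\prec(x)v)$ as $l_\prec(\alpha_1(x))\beta_1(v)$, and I would invoke them at each point where the $\beta$-placement produced by the definitions of $\prec'$, $\succ'$ differs from the $\beta$-placement in the targeted bimodule axiom. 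Once these rewritings are carried out, the three $V$-valued expressions vanish term by term, and together with the vanishing $A$-component this completes the verification.
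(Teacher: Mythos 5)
Your proposal is correct and follows essentially the same route as the paper's own proof: expand the defining identities on $A\oplus V$, observe that the $A$-component vanishes because $(A,\prec,\succ,\alpha_1,\alpha_2)$ is BiHom-Leibniz dendriform, and match the $V$-component of \eqref{first cond} against the bimodule axioms \eqref{BiH-mod1}--\eqref{BiH-mod3} (with \eqref{second cond} and \eqref{third cond} handled analogously via \eqref{BiH-mod4}--\eqref{BiH-mod9}), exactly as the paper does while displaying only the first identity. Your additional remarks on the commutativity of the structure maps and on invoking \eqref{BiH-mod10}--\eqref{BiH-mod14} to align twist placements are sound bookkeeping that the paper leaves implicit.
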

\begin{proof}
Let $v_{1}, v_{2}, v_{3}\in V$ and $x_{1}, x_{2},x_{3}\in  A.$ Setting and computing
\begin{align*}
&([(\alpha_2+\beta_2)(x_1+v_1),x_2+v_2]')\succ'(\alpha_2+\beta_2)(x_3+v_3)\\
&\quad\quad-(\alpha_1+\beta_1)(\alpha_2+\beta_2)(x_1+v_1)\succ'((x_2+v_2)\succ'(x_3+v_3))\\
&\quad\quad+((\alpha_2+\beta_2)(x_2+v_2)\succ'((\alpha_1+\beta_1)(x_1+v_1)\succ x_3+v_3)\\
&\quad=[\alpha_2(x_1),x_2]\succ\alpha_2(x_3)+l_\succ([\alpha_2(x_1),x_2])\beta_2(v_3)\\&\quad\quad +r_{\succ}(\alpha_2(x_3))l_\prec(\alpha_2(x_1))v_2+r_\succ(\alpha_2(x_3))r_\prec(x_2)\beta_2(v_3)\\
&\quad\quad+r_\succ(\alpha_2(x_3))l_\succ(\alpha_2(x_1))v_2+r_\succ(\alpha_2(x_3))r_\succ(x_2)\beta_2(v_1)\\
&\quad\quad-\alpha_1\alpha_2(x_1)\succ(x_2\succ x_3)-l_\succ(\alpha_1\alpha_2(x_1))l_\succ(x_2)v_3\\&\quad\quad-l_\succ(\alpha_1\alpha_2(x_1))r_\succ(x_3)v_2-r_\succ(x_2\succ x_3)\beta_1\beta_2(v_1)\\
&\quad\quad+\alpha_2(x_2)\succ(\alpha_1(x_1)\succ x_3)+l_\succ(\alpha_2(x_2))l_\succ(\alpha_1(x_1))v_3\\
&\quad\quad+l_\succ(\alpha_2(x_2))r_\succ(x_3)\beta_1(v_1)+r_\succ(\alpha_1(x_1)\succ x_3)\beta_2(v_2).
\end{align*}
By \eqref{BiH-mod1}-\eqref{BiH-mod3},
and \eqref{BiH-Leib 1} in $A$,
\begin{align*}
& ([(\alpha_2+\beta_2)(x_1+v_1),x_2+v_2]')\succ'(\alpha_2+\beta_2)(x_3+v_3) \\
& \quad -(\alpha_1+\beta_1)(\alpha_2+\beta_2)(x_1+v_1)\succ'((x_2+v_2)\succ'(x_3+v_3))
\\
& \quad +((\alpha_2+\beta_2)(x_2+v_2)\succ'((\alpha_1+\beta_1)(x_1+v_1)\succ x_3+v_3)=0.
\qedhere \end{align*}
\end{proof}
\begin{prop}\label{proposition BiHbimodule}
Let $(l_{\prec}, r_{\prec}, l_{\succ}, r_{\succ}, \beta_{1}, \beta_{2}, V)$ be a bimodule of a BiHom-Leibniz dendriform algebra $( A, \prec, \succ, \alpha_{1}, \alpha_{2})$, and let $(A, [\cdot,\cdot], \alpha_1,\alpha_2)$ be the associated BiHom-Leibniz algebra. Then,
\begin{enumerate}[label=\upshape{\arabic*)}]
\item $(l_{\succ}, r_{\prec}, \beta_{2}, \beta_{1}, V) $ and $ (l_{\prec} + l_{\succ}, r_{\prec} + r_{\succ}, \beta_{1}, \beta_{2}, V)$ are bimodules \\ of $ ( A, [\cdot,\cdot], \alpha_{1}, \alpha_{2}); $
\item for any bimodule $(l, r, \beta_{1}, \beta_{2}, V)$ of $( A, [\cdot,\cdot], \alpha_{1}, \alpha_{2})$, $(0,l,r,0, \beta_{1}, \beta_{2}, V)$ is a bimodule of $( A, \prec, \succ, \alpha_{1}, \alpha_{2}).$
\item $(0,l_{\prec} + l_{\succ}, r_{\prec} + r_{\succ}, 0, \beta_{1}, \beta_{2}, V)$ and $(0,l_{\succ},  r_{\prec}, 0, \beta_{1}, \beta_{2}, V)$ are bimodules
 of\\ $( A, \prec, \succ, \alpha_{1}, \alpha_{2});$
\item  the BiHom-Leibniz dendriform algebras
\begin{eqnarray*}
  A \times_{l_{\prec}, r_{\prec}, l_{\succ}, r_{\succ}, \alpha_{1}, \alpha_{2}, \beta_{1}, \beta_{2}}V \mbox{ and }   A \times_{ 0,l_{\prec} +  l_{\succ} , r_{\prec} + r_{\succ}, 0, \alpha_{1}, \alpha_{2}, \beta_{1}, \beta_{2}} V
 \end{eqnarray*} have the same associated
BiHom-Leibniz algebra $ A \times_{l_{\prec} +  l_{\succ}, r_{\prec} + r_{\succ}, \alpha_{1}, \alpha_{2}, \beta_{1}, \beta_{2}} V.$
 \end{enumerate}
\end{prop}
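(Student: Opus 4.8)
The plan is to mirror, step for step, the proof of Proposition \ref{prop of bimodules}, which is the Hom-analogue of the present statement; the only genuinely new feature is that there are now two twisting maps $\alpha_1,\alpha_2$ (and $\beta_1,\beta_2$) instead of one, so throughout I must keep careful track of which of $\alpha_1\alpha_2$, $\alpha_1$, $\alpha_2$ decorates each slot. All four items will be obtained by reorganizing the thirteen defining identities \eqref{BiH-mod1}-\eqref{BiH-mod14} of a BiHom-Leibniz dendriform bimodule into the seven defining identities \eqref{Cond.biHom-bimod 1}-\eqref{Cond.biHom-bimod 7} of a BiHom-Leibniz bimodule (and back), so no BiHom-Leibniz identity needs to be reopened.

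For item (1), the first assertion is immediate: reading $l=l_{\succ}$ and $r=r_{\prec}$, the three structural conditions \eqref{Cond.biHom-bimod 1}-\eqref{Cond.biHom-bimod 3} required of $(l_{\succ},r_{\prec},\beta_2,\beta_1,V)$ are literally the dendriform axioms \eqref{BiH-mod1}, \eqref{BiH-mod5} and \eqref{BiH-mod9} (with the roles of $\beta_1,\beta_2$ interchanged, which is exactly why the claimed twists appear as $\beta_2,\beta_1$), while the four commutation conditions \eqref{Cond.biHom-bimod 4}-\eqref{Cond.biHom-bimod 7} are supplied by the compatibility identities \eqref{BiH-mod10}-\eqref{BiH-mod14}. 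For the second assertion I would substitute $l=l_{\prec}+l_{\succ}$ and $r=r_{\prec}+r_{\succ}$ into \eqref{Cond.biHom-bimod 1}-\eqref{Cond.biHom-bimod 3}, expand every product bilinearly, and then regroup the resulting terms into three bundles, each of which is one of the nine structural dendriform axioms \eqref{BiH-mod1}-\eqref{BiH-mod9} and hence vanishes, exactly as in the overbrace/underbrace reshuffling carried out in the Hom case. \textbf{I expect this regrouping to be the main obstacle}: with two commuting twists, matching the $\alpha_1\alpha_2$-, $\alpha_1$- and $\alpha_2$-decorated terms across the nine identities is the error-prone part and must be checked slot by slot rather than by formal analogy with the single-twist computation.

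Items (2) and (3) are then formal. For (2), put $l_{\prec}=r_{\succ}=0$, $l_{\succ}=l$, $r_{\prec}=r$ as dictated by the tuple $(0,l,r,0,\beta_1,\beta_2,V)$; the surviving structural axioms among \eqref{BiH-mod1}-\eqref{BiH-mod9} collapse precisely to the BiHom-Leibniz bimodule conditions \eqref{Cond.biHom-bimod 1}-\eqref{Cond.biHom-bimod 3}, every remaining dendriform axiom contains a factor that vanishes and therefore holds trivially, and the compatibility conditions \eqref{BiH-mod10}-\eqref{BiH-mod14} reduce to those of the given bimodule. Item (3) follows by applying (2) to the two BiHom-Leibniz bimodules produced in (1): taking $(l,r)=(l_{\prec}+l_{\succ},r_{\prec}+r_{\succ})$ yields $(0,l_{\prec}+l_{\succ},r_{\prec}+r_{\succ},0,\beta_1,\beta_2,V)$, and taking $(l,r)=(l_{\succ},r_{\prec})$ yields $(0,l_{\succ},r_{\prec},0,\beta_1,\beta_2,V)$.

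Finally, for item (4) I would compute the bracket $[\,\cdot\,,\,\cdot\,]'=\prec'+\succ'$ of the associated BiHom-Leibniz algebra directly in each of the two dendriform structures. In $A\times_{l_{\prec},r_{\prec},l_{\succ},r_{\succ},\alpha_1,\alpha_2,\beta_1,\beta_2}V$ the total left action on $V$ is $l_{\prec}+l_{\succ}$ and the total right action is $r_{\prec}+r_{\succ}$, and the identical sums (with the $\prec$-part set to zero) appear in $A\times_{0,l_{\prec}+l_{\succ},r_{\prec}+r_{\succ},0,\alpha_1,\alpha_2,\beta_1,\beta_2}V$; hence both products reduce to the single bracket of $A\times_{l_{\prec}+l_{\succ},r_{\prec}+r_{\succ},\alpha_1,\alpha_2,\beta_1,\beta_2}V$, exactly as in the computation for Proposition \ref{prop of bimodules}\,\ref{propr4}. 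This completes the proposed proof.
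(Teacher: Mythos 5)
The paper offers no proof of Proposition \ref{proposition BiHbimodule} at all: the statement is followed immediately by an example, the authors evidently treating it as the routine BiHom analogue of Proposition \ref{prop of bimodules}. So your plan of mirroring that Hom-case proof, with careful bookkeeping of the two twists, is certainly the intended route, and most of it is sound in outline: for the sum bimodule, condition \eqref{Cond.biHom-bimod 3} for $(l_\prec+l_\succ,r_\prec+r_\succ,\beta_1,\beta_2,V)$ does regroup into \eqref{BiH-mod3}, \eqref{BiH-mod6} and \eqref{BiH-mod9}, exactly as in the Hom case, and your reductions for items (2), (3) and (4) are the right ones.

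There is, however, a genuine error in your treatment of the first assertion of item (1), the tuple $(l_\succ,r_\prec,\beta_2,\beta_1,V)$. The claimed twist order cannot be extracted from the axioms, and your parenthetical ``which is exactly why the claimed twists appear as $\beta_2,\beta_1$'' asserts a match that does not exist. Write out what \eqref{Cond.biHom-bimod 1} demands of a tuple whose third slot is $\beta_2$ and fourth slot is $\beta_1$: the term $l_\succ([\alpha_2(x),y])$ must act on $\beta_1(v)$. But \eqref{BiH-mod1} supplies $l_\succ([\alpha_2(x),y])\beta_2(v)$ in that position, i.e.\ it verifies \eqref{Cond.biHom-bimod 1} for the \emph{unswapped} tuple $(l_\succ,r_\prec,\beta_1,\beta_2,V)$; if both versions held, subtracting would give $l_\succ([\alpha_2(x),y])(\beta_1-\beta_2)(v)=0$ identically, which is false in general. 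The compatibility conditions tell the same story: \eqref{Cond.biHom-bimod 4} for the swapped tuple reads $\beta_2(l_\succ(x)v)=l_\succ(\alpha_1(x))\beta_2(v)$, whereas \eqref{BiH-mod14} gives $l_\succ(\alpha_2(x))\beta_2(v)$ --- the defining identities pair $\beta_1$ with $\alpha_1$ and $\beta_2$ with $\alpha_2$, and swapping only the $\beta$'s destroys this pairing. The provable statement, exactly parallel to $(l_\succ,r_\prec,\beta,V)$ in Proposition \ref{prop of bimodules}, is that $(l_\succ,r_\prec,\beta_1,\beta_2,V)$ is a bimodule; the ``$\beta_2,\beta_1$'' in the printed proposition is a misprint (corroborated by the example immediately following it, which instantiates item (1) with the twists in the original order $(\alpha,\beta)$), just as the stray $\beta_1$ in \eqref{BiH-mod5} is a misprint when compared with \eqref{def:bimodhomdendif:5}. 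A sound proof must flag and correct these, proving the unswapped claim, rather than rationalize the swap as a ``literal'' consequence of the axioms.
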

\begin{ex}
Let $( A, \prec, \succ, \alpha, \beta)$ be a BiHom-Leibniz dendriform algebra. Suppose that $( A, [\cdot,\cdot], \alpha, \beta)$ is the associated BiHom-Leibniz algebra.  Then, $ (L_\prec+L_{\succ}, R_{\prec}+R_\succ, \alpha, \beta,  A)$ and $ (L_{\succ}, R_{\prec}, \alpha, \beta,  A)$ are bimodule of $( A,[\cdot,\cdot], \alpha, \beta)$.
\end{ex}
\begin{thm}
Let $( A, \prec_{ A}, \succ_{ A}, \alpha_{1}, \alpha_{2})$ and $( B , \prec_{ B }, \succ_{ B }, \beta_{1}, \beta_{2})$
 be two BiHom-Leibniz dendriform algebras. If there are linear maps
$ l_{\prec_{ A}},   r_{\prec_{ A}},  l_{\succ_{ A}},  r_{\succ_{ A}} :  A \rightarrow gl( B ),$
and $ l_{\prec_{ B }},   r_{\prec_{ B }},  l_{\succ_{ B }},  r_{\succ_{ B }} :  B  \rightarrow gl( A)$
such that $( l_{\prec_{ A}},   r_{\prec_{ A}},  l_{\succ_{ A}},
r_{\succ_{ A}}, \beta_{1}, \beta_{2},  B )$ is a bimodule of $ A,$ and
$(l_{\prec_{ B }},   r_{\prec_{ B }},  l_{\succ_{ B }},  r_{\succ_{ B }},
\alpha_{1}, \alpha_{2},  A)$ is a bimodule  of $ B ,$
and for any $ x, y \in  A,$ $a, b \in  B  $ and $ l_{ A} = l_{\prec_{ A}} +
 l_{\succ_{ A}}, r_{ A} =  r_{\prec_{ A}} +  r_{\succ_{ A}}, l_{ B } =
 l_{\prec_{ B }} +  l_{\succ_{ B }} , r_{ B } =  r_{\prec_{ B }} +  r_{\succ_{ B }} $,
\begin{eqnarray}
\label{bieq35}
l_A(\alpha_2(x))a\succ_B\beta_2(b)+l_{\succ_A}(r_B(\alpha_2(x)))\beta_2(b)=l_{\succ_A}(\alpha_1\alpha_2(x))(a\succ b)\nonumber\\-\beta_2(a)\succ_B(l_{\succ_A}(\alpha_1(x))b)-r_{\succ_A}(r_{\succ_B}(b)\alpha_1(x))\beta_2(a),
\\
\label{bieq36}
r_A(x)\beta_2(a)\succ_B\beta_2(b)+l_{\succ_A}(l_B(\beta_2(a))x)\beta_2(b)=\beta_1\beta_2(a)\succ_B(l_{\succ_A}(x)b)\nonumber\\+r_A(r_B(b)x)\beta_1\beta_2(a)-l_{\succ_A}(\alpha_2(x))(\beta_1(a)\succ_B b),\\
\label{bieq37}
r_{\succ_A}(\alpha_2(x))([\beta_2(a),b])=\beta_1\beta_2(a)\succ_B(r_{\succ_A}(x))b+r_A(r_B(b)x)\beta_1\beta_2(b)\nonumber\\-\beta_2(a)\succ_B(r_{\succ_A}(x)\beta_1(b))-r_A(l_{\succ_B}(\beta_1(b))x)\beta_2(a),\\
 \label{bieq38}
l_{\succ_A}(\alpha_1\alpha_2(x))(a\prec b)=(l_{\succ_A}(\alpha_2(x))a)\prec_B \beta_2(b)+l_{\prec_A}(r_{\succ_B}(a)\alpha_2(x))\beta_2(b)\nonumber\\+\beta_2(a)\prec_B(l_A(\alpha_1(x))b)+r_{prec_A}(r_B(b)\alpha_1(x))\beta_2(a), \\
\label{bieq39}
\beta_1\beta_2(a)\succ_B(l_{\prec_A}(x)b)+r_{\succ_A}(r_B(b)x)\beta_1\beta_2(a)=(r_{\succ_A}(x)\beta_2(a))\prec_B\beta_2(b)\nonumber\\+l_A(l_{\succ_B}(\beta_2(a))x)\beta_2(b)+l_{\prec_A}(\alpha_2(x))([\beta_1(a),b]),\\
\label{bieq40}
\beta_1\beta_2(a)\succ_B(r_{\prec_A}(x)b)+r_A(l_B(b)x)\beta_1\beta_2(a)=r_{\prec_A}(\alpha_2(x))(\beta_2(a)\succ_B b)\nonumber\\+r_{\prec_A}(l_B(\beta_1(b)x)\beta_2(a)+\beta_2(a)\prec_B(r_A(x)\beta_1(b)),
\\
\label{bieq41}
l_{\prec_A}(\alpha_1\alpha_2(x))([a,b])=l_{\prec_A}(r_{\prec_B}(a)x)\beta_2(b)+(l_{\prec_A}(\alpha_2(x))a)\prec_B\beta_2(b)\nonumber\\+\beta_2(a)\succ_B(l_{\prec_A}(\alpha_1(x))b)+r_A(r_B(b)\alpha_1(x))\beta_2(a),
\\
\label{bieq42}
\beta_1\beta_2(a)\prec_B(l_A(x)b)+r_{\prec_A}(r_B(b)x)\beta_1\beta_2(a)=(r_{\prec_A}(x)\beta_2(a))\prec_B\beta_2(b)\nonumber\\+l_{\prec_A}(l_{\prec_B}(\beta_2(a))x)\beta_2(b)+l_{\succ_A}(\alpha_2(x))(\beta_1(a)\prec b),\\
\label{bieq43}
\beta_1\beta_2(a)\prec_B(r_A(x)b)+l_{\prec_A}(l_B(b)x)\beta_1\beta_2(a)=r_{\prec_A}(\alpha_2(x))(\beta_2(a)\prec b)\nonumber\\+r_{\succ_A}(l_{\prec_B}(\beta_1(a))x)\beta_2(b)+\beta_2(b)\succ_B(r_{\succ_A}(x)\beta_1(a)),\\
\label{bieq44}
l_B(\beta_2(a))x\succ_A\alpha_2(y)+l_{\succ_B}(r_A(\beta_2(a)))\alpha_2(y)=l_{\succ_B}(\beta_1\beta_2(a))(x\succ y)\nonumber\\-\beta_2(x)\succ_A(l_{\succ_B}(\beta_1(a))y)-r_{\succ_B}(r_{\succ_A}(y)\beta_1(a))\alpha_2(x),
\\
\label{bieq45}
r_B(a)\alpha_2(x)\succ_A\alpha_2(y)+l_{\succ_B}(l_A(\alpha_2(x))a)\alpha_2(y)=\alpha_1\alpha_2(x)\succ_A(l_{\succ_B}(a)y)\nonumber\\+r_B(r_A(y)a)\alpha_1\alpha_2(x)-l_{\succ_B}(\beta_2(a))(\alpha_1(x)\succ_A y),\\
\label{bieq46}
r_{\succ_B}(\beta_2(a))([\alpha_2(x),y])=\alpha_1\alpha_2(x)\succ_A(r_{\succ_B}(a))y+r_B(r_A(y)a)\alpha_1\alpha_2(y)\nonumber\\-\alpha_2(x)\succ_A(r_{\succ_B}(a)\alpha_1(y))-r_B(l_{\succ_A}(\alpha_1(y))a)\alpha_2(x),\\
 \label{bieq47}
l_{\succ_B}(\beta_1\beta_2(a))(x\prec y)=(l_{\succ_B}(\beta_2(a))x)\prec_A \alpha_2(y)+l_{\prec_B}(r_{\succ_A}(x)\beta_2(a))\alpha_2(y)\nonumber\\+\alpha_2(x)\prec_A(l_B(\beta_1(a))y)+r_{prec_B}(r_A(y)\beta_1(a))\alpha_2(x), \\
\label{bieq48}
\alpha_1\alpha_2(x)\succ_A(l_{\prec_B}(a)y)+r_{\succ_B}(r_A(y)a)\alpha_1\alpha_2(x)=(r_{\succ_B}(a)\alpha_2(x))\prec_A\alpha_2(y)\nonumber\\+l_B(l_{\succ_A}(\alpha_2(x))a)\alpha_2(y)+l_{\prec_B}(\beta_2(a))([\alpha_1(x),y]),\\
\label{bieq49}
\alpha_1\alpha_2(x)\succ_A(r_{\prec_B}(a)y)+r_B(l_A(y)a)\alpha_1\alpha_2(x)=r_{\prec_B}(\beta_2(a))(\alpha_2(x)\succ_A y)\nonumber\\+r_{\prec_B}(l_A(\alpha_1(y)a)\alpha_2(x)+\alpha_2(x)\prec_A(r_B(a)\alpha_1(y)),
\\
\label{bieq50}
l_{\prec_B}(\beta_1\beta_2(a))([x,y])=l_{\prec_B}(r_{\prec_A}(x)a)\alpha_2(y)+(l_{\prec_B}(\beta_2(a))x)\prec_A\alpha_2(y)\nonumber\\+\alpha_2(x)\succ_A(l_{\prec_B}(\beta_1(a))y)+r_B(r_A(y)\beta_1(a))\alpha_2(x),
\\
\label{bieq51}
\alpha_1\alpha_2(x)\prec_A(l_B(a)y)+r_{\prec_B}(r_A(y)a)\alpha_1\alpha_2(x)=(r_{\prec_B}(a)\alpha_2(x))\prec_A\alpha_2(y)\nonumber\\+l_{\prec_B}(l_{\prec_A}(\alpha_2(x))a)\alpha_2(y)+l_{\succ_B}(\beta_2(a))(\alpha_1(x)\prec y),\\
\label{bieq52}
\alpha_1\alpha_2(x)\prec_A(r_B(a)y)+l_{\prec_B}(l_A(y)a)\alpha_1\alpha_2(x)=r_{\prec_B}(\beta_2(a))(\alpha_2(x)\prec y)\nonumber\\
+r_{\succ_B}(l_{\prec_A}(\alpha_1(x))a)\alpha_2(y)+\alpha_2(y)\succ_A(r_{\succ_B}(a)\alpha_1(x)),
\end{eqnarray}
then, there is a BiHom-Leibniz dendriform algebra structure on the direct sum $  A \oplus  B  $ of the underlying linear spaces of
 $  A $ and $  B  $ given for any $ x, y \in  A, a, b \in  B  $ by
\begin{eqnarray*}
(x + a) \prec ( y + b ) &=& (x \prec_{ A} y + r_{\prec_{ B }}(b)x + l_{\prec_{ B }}(a)y)\cr
&+&(l_{\prec_{ A}}(x)b + r_{\prec_{ A}}(y)a + a \prec_{ B } b ), \cr
(x + a) \succ ( y + b ) &=& (x \succ_{ A} y + r_{\succ_{ B }}(b)x + l_{\succ_{ B }}(a)y)\cr
&+& (l_{\succ_{ A}}(x)b + r_{\succ_{ A}}(y)a + a \succ_{ B } b ).
\end{eqnarray*}
\end{thm}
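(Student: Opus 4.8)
The plan is to verify directly that the two operations $\prec$ and $\succ$ defined on $A \oplus B$, together with the twisting maps $\alpha_1 \oplus \beta_1$ and $\alpha_2 \oplus \beta_2$, satisfy the three defining identities \eqref{first cond}, \eqref{second cond} and \eqref{third cond} of a BiHom-Leibniz dendriform algebra. The commutativity $(\alpha_1 \oplus \beta_1)(\alpha_2 \oplus \beta_2) = (\alpha_2 \oplus \beta_2)(\alpha_1 \oplus \beta_1)$ is immediate from the commutativity of $\alpha_1,\alpha_2$ on $A$ and of $\beta_1,\beta_2$ on $B$. The argument is the dendriform counterpart of the computation behind Theorem \ref{thm:matchedpairs}, and proceeds exactly as in the semidirect-sum proposition stated just above, now with nontrivial actions on both factors.

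First I would fix generic elements $x+a$, $y+b$, $z+c$ with $x,y,z \in A$ and $a,b,c \in B$, substitute them into each of the three identities, and expand every occurrence of $\prec$, $\succ$ and $[\cdot,\cdot] = \prec + \succ$ using the defining formulas for the new products. Each resulting expression is then projected onto its $A$-component and its $B$-component. By construction the $A$-valued terms involve only $\prec_A$, $\succ_A$ and the actions $l_{\prec_B}, r_{\prec_B}, l_{\succ_B}, r_{\succ_B}$ of $B$ on $A$, while the $B$-valued terms involve only $\prec_B$, $\succ_B$ and the actions of $A$ on $B$; hence the two projections decouple and may be treated independently.

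Next, within each projection I would group the terms. On the $A$-side, the purely $A$ terms reproduce the corresponding identity in $\mathcal{A}$ and cancel; the terms carrying a single $B$-factor split into two families, one annihilated by the bimodule axioms \eqref{BiH-mod1}--\eqref{BiH-mod9} for $(l_{\prec_B}, r_{\prec_B}, l_{\succ_B}, r_{\succ_B}, \alpha_1, \alpha_2, A)$, the other by the three matched-pair conditions among \eqref{bieq44}--\eqref{bieq52} attached to that identity. The $B$-side is handled symmetrically, using the bimodule axioms for the action of $A$ on $B$ together with the three conditions among \eqref{bieq35}--\eqref{bieq43}. Matching the three dendriform identities against the $3+3$ relevant hypotheses on each side accounts for all eighteen conditions \eqref{bieq35}--\eqref{bieq52}. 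I would display the verification of \eqref{first cond} in full, as in the semidirect-sum proposition, and note that \eqref{second cond} and \eqref{third cond} follow by the same bookkeeping.

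The main obstacle is purely combinatorial: because $\alpha_1$ and $\alpha_2$ are distinct twisting maps, every term carries a specific ordered string of twists, and the delicate point is to confirm that the twists produced by expanding the new products match precisely those appearing in the hypotheses \eqref{bieq35}--\eqref{bieq52} and in the bimodule axioms. Keeping the two twists in the correct order and in the correct argument slots --- rather than any genuinely new idea --- is what makes the verification lengthy; once the grouping above is in place, each group vanishes by a single named hypothesis.
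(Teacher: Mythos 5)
Your proposal is correct and follows essentially the same route as the paper: the paper's own proof is just the one-line remark that the result ``is obtained in a similar way as for Theorem~\ref{thm:matchedpairs}'', i.e.\ a direct component-wise verification of \eqref{first cond}--\eqref{third cond} on $A\oplus B$, expanding the products, splitting each identity into its $A$-valued and $B$-valued parts, and cancelling the pure terms by the dendriform identities in $A$ and $B$, the mixed terms with two factors from the acting side by the bimodule axioms, and the remaining mixed terms by the eighteen conditions \eqref{bieq35}--\eqref{bieq52}, three per identity per component. Your accounting of how the $3+3$ hypotheses per identity exhaust all eighteen conditions matches the intended structure, so the plan is sound.
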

\begin{proof}
The proof  is obtained in a similar way as for Theorem \ref{thm:matchedpairs}.
\qedhere
\end{proof}

 Let $  A \bowtie^{l_{\prec_{ A}}, r_{\prec_{ A}},
l_{\succ_{ A}}, r_{\succ_{ A}}, \beta_{1}, \beta_{2}}_{l_{\prec_{ B }}, r_{\prec_{ B }}, l_{\succ_{ B }},
r_{\succ_{ B }}, \alpha_{1}, \alpha_{2}}  B  $ denote this BiHom-Leibniz dendriform algebra.
\begin{defn}
Let $ ( A, \prec_{ A}, \succ_{ A}, \alpha_{1}, \alpha_{2}) $ and $  ( B , \prec_{ B }, \succ_{ B }, \beta_{1}, \beta_{2}) $
be BiHom-Leibniz dendriform algebras. Suppose there exist linear maps
$ l_{\prec_{ A}}, r_{\prec_{ A}}, l_{\succ_{ A}}, r_{\succ_{ A}} :  A \rightarrow gl( B ),$
and $ l_{\prec_{ B }}, r_{\prec_{ B }}, l_{\succ_{ B }}, r_{\succ_{ B }} :  B  \rightarrow gl( A) $
 such that $(l_{\prec_{ A}}, r_{\prec_{ A}}, l_{\succ_{ A}}, r_{\succ_{ A}}, \beta_{1}, \beta_{2})$ is a bimodule of $  A,$
and $(l_{\prec_{ B }}, r_{\prec_{ B }}, l_{\succ_{ B }}, r_{\succ_{ B }}, \alpha_{1}, \alpha_{2})$ is a bimodule of $  B  $.
If  \eqref{bieq35} - \eqref{bieq52} are satisfied, $( A,  B , l_{\prec_{ A}},
r_{\prec_{ A}}, l_{\succ_{ A}}, r_{\succ_{ A}}, \beta_{1}, \beta_{2}, l_{\prec_{ B }}, r_{\prec_{ B }}, l_{\succ_{ B }},
 r_{\succ_{ B }}, \alpha_{1}, \alpha_{2})$ is called a matched pair of BiHom-Leibniz dendriform algebras.
\end{defn}
\begin{cor}
If $( A, B , l_{\prec_{ A}}, r_{\prec_{ A}}, l_{\succ_{ A}}, r_{\succ_{ A}}, \beta_{1}, \beta_{2},
 l_{\prec_{ B }}, r_{\prec_{ B }}, l_{\succ_{ B }}, r_{\succ_{ B }}, \alpha_{1}, \alpha_{2}) $ is an arbitrary matched pair of BiHom-Leibniz dendriform algebras, then
$(A, B , l_{\prec_{ A}} + l_{\succ_{ A}}, r_{\prec_{ A}} + r_{\succ_{ A}},
l_{\prec_{ B }} + l_{\succ_{ B }},  r_{\prec_{ B }} + r_{\succ_{ B }}, \alpha_{1} + \beta_{1}, \alpha_{2} + \beta_{2})$ is a matched pair of the associated
BiHom-Leibniz algebras $( A, [\cdot,\cdot], \alpha_{1}, \alpha_{2})$ and  $( B , \{\cdot,\cdot\}, \beta_{1}, \beta_{2})$.
\end{cor}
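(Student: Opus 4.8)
The plan is to follow the proof of Corollary \ref{match dendriform-associative algebras} verbatim, replacing each Hom-ingredient by its BiHom-counterpart. A matched pair of BiHom-Leibniz algebras consists of two data: the two summed bimodules, and the six compatibility relations \eqref{bimatch. pair1}--\eqref{bimatch. pair6}. The first requirement is handled at once by Proposition \ref{proposition BiHbimodule}: its first assertion states that whenever $(l_{\prec_A}, r_{\prec_A}, l_{\succ_A}, r_{\succ_A}, \beta_1, \beta_2, B)$ is a bimodule of the BiHom-Leibniz dendriform algebra $(A, \prec_A, \succ_A, \alpha_1, \alpha_2)$, the summed quintuple $(l_{\prec_A} + l_{\succ_A}, r_{\prec_A} + r_{\succ_A}, \beta_1, \beta_2, B)$ is a bimodule of the associated BiHom-Leibniz algebra $(A, [\cdot,\cdot], \alpha_1, \alpha_2)$, and symmetrically $(l_{\prec_B} + l_{\succ_B}, r_{\prec_B} + r_{\succ_B}, \alpha_1, \alpha_2, A)$ is a bimodule of $(B, \{\cdot,\cdot\}, \beta_1, \beta_2)$. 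Thus only the six compatibility conditions remain to be checked.

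Next I would substitute $l_A = l_{\prec_A} + l_{\succ_A}$, $r_A = r_{\prec_A} + r_{\succ_A}$, $l_B = l_{\prec_B} + l_{\succ_B}$, $r_B = r_{\prec_B} + r_{\succ_B}$ into \eqref{bimatch. pair1}--\eqref{bimatch. pair6} and recover each relation by combining three of the dendriform compatibility conditions \eqref{bieq35}--\eqref{bieq52}. The three relations describing the action of $A$ on $B$, namely \eqref{bimatch. pair1}, \eqref{bimatch. pair2}, \eqref{bimatch. pair3}, arise by grouping the nine conditions \eqref{bieq35}--\eqref{bieq43} into triples, while \eqref{bimatch. pair4}, \eqref{bimatch. pair5}, \eqref{bimatch. pair6} arise from \eqref{bieq44}--\eqref{bieq52} in the same fashion. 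For instance, combining \eqref{bieq35}, \eqref{bieq38} and \eqref{bieq41} and rearranging terms, the three summands $l_{\succ_A}(\alpha_1\alpha_2(x))(a\succ b)$, $l_{\succ_A}(\alpha_1\alpha_2(x))(a\prec b)$ and $l_{\prec_A}(\alpha_1\alpha_2(x))([a,b])$ collapse, via $a\prec_B b + a\succ_B b = \{a,b\}$ and $l_{\prec_A} + l_{\succ_A} = l_A$, into the single term $l_A(\alpha_1\alpha_2(x))(\{a,b\})$, the left-hand side of \eqref{bimatch. pair1}; the corresponding right-hand sides reassemble identically. The remaining five groupings are governed by the same pattern and are the exact BiHom analogues of the computation in the proof of Proposition \ref{prop of bimodules}, which is why the corollary may simply cite the earlier grouping.

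The only genuine difficulty is this final bookkeeping. One must verify that every mixed term — a product pairing a $\prec$-type operator with a $\succ$-type operator, such as $l_{\succ_A}(\cdot)r_{\prec_A}(\cdot)$ or $r_{\succ_A}(\cdot)l_{\prec_A}(\cdot)$ — is precisely a cross term produced when the composite operators $l_A, r_A, l_B, r_B$ are expanded, and that each split product $a\prec_B b$, $a\succ_B b$, $x\prec_A y$, $x\succ_A y$ recurs paired with its partner so as to re-form $\{a,b\}$ and $[x,y]$. Since the twisting maps $\alpha_1, \alpha_2, \beta_1, \beta_2$ occur identically on matching terms of each paired relation, they play no role in the cancellations and do not obstruct the reassembly. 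I would therefore present the argument by exhibiting the six triples explicitly and leaving the routine term-by-term verification to the reader, exactly as in Corollary \ref{match dendriform-associative algebras}.
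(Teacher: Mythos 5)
Your proposal is correct and takes essentially the same approach as the paper: the paper's proof likewise invokes Proposition \ref{proposition BiHbimodule} to get that the summed maps are bimodules of the associated BiHom-Leibniz algebras, and then identifies \eqref{bimatch. pair1}--\eqref{bimatch. pair3} with \eqref{bieq35}--\eqref{bieq43} and \eqref{bimatch. pair4}--\eqref{bimatch. pair6} with \eqref{bieq44}--\eqref{bieq52}. Your explicit triple-grouping (e.g.\ \eqref{bieq35}, \eqref{bieq38}, \eqref{bieq41} reassembling into \eqref{bimatch. pair1}) merely spells out the bookkeeping that the paper leaves implicit.
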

\begin{proof}
Let $(A, B, l_{\prec_{A}}, r_{\prec_{A}}, l_{\succ_{A}}, r_{\succ_{A}}, \beta_1,\beta_2,
 l_{\prec_{B}}, r_{\prec_{B}}, l_{\succ_{B}}, r_{\succ_{B}}, \alpha_1,\alpha_2)$ be some matched pair of BiHom-Leibniz dendriform algebras $(A, \prec_{A}, \succ_{A}, \alpha_1,\alpha_2)$ and $(B, \prec_{B}, \succ_{B},\beta_1,\beta_2)$. In view of Proposition \ref{proposition BiHbimodule}, the linear maps $l_{\prec_{A}} + l_{\succ_{A}}, r_{\prec_{A}} + r_{\succ_{A}}:A\rightarrow gl(B)$ and  $l_{\prec_{B}} + l_{\succ_{B}},  r_{\prec_{B}} + r_{\succ_{B}}:B\rightarrow gl(A)$ are bimodules of the underlying BiHom-Leibniz algebras $(A,[\cdot,\cdot], \alpha_1,\alpha_2)$ and $(B,\{\cdot,\cdot\},\beta_1,\beta_2)$, respectively. Therefore,
 \eqref{bimatch. pair1}-\eqref{bimatch. pair3} are equivalent to \eqref{bieq35}-\eqref{bieq43} and \eqref{bimatch. pair4}-\eqref{bimatch. pair6} are equivalent to \eqref{bieq44}-\eqref{bieq52}.
\end{proof}
\begin{defn}
Let $( A, [\cdot,\cdot], \alpha_{1}, \alpha_{2})$ be a BiHom-Leibniz algebra, and $(l, r, \beta_{1}, \beta_{2}, V)$ be a bimodule. A linear map $T : V \rightarrow  A$
is called an $ \mathcal{O} $-operator associated to $(l, r, \beta_{1}, \beta_{2}, V)$,  if $ T $ satisfies
\begin{align*}
& \alpha_{1} T= T\beta_{2}, \quad \alpha_{2} T= T\beta_{1}, \\
& [T(u), T(v)] = T(l(T(u))v + r(T(v))u) \mbox { for all } u, v \in V.
\end{align*}
\end{defn}

\begin{ex}
Let $( A, [\cdot,\cdot], \alpha_{1}, \alpha_{2})$ be a multiplicative BiHom-Leibniz algebra. Then,
the identity map $id$ is an $ \mathcal{O} $-operator associated to the bimodule $(L,0, \alpha_{1}, \alpha_{2})$.
\end{ex}

\begin{ex}
Let ${\mathcal{A}}=( A, [\cdot,\cdot], \alpha, \beta)$ be a multiplicative BiHom-Leibniz algebra.
A linear map $ K :  A \rightarrow  A $ is called a Rota-Baxter operator on $\mathcal{A}$ of weight zero if $ K $ satisfies
\begin{align*}
K\circ\alpha & =\alpha\circ K, \quad K\circ\beta=\beta\circ K, \\
[K(x), K(y)] &= K([K(x), y] + [x, K(y)]), \ \mbox{ for all } x, y \in  A.
\end{align*}
A Rota-Baxter operator on $ \mathcal{A} $ is an $ \mathcal{O} $-operator associated to the bimodule 
$(L, R, \alpha, \beta)$.
\end{ex}
\begin{thm}\label{theorm Leib to dendr}
Let $( A, [\cdot,\cdot], \alpha_{1}, \alpha_{2})$ be a BiHom-Leibniz algebra, and $(l, r, \beta_{1}, \beta_{2}, V) $ be a bimodule.
Let $ T : V \rightarrow  A $ be an $ \mathcal{O} $-operator associated to $(l, r, \beta_{1}, \beta_{2}, V)$. Then, there exists a BiHom-Leibniz dendriform
algebra structure on $V$ given by
\begin{eqnarray*}
 u\prec v = r(T(u))v , \quad u\succ v = l(T(v))u
\end{eqnarray*}
for all $u, v \in V$. So, there is an associated BiHom-Leibniz algebra structure on $ V $ given by the equation \eqref{associative-dendriform},  and $ T $
is a homomorphism of BiHom-Leibniz algebras. Moreover, $ T(V) = \lbrace { T(v) \setminus v \in V }  \rbrace  \subset  A $ is a BiHom-Leibniz
subalgebra of $  A, $ and there is an induced BiHom-Leibniz dendriform algebra structure on $ T(V) $ given by
\begin{eqnarray}
T(u) \succ T(v) = T(u \succ v),\quad T(u) \prec T(v) = T(u \prec v)
\end{eqnarray}
for all $ u, v \in V $. Its corresponding associated BiHom-Leibniz algebra structure on $ T(V) $ given by \eqref{associative-dendriform} is
just the BiHom-Leibniz subalgebra structure of $  A, $ and $ T $ is a homomorphism of BiHom-Leibniz dendriform algebras.
\end{thm}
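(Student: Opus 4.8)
The plan is to establish, by direct computation, that the two operations $\prec,\succ$ satisfy the three defining identities \eqref{first cond}--\eqref{third cond} of a BiHom-Leibniz dendriform algebra on $(V,\beta_1,\beta_2)$, following the same scheme already used for the $\mathcal{O}$-operator theorem in the Hom-Leibniz setting. Three ingredients drive every verification: the defining formulas for $\prec$ and $\succ$, which convert each product on $V$ into a single left or right action of the bimodule twisted by $T$; the two intertwining relations $\alpha_1T=T\beta_2$ and $\alpha_2T=T\beta_1$, which allow each $T\beta_i$ to be moved across $T$ so that the twists assemble into the patterns $\alpha_1\alpha_2(\cdot)$, $\alpha_1(\cdot)$, $\alpha_2(\cdot)$ appearing in the bimodule axioms; and the product relation $[T(u),T(v)]=T\big(l(T(u))v+r(T(v))u\big)$, which identifies $T([u,v]_V)$ with the bracket $[T(u),T(v)]$ of $A$.

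For \eqref{first cond}, I would first note that $[u,v]_V=u\prec v+u\succ v$ is exactly the argument appearing in the product relation, so that $T([\beta_2(u),v]_V)=[\,T\beta_2(u),T(v)\,]$ is a single bracket in $A$; the left-hand side of \eqref{first cond} then collapses to $l$ of that bracket acting on a twisted vector of $V$. On the right-hand side the two nested products unfold, after the intertwining relations are used to normalise the twists, into a difference of two iterated left actions $l(\,\cdot\,)\,l(\,\cdot\,)$. The equality of the two sides is then precisely the $l$--$l$ bimodule axiom \eqref{Cond.biHom-bimod 1} evaluated at $x=T(u)$, $y=T(v)$. The identities \eqref{second cond} and \eqref{third cond} follow by the same template, this time using the mixed axioms \eqref{Cond.biHom-bimod 2} and \eqref{Cond.biHom-bimod 3}, while the equivariance relations \eqref{Cond.biHom-bimod 4}--\eqref{Cond.biHom-bimod 7} are invoked to slide individual twists in and out of $l$ and $r$ so that the bracket entries carry the correct maps.

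Granting the dendriform structure on $V$, the associated bracket $[u,v]_V=u\prec v+u\succ v$ is a BiHom-Leibniz algebra by the proposition that every BiHom-Leibniz dendriform algebra carries one, so \eqref{associative-dendriform} holds on $V$. The product relation says exactly that $T([u,v]_V)=[T(u),T(v)]$, and the intertwining relations give twist-compatibility, whence $T$ is a homomorphism of BiHom-Leibniz algebras. Because $[T(u),T(v)]=T\big(l(T(u))v+r(T(v))u\big)\in T(V)$ and $\alpha_1(T(u))=T(\beta_2(u))$, $\alpha_2(T(u))=T(\beta_1(u))$ also lie in $T(V)$, the image $T(V)$ is a BiHom-Leibniz subalgebra of $A$. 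Transporting $\prec,\succ$ along $T$ by $T(u)\prec T(v)=T(u\prec v)$ and $T(u)\succ T(v)=T(u\succ v)$ then yields the induced dendriform structure on $T(V)$; it is well defined because $\ker T$ is a two-sided dendriform ideal of $(V,\prec,\succ)$ (a direct consequence of the product relation, since $[T(v),T(w)]=0$ whenever $Tw=0$ forces $T(v\succ w)=T(v\prec w)=0$), $T$ is a dendriform homomorphism by construction, and by \eqref{associative-dendriform} the bracket it induces on $T(V)$ coincides with the subalgebra bracket inherited from $A$.

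The main obstacle is the twist-index bookkeeping in the first step. After the definitions are substituted, the four maps $\alpha_1,\alpha_2,\beta_1,\beta_2$ proliferate, and the computation only closes if, once the intertwining and equivariance relations are applied, they organise into exactly the configuration $\alpha_1\alpha_2(x)$, $\alpha_2(y)$, $\alpha_1(x)$, $\beta_2(v)$ demanded by \eqref{Cond.biHom-bimod 1}--\eqref{Cond.biHom-bimod 3}. Matching these twists is what dictates the precise distribution of $\beta_1$ and $\beta_2$ throughout the dendriform identities and is the only genuinely delicate point; the remaining manipulations are the routine unwinding already carried out in the Hom-Leibniz case and in the matched-pair constructions.
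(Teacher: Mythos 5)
Your proposal is correct and takes essentially the same route as the paper: the paper's proof verifies \eqref{first cond} exactly by your scheme (expand $[\beta_2(x),y]$ via the definitions, use the product relation to rewrite $T$ of that sum as the bracket $[T(\beta_2(x)),T(y)]$, normalise the twists, and recognise \eqref{Cond.biHom-bimod 1} at $x=T(u)$, $y=T(v)$), then dismisses the other two identities as ``checked in a similar way'' and does not even write out the homomorphism, subalgebra and $T(V)$ statements, so your $\ker T$ well-definedness argument supplies detail the paper omits. One warning on the point you yourself flag as delicate: the computation only closes if the intertwining relations are read as $T\beta_1=\alpha_1 T$ and $T\beta_2=\alpha_2 T$, which is what the paper's displayed proof actually uses (it replaces $T(\beta_2(x))$ by $\alpha_2(T(x))$ and $T(\beta_1(x))$ by $\alpha_1(T(x))$); if you instead use the crossed relations $\alpha_1 T=T\beta_2$, $\alpha_2 T=T\beta_1$ quoted from the paper's definition, the left-hand side becomes $l([\alpha_1 T(x),T(y)])\beta_2(z)$ while \eqref{Cond.biHom-bimod 1} demands $l([\alpha_2 T(x),T(y)])\beta_2(z)$, and the three terms fail to cancel --- so when executing your plan you must adopt the uncrossed convention (or equivalently fix the paper's definition), not the one you cite.
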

\begin{proof}
For any $x, y, z\in V,$ we have
\begin{align*}
&([\beta_2(x),y])\succ\beta_2(z)-\beta_1\beta_2(x)\succ(y\succ z)+\beta_2(y)\succ(\beta_1(x)\succ z)\\
&\quad=(r(T(y))\beta_2(x)+l(T((\beta_2(x))y)\succ\beta_2(z)-\beta_1\beta_2(x)\succ(l(T(y))z)\\&\quad\quad+\beta_2(y)\succ(l(T(\beta_1(x))z)\\
&\quad=l(T(r(T(y))\beta_2(x)+l(T(\beta_2(x))y))\beta_2(z)-l(T(\beta_1\beta_2(x)))l(T(y))z\\
&\quad\quad+l(T(\beta_2(y))l(T(\beta_1(x))z\\
&\quad=l(T(r(T(y))\beta_2(x)+l(\alpha_2(T(x)))y)\beta_2(z)-l(\alpha_1\alpha_2(T(x)))l(T(y))z\\&\quad\quad+l(\alpha_2(T(y))l(\alpha_1(T(x))z\\
&\quad=l([T(\beta_2(x)),T(y)]\beta_2(z)-l([T(\beta_2(x)),T(y)]\beta_2(z)=0.
\end{align*}
The two other axioms are checked in a similar way.
\end{proof}
\begin{ex}
There is three-dimensional BiHom-Leibniz algebra $(A, [\cdot,\cdot],\alpha_1,\alpha_2)$ with multiplications tables for a basis $\{e_1, e_2, e_3\}$:
\begin{center}
\begin{tabular}{c|cccc}
$[\cdot,\cdot]$ & $e_1$ & $e_2$ & $e_3$ \\ \hline
$e_1$ & $0$ & $0$ & $0$  \\
$e_2$ & $0$ & $e_1$ & $0$  \\
$e_3$ & $0$ & $0$ & $0$ \\
\end{tabular}
\end{center}
$$\begin{array}{llll}
    \alpha_1(e_1)=e_1, &\alpha_1(e_2)=-e_2, &\alpha_1(e_3)=-2e_3,\\
    \alpha_2(e_1)=\frac{1}{4}e_1, &\alpha_2(e_2)=\frac{1}{2} e_3, &\alpha_2(e_3)=2 e_3.
\end{array}$$
Let $K$ the Rota-Baxter operator of weight zero defined, for the basis $\{e_1, e_2, e_3\}$, by
$$\begin{array}{llll}
    K(e_1)=\frac{\lambda_1}{2}e_1, &K(e_2)=\lambda_2 e_1+\lambda_1 e_2+\lambda_3 e_3,&K(e_3)=\lambda_4 e_1+\lambda_5 e_3,
\end{array}$$
where $\lambda_i$ are parameters in $\mathbb{K}\setminus\{0\}$.
Now, using the previous Theorem, there is a BiHom-Leibniz dendriform algebra structure on $A$ with
the multiplications "$\prec$" and "$\succ$" are given by $x\prec y=[y,K(x)],~~x\succ y=[K(y),x]$
that is
\begin{center}
\begin{tabular}{c|cccc}
$\prec$ & $e_1$ & $e_2$ & $e_3$ \\ \hline
$e_1$ & $0$ & $0$ & $0$  \\
$e_2$ & $0$ & $\lambda_1 e_1$ & $0$  \\
$e_3$ & $0$ & $0$ & $0$
\end{tabular}
\hspace{1 cm}
\begin{tabular}{c|cccc}
$\succ$ & $e_1$ & $e_2$ & $e_3$ \\ \hline
$e_1$ & $0$ & $0$ & $0$  \\
$e_2$ & $0$ & $\lambda_1 e_1$ & $0$  \\
$e_3$ & $0$ & $0$ & $0$
\end{tabular}
\end{center}
$$\begin{array}{llll}
    \alpha_1(e_1)=e_1, &\alpha_1(e_2)=-e_2,&\alpha_1(e_3)=-2e_3,\\
    \alpha_2(e_1)=\frac{1}{4}e_1, &\alpha_2(e_2)=\frac{1}{2} e_3, 
    &\alpha_2(e_3)=2 e_3.
\end{array}$$
\end{ex}
\begin{cor}
Let $T:V\rightarrow A$ be an $\mathcal{O}$-operator on a BiHom-Leibniz algebra $(A,[\cdot,\cdot],\alpha_1,\alpha_2)$ with respect to a representation $(l,r, \beta_1,\beta_2,V)$.
Let $[u,v]_C=u\prec v+u\succ v,~$ for all $u,v\in V.$
Then, $T$ is a morphism from the BiHom-Leibniz algebra
$(V, [\cdot,\cdot]_C, \beta_1,\beta_2)$ to $(A,[\cdot,\cdot],\alpha_1,\alpha_2).$
\end{cor}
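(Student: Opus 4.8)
The plan is to verify directly the two defining properties of a morphism of BiHom-Leibniz algebras from $(V,[\cdot,\cdot]_C,\beta_1,\beta_2)$ to $(A,[\cdot,\cdot],\alpha_1,\alpha_2)$: compatibility with the two structure maps and compatibility with the brackets. Both reduce immediately to facts already established in Theorem~\ref{theorm Leib to dendr}, so the argument runs in exact parallel with the Hom-Leibniz corollary proved earlier.

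First I would record the compatibility with the twisting maps. By the definition of an $\mathcal{O}$-operator associated to $(l,r,\beta_1,\beta_2,V)$, the map $T$ satisfies $\alpha_1 T = T\beta_2$ and $\alpha_2 T = T\beta_1$. These intertwining relations link the structure maps $\beta_1,\beta_2$ of the source with the structure maps $\alpha_1,\alpha_2$ of the target, and hence supply the required twist compatibility with no extra work.

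The substantive point is multiplicativity, and here I would invoke Theorem~\ref{theorm Leib to dendr} rather than recompute. That theorem guarantees that $u\prec v = r(T(u))v$ and $u\succ v = l(T(v))u$ define a BiHom-Leibniz dendriform structure on $V$ whose associated bracket is exactly $[u,v]_C=u\prec v+u\succ v$, and that $T$ carries these operations into $A$ via $T(u)\prec T(v)=T(u\prec v)$ and $T(u)\succ T(v)=T(u\succ v)$, the induced associated bracket on $T(V)$ coinciding with the subalgebra bracket of $A$. Using these identities the computation is a one-liner:
\begin{align*}
T([u,v]_C) &= T(u\prec v + u\succ v) = T(u\prec v) + T(u\succ v) \\
&= T(u)\prec T(v) + T(u)\succ T(v) = [T(u),T(v)].
\end{align*}

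I do not expect a genuine obstacle: once Theorem~\ref{theorm Leib to dendr} is in place the statement is an immediate consequence. The only care required is to cite the identities $T(u)\prec T(v)=T(u\prec v)$ and $T(u)\succ T(v)=T(u\succ v)$ from that theorem instead of re-deriving them, and to read off the twist relations $\alpha_1 T=T\beta_2$, $\alpha_2 T=T\beta_1$ directly from the $\mathcal{O}$-operator axioms so that the full morphism condition is accounted for.
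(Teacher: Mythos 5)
Your proposal is correct and takes essentially the same route as the paper: the paper omits a printed proof for this BiHom corollary precisely because it is the one-line computation you give, $T([u,v]_C)=T(u\prec v)+T(u\succ v)=T(u)\prec T(v)+T(u)\succ T(v)=[T(u),T(v)]$, using the identities $T(u\prec v)=T(u)\prec T(v)$, $T(u\succ v)=T(u)\succ T(v)$ from Theorem \ref{theorm Leib to dendr}, exactly as in the proof of the Hom-case analogue earlier in the paper. One caveat: the $\mathcal{O}$-operator axioms give the \emph{crossed} relations $\alpha_1T=T\beta_2$ and $\alpha_2T=T\beta_1$, so your claim that these supply the twist compatibility ``with no extra work'' glosses over the fact that a literal reading of morphism would ask for $T\beta_1=\alpha_1T$ and $T\beta_2=\alpha_2T$; this mismatch, however, is inherited from the paper's own definitions and statement, not introduced by your argument.
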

\begin{cor}\label{bidendriform-invertible operator}
Let $( A, [\cdot,\cdot], \alpha, \beta)$ be a multiplicative BiHom-Leibniz algebra. There is a compatible multiplicative BiHom-Leibniz dendriform algebra structure on $  A $
if and only if there exists an invertible $ \mathcal{O} $-operator of $ ( A, [\cdot,\cdot], \alpha, \beta)$.
\end{cor}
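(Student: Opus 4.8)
The plan is to prove both implications by transporting the $\mathcal{O}$-operator correspondence of Theorem~\ref{theorm Leib to dendr} through an isomorphism, in direct parallel with the Hom-Leibniz argument of Corollary~\ref{dendriform-invertible operator}. No genuinely new computation should be required: each direction reduces to invoking an already established construction and reading off the relevant operations.

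For the ``if'' direction, I would start from an invertible $\mathcal{O}$-operator $T:V\rightarrow A$ associated to a bimodule $(l,r,\beta_1,\beta_2,V)$. By Theorem~\ref{theorm Leib to dendr}, $V$ acquires a BiHom-Leibniz dendriform structure, and the subalgebra $T(V)=A$ carries the induced operations $T(u)\prec T(v)=T(u\prec v)$ and $T(u)\succ T(v)=T(u\succ v)$, whose sub-adjacent bracket is exactly $[\cdot,\cdot]$. Since $T$ is invertible, putting $x=T(u)$, $y=T(v)$ converts these into explicit formulas on $A$, namely $x\prec y=T\big(r(x)T^{-1}(y)\big)$ and $x\succ y=T\big(l(y)T^{-1}(x)\big)$; by construction $x\prec y+x\succ y=[x,y]$, so the structure is compatible with the fixed bracket, and multiplicativity of $\prec,\succ$ with respect to $\alpha,\beta$ follows from the intertwining relations $\alpha T=T\beta_2$, $\beta T=T\beta_1$ and the multiplicativity of $T$.

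Conversely, given a compatible multiplicative BiHom-Leibniz dendriform structure $(A,\prec,\succ,\alpha,\beta)$ with associated BiHom-Leibniz algebra $(A,[\cdot,\cdot],\alpha,\beta)$, I would exhibit $\mathrm{id}_A$ as the desired invertible $\mathcal{O}$-operator, associated to the regular-type bimodule built from $L_\succ$ and $R_\prec$. Its defining relation collapses to $[x,y]=L_\succ(x)y+R_\prec(y)x=x\succ y+x\prec y$, which is nothing but the identity $[x,y]=x\prec y+x\succ y$ defining the sub-adjacent bracket, while the two twisting conditions hold trivially for the identity map. Invertibility of $\mathrm{id}_A$ is immediate, closing the equivalence.

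The main obstacle I anticipate is bookkeeping rather than conceptual. One must fix the twisting maps of the bimodule in the order forced by the BiHom $\mathcal{O}$-operator axioms $\alpha_1 T=T\beta_2$, $\alpha_2 T=T\beta_1$ (so that $\beta_1=\beta$, $\beta_2=\alpha$ when $T=\mathrm{id}_A$), and then confirm that $(L_\succ,R_\prec,\dots)$ with these twistings genuinely satisfies the bimodule axioms of the associated BiHom-Leibniz algebra. Once this alignment is checked, the three defining dendriform axioms on $A$ require no direct verification: they are guaranteed on $V$ by Theorem~\ref{theorm Leib to dendr} and are preserved by the algebra isomorphism $T$, so the pushed-forward operations automatically satisfy them and the proof concludes.
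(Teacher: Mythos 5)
Your proposal follows the same route as the paper's proof: the forward direction transports the $\mathcal{O}$-operator-induced dendriform structure of Theorem \ref{theorm Leib to dendr} through the invertible map $T$, and the converse exhibits $\mathrm{id}_A$ as an invertible $\mathcal{O}$-operator for the bimodule built from $(L_\succ,R_\prec)$. Your converse is exactly the paper's, and your insistence on the twist ordering $\beta_1=\beta$, $\beta_2=\alpha$, forced by $\alpha_1T=T\beta_2$, $\alpha_2T=T\beta_1$ when $T=\mathrm{id}_A$, is in fact more careful than the paper, which writes the bimodule as $(L_\succ,R_\prec,\alpha,\beta)$; the quintuple consistent with Proposition \ref{proposition BiHbimodule} is $(L_\succ,R_\prec,\beta,\alpha,A)$.

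There is, however, one concrete slip in your forward direction. Your explicit operations $x\prec y=T\bigl(r(x)T^{-1}(y)\bigr)$, $x\succ y=T\bigl(l(y)T^{-1}(x)\bigr)$ satisfy, by the $\mathcal{O}$-operator identity $[T(u),T(v)]=T\bigl(l(T(u))v+r(T(v))u\bigr)$ applied with $u=T^{-1}(y)$, $v=T^{-1}(x)$,
\[
x\prec y+x\succ y=T\bigl(r(x)T^{-1}(y)+l(y)T^{-1}(x)\bigr)=[y,x],
\]
not $[x,y]$, so compatibility with the fixed bracket fails as written. The source of the error is that you transported the operations as they are stated in Theorem \ref{theorm Leib to dendr}; that statement is inconsistent with its own proof and with its claim that $T$ is a homomorphism onto the sub-adjacent bracket. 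The proof of that theorem actually uses $u\prec v=r(T(v))u$ and $u\succ v=l(T(u))v$ (the same convention as in the Hom-Leibniz case), and transporting these gives $x\prec y=T\bigl(r(y)T^{-1}(x)\bigr)$, $x\succ y=T\bigl(l(x)T^{-1}(y)\bigr)$, whose sum is $[x,y]$. With this correction your argument coincides with the paper's, and the remaining points you raise (multiplicativity via the intertwining relations and the bimodule axioms, invertibility of $\mathrm{id}_A$) go through as you describe; note that the published proof of the corollary itself carries a $\prec/\succ$ swap of the same kind, so the bookkeeping you flagged is indeed the only delicate point.
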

\begin{proof}
In fact, if the homomorphism  $ T $ is an invertible $ \mathcal{O}- $operator associated to a bimodule $(l, r, \alpha, \beta, V)$, then
the compatible multiplicative BiHom-Leibniz dendriform algebra structure on $  A $ is given for all
$x, y \in  A$ by
\begin{eqnarray*}
x \prec y = T(l(x)T^{-1}(y)),\quad  x \succ y = T(r(y)T^{-1}(x)).
\end{eqnarray*}
Conversely, let $( A, \prec, \succ, \alpha, \beta)$ be a multiplicative BiHom-Leibniz dendriform algebra, and $( A, [\cdot,\cdot], \alpha, \beta)$
 be its associated BiHom-Leibniz algebra. Then, the identity map $id$ is an $ \mathcal{O}- $operator
associated to the bimodule $(L_{\succ}, R_{\prec}, \alpha, \beta)$ of $( A,[\cdot,\cdot], \alpha, \beta)$.
\end{proof}

\end{document}